\newcommand\numberthis{\addtocounter{equation}{1}\tag{\theequation}}
\theoremstyle{plain}  
\newtheorem{theorem}{Theorem}[section]
\newtheorem{defn}[theorem]{Definition}
\newtheorem{lemma}[theorem]{Lemma}
\newtheorem{corollary}[theorem]{Corollary}
\newtheorem{proposition}[theorem]{Proposition}
\newtheorem{conjecture}[theorem]{Conjecture}
\theoremstyle{definition}
\theoremstyle{remark} 
\newcommand{\p}{\mathbb{P}}
\newcommand{\E}{\mathbb{E}}
\newcommand{\ER}{Erd{\"o}s-R\'{e}nyi }
\newcommand{\Bern}{\ensuremath{\operatorname{Bern}}\xspace}
\def\0{{\bf 0}}
\def\1{{\bf 1}}
\def \R{ \mathbb{R}}
\def \G{ \mathbb{G}}
\def \A{ \mathcal{A}}
\def \Nc{ \mathcal{N}}
\def \Rc{ \mathcal{R}}
\def \bmat{\left[\begin{matrix}}
	\def \emat{\end{matrix}\right]}
\def \xy1vec{\left[\begin{matrix}x\\y\\1\end{matrix}\right]}
\def \QED{\begin{flushright}\Halmos\end{flushright}\end{proof}}
\long\def\old#1{}
\definecolor{DarkerGreen}{RGB}{0,170,0}
\definecolor{orange}{rgb}{1,0.5,0}
\renewcommand\and{\end{tabular}\kern-\tabcolsep\ and\ \kern-\tabcolsep\begin{tabular}[t]{c}}
\let\origthanks\thanks
\renewcommand\thanks[1]{\begingroup\let\rlap\relax\origthanks{#1}\endgroup}
\title{\LARGE \bf Densest Subgraphs of a  Dense Erd\"os-R\'enyi Graph. Asymptotics, Landscape and Universality}
\author{Houssam El Cheairi\thanks{MIT. Email: \texttt{houssamc@mit.edu}}, David Gamarnik\thanks{MIT. Email : \texttt{gamarnik@mit.edu}}}
\begin{document}
\date{\today}
\maketitle


\begin{abstract}

We consider the problem of estimating  the edge density of densest $K$-node subgraphs  of an \ER graph $\G(n,1/2)$. The problem is well-understood
in the regime $K=\Theta(\log n)$ and in the regime $K=\Theta(n)$. In the former case it can be reduced to the problem of estimating the size of largest cliques,
and its extensions, \cite{BBBV}. 
In the latter case the full answer is known up to the order $n^{3\over 2}$ using  sophisticated methods from the theory of spin glasses. The intermediate
case $K=n^\alpha, \alpha\in (0,1)$ however is not well studied and this is our focus. 

We establish that that in this regime the density (that is the maximum 
number of edges supported by any $K$-node subgraph)  is
${1\over 4}K^2+{1+o(1)\over 2}K^{3\over 2}\sqrt{\log (n/K)}$, w.h.p. as $n\to\infty$,
and provide more refined asymptotics under the $o(\cdot)$, for various ranges of $\alpha$. 
This extends earlier similar results in \cite{GamarnikZadik} where this asymptotics was confirmed only when $\alpha$ is a small constant.

We extend our results to the case of ''weighted'' graphs, when the weights have either Gaussian or arbitrary sub-Gaussian distributions. 
The proofs are based on 
the second moment method combined with concentration bounds, the Borell-TIS inequality for the Gaussian case and the Talagrand's inequality
for the  case of distributions with bounded support (including the $\G(n,1/2)$ case). 
The case of general distribution is treated using  a novel symmetrized version 
 of the   Lindeberg argument, which reduces the general case to the Gaussian
case. 
Finally, using the results above we conduct the landscape analysis of the related Hidden Clique Problem, and establish that it exhibits an overlap gap property
when the size of the clique is $O(n^{2\over 3})$, confirming a hypothesis stated in \cite{GamarnikZadik}.

\end{abstract}


{
  \hypersetup{hidelinks}
  \tableofcontents
}

\section{Introduction.}\label{Introduction}
We consider the problem of estimating the density of densest subgraphs of a dense \ER graph $\G(n,1/2)$ with a fixed cardinality of the supporting
node set. Given any simple undirected graph $G=(V,E)$ ($V$ and $E$ standing for the node and edge sets respectively),    and any subset $S\subset V$
the density of $S$ is defined simply as  the number of edges in $S$, that is the cardinality of the set $\{(i,j): (i,j)\in E\}$. While typically the density
is defined as the number of edges normalized by the number of nodes, we focus on the case when sets $S$ have a fixed cardinality, so this 
distinction is non-essential. We are interested in estimating the values of the maximum density of subsets $S$ of $\G(n,1/2)$ with a fixed cardinality $|S|=K$.
When $K\le 2(1+o(1))\log_2 n$ the optimum value is ${K\choose 2}$ since a clique of this size exists with high probability (w.h.p.) 
\cite{Matula}, see \cite{AlonProbabilistic}
for  the textbook reference. The extension to the case $K=O(\log n)$ is also obtained in~\cite{BBBV}. At the other end
of the spectrum, the answer for the case $K=\Theta(n)$ can be derived using the sophisticated methods of spin glass theory, involving evaluation
of the so-called Parisi measure~\cite{GamarnikSubhabrata}. While this reference is devoted to a Gaussian model with a planted sparse rank one matrix, the answer
for the graph can be inferred by assuming that the matrix is trivially zero and connecting the Gaussian model to the random graph model using
standard Lindeberg type argument (more on this below). 

The intermediate case $K=n^\alpha, \alpha\in (0,1)$, however received relatively little attention. The first reference which directly addresses 
 this regime is~\cite{GamarnikZadik}. It is not hard to ''guess'' the right asymptotic answer for this regime. The asymptotic answer ''should be'' 
\begin{align}\label{eq:main-asymptotics}
{1\over 2}{K\choose 2}+{1\over 2}K^{3\over 2}\sqrt{ \log (n/K)}+E_n
\end{align}
where $E_n=o(K^{3\over 2} \sqrt{\log(n/K)})$, for the following reasons. For any fixed set $S, |S|=K$, the number of edges has a Binomial distribution with ${K\choose 2}$ trials, and as such is 
approximated by $ \frac{1}{2}{K\choose 2}+(K/{2\sqrt{2}})Z$, where $Z$ is a standard normal r.v. The extra $\sqrt{2 K\log (n/K)}$ factor arises from extremizing
over all $K$-subsets $S$. It is not hard to turn this intuition into an upper bound using standard first moment argument, and this was done
in~\cite{GamarnikZadik}. Using a much more involved second moment argument augmented further by a certain flattening procedure, inspired
by~\cite{BBBV}, the asymptotics (\ref{eq:main-asymptotics}) was confirmed in~\cite{GamarnikZadik}  by a matching lower bound, but only when 
$\alpha<1/2$. The gap between the upper and lower bound, namely the error hidden in $E_n$ was upper bounded by $K^\beta$ for some $1.449<\beta<3/2$,
see Theorem~4 in the above-mentioned reference  and a follow up remark. 
Extension to the Gaussian case, namely the case when the edges of the graph are equipped with a Gaussian rather than Bernoulli 
weighted distribution, was considered later in~\cite{ZadikArousWein}. While the model considered in \cite{ZadikArousWein} included a planted signal, one could informally interpret the obtained asymptotics in the context of a vanishing signal. There, the asymptotics (\ref{eq:main-asymptotics}) (which is naturally the same) was confirmed
for all $\alpha\in (0,1)$ with an error bound $O\left( \max\{ \left(K^{5/2}/n\right)\sqrt{\log(n)}, K \sqrt{\log(n)} \}\right)$ on $E_n$. The Gaussian extension as well as an
extension to the case of arbitrary weight distribution is considered in our paper as well.

We now summarize our results. In short we confirm the  asymptotics (\ref{eq:main-asymptotics}) in the entire range $\alpha\in (0,1)$ 
for the unweighted random graph case $\G(n,1/2)$, w.h.p. as $n\to\infty$. 
The error bounds we obtain depend on the  value of $\alpha$ and are progressively weaker
as $\alpha$ increases. Specifically, we show that $E_n=O\left(\sqrt{K\log (n)^3}\right)$ when $\alpha\in (0,1/2)$, though the second
order term $\left(K^{3\over 2}/2\right) \sqrt{\log (n/K) }$ has to be slightly modified, see Theorem~\ref{Theorem 3.1}. Similarly, 
$E_n=O\left(K\log n \right)$ when $\alpha\in [1/2,2/3)$ and $O\left( \left(K^{5\over 2}/n\right)\sqrt{\log n}\right)$ when $\alpha\in [2/3,1)$, with second order 
terms slightly modified. It is not hard to see
that the  bound for the case $\alpha\in [2/3,1)$ is the weakest of three (though still clearly order of magnitude smaller than the second leading asymptotics $K^{3\over 2} \sqrt{\log(n/K)}$. 

Turning to the Gaussian case, our results are slightly stronger in this case. Specifically, we obtain an error bound $a_n\sqrt{K\over \log n}$  on $E_n$ w.h.p. 
when $\alpha\in (0,1/2)\cup (1/2,2/3)$, where  $a_n\ge 0$ is an an arbitrarily slowly growing function. For the ''boundary'' case $\alpha=1/2$ we show that
 $|E_n|\le a_n K$, where $a_n\ge 0$ is again an arbitrarily slowly growing  function. Finally, our bound on $E_n$ for the case $\alpha\in [2/3,1)$
 is $O\left( \left(K^{5\over 2}/n\right)\sqrt{\log n}\right)$, namely it is the same as for the case of a random \ER graph $\G(n,1/2)$. 
 
 For the case when the weight distribution is general, we show that the answer is the same as for the Gaussian case within an
 error bound $O\left(K^{\frac{4}{3}} \log(n)^{\frac{7}{6}}\right)$ 
 when the underlying distribution is sub-Gaussian. Namely, we confirm that the optimal value of the densest subgraph problem is universal
 (depends on the underlying distribution only via its first and second moment).  
 Unfortunately, we managed to derive this bound only on the expectation of the optimal value.
 We do show that the bound holds w.h.p. as well but for a more restrictive case when the underlying distribution has a bounded support. 
 When $K=\omega(n^{6\over 7})$ the bound  $O\left(K^{\frac{4}{3}} \log(n)^{\frac{7}{6}}\right)$ that we obtain is smaller than the error bound $O\left( \left(K^{5\over 2}/n\right)\sqrt{\log n}\right)$ 
 arising in the Gaussian case for $\alpha\ge 2/3$. This does not mean (bizzarely) that  our bound for the sub-Gaussian case is stronger than for the 
 Gaussian case, as the error bound $O\left(K^{\frac{4}{3}} \log(n)^{\frac{7}{6}}\right)$  is \emph{on top} of the error bound incurred for the Gaussian case itself.
 
 Next we discuss our proof ideas. The core of the argument is the usage of the second moment method providing a matching lower bound
 to the upper bound obtained by the application of the first moment estimation. Specifically, letting $Z_\theta$ denote the number
 of subgraphs with edge density $\theta$ we compute the asymptotic values for $\E[Z_\theta]$ and $\E[Z_\theta^2]$. If $\theta$ is such
 that $\E[Z_\theta]\to 0$, then by Markov inequality the optimal value is at most $\theta$ w.h.p. Choosing the ''largest'' value  $\theta^*$
 such that the expectation $\E[Z_{\theta^*}]$ \emph{does not} converge to zero (and in fact diverges to infinity by adjusting $\theta^*$ to a slightly smaller value $\theta$)
 we next check whether $\E[Z_\theta^2]$ is asymptotically close to $\E^2[Z_\theta]$. If this is the case the application of a standard Paley-Zygmund inequality
 asserts that the graphs with density $\theta$ exist w.h.p. This is a general strategy we employ in this paper.
  While the use of the second moment method is very standard
 and was employed widely, including \cite{GamarnikZadik} and \cite{ZadikArousWein}, the bounds associated with this method have
 to be extremely delicate and thus our derivations are very involved. 
 An important ingredient is a hard (non-asymptotic)
 bound on a tail of a multi-variate normal distribution in terms of the underlying covariance matrix. This is known as 
 Savage's Theorem (Theorem~\ref{Theorem 4.14}). As we are not aware of any similar bound for multi-nomial distributions, we have
 derived it ourselves (Lemma~\ref{Lemma 4.11}) using  the Hoeffding bound. 
 
 The asymptotic estimates for the \ER graph are tight enough in the
 case $\alpha<1/2$, in the sense that indeed the moments match: $\E[Z_\theta^2]=(1+o(1))\E^2[Z_\theta]$. However, the estimations 
 for the case $\alpha\in [1/2,2/3)$ only allow us to conclude that $\E[Z_\theta^2]=O\left(e^{\Theta(\log(n)^2)}\E^2[Z_\theta]\right)$. This in turn
 implies that graphs with density $\theta$ exist with probability at least $O\left(e^{-\Theta(\log(n)^2)}\right)$, which is unfortunately vanishing. 
 To complete the proof we use Talagrand's  concentration inequality to show that the value of the optimum is concentrated around its median, as well as its mean.
 The error value $t$ occurs with probability at most $4\exp\left(-t^2/(2K^2)\right)$. For the Gaussian case this error bound follows
 directly from Gaussian concentration inequality known as Borell-TIS inequality.
 Using this strong concentration bounds we boost the weak lower bound of order $e^{-\Theta(\log(n)^2)}$ to a high probability bound again by slightly adjusting the value
 of $\theta$ to a slightly smaller value.
  The case $\alpha\in [2/3,1)$ leads to a weaker estimate $\E[Z_\theta^2]=\exp\left(O\left({K^3\over n^2}\log n\right)\right)\E^2[Z_\theta]$.
  Similarly to the previous case, we use Talagrand's concentration inequality 
 to boost the required success probability from $\exp\left(-O\left({K^3\over n^2}\log n\right)\right)$ to $1-o(1)$. 

Next we discuss our approach for handling the case of a general distribution. Arguably, the method of choice for establishing universality type results
is the Lindeberg method. See~\cite{Lindeberg} for a general description of  the method and \cite{Subhabrata}
for application of the method for studying extremal problems in random graphs, similar to our setting.
The rough idea behind the method is as follows. Suppose we have a test function $f(X)$ applied to a vector of i.i.d. random
variables $X\in \R^T$. Our goal is to show that the expected value $\E[f(X)]$ 
does not change significantly if $X$ is replaced by a Gaussian vector with matching first and second moments. The approach is to consider an 
interpolation scheme $Y_t, 0\le t\le T$, were the coordinates of $X$ are switched to the one of $Z$ one by one, so that $Y_0=X$
and $Y_T=Z$. Then, at every step $t$ of the interpolation, the change of the expected values is estimated by one-dimensional Taylor expansion
around the nominal value $0$. As the first two moments match, the error term can be bounded in terms of the third moments of coordinates of $X$ and its
gaussian counterparts. The universality argument then holds if this error term is $o(1/T)$ so that the accumulated error is $o(1)$. 
As the maximization operator is not a smooth function allowing for Taylor expansion, the maximization operator if often replaced by its soft-max
version, such as partition function with large enough inverse temperature, see~\cite{Subhabrata}. As we show in the body of the paper
this approach indeed works in our case, but it incurs and error bound $O\left(K^{\frac{2}{3}}n^{\frac{2}{3}} \log(n)^{\frac{2}{3}}\right)$. This error bound is only 
meaningful when $\alpha\ge 4/5$, since below this value the error bound is larger than $K^{3\over 2}$ -- the second order term in our asymptotics
of the optimum. In order to derive a tighter estimate, which is in fact  $O\left(K^{\frac{4}{3}} \log(n)^{\frac{7}{6}}\right)$, we introduce a novel and more ''economic''
version of the Lindeberg's argument, exploiting symmetry in a fundamental way. Specifically, we show that when the interpolation order is chosen
uniformly at random ''most'' of the third order Taylor expansion terms disappear since they are not associated with subraphs supporting the extremal value.
This symmetry holds in expectation for fairly trivial reasons at the beginning and the end of the interpolation, since in these cases the distribution of weights
on all edges is identical. The intermediate case $1\le t\le T-1$ however is a different story, as in this case there is a mix of distributions. This is where
a random uniform choice comes to the rescue providing us with averaged out symmetry. We believe that our proof technique is of separate interest.

Finally, we turn to the description of our results regarding the Overlap Gap Property associated with the Planted Clique model. 
We begin with the description of the Planted Clique model (also often called Hidden Clique model). 
Starting with the \ER graph $\G(n,1/2)$ we select an arbitrary subset of $K$ nodes (say nodes $1,\ldots,K$) and place a clique 
on this node set. That is, for every pair $(i,j), 1\le i<j\le K$  if the original graph did not have $(i,j)$ as an edge, it turns this pair into an edge in the modified
graph, and if the pair was already an edge, it is left untouched. The resulting graph is denoted by $\G(n,K,\Bern(1/2))$. A key question of algorithmic interest is to recover
the underlying clique when its precise location is not known to the algorithm designer.  
A long stream of research starting with~\cite{Jerrum},\cite{AlonKrivelevitchSudakov} studied this problem. Specifically, the model
was introduced in~\cite{Jerrum} and the first non-trivial polynomial time algorithm leading to 
the recovery of the clique  when $K\ge cn^{1\over 2}$ for any constant $c>0$ was constructed in~\cite{AlonKrivelevitchSudakov}. 
Notably, without any bound on the computation time, the recovery is possible as soon as $K\ge (2+c)\log_2 n$ for any constant $c$, since,
as mentioned above, the largest clique naturally occurring in $\G(n,1/2)$ is only $2(1+o(1))\log_2 n$, and thus  the largest clique returned by brute force
search has to be the hidden clique. The gap $\Theta(\log_2 n)$ vs $\Theta(\sqrt{n})$ is quite substantial. It is often called statistics-to-computation gap
and has been discovered for many many other problems~\cite{BrennanBresler}. 
While it is believed to be fundamental, namely, it is unlikely
that a polynomial time algorithm for clique recovery exists for $K=n^\alpha, \alpha<1/2$, it remains just a conjecture. 

The main focus of~\cite{GamarnikZadik} was to investigate the Hidden Clique problem from the solution space geometry perspective. It was conjectured
in this paper that the model exhibits the so-called Overlap Gap Property (OGP) when  $\alpha<2/3$. In particular not matching the algorithmic threshold.
At the same time it was also conjectured that the model exhibits the gap when $\alpha<1/2$, though for the so-called overparametrized regime. 
We refer to~\cite{GamarnikZadik} 
for the definition and significance of the overparametrized regime. 
 To describe the OGP, consider the following
parametrized optimization problem. For every value $z\in [0,K]$ let $\Psi_K(z)$ denote the largest density among subgraphs of $\G(n,K, \Bern(1/2))$,
supported by $K$-node subsets which share precisely $z$ nodes with the hidden clique. In particular, when $z=K$ this is the hidden clique itself
with $\Psi_K(K)={K\choose 2}$, 
and when $z=0$, this corresponds to  subgraphs which share no common nodes with the hidden graph. In the latter case the maximum is clearly at most
${K\choose 2}$. The overparametrized regime mentioned above corresponds to changing the value of the support size $K$ to large value $\bar K$, 
while keeping the size of the clique to be the same value $K$. 

We say that the model exhibits the OGP
if the function is non-monotonic in the interval $[K^2/n, K]$. In particular,  there exist $K^2/n\le z_1<z_2\le K$ such that $\max_{z\in [z_1,z_2]}\Psi_K(z)$
is smaller then $\Psi_K(K^2/n)\le \Psi_K(K)$ with a ''significant gap between the maximum and $\Psi_K(K^2/n)$ and with a ''significant''
value of $z_2-z_1$. The choice of $K^2/n$ as the lower end of the range of values for $z$ is motivated by the fact that it  
 is the expected  value of the overlap with the hidden clique obtained by randomly choosing a subset of $K$ nodes. It thus makes
sense to consider the range of overlaps within $[K^2/n,K]$. (We note that $K^2/n$ is asymptotically zero when $K=n^\alpha$ and $\alpha<1/2$) 
The presence  of the OGP is a barrier to certain classes of algorithms, specifically algorithms based on the Markov Chain Monte Carlo (MCMC)
methods, see~\cite{GamarnikZadik} for details. The presence of the OGP was proven in~\cite{GamarnikZadik}, but only when $\alpha< 0.0917$. 
The status of the other cases  remains a conjecture, and a partial resolution of this conjecture constitutes our last set of results.
Specifically, we confirm the presence of the OGP in the case $0<\alpha<1/2$. We further prove the OGP when $1/2\le \alpha<2/3$ as well,
but subject to an unproven conjecture (Conjecture~\ref{conjecture 3.3}), regarding the strength of our upper and lower bounds on the value
of the densest subgraph problem in the \ER $\G(n,1/2)$ when $\alpha\in [1/2,2/3)$. While, as stated above, we did derive upper and lower bounds 
matching up to $O(K\log n)$ gap, the conjecture states that the gap can be improved to $o(K\log n)$. We note that we do establish the validity of this conjecture albeit for the Gaussian case. Resolving the aforementioned conjecture is an interesting
question for future research. So is the status of the OGP when $\alpha\ge 2/3$. In this case we conjecture that OGP does not hold. Precisely
what this means needs to be clarified. See for example~\cite{RandomPartitionNumber} for one instantiation of this negative statement.
For a general survey about OGP based techniques we refer to~\cite{GamarnikPNAS}, where it is noted that the strongest applications of the OGP based negative results correspond
to models with no planted structure. Whether the presence of the OGP presents a barrier to broader than MCMC type algorithms for models
with planted structure is another very interesting question for future research.

We finish this section with some notational conventions. $[n]$ denotes the set $\{1,...,n\}$.
All order of magnitude notations $O(.), o(.), \omega(.), \Omega(.)$ should be understood with respect to $n$, that is the number of nodes in the graph. They are assumed
in the absolute sense, so that, for example, for a positive sequence $a_n>0$, $O(a_n)$ denotes any sequence $b_n$ with $\limsup_n|b_n|/a_n<\infty$.
Given sequences $a_n\ge 0, b_n>0, n\ge 1$ we write $a_n \lesssim b_n$ when $ \limsup_{n \to +\infty} \frac{a_n}{b_n} \leq 1$. $\gtrsim$ is defined similarly.
We write $a_n\sim b_n$ if $a_n\lesssim b_n$ and $a_n\gtrsim b_n$.
$\stackrel{d}{=}$ denotes equality in distribution. $|S|$ denotes the cardinality of a finite set $S$. Given a graph $G$ and a set of nodes
$S$ in it, we denote by $E[S]$ the set of edges spanned by $S$. 
We finish this section with some notational convention. $\mathcal{N}(\mu,\sigma^2)$ denotes the normal distribution with mean $\mu$ and
variance $\sigma^2$. $\Bern(p)$ denotes the Bernoulli distribution with parameter $p$. Namely $X=1$ with probability $p$ and $X=0$
with probability $1-p$ when $X\stackrel{d}{=} \Bern(p)$. $\mathcal{R}$ denotes the Rademacher distribution. Namely $X=\pm 1$ with
equal probability when $X\stackrel{d}{=} \mathcal{R}$. The probability distribution of a random variable $X$ is called sub-Gaussian if there are positive constants $C, v$ such that for every $t > 0$ it holds  $p(|X|>t)\leq Ce^{-vt^{2}}$. $h(x)$ denotes the binary entropy, i.e $h(x) = -x\log(x) - (1-x) \log(1-x)$. Finally, let $\mathfrak{S}_n$ be the set of permutations of $[n]$.


\section{Problem Formulation and the Main Results.} \label{Problem Statement and Notations}

$\mathbb{G}(n, 1/2)$ denotes  a dense \ER graph on $n$ nodes. 
Namely, it is a graph on $n$ nodes where each pair $(i,j), 1\le i<j\le n$ is an edge with probability $1/2$ and is not an edge
with probability $1/2$, independently across pairs. We write $Z_{ij}=1$ in the former case and $Z_{ij}=0$ in the latter case.
For each subset of nodes $S\subset [n]\triangleq \{1,\ldots,n\}$, let 
\begin{align}\label{eq:Z_S}
Z_S\triangleq\sum_{\substack{1\le i<j\le n\\ i,j \in S}} Z_{ij},
\end{align} 
be the number of edges
spanned by $S$, namely the density. (The notion of density is somewhat abused here, since typically it corresponds to the ratio 
of the number of edges to the number of nodes, or to the number of nodes squared. We maintain the term density here for simplicity). 
Our focus is obtaining the asymptotic values for 
\begin{equation}\label{eq:Psi_K}
\Psi_K\triangleq \max_{S \subset [n], |S|=K} Z_S, 
\end{equation}
when $K=n^\alpha$ in the regime $\alpha\in (0,1)$.

We generalize this model to the case of an arbitrary probability distribution $\mathcal{D}$ on $\R$ as follows. 
We generate $Z_{ij}, 1\le i<j\le n$ according to $\mathcal{D}$, i.i.d. and define $Z_S$ as in (\ref{eq:Z_S}). Borrowing from the statistical
physics literature, we call the matrix $(Z_{ij}, 1\le i<j\le n)$ the disorder of the model. 
The goal is obtaining asymptotics
in (\ref{eq:Psi_K}), and we denote the left-hand of it by $\Psi_K^{\mathcal{D}}$ instead. The case $\mathcal{D}=\Bern(1/2)$ corresponds
to the graph case above. Another distribution of a key focus for us is the normal distribution $\mathcal{N}(1/2,1/4)$. The parameters
$1/2$ and $1/4$ are chosen to match the first two moments with ones for $\Bern(1/2)$. We will sometimes also use the notation $\Psi^{\mathcal{D}}_K(G)$ where $G$ is the  graph associated to the disorder matrix $Z_{ij}, 1\leq i < j \leq n$.

For any distribution  $\mathcal{D}$ with finite 2nd moment, let $\bar{\mathcal{D}}$ correspond to the centered and rescaled version of $\mathcal{D}$, obtained
by $X\to (X-\E{X})/\sqrt{{\mathrm{Var}}(X)}$. In particular it is standard normal distribution for $\mathcal{N}(1/2,1/4)$ and the Rademacher
distribution for $\Bern(1/2)$. If the mean and the variance of $\mathcal{D}$ are $1/2$ and $1/4$, respectively, then the mean and the variance 
of $\bar{\mathcal{D}}$ are $0$ and $1$, respectively and we have
\begin{align*}
\Psi^{\mathcal{D}}_K = \frac{1}{2}\binom{K}{2} +\frac{1}{2}\Psi^{\bar{\mathcal{D}}}_K,
\end{align*}
so for convenience we focus on the asymptotics of $\Psi^{\bar{\mathcal{D}}}_K$ instead. Thus, for example, we have
\begin{align*}
\Psi^{\Bern(1/2)}_K &= \frac{1}{2}\binom{K}{2} +\frac{1}{2}\Psi^{\mathcal{R}}_K,   \\
\Psi^{\mathcal{N}(1/2,1/4)}_K &= \frac{1}{2}\binom{K}{2} +\frac{1}{2}\Psi^{\mathcal{N}}_K,
\end{align*}
where from this point on we use $\Psi^{\mathcal{N}}_K$ in place of $\Psi^{\mathcal{N}(0,1)}_K$ for short. 

Introduce
\begin{align*}
    V(n,K) &\triangleq\sqrt{2 \binom{K}{2} \log\left(\binom{n}{K}\right)} , \numberthis \label{V(n,K)} \\
    L(n,K)  &\triangleq \sqrt{2 \binom{K}{2} \log\left(\binom{n}{K}\frac{1}{K}\right)},  \numberthis \label{L(n,K)}\\
    U(n,K) &\triangleq\sqrt{2 \binom{K}{2} \log\left(   \binom{n}{K}\frac{1}{\sqrt{ K \log\left(\frac{n}{K}\right)}}  \right)  } \numberthis\label{U(n,K)}.
\end{align*}
Observe that $L(n,K) \leq U(n,K) \leq V(n,K)$. As $n,K \to +\infty$,   $K = o(n)$, we have
\begin{alignat*}{1}
\hspace*{-2cm}
&L(n,K), U(n,K) = V(n,K) \left( 1 + O\left(\frac{1}{K}\right) \right)^{-1}  \label{eq:Vnk}, \numberthis\\
     &V(n,K) - L(n,K),V(n,K) - U(n,K)  =  O\left( \sqrt{K \log(n)}\right)  \label{eq:VvsLUAbsolute}, \numberthis\\
&L(n,K), U(n,K), V(n,K) \sim   K^{3\over 2}\sqrt{\log \left({n\over K}\right)}  \numberthis\label{LNK}.
\end{alignat*}
The proof of the above asymptotic properties is given in Lemma \ref{Lemma 4.80}.


We now turn to the description of our results. 
\subsection{Bernoulli/Rademacher Disorder}\label{Subsection 3.1}

\begin{theorem}\label{Theorem 3.1}
Suppose $\alpha \in \left(0,\frac{1}{2}\right)$. For any non-negative sequence $a_n=\omega(1)$ (that is any growing non-negative sequence),
the following holds w.h.p. as $n\to\infty$ 
\begin{align}\label{eq:L-lower}
L(n,K) -O\left( \sqrt{K \log(n)^3} \right) \le 
\Psi^{ \mathcal{R}}_K \le   
U(n,K) + a_n\sqrt{\frac{K}{\log(n)}}.
\end{align}
The above bounds can be combined to obtain the following asymptotics:
\begin{align}\label{eq:V-combined}
\Psi^{\mathcal{R}}_K  = V(n,K) + O\left(\sqrt{K \log(n)^3}\right).
\end{align}
\end{theorem}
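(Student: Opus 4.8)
The plan is to prove the two-sided bound \eqref{eq:L-lower} and then observe that \eqref{eq:V-combined} follows immediately from it using the asymptotic estimates \eqref{eq:Vnk}--\eqref{LNK} for $L,U,V$. Indeed, by \eqref{eq:VvsLUAbsolute} we have $V(n,K)-L(n,K) = O(\sqrt{K\log n})$ and $V(n,K)-U(n,K)=O(\sqrt{K\log n})$, so once we know $L(n,K)-O(\sqrt{K\log(n)^3}) \le \Psi^{\mathcal R}_K \le U(n,K)+a_n\sqrt{K/\log n}$, replacing $L$ and $U$ by $V$ costs only an additional $O(\sqrt{K\log n}) = O(\sqrt{K\log(n)^3})$ on each side, and the slack $a_n\sqrt{K/\log n}$ is also absorbed into $O(\sqrt{K\log(n)^3})$ for any slowly growing $a_n$; thus $\Psi^{\mathcal R}_K = V(n,K)+O(\sqrt{K\log(n)^3})$ w.h.p. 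So the real content is \eqref{eq:L-lower}, which splits into an upper bound and a lower bound.

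For the \textbf{upper bound} $\Psi^{\mathcal R}_K \le U(n,K) + a_n\sqrt{K/\log n}$, I would use the first moment method together with a sharp tail estimate for the binomial/multinomial sum $Z_S$. Fix a threshold $\theta$ and let $Z_\theta$ count $K$-subsets $S$ with $Z_S \ge \theta$. By a union bound $\p(\Psi^{\mathcal R}_K \ge \theta) \le \binom{n}{K}\p(Z_S \ge \theta)$ for a fixed $S$; here $Z_S$ is a centered sum of $\binom{K}{2}$ i.i.d.\ Rademacher variables, so $Z_S$ has variance $\binom{K}{2}$. The key is to control $\p(Z_S \ge \theta)$ to high precision: for $\theta$ of order $V(n,K)$, which scales like $K^{3/2}\sqrt{\log(n/K)} = o(\binom K2)$, the relevant deviation is moderate, and one needs not just the Hoeffding bound $\exp(-\theta^2/(2\binom K2))$ but the refined pre-factor $\sim \frac{1}{\theta}\sqrt{\binom K2}$ coming from the local CLT / Gaussian tail asymptotics. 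The definition of $U(n,K)$ in \eqref{U(n,K)} is tailored precisely so that $\binom nK \cdot \frac{\sqrt{\binom K2}}{\theta}\exp(-\theta^2/(2\binom K2)) \to 0$ exactly when $\theta \gtrsim U(n,K)$. So the steps are: (i) invoke the sharp binomial upper-tail bound (this is where Lemma~\ref{Lemma 4.11}, the Hoeffding-type bound derived in the paper, enters); (ii) plug $\theta = U(n,K) + a_n\sqrt{K/\log n}$ and verify the extra additive term $a_n\sqrt{K/\log n}$ contributes a factor $\exp(-\Theta(a_n))\to 0$ in the exponent after multiplying by $\binom nK$, using $U(n,K)/\binom K2 \asymp \sqrt{\log(n/K)}/\sqrt K$ so that $\frac{d}{d\theta}\frac{\theta^2}{2\binom K2}\big|_{\theta=U} \asymp \sqrt{\log n /K}$; (iii) conclude $\E[Z_\theta]\to 0$ and apply Markov.

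For the \textbf{lower bound} $\Psi^{\mathcal R}_K \ge L(n,K) - O(\sqrt{K\log(n)^3})$, I would use the second moment method à la Paley--Zygmund. Set $\theta = L(n,K) - C\sqrt{K\log(n)^3}$ for a large constant $C$; the choice of $L(n,K)$ in \eqref{L(n,K)} (with the extra $1/K$ inside the log) is calibrated so that $\E[Z_\theta] \gtrsim K \to \infty$. One then computes $\E[Z_\theta^2] = \sum_{S,S'} \p(Z_S\ge\theta, Z_{S'}\ge\theta)$, decomposing over the overlap $\ell = |S\cap S'|$: the pair $(Z_S,Z_{S'})$ is a bivariate sum with covariance structure governed by $\binom{\ell}{2}$ shared edges. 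For small overlaps the joint probability factorizes approximately and contributes $(1+o(1))\E^2[Z_\theta]$; the delicate part is bounding the contribution of large overlaps $\ell$ close to $K$. Here I expect the multivariate tail bound (the multinomial analogue of Savage's Theorem, Lemma~\ref{Lemma 4.11}) to do the heavy lifting: one shows the sum over $\ell\ge 1$ of the off-diagonal terms is $o(\E^2[Z_\theta])$, which for $\alpha<1/2$ is where the moments actually match. Then Paley--Zygmund gives $\p(Z_\theta\ge 1) = \p(\Psi^{\mathcal R}_K\ge\theta)\to 1$.

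The \textbf{main obstacle} is the second-moment computation for the lower bound — specifically, obtaining the cancellation $\E[Z_\theta^2] = (1+o(1))\E^2[Z_\theta]$ rather than merely $e^{o(\log^2 n)}\E^2[Z_\theta]$ (the latter being all that survives for $\alpha\in[1/2,2/3)$). This requires a genuinely sharp bound on $\p(Z_S\ge\theta,Z_{S'}\ge\theta)$ for every overlap size $\ell$, capturing both the exponential rate and the polynomial pre-factor uniformly in $\ell$, and then summing a somewhat delicate series in $\ell$; the regime $\alpha<1/2$ (equivalently $K = o(\sqrt n)$, so $K^2/n \to 0$) is exactly what makes the large-overlap contribution negligible. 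A secondary technical point is making sure the additive loss in passing from the "$\E[Z_\theta]\to\infty$" threshold down to an actual high-probability statement is only $O(\sqrt{K\log(n)^3})$, which again comes down to the derivative bound on the exponent $\theta^2/(2\binom K2)$ near $\theta = L(n,K)$.
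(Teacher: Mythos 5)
Your plan matches the paper's proof essentially step for step: the upper bound is the first-moment bound with the Gaussian-type tail prefactor (which the paper obtains from Pinelis' comparison of Rademacher sums to Gaussians, Lemma~\ref{Lemma 4.10}, rather than from Lemma~\ref{Lemma 4.11}), and the lower bound is the second-moment/Paley--Zygmund argument with the overlap decomposition, the bivariate Hoeffding bound of Lemma~\ref{Lemma 4.11} for overlaps $\ell \gtrsim \log K$, and a separate ratio estimate (Corollary~\ref{Corollary 3.19}) for small overlaps, with no concentration boost needed since the moments match when $\alpha<1/2$. One small correction to your accounting: the $O(\sqrt{K\log(n)^3})$ loss in the lower bound comes not from the derivative of the exponent near $\theta=L(n,K)$ but from the quartic correction term $\gamma_n^4\binom{K}{2}/12$ in the sharp lower tail bound for Rademacher sums (Lemma~\ref{Lemma 4.12}), i.e., from the deviation of the Rademacher tail from its Gaussian approximation.
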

In particular, according to (\ref{eq:L-lower}) and applying (\ref{eq:VvsLUAbsolute}),
 our upper and lower bounds match up to order $O(\sqrt{K\log (n)^3})$. This illustrate that the leading asymptotics (after the common $O(K^2)$ term) is
 given by $V(n,K)$, which is asymptotically  $K^{3\over 2}\sqrt{\log \left({n/ K}\right)}$ according to (\ref{LNK}).

Next we turn to the case $\alpha\in \left[ 1/2, 2/3\right)$, in which our bounds are  not as tight, unfortunately.
\begin{theorem}\label{Theorem 3.2}
Suppose $\alpha \in \left[\frac{1}{2},\frac{2}{3}\right)$. For any non-negative sequence $a_n=\omega(1)$,
the following holds w.h.p. as $n\to\infty$ 
\begin{align}
 L(n,K) - O(K \log(n)) \le 
 \Psi^{\mathcal{R}}_K \le 
 U(n,K) +  a_n\sqrt{\frac{K}{\log(n)}} .\label{eq:L-lower-2}
\end{align}
The above bounds can be combined to obtain the following asymptotics:
\begin{align*}
\Psi^{\mathcal{R}}_K = V(n,K) + O(K \log(n)).
\end{align*}
\end{theorem}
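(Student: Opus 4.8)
The plan is to sandwich $\Psi^{\mathcal R}_K$ between a first-moment upper bound and a second-moment lower bound; in this regime the second-moment bound is too weak to conclude by itself, and it is boosted to a high-probability statement via Talagrand's concentration inequality. Write $m=\binom{K}{2}$, so that $Z_S=\sum_{\{i,j\}\subset S}R_{ij}$ is a sum of $m$ i.i.d.\ Rademacher variables, hence close to $\mathcal N(0,m)$, and for a threshold $\theta$ let $Z_\theta=\#\{S:|S|=K,\ Z_S\ge\theta\}$. Two analytic inputs are needed: a sharp two-sided tail estimate of the form $\p(Z_S\ge\theta)=\Theta\!\big(\tfrac{\sqrt m}{\theta}e^{-\theta^2/(2m)}\big)$ in the relevant window of $\theta$ (from Stirling / a local CLT, in the spirit of Lemma~\ref{Lemma 4.11} and Hoeffding's bound), and a bound on the joint tail of $(Z_S,Z_{S'})$ when $|S\cap S'|=\ell$, obtained from Savage's theorem (Theorem~\ref{Theorem 4.14}) together with its multinomial analogue Lemma~\ref{Lemma 4.11}; the elementary properties of $L,U,V$ used below are those recorded around \eqref{eq:VvsLUAbsolute} and proved in Lemma~\ref{Lemma 4.80}.

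\emph{Upper bound.} Take $\theta_+=U(n,K)+a_n\sqrt{K/\log n}$. The definition \eqref{U(n,K)} is calibrated so that, once the prefactor $\sqrt m/\theta\asymp\big(K\log(n/K)\big)^{-1/2}$ in the tail estimate is taken into account, $\binom{n}{K}\,\p(Z_S\ge U(n,K))=\Theta(1)$. Since $\theta_+^2\ge U(n,K)^2+2U(n,K)\,a_n\sqrt{K/\log n}$, while $U(n,K)/m=\Theta\!\big(\sqrt{\log(n/K)/K}\big)$ and $\log(n/K)=\Theta(\log n)$ for $\alpha\in[1/2,2/3)$, the shift costs an extra exponential factor $\exp(-\Omega(a_n))$. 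Hence $\E[Z_{\theta_+}]\to0$, and Markov's inequality gives $\p(\Psi^{\mathcal R}_K\ge\theta_+)\le\E[Z_{\theta_+}]\to0$, the upper inequality in \eqref{eq:L-lower-2}. (This step does not use $\alpha\ge1/2$ and coincides with the one for Theorem~\ref{Theorem 3.1}.)

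\emph{Lower bound (weak version).} Take $\theta_-=L(n,K)$. Calibration of \eqref{L(n,K)} gives $\E[Z_{\theta_-}]\asymp\sqrt{K/\log(n/K)}\to\infty$. Decomposing the second moment by overlap $\ell=|S\cap S'|$,
\[
\E[Z_{\theta_-}^2]=\sum_{\ell=0}^{K}\binom{n}{K}\binom{K}{\ell}\binom{n-K}{K-\ell}\,\p\!\big(Z_S\ge\theta_-,\ Z_{S'}\ge\theta_-\big),
\]
where the joint probability depends on $\ell$ only through the $\binom{\ell}{2}$ shared edges, which form the covariance of the two otherwise variance-$m$ sums. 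The terms $\ell\in\{0,1\}$ give $(1+o(1))\E[Z_{\theta_-}]^2$; the remaining terms are estimated with Savage's theorem and Lemma~\ref{Lemma 4.11}, and for $\alpha\in[1/2,2/3)$ they are no longer $o(\E[Z_{\theta_-}]^2)$, but one shows the whole sum is at most $\exp\!\big(O(\log^2 n)\big)\,\E[Z_{\theta_-}]^2$. Paley--Zygmund then yields $\p(\Psi^{\mathcal R}_K\ge\theta_-)\ge\E[Z_{\theta_-}]^2/\E[Z_{\theta_-}^2]\ge\exp(-c\log^2 n)$ for some constant $c>0$.

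\emph{Boosting, combining, and the main obstacle.} The map $x\mapsto\Psi^{\mathcal R}_K$ is a maximum of linear functionals $x\mapsto\langle\mathbf{1}_{E[S]},x\rangle$, each of Euclidean gradient norm $\sqrt{|E[S]|}=\sqrt m$, hence convex and $\sqrt m$-Lipschitz on $\{-1,1\}^{\binom{n}{2}}$, so Talagrand's convex--Lipschitz inequality gives $\p\!\big(|\Psi^{\mathcal R}_K-\mathrm{med}(\Psi^{\mathcal R}_K)|\ge t\big)\le4\exp(-t^2/(2K^2))$. Comparing this with $\p(\Psi^{\mathcal R}_K\ge\theta_-)\ge\exp(-c\log^2 n)$ and taking $t=(1+o(1))\sqrt{2c}\,K\log n$ forces $\mathrm{med}(\Psi^{\mathcal R}_K)\ge\theta_--t=L(n,K)-O(K\log n)$; one more application of the inequality with any $t'$ satisfying $K\sqrt{\log n}=o(t')$ and $t'=o(K\log n)$ gives $\Psi^{\mathcal R}_K\ge\mathrm{med}(\Psi^{\mathcal R}_K)-t'$ w.h.p., hence $\Psi^{\mathcal R}_K\ge L(n,K)-O(K\log n)$ w.h.p. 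Since $V(n,K)-L(n,K)$ and $V(n,K)-U(n,K)$ are $O(\sqrt{K\log n})=O(K\log n)$ by \eqref{eq:VvsLUAbsolute}, and $a_n\sqrt{K/\log n}=O(K\log n)$ for a fixed slowly growing $a_n$, the two inequalities combine to $\Psi^{\mathcal R}_K=V(n,K)+O(K\log n)$. The main obstacle is the second-moment estimate: controlling the contribution of the intermediate and large overlaps $\ell$ to $\E[Z_{\theta_-}^2]$ up to only a factor $\exp(O(\log^2 n))$ across the whole range $\alpha\in[1/2,2/3)$. This is where the sharp multivariate tail inequalities are indispensable and where the method loses the clean $(1+o(1))$ ratio available for $\alpha<1/2$; it is also the sole source of the $O(K\log n)$ error, which enters entirely through the Talagrand boosting step.
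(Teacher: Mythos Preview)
Your proposal is correct and follows essentially the same route as the paper: first-moment upper bound at $U(n,K)+a_n\sqrt{K/\log n}$, second-moment/Paley--Zygmund lower bound yielding only $\p(\Psi^{\mathcal R}_K\ge L(n,K))\ge\exp(-c\log^2 n)$ in this range of $\alpha$, and then boosting via Talagrand's concentration to turn this into a w.h.p.\ statement at the cost of $O(K\log n)$. Two cosmetic differences: the paper invokes Talagrand through the $f$-certifiability formulation (Theorem~\ref{Theorem 4.8}) and concentrates around the mean rather than the median, and for the Rademacher joint tail it uses only the Hoeffding-based Lemma~\ref{Lemma 4.11} (Savage's Theorem~\ref{Theorem 4.14} is reserved for the Gaussian disorder); also, the $\exp(\Theta(\log^2 n))$ factor in $\E[Z_{\theta_-}^2]/\E[Z_{\theta_-}]^2$ actually originates from the $\gamma_n^4\binom{K}{2}$ correction in the Rademacher tail (Lemma~\ref{Lemma 4.12}) rather than from the overlap sum itself, which Proposition~\ref{Proposition 6.1} bounds by a polynomial in $K$.
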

We see that the gap between the lower and upper bound is now $O(K\log n)$, as opposed to $O(\sqrt{K\log (n)^3})$ when $\alpha\in (0,1/2)$.
We conjecture that this can be improved: 
\begin{conjecture}\label{conjecture 3.3}
The lower bound  of Theorem~\ref{Theorem 3.2} can be improved to $o(K\log(n))$. I.e when $\alpha \in \left[\frac{1}{2},\frac{2}{3}\right)$  w.h.p. as $n\to\infty$, we have 
\begin{align*}
\Psi^{\mathcal{R}}_K \geq L(n,K) - o\left(K \log(n)\right).
\end{align*}
\end{conjecture}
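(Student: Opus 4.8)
The $O(K\log n)$ deficit in Theorem~\ref{Theorem 3.2} is purely a lower-bound phenomenon, and the plan is to locate it precisely and shave it. Write $Z_\theta$ for the number of $K$-subsets $S$ with $Z_S\ge\theta$. The lower bound of Theorem~\ref{Theorem 3.2} proceeds by (i) fixing $\theta$ slightly below $L(n,K)$ so that $\E[Z_\theta]$ is polynomially large in $n$; (ii) a second moment estimate which yields only $\E[Z_\theta^2]=O(e^{\Theta(\log(n)^2)}\E^2[Z_\theta])$, so that Paley--Zygmund gives $Z_\theta>0$ with probability merely $e^{-\Theta(\log(n)^2)}$; and (iii) Talagrand's concentration inequality, $\p(|\Psi^{\mathcal R}_K-M|\ge t)\le 4\exp(-t^2/(2K^2))$ around the median $M$, which boosts this to a w.h.p.\ statement at the cost of lowering the target by $t=\Theta(K\log n)$ --- precisely the loss appearing in the theorem. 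It therefore suffices to improve step (ii) from $e^{\Theta(\log(n)^2)}$ to $e^{o(\log(n)^2)}$; in fact even a polynomial ratio $n^{O(1)}$ would suffice, since it costs only $t=\Theta(K\sqrt{\log n})=o(K\log n)$ in step (iii).

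The $e^{\Theta(\log(n)^2)}$ factor arises as a compounding effect: the second moment is controlled across the $\Theta(\log n)$ scales involved in a flattening-type decomposition in the spirit of \cite{BBBV}, and at each scale the pair tail $\p(Z_S\ge\theta,\,Z_{S'}\ge\theta)$ is bounded through the Hoeffding-based Lemma~\ref{Lemma 4.11}, which overshoots the truth by a polynomial-in-$(\theta/\sigma)$ prefactor; since $\theta/\sigma=\Theta(\sqrt{K\log(n/K)})=n^{\Theta(1)}$, these per-scale losses multiply over the $\Theta(\log n)$ scales into $e^{\Theta(\log(n)^2)}$. The heart of the plan is to replace Lemma~\ref{Lemma 4.11} by a sharp multinomial large-deviation estimate carrying the correct polynomial prefactor. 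Writing $Z_S=W+X_1$ and $Z_{S'}=W+X_2$, where $W$ collects the $\binom{\ell}{2}$ shared edges, $\ell=|S\cap S'|$, and $W,X_1,X_2$ are independent Rademacher sums, conditioning on the shared block gives
\begin{align*}
\p(Z_S\ge\theta,\,Z_{S'}\ge\theta)=\sum_{w}\p(W=w)\,\p(X_1\ge\theta-w)\,\p(X_2\ge\theta-w),
\end{align*}
so the task reduces to a sharp univariate estimate for Rademacher-sum tails $\p(X\ge t)$ with the prefactor $\Theta(\sigma_X/t)$ rather than the bare Hoeffding exponential, uniformly for $t/\sigma_X$ in the relevant moderate-to-large deviation window. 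For $\pm1$ sums this should follow from a local central limit theorem with explicit prefactor together with a Berry--Esseen-type bound on the error at the scale $t/\sigma_X=\Theta(\sqrt{K\log(n/K)})$; it is the discrete counterpart of the role played by Savage's Theorem~\ref{Theorem 4.14} in the Gaussian argument. Inserting the resulting sharp bound into the per-scale accounting collapses the compounded loss to $e^{o(\log(n)^2)}$, and then steps (i)--(iii) go through with the target lowered by only $o(K\log n)$, giving $\Psi^{\mathcal R}_K\ge L(n,K)-o(K\log n)$ w.h.p.

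One tempting shortcut is to import the Gaussian lower bound --- which already achieves the conjectured accuracy --- through universality; but the Lindeberg-type transfer of this paper carries an error $O(K^{4/3}\log(n)^{7/6})$, of strictly larger order than $K\log n$ throughout $\alpha\in[1/2,2/3)$, so this route would itself require a genuinely sharper universality estimate for the functional $\Psi_K$, which seems at least as hard as the direct approach above.

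The step I expect to be the main obstacle is exactly this sharp multinomial tail estimate, uniformly over all overlaps $\ell\in[0,K]$. The deviation scale $t/\sigma=\Theta(\sqrt{K\log(n/K)})$ lies well outside the range where off-the-shelf normal approximations are accurate; the shared-block variance $\binom{\ell}{2}$ sweeps through every order of magnitude as $\ell$ ranges over $[0,K]$, so the conditioning-on-$W$ sum above must be handled carefully near both endpoints; and there is no exact discrete analogue of Savage's inequality to lean on --- which is precisely why the present proof settles for the Hoeffding-based Lemma~\ref{Lemma 4.11} and the resulting $O(K\log n)$ gap.
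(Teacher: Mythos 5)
First, a point of order: the statement you are proving is Conjecture~\ref{conjecture 3.3}, which the paper explicitly leaves \emph{open} (it is proved only in the Gaussian case, via Theorem~\ref{Theorem 3.6}); there is no proof in the paper to compare against, and your proposal does not close the gap either. Your overall strategy --- keep steps (i) and (iii), and improve the second-moment ratio from $e^{\Theta(\log(n)^2)}$ to $n^{O(1)}$ so that Talagrand costs only $t=\Theta(K\sqrt{\log n})=o(K\log n)$ --- is sound and is indeed the natural route. But the proposal has two genuine defects. The first is that your diagnosis of where the $e^{\Theta(\log(n)^2)}$ factor comes from is wrong: there is no flattening decomposition and no compounding over $\Theta(\log n)$ scales in the proof of Theorem~\ref{Theorem 3.2}. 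The factor enters exactly once, in (\ref{28}), as $\exp\left(\gamma_n^4\binom{K}{2}/6\right)=\exp(\Theta(\log(n)^2))$; it is the quartic Cram\'er correction in the sharp \emph{lower} bound (\ref{25}) for the marginal tail (Lemma~\ref{Lemma 4.12}), which has no counterpart in the Hoeffding-based \emph{upper} bound for the joint tail (Lemma~\ref{Lemma 4.11}). So the loss is in the exponent, not in polynomial prefactors, and a local-CLT prefactor of order $\sigma/t$ on the univariate tail would not by itself remove it; what is needed is a joint-tail upper bound whose quartic correction matches (twice) that of the marginal, so that the two cancel in the ratio --- this is precisely what Corollary~\ref{Corollary 3.19} and Lemma~\ref{Lemma 4.17} accomplish for $\alpha<1/2$, but only for overlaps $l=O(\log n)$, and extending that cancellation uniformly over all $2\le l\le K-1$ (where the shared-block variance $\binom{l}{2}$ is no longer negligible) is the actual open difficulty.

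The second defect is that the one step that would constitute the mathematical content --- the sharp bivariate Rademacher tail estimate, uniform in the overlap $\ell$, at deviation scale $\theta/\sigma=\Theta(\sqrt{K\log(n/K)})$ --- is explicitly deferred ("the step I expect to be the main obstacle"). As written, the proposal is a research plan pointing at the same obstruction the authors already identified, not a proof; until that estimate is established (with the correct cancellation of the $\gamma_n^4\binom{K}{2}$ terms between numerator and denominator for every $\ell$), the conjecture remains open.
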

As it turns out,  validating this conjecture would have ramification for the  Planted Clique Problem and the associated solution space geometry,
see Theorem~\ref{Theorem 3.14} below.

Turning to the remaining case $\alpha\in [2/3,1)$, our results are as follows.

\begin{theorem}\label{Theorem 3.4}
Suppose $\alpha \in \left[\frac{2}{3},1\right)$. For any non-negative sequence $a_n=\omega(1)$,
the following holds w.h.p. as $n\to\infty$:
\begin{align*}
 L(n,K) - O\left( \frac{K^{\frac{5}{2}}}{n} \sqrt{\log(n)} \right)
 \le
 \Psi^{\mathcal{R}}_K 
 \leq 
 U(n,K) +  a_n\sqrt{\frac{K}{\log(n)}}.
\end{align*}
\end{theorem}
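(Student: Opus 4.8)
The upper bound $\Psi^{\mathcal{R}}_K\le U(n,K)+a_n\sqrt{K/\log(n)}$ is a pure first‑moment statement and is the same one already established for $\alpha\in(0,1/2)$ and $\alpha\in[1/2,2/3)$: writing $Z_\theta=|\{S\subset[n]:|S|=K,\ Z_S\ge\theta\}|$, one has $\E[Z_\theta]=\binom{n}{K}\p(Z_S\ge\theta)$ with $Z_S$ a sum of $\binom{K}{2}$ i.i.d.\ Rademacher variables, and the sharp non‑asymptotic tail estimate of Lemma~\ref{Lemma 4.11} shows $\E[Z_\theta]\to 0$ once $\theta$ exceeds $U(n,K)$ by the stated slowly growing margin, so Markov's inequality finishes; nothing here uses $\alpha<2/3$. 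The content of the theorem is therefore the lower bound, and the plan is to follow the same two‑stage scheme as in Theorem~\ref{Theorem 3.2} — a second‑moment computation yielding an exponentially small but nonzero success probability, then a Talagrand‑concentration boost — but with a quantitatively weaker second‑moment bound, reflecting that $\alpha\ge 2/3$.

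\textbf{Second moment.} Fix $\theta=L(n,K)-O(\sqrt{K\log(n)^3})$, chosen (again via Lemma~\ref{Lemma 4.11}) so that $\E[Z_\theta]\to\infty$, and decompose
\begin{align*}
\E[Z_\theta^2]=\sum_{\ell=0}^{K}\binom{n}{K}\binom{K}{\ell}\binom{n-K}{K-\ell}\,\p\big(Z_S\ge\theta,\ Z_{S'}\ge\theta\big),
\end{align*}
the inner probability being evaluated for a pair with $|S\cap S'|=\ell$, so that $Z_S$ and $Z_{S'}$ share $\binom{\ell}{2}$ summands and have correlation $\rho_\ell=\binom{\ell}{2}/\binom{K}{2}\asymp(\ell/K)^2$. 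I would bound the joint tail by the multinomial Savage‑type inequality of Lemma~\ref{Lemma 4.11}, obtaining, up to a polynomial factor, a bound of the form $\p(Z_S\ge\theta)^2\exp\big(\Theta(\rho_\ell\,\theta^2/\binom{K}{2})\big)$. Since $\alpha\ge 2/3$ forces the typical overlap $K^2/n=n^{2\alpha-1}$ to diverge, the sum is dominated by the band $\ell\asymp K^2/n$, where the excess factor evaluates, using $\theta^2/\binom{K}{2}\asymp\log(n/K)$, to $\exp\big(\Theta(\tfrac{K^3}{n^2}\log(n/K))\big)$, while smaller overlaps contribute at most $\E[Z_\theta]^2$ and overlaps near $K$ only $O(\E[Z_\theta])$. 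Hence $\E[Z_\theta^2]=\exp\big(O(\tfrac{K^3}{n^2}\log n)\big)\E[Z_\theta]^2$, and the Paley-Zygmund inequality gives $\p(\Psi^{\mathcal{R}}_K\ge\theta)\ge\exp\big(-O(\tfrac{K^3}{n^2}\log n)\big)$.

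\textbf{Boosting.} $\Psi^{\mathcal{R}}_K=\max_{|S|=K}Z_S$ is a maximum of linear forms in the bounded i.i.d.\ disorder, each with at most $\binom{K}{2}$ unit coefficients, hence a convex $O(K)$‑Lipschitz function; Talagrand's concentration inequality then gives $\p(|\Psi^{\mathcal{R}}_K-m|\ge t)\le 4\exp(-t^2/(2K^2))$ for its median $m$. Comparing this with the weak lower bound of the previous step forces $m\ge\theta-O\big(\sqrt{K^2\cdot\tfrac{K^3}{n^2}\log n}\big)=\theta-O\big(\tfrac{K^{5/2}}{n}\sqrt{\log n}\big)$, and a second application of the concentration bound yields $\Psi^{\mathcal{R}}_K\ge m-O(\tfrac{K^{5/2}}{n}\sqrt{\log n})$ w.h.p. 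For $\alpha\ge 2/3$ one has $\sqrt{K\log(n)^3}=o\big(\tfrac{K^{5/2}}{n}\sqrt{\log n}\big)$, so the gap from $\theta$ to $L(n,K)$ is absorbed and we obtain $\Psi^{\mathcal{R}}_K\ge L(n,K)-O(\tfrac{K^{5/2}}{n}\sqrt{\log n})$ w.h.p.; since moreover $\tfrac{K^{5/2}}{n}\sqrt{\log n}\big/K^{3/2}=n^{\alpha-1}\to 0$, this error term is genuinely $o(L(n,K))$, so the bound is informative.

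\textbf{Main obstacle.} The delicate step is the second‑moment estimate: one must track the overlap sum across \emph{all} ranges of $\ell$ — the small‑overlap part (bounded by $\E[Z_\theta]^2$), the critical band $\ell\asymp K^2/n$ which must produce exactly the exponent $O(\tfrac{K^3}{n^2}\log n)$ and no more, and $\ell$ close to $K$ — and for this the sharp non‑asymptotic joint‑tail inequality of Lemma~\ref{Lemma 4.11} is essential, since a cruder (e.g.\ purely Gaussian‑approximation) bound would overshoot a target that is only a lower‑order correction in the exponent. A secondary point requiring care is fixing the Talagrand Lipschitz constant of $\Psi^{\mathcal{R}}_K$ at $O(K)$ (equivalently $O(\sqrt{\binom{K}{2}})$), and, at the left endpoint $\alpha=2/3$, merging the above with the $\exp(\Theta(\log(n)^2))$ second‑moment estimate of Theorem~\ref{Theorem 3.2} so that the two ranges agree.
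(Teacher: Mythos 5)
Your proposal follows essentially the same route as the paper: first‑moment upper bound, a second‑moment/overlap computation (via the joint‑tail bound of Lemma~\ref{Lemma 4.11}) giving $\E[Z_\theta^2]=\exp\bigl(O(\tfrac{K^3}{n^2}\log n)\bigr)\E[Z_\theta]^2$ and hence a success probability $\exp\bigl(-O(\tfrac{K^3}{n^2}\log n)\bigr)$ by Paley--Zygmund, then a Talagrand boost with $t^*=\Theta\bigl(\tfrac{K^{5/2}}{n}\sqrt{\log n}\bigr)$ — exactly the paper's argument (the paper phrases the concentration step via $f$-certifiability rather than convex Lipschitzness, but both yield the same $\exp(-\Theta(t^2/K^2))$ tail). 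The proposal is correct.
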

In this case the gap between the upper and lower bounds is an even cruder quantity $(K^{5\over 2}/n)\sqrt{\log n}$. When $\alpha>2/3$ this is 
$\omega(K\log n)$, that is wider than the gap in the case $\alpha<2/3$. To verify this, note that   $K^{5\over 2}/(nK)=n^{\Theta(1)}=\omega(\sqrt{\log n})$.

All of the cases above can be summarized into one asymptotics provided below, with understanding the terms under $O(\cdot)$ are refined
in various ways in  cases $\alpha\in (0,1/2)$ vs $\alpha\in [1/2,2/3)$ vs $\alpha\in [2/3,1)$.

\begin{corollary}\label{Corollary 3.5}
For every $\alpha \in (0,1)$ w.h.p. as $n\to\infty$
\begin{align*}
\Psi^{ \Bern\left(\frac{1}{2} \right)}_K = \frac{K^2}{4} +  \frac{K^{\frac{3}{2}} \sqrt{\log(\frac{n}{K})}}{2} + 
o\left( K^{\frac{3}{2}} \sqrt{\log(n)} \right).
\end{align*}
\end{corollary}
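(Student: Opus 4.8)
The plan is to obtain Corollary~\ref{Corollary 3.5} directly from Theorems~\ref{Theorem 3.1}, \ref{Theorem 3.2} and~\ref{Theorem 3.4}, which between them cover the whole range $\alpha\in(0,1)$, together with the elementary identity $\Psi^{\Bern(1/2)}_K=\tfrac12\binom{K}{2}+\tfrac12\Psi^{\mathcal{R}}_K$ and the asymptotics for $V(n,K)$ recorded in~(\ref{LNK}) (proved in Lemma~\ref{Lemma 4.80}). No genuinely new argument is needed; the only work is to check that every error term produced along the way is $o(K^{3/2}\sqrt{\log n})$, which is exactly where the case split on $\alpha$ enters.

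First I would fix a convenient growing sequence, say $a_n=\log n$, so that each of the three theorems yields, w.h.p.,
\begin{align*}
L(n,K)-R_n^{-}\;\le\;\Psi^{\mathcal{R}}_K\;\le\;U(n,K)+\sqrt{K\log n},
\end{align*}
where I used $a_n\sqrt{K/\log n}=\sqrt{K\log n}$, and where $R_n^{-}$ is $O\!\left(\sqrt{K\log(n)^3}\right)$, $O(K\log n)$, or $O\!\left((K^{5/2}/n)\sqrt{\log n}\right)$ according to whether $\alpha\in(0,1/2)$, $\alpha\in[1/2,2/3)$, or $\alpha\in[2/3,1)$. Using~(\ref{eq:VvsLUAbsolute}) to replace $L(n,K)$ and $U(n,K)$ by $V(n,K)$ at the cost of an extra $O(\sqrt{K\log n})$, this becomes $\Psi^{\mathcal{R}}_K=V(n,K)+O(R_n^{-}+\sqrt{K\log n})$ w.h.p. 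I would then verify, using $K=n^\alpha$ with $\alpha$ a fixed constant in $(0,1)$, that each candidate error term is $o(K^{3/2}\sqrt{\log n})$: dividing $\sqrt{K\log(n)^3}$, $K\log n$, $(K^{5/2}/n)\sqrt{\log n}$ and $\sqrt{K\log n}$ by $K^{3/2}\sqrt{\log n}$ gives $\log(n)/K$, $\sqrt{\log n}/\sqrt{K}$, $K/n$ and $1/K$ respectively, all tending to $0$. Hence $\Psi^{\mathcal{R}}_K=V(n,K)+o(K^{3/2}\sqrt{\log n})$ w.h.p.\ throughout $\alpha\in(0,1)$.

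To finish, I would plug this into $\Psi^{\Bern(1/2)}_K=\tfrac12\binom{K}{2}+\tfrac12\Psi^{\mathcal{R}}_K$. Since $\tfrac12\binom{K}{2}=\tfrac{K^2}{4}-\tfrac{K}{4}=\tfrac{K^2}{4}+o(K^{3/2}\sqrt{\log n})$, and since~(\ref{LNK}) gives $V(n,K)=K^{3/2}\sqrt{\log(n/K)}\,(1+o(1))$ while $\log(n/K)=(1-\alpha)\log n=\Theta(\log n)$ forces this relative error to become the absolute error $V(n,K)-K^{3/2}\sqrt{\log(n/K)}=o(K^{3/2}\sqrt{\log n})$, collecting the three contributions gives
\begin{align*}
\Psi^{\Bern(1/2)}_K=\frac{K^2}{4}+\frac{K^{3/2}\sqrt{\log(n/K)}}{2}+o\!\left(K^{3/2}\sqrt{\log n}\right),
\end{align*}
which is the assertion of the corollary.

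The only mildly delicate point — the closest thing here to an obstacle — is confirming that $V(n,K)$ agrees with the clean closed form $K^{3/2}\sqrt{\log(n/K)}$ up to $o(K^{3/2}\sqrt{\log n})$. If one prefers not to quote Lemma~\ref{Lemma 4.80} in its relative-error form, this is checked directly via a Stirling expansion $\log\binom{n}{K}=K\log(n/K)+K+O(K^2/n)+O(\log n)$ together with a first-order expansion of the square root, which shows the leading correction to $V(n,K)$ is of order $K^{3/2}/\sqrt{\log n}$, comfortably inside the allowed error. Everything else is routine bookkeeping.
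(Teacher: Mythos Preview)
Your proposal is correct and follows essentially the same route as the paper: combine the three Rademacher theorems, use that $L(n,K)\sim U(n,K)\sim K^{3/2}\sqrt{\log(n/K)}$, and pass to the Bernoulli case via $\Psi^{\Bern(1/2)}_K=\tfrac12\binom{K}{2}+\tfrac12\Psi^{\mathcal{R}}_K$. The paper's own proof is simply a terser version of what you wrote; your explicit verification that each of the four error terms is $o(K^{3/2}\sqrt{\log n})$ and your careful handling of $V(n,K)-K^{3/2}\sqrt{\log(n/K)}$ are details the paper leaves implicit.
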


\subsection{Gaussian Disorder}\label{Subsection 3.2}
We now turn to the case when the distribution of weights (disorder) $Z_{ij}$ is Gaussian.
\begin{theorem}\label{Theorem 3.6}
Suppose $\alpha \in \left(0, \frac{1}{2}\right)\cup \left( \frac{1}{2}, \frac{2}{3}\right)$.  For any non-negative sequence $a_n=\omega(1)$,
the following holds w.h.p. as $n\to\infty$:
\begin{align*}
U(n,K) - a_n \sqrt{ \frac{K}{\log(n)}}\le 
\Psi^{\mathcal{N}}_K 
\le
U(n,K) + a_n \sqrt{ \frac{K}{\log(n)}} . 
\end{align*}
\end{theorem}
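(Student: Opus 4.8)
The plan is to prove the two bounds separately, both anchored at $U(n,K)$: the upper bound by a first-moment union argument with the sharp Gaussian tail, and the lower bound by the second moment method, which in the Gaussian regime $\alpha<2/3$ is tight enough to give $\E[Z_\theta^2]=(1+o(1))\E^2[Z_\theta]$ and hence the matching constant. Throughout I use the asymptotics of $U,V,L$ from Lemma~\ref{Lemma 4.80}, in particular $\sqrt{\binom K2}/U(n,K)=\Theta\!\big(1/\sqrt{K\log(n/K)}\big)$ and $U(n,K)^2/(2\binom K2)=\log\!\big(\binom nK/\sqrt{K\log(n/K)}\big)$, together with $\log(n/K)=(1-\alpha)\log n$; and since a smaller $a_n$ is a stronger statement I may assume WLOG that $a_n$ grows arbitrarily slowly. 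For the upper bound, note that for fixed $S$ with $|S|=K$ one has $Z_S\stackrel{d}{=}\mathcal N(0,\binom K2)$, so the sharp Gaussian tail $\p(\mathcal N(0,\sigma^2)\ge t)\le \frac{\sigma}{t\sqrt{2\pi}}e^{-t^2/(2\sigma^2)}$ gives $\p(Z_S\ge\theta)=\Theta\!\big(\frac{1}{\sqrt{K\log(n/K)}}\big)e^{-\theta^2/(2\binom K2)}$ for $\theta=\Theta(U(n,K))$. Taking $\theta=U(n,K)+a_n\sqrt{K/\log n}$ and summing over the $\binom nK$ sets $S$, the polynomial prefactor $\Theta(1/\sqrt{K\log(n/K)})$ matches exactly the factor $\sqrt{K\log(n/K)}$ appearing inside $U(n,K)$, so that $\binom nK\,\Theta(1/\sqrt{K\log(n/K)})\,e^{-U(n,K)^2/(2\binom K2)}=\Theta(1)$, while the cross term $U(n,K)\,a_n\sqrt{K/\log n}/\binom K2=2a_n\sqrt{1-\alpha}\,(1+o(1))\to\infty$ drives the union bound to $0$. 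Hence $\Psi^{\mathcal N}_K\le U(n,K)+a_n\sqrt{K/\log n}$ w.h.p.; this is the argument of the upper bounds in Theorems~\ref{Theorem 3.1}--\ref{Theorem 3.4}, only cleaner since the exact Gaussian tail replaces the Hoeffding/multinomial estimate of Lemma~\ref{Lemma 4.11}.

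For the lower bound, set $\theta=U(n,K)-a_n\sqrt{K/\log n}$ and $Z_\theta=\#\{S:|S|=K,\ Z_S\ge\theta\}$. The matching lower Gaussian tail gives $\E[Z_\theta]=\Theta(1)\,e^{2a_n\sqrt{1-\alpha}\,(1+o(1))}\to\infty$. For the second moment, split by the overlap $\ell=|S\cap S'|$: $\E[Z_\theta^2]=\sum_{\ell=0}^{K}N_\ell P_\ell$ with $N_\ell=\binom nK\binom K\ell\binom{n-K}{K-\ell}$ and $P_\ell=\p(Z_S\ge\theta,\,Z_{S'}\ge\theta)$, where $(Z_S,Z_{S'})$ is bivariate normal with correlation $\rho_\ell=\binom\ell2/\binom K2$. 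Dividing by $\E^2[Z_\theta]=\binom nK^2\p(Z_S\ge\theta)^2$ gives $\E[Z_\theta^2]/\E^2[Z_\theta]=\sum_\ell H_\ell Q_\ell$, where $H_\ell=\binom K\ell\binom{n-K}{K-\ell}/\binom nK$ is the hypergeometric weight ($\sum_\ell H_\ell=1$, mean $K^2/n$, variance $\le K^2/n$) and $Q_\ell=P_\ell/\p(Z_S\ge\theta)^2\ge1$, with $Q_0=Q_1=1$ because $\ell\le1$ means no shared edges, hence independence. A careful application of Savage's Theorem~\ref{Theorem 4.14} to the $2\times2$ covariance matrix gives $Q_\ell\le 1+O\!\big(\frac{\theta^2}{\binom K2}\rho_\ell\big)=1+O\!\big(\frac{\ell^2\log(n/K)}{K}\big)$ uniformly over $\ell$ in the bulk of $H$; summing against $H_\ell$, this contributes $O\!\big(\frac{\log(n/K)}{K}((K^2/n)^2+K^2/n)\big)=O\!\big(\frac{K^3\log(n/K)}{n^2}+\frac{K\log(n/K)}{n}\big)=o(1)$, where the first term uses $\alpha<2/3$ and the second uses $\alpha<1$. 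The far tail $\ell\gg\sqrt{K/\log(n/K)}$ is handled with the trivial bound $P_\ell\le\p(Z_S\ge\theta)$ and Stirling estimates on $N_\ell$: there $H_\ell Q_\ell$ is super-exponentially small except for the diagonal term $\ell=K$, whose contribution is $N_KP_K/\E^2[Z_\theta]=1/\E[Z_\theta]=o(1)$. Thus $\E[Z_\theta^2]=(1+o(1))\E^2[Z_\theta]$, and the Paley--Zygmund inequality yields $\p(Z_\theta>0)\ge\E^2[Z_\theta]/\E[Z_\theta^2]=1-o(1)$, i.e.\ $\Psi^{\mathcal N}_K\ge\theta$ w.h.p.; the Borell--TIS inequality with variance proxy $\max_S\mathrm{Var}(Z_S)=\binom K2$ may additionally be invoked to confirm concentration around the median and the mean.

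The main obstacle is the uniform control of $\sum_\ell N_\ell P_\ell$ over the entire range $0\le\ell\le K$: one must play the sharp non-asymptotic bivariate Gaussian tail of Savage's theorem against precise Stirling-type estimates for $N_\ell$, both inside the hypergeometric bulk --- where the correction has to be shown to be $o(1)$, which is exactly where the hypothesis $\alpha<2/3$ is consumed, through $K^2/n=o\!\big(\sqrt{K/\log(n/K)}\big)$, equivalently $K^3\log(n/K)/n^2\to0$ --- and in the large-overlap regime $\rho_\ell\to1$, where the Savage bound degenerates and one must fall back on $P_\ell\le\p(Z_S\ge\theta)$ together with the smallness of $N_\ell$. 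The exclusion of $\alpha=1/2$ is genuine: there the typical overlap $K^2/n$ is $\Theta(1)$, the bulk correction is only $O(1)$ rather than $o(1)$, and a separate argument yielding the weaker error term $a_nK$ is needed instead.
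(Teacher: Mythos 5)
Your overall architecture is the paper's: the upper bound is the same first-moment computation with the exact Gaussian tail (Section 10 of the paper simply re-runs the Rademacher first-moment argument with the Gaussian density), and the lower bound is the same second-moment argument, normalized by $\E^2[Z_\theta]$, with the bivariate tail controlled by Savage's Theorem~\ref{Theorem 4.14} via Corollary~\ref{Corollary 4.15}, the overlap-$0,1$ terms isolated, and the hypothesis $\alpha<2/3$ consumed exactly where you say, through $K^3\log(n)/n^2\to 0$. The paper also does not need Borell--TIS here: for $\alpha\in(0,1/2)\cup(1/2,2/3)$ the ratio $\E[Z_\theta^2]/\E^2[Z_\theta]$ really is $1+o(1)$ and Paley--Zygmund alone gives the w.h.p.\ statement (Borell--TIS enters only for Theorems~\ref{Theorem 3.7} and~\ref{Theorem 3.8}). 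Your diagnosis of why $\alpha=1/2$ is excluded is also essentially the paper's.

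The one step in your plan that would fail as stated is the treatment of the large-overlap terms by ``the trivial bound $P_\ell\le\p(Z_S\ge\theta)$ together with the smallness of $N_\ell$.'' With that bound, $H_\ell Q_\ell\le H_\ell/\p(Z_S\ge\theta)=\binom{K}{\ell}\binom{n-K}{K-\ell}/w_n$ where $w_n=\binom{n}{K}\p(Z_S\ge\theta)$ is chosen to grow arbitrarily slowly; already at $\ell=K-1$ this is $K(n-K)/w_n\to\infty$, and for intermediate $\ell$ (say $\ell=\epsilon K$) it is astronomically large. The true contribution of these terms is tiny, but seeing this requires keeping the correlated Gaussian tail $\exp\bigl(-\theta^2\binom{K}{2}/(\binom{K}{2}+\binom{\ell}{2})\bigr)$ from Savage's theorem for the \emph{entire} range $2\le\ell\le K-1$ (Corollary~\ref{Corollary 4.15} does not degenerate there; only its polynomial prefactor does) and balancing it against the combinatorial entropy of $N_\ell$. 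This is precisely the content of Proposition~\ref{Proposition 6.1}: writing $x=\ell/K$, one shows the exponent $Kg_n(x)$ of $N_\ell P_\ell/\E^2[Z_\theta]$ is negative and bounded away from $0$ on the whole interval, via separate monotonicity arguments on $(0,x_1)$, $[x_1,x_2]$ and $(x_2,1)$. Your bulk estimate $Q_\ell\le 1+O(\ell^2\log(n/K)/K)$ is also only valid for $\ell=O(\sqrt{K/\log(n/K)})$, so the intermediate window between the hypergeometric bulk and the diagonal is not covered by either of your two regimes; that window is where the real work of the paper's proof lives.
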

The bounds above might appear similar to those in 
 (\ref{eq:L-lower}) for the case of the Rademacher disorder
and $\alpha\in (0,1/2)$. They are stronger however, since the lower bound term is also in terms of $U(n,K)$ and not $L(n,K)$, as was the 
case for the Rademacher disorder.
As a result the gap is of order $a_n \sqrt{ \frac{K}{\log(n)}} $ as opposed to $O\left(\sqrt{K\log (n)^3}\right)$.

The regime above does excludes the boundary case $\alpha=1/2$. We obtain upper and lower bounds in this case as well, but unfortunately,
our lower bound is not as tight as in the other case.  
\begin{theorem}\label{Theorem 3.8}
Suppose $\alpha= 1/2$. For any non-negative sequence $a_n=\omega(1)$,
the following holds w.h.p. as $n\to\infty$:
\begin{align*}
 U(n,K) -K a_n 
 \le 
\Psi^{\mathcal{N}}_K 
\leq 
U(n,K) + a_n \sqrt{ \frac{K}{\log(n)}} .
\end{align*}
\end{theorem}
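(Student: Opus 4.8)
The plan is to establish the upper and lower bounds by the two methods indicated in the introduction. The upper bound is a first-moment estimate; the lower bound combines a second moment computation — which at $\alpha=1/2$ is only strong enough to exhibit a near-optimal $K$-subset with probability bounded away from zero — with a boost to high probability via the Borell--TIS Gaussian concentration inequality, and this boost is what accounts for the comparatively crude $Ka_n$ term. I work with the centered, rescaled disorder, so that $\Psi^{\mathcal N}_K=\max_{|S|=K}Z_S$ with $Z_S=\sum_{ij\in E[S]}Z_{ij}$ and $Z_{ij}$ i.i.d.\ $\mathcal N(0,1)$; thus $Z_S\sim\mathcal N\!\big(0,\binom K2\big)$, and for two $K$-subsets $S,T$ with $|S\cap T|=\ell$ the pair $(Z_S,Z_T)$ is bivariate normal with correlation $\rho_\ell=\binom\ell2/\binom K2$. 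Note that $\alpha=1/2$ means $K=\sqrt n$, so $K^2/n\asymp1$ — i.e.\ two independent uniform $K$-subsets overlap in $\Theta(1)$ nodes in expectation — which is why this case is borderline; the asymptotics of Lemma~\ref{Lemma 4.80} will be used throughout.

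\emph{Upper bound.} Let $Z_\theta$ count the $K$-subsets with $Z_S\ge\theta$. The Mills bound $\p(\mathcal N(0,1)>x)\le\frac1{x\sqrt{2\pi}}e^{-x^2/2}$ applied with $x=\theta/\sqrt{\binom K2}$ gives $\E[Z_\theta]\le\binom nK\frac{\sqrt{\binom K2}}{\theta\sqrt{2\pi}}e^{-\theta^2/(2\binom K2)}$; the prefactor $1/x\asymp(K\log(n/K))^{-1/2}$ is exactly what makes $U(n,K)$, rather than $V(n,K)$, the first-moment threshold. Substituting $\theta=U(n,K)+a_n\sqrt{K/\log n}$, expanding $\theta^2$, and invoking \eqref{U(n,K)} and Lemma~\ref{Lemma 4.80}, the principal terms cancel and an extra $-\Theta(a_n)$ remains in the exponent, so $\E[Z_\theta]\to0$; Markov's inequality gives $\p(\Psi^{\mathcal N}_K\ge\theta)\to0$. (This is the same argument used for the upper bounds in Sections~\ref{Subsection 3.1}--\ref{Subsection 3.2}.)

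\emph{Lower bound, weak form.} Choose $\theta=U(n,K)-C_n$, where $C_n=o(K)$ is a reduction to be fixed so that $\E[Z_\theta]$ is sufficiently large (it will be a polynomial in $n$); by the computation of the upper bound together with the matching lower Mills bound, such a $C_n$ exists. We estimate $\E[Z_\theta^2]=\sum_{\ell=0}^{K}N_\ell p_\ell$, with $N_\ell=\binom nK\binom K\ell\binom{n-K}{K-\ell}$ the number of ordered pairs of $K$-subsets meeting in $\ell$ nodes and $p_\ell=\p(Z_S\ge\theta,Z_T\ge\theta)$ for such a pair. The crucial ingredient is a sharp upper bound on $p_\ell$ in terms of $\rho_\ell$, furnished by Savage's Theorem~\ref{Theorem 4.14} (supplemented by a direct Gaussian integral computation in the regime where $\ell$ is close to $K$, i.e.\ where the correlation is nearly $1$). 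The bounded-$\ell$ terms satisfy $N_\ell/N_0\asymp\frac1{\ell!}(K^2/n)^\ell$ and $p_\ell/p_0^2=1+o(1)$, and here is where $\alpha=1/2$ enters: because $K^2/n\asymp1$, these terms sum (over $\ell$) to a \emph{constant} multiple of $\E[Z_\theta]^2$ — in contrast to the case $\alpha<1/2$, where the sum is $(1+o(1))\E[Z_\theta]^2$. The terms with $\ell=\Theta(K)$ are exponentially (in $K\log(n/K)$) suppressed, the correlation-induced gain in $p_\ell$ being outweighed by the entropic cost of constraining the overlap; the terms with $\ell$ within $o(K)$ of $K$ are the delicate ones and are precisely what dictate how large the reduction $C_n$ must be taken. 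The outcome is $\E[Z_\theta^2]=O(\E[Z_\theta]^2)$, whence Paley--Zygmund gives $\p(\Psi^{\mathcal N}_K\ge\theta)\ge\E[Z_\theta]^2/\E[Z_\theta^2]\ge c>0$.

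\emph{Boost to high probability.} Writing $\Psi^{\mathcal N}_K=\max_{|S|=K}\langle\chi_S,(Z_{ij})\rangle$ with $\chi_S\in\{0,1\}^{\binom n2}$, $\|\chi_S\|_2=\sqrt{\binom K2}$, the functional is $\sqrt{\binom K2}$-Lipschitz in the i.i.d.\ standard Gaussian disorder, so the Borell--TIS inequality gives $\p\big(|\Psi^{\mathcal N}_K-\mathrm{med}(\Psi^{\mathcal N}_K)|\ge t\big)\le 2e^{-t^2/(2\binom K2)}\le 2e^{-t^2/K^2}$ for all $t\ge0$. Choosing $t$ a large constant multiple of $K$ makes the right-hand side smaller than $c$, which together with $\p(\Psi^{\mathcal N}_K\ge\theta)\ge c$ forces $\mathrm{med}(\Psi^{\mathcal N}_K)\ge\theta-C_cK$ for a constant $C_c$; then choosing $t=Ka_n$ gives $\p\big(\Psi^{\mathcal N}_K\ge\mathrm{med}(\Psi^{\mathcal N}_K)-Ka_n\big)\ge1-2e^{-a_n^2}\to1$. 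Hence $\p\big(\Psi^{\mathcal N}_K\ge U(n,K)-C_n-C_cK-Ka_n\big)\to1$, and since $C_n+C_cK=O(K)$ while $a_n=\omega(1)$ is arbitrary, a routine bookkeeping step (run the argument with $a_n/3$ in place of $a_n$) yields $\Psi^{\mathcal N}_K\ge U(n,K)-Ka_n$ w.h.p., as claimed. I expect the second-moment estimate — specifically the handling of overlaps close to $K$ and the attendant choice of $C_n$ — to be the main obstacle; the remaining ingredients (Gaussian tail estimates, Savage's theorem, Paley--Zygmund and Borell--TIS) are standard.
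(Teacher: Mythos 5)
Your proposal is correct and follows essentially the same route as the paper: a first-moment/Mills-ratio upper bound at threshold $U(n,K)+a_n\sqrt{K/\log n}$, a second-moment computation that at $\alpha=1/2$ only yields $\E[Z_\theta^2]=O(\E[Z_\theta]^2)$ (because $K^2/n=\Theta(1)$ makes both the small-overlap combinatorial factor and the $\bar B$ sum contribute $\Theta(1)$ rather than $1+o(1)$), hence Paley--Zygmund gives only constant success probability, and finally a Borell--TIS boost that costs the $O(K\,\omega(1))$ term. The only cosmetic difference is that you center the concentration step at the median while the paper centers it at the mean.
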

Finally, we treat the remaining case $\alpha\in [2/3,1)$. 
\begin{theorem}\label{Theorem 3.7}
Suppose $\alpha \in \left[\frac{2}{3}, 1\right)$. For any non-negative sequence $a_n=\omega(1)$,
the following holds w.h.p. as $n\to\infty$:
\begin{align*}
U(n,K) -O\left( \frac{K^{\frac{5}{2}}}{n} \sqrt{\log(n)}\right)
\le 
\Psi^{\mathcal{N}}_K 
\leq 
U(n,K) + a_n \sqrt{ \frac{K}{\log(\frac{n}{K})}} .
\end{align*}
\end{theorem}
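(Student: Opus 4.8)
The plan is to establish the two bounds separately. The upper bound, which coincides with the upper bounds of Theorems~\ref{Theorem 3.6} and~\ref{Theorem 3.8}, follows from a first moment argument: fix $\theta = U(n,K) + a_n\sqrt{K/\log(n/K)}$ and let $Z_\theta$ count the $K$-sets $S$ with $Z_S\ge\theta$. Since $Z_S\sim\mathcal{N}(0,\binom{K}{2})$ we have $\E[Z_\theta] = \binom{n}{K}\,\p\big(\mathcal{N}(0,1)\ge\theta/\sqrt{\binom{K}{2}}\big)$, and using the Mills-ratio asymptotics for the Gaussian tail together with \eqref{eq:Vnk}--\eqref{LNK}, the definition of $U(n,K)$ makes $\E[Z_{U(n,K)}] = \Theta(1)$ while the extra summand $a_n\sqrt{K/\log(n/K)}$ multiplies the expectation by $e^{-(2+o(1))a_n}\to 0$; hence $\E[Z_\theta]\to 0$ and $\p(\Psi^{\mathcal{N}}_K\ge\theta)\le\E[Z_\theta]\to 0$ by Markov's inequality.

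For the lower bound I would take $\theta = U(n,K) - \Delta_n$ with $\Delta_n$ any sequence obeying $\sqrt{K/\log(n/K)}\ll\Delta_n\ll K^{5/2}\sqrt{\log n}/n$ — which exists because $\sqrt{K/\log(n/K)} = o(K^{5/2}\sqrt{\log n}/n)$ for $\alpha\ge 2/3$ — so that $\E[Z_\theta]\to\infty$. The core step is the second moment bound
\[
\E[Z_\theta^2]\ \le\ \exp\!\Big(O\big(\tfrac{K^3}{n^2}\log n\big)\Big)\,\E[Z_\theta]^2 .
\]
To prove it one restricts $Z_\theta$ to a suitable ``flattened'' subfamily of dense $K$-sets (the same type of restriction already required for the plain second moment when $\alpha<1/2$, since otherwise high-overlap pairs spoil the ratio) and decomposes $\E[Z_\theta^2]$ over the overlap $\ell = |S\cap T|$: there are $\binom{n}{K}\binom{K}{\ell}\binom{n-K}{K-\ell}$ ordered pairs with overlap $\ell$, and $(Z_S,Z_T)$ is a centered bivariate Gaussian with variance $\binom{K}{2}$ and covariance $\binom{\ell}{2}$, hence correlation $\rho_\ell = \binom{\ell}{2}/\binom{K}{2}$. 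Bounding the two-point tail $\p(Z_S\ge\theta,\,Z_T\ge\theta)$ by Savage's theorem (Theorem~\ref{Theorem 4.14}), the overlap-$\ell$ block contributes, relative to $\E[Z_\theta]^2$ and up to polynomial factors, something of order $w_\ell\exp\!\big(a^2\rho_\ell/(1+\rho_\ell)\big)$, where $a=\theta/\sqrt{\binom{K}{2}}$ (so $a^2\asymp K\log(n/K)$) and $w_\ell = \binom{K}{\ell}\binom{n-K}{K-\ell}/\binom{n}{K}$ is the hypergeometric overlap weight, concentrated near $\ell\approx K^2/n$. At the bulk $\ell\approx K^2/n$ this is $a^2\rho_\ell\asymp K\log(n/K)\,(K/n)^2 = K^3\log(n/K)/n^2$, the source of the claimed factor; the moderate overlaps are dominated by the bulk term once one balances the hypergeometric decay of $w_\ell$ against the growth of the correlation correction, and the large overlaps $\ell$ close to $K$ — the genuinely dangerous ones — are tamed by the flattening restriction.

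Given this estimate, Paley-Zygmund gives $\p(\Psi^{\mathcal{N}}_K\ge\theta)\ge\E[Z_\theta]^2/\E[Z_\theta^2]\ge\exp(-O(K^3 n^{-2}\log n))=:p_n$. Now $\Psi^{\mathcal{N}}_K = \sup_{|S|=K}Z_S$ is the supremum of a finite centered Gaussian process with $\sup_S\mathrm{Var}(Z_S) = \binom{K}{2}\le K^2/2$, so the Borell-TIS inequality yields, with $m$ the median of $\Psi^{\mathcal{N}}_K$, both $\p(\Psi^{\mathcal{N}}_K\ge m+t)\le e^{-t^2/K^2}$ and $\p(\Psi^{\mathcal{N}}_K\le m-t)\le e^{-t^2/K^2}$. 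The first forces $m\ge\theta - O\big(K\sqrt{\log(1/p_n)}\big) = \theta - O\big(K^{5/2}\sqrt{\log n}/n\big)$, for otherwise $\p(\Psi^{\mathcal{N}}_K\ge\theta)$ would fall below $p_n$; the second, with $t = K\sqrt{2\log\log n}$, gives $\Psi^{\mathcal{N}}_K\ge m - K\sqrt{2\log\log n}$ with probability $\to 1$. Since both $K\sqrt{\log\log n}$ and $\Delta_n$ are $o(K^{5/2}\sqrt{\log n}/n)$ when $\alpha\ge 2/3$, combining these gives $\Psi^{\mathcal{N}}_K\ge U(n,K) - O(K^{5/2}\sqrt{\log n}/n)$ w.h.p., as required.

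The main obstacle is the second moment bound. Savage's theorem supplies the correct two-point Gaussian tail, but extracting the factor $\exp(O(K^3 n^{-2}\log n))$ from the overlap sum requires a delicate balance between the hypergeometric decay of the overlap weights and the growth of the correlation correction and, crucially, a flattening of the counted family to control the near-$K$ overlaps, which would otherwise inflate $\E[Z_\theta^2]/\E[Z_\theta]^2$ far beyond the target (already the $\ell = K-1$ pairs contribute of order $K^2/\sqrt{\log(n/K)}$ to the unrestricted ratio). Everything else — the calibration of $U(n,K)$, the tail expansions, and the two applications of Borell-TIS — is routine.
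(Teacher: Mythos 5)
Your skeleton coincides with the paper's: the upper bound is the same first--moment calculation (carried out once for all three Gaussian theorems in Section~\ref{Proof of Upper Bounds in Theorems 3.6, 3.7, 3.8}), and the lower bound follows the identical chain --- overlap decomposition of $\E[Z_\theta^2]$, Savage's bound (via Corollary~\ref{Corollary 4.15}) for the two--point tails, the ratio estimate $\E[Z_\theta^2]\le\exp(O(K^3n^{-2}\log n))\,\E[Z_\theta]^2$, Paley--Zygmund, and then Borell--TIS to boost the probability $\exp(-O(K^3n^{-2}\log n))$ to $1-o(1)$ at the cost of the shift $t^*=\Theta(K^{5/2}\sqrt{\log n}/n)$. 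Your calibration of $\Delta_n$ and the final bookkeeping ($K\sqrt{\log\log n}$ and $\Delta_n$ both $o(K^{5/2}\sqrt{\log n}/n)$ for $\alpha\ge 2/3$) are correct; the paper works with the mean rather than the median, which is immaterial.

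The one place you depart from the paper is the second--moment bound itself, which you assert rather than prove and propose to obtain via a flattening of the counted family. The paper uses no flattening: Proposition~\ref{Proposition 6.1} (part 4) controls the full overlap sum directly, splitting at $\ell\asymp MK^2/n$. The bulk $\ell\le MK^2/n$ produces the factor $\exp(O(K^3n^{-2}\log n))$ exactly as you describe; for all larger $\ell$, including $\ell$ near $K$, the super--exponential decay of the hypergeometric weight $\binom{K}{\ell}\binom{n-K}{K-\ell}\binom{n}{K}^{-1}$ is shown to beat the correlation correction $\exp\bigl(\gamma_n^2\binom{K}{2}\binom{\ell}{2}/(\binom{K}{2}+\binom{\ell}{2})\bigr)$, so the entire large--overlap block is $o(1)$. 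The near--$K$ overlaps you flag as ``genuinely dangerous'' contribute to the unrestricted ratio only a term of order $\mathrm{poly}(n)/\E[Z_\theta]$ (for $\ell=K-1$ the pair count is $\binom{n}{K}K(n-K)$ and the joint tail is comparable to the single tail), and this is killed not by flattening but by exploiting the freedom to take $\E[Z_\theta]=w_n$ polynomially large in $n$: the induced downward shift of the threshold is only $O(\log(w_n)\sqrt{K/\log n})=O(\sqrt{K\log n})$, which is $o(K^{5/2}\sqrt{\log n}/n)$ when $\alpha>1/2$. So your proposed flattening is an admissible alternative (it is the route of the earlier work of Gamarnik--Zadik) but is not needed here; either way, the ratio bound is the entire technical content of the proof and your proposal leaves it unestablished.
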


Again, all cases can be summarized (with some loss of error bounds) as follows:
\begin{corollary}\label{Corollary 3.9}
For $\alpha \in (0,1)$ the following holds w.h.p. as $n\to\infty$.
\begin{align*}
\Psi^{ \mathcal{N} \left( \frac{1}{2}, \frac{1}{4} \right)}_K = 
\frac{K^2}{4} +  \frac{K^{\frac{3}{2}} \sqrt{\log(\frac{n}{K})}}{2} + o\left(K^{\frac{3}{2}} \sqrt{\log(n)}\right).
\end{align*}
\end{corollary}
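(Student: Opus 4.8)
The plan is to read the statement off from the three Gaussian theorems of Section~\ref{Subsection 3.2} together with the elementary asymptotics of $U(n,K)$, so this is essentially a bookkeeping argument. First I would invoke the reduction recorded in Section~\ref{Problem Statement and Notations}, namely $\Psi^{\mathcal{N}(1/2,1/4)}_K = \tfrac12\binom{K}{2} + \tfrac12\Psi^{\mathcal{N}}_K$, and note that $\tfrac12\binom{K}{2} = \tfrac{K^2}{4}-\tfrac{K}{4} = \tfrac{K^2}{4} + o\!\big(K^{3/2}\sqrt{\log n}\big)$ since $K = o\!\big(K^{3/2}\sqrt{\log n}\big)$. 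Hence it suffices to show that w.h.p. $\Psi^{\mathcal{N}}_K = K^{3/2}\sqrt{\log(n/K)} + o\!\big(K^{3/2}\sqrt{\log n}\big)$.

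Next I would split into the three regimes covering $(0,1)$. For $\alpha\in(0,1/2)\cup(1/2,2/3)$ Theorem~\ref{Theorem 3.6} gives $|\Psi^{\mathcal{N}}_K - U(n,K)| \le a_n\sqrt{K/\log n}$ w.h.p.; for $\alpha=1/2$ Theorem~\ref{Theorem 3.8} gives the same upper bound and the weaker lower bound $U(n,K) - K a_n$; and for $\alpha\in[2/3,1)$ Theorem~\ref{Theorem 3.7} gives lower bound $U(n,K) - O\!\big((K^{5/2}/n)\sqrt{\log n}\big)$ and upper bound $U(n,K) + a_n\sqrt{K/\log(n/K)}$. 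Fixing a concrete slowly growing sequence, e.g. $a_n = \log\log n$, each of these error terms is $o\!\big(K^{3/2}\sqrt{\log n}\big)$: indeed $a_n\sqrt{K/\log n}\,/\,(K^{3/2}\sqrt{\log n}) = a_n/(K\log n)\to 0$, $Ka_n/(K^{3/2}\sqrt{\log n}) = a_n/\sqrt{K\log n}\to 0$, and $(K^{5/2}/n)\sqrt{\log n}\,/\,(K^{3/2}\sqrt{\log n}) = K/n \to 0$ because $K = n^\alpha$ with $\alpha<1$. Therefore in every regime $\Psi^{\mathcal{N}}_K = U(n,K) + o\!\big(K^{3/2}\sqrt{\log n}\big)$ w.h.p.

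It then remains to upgrade the ratio asymptotics $U(n,K)\sim K^{3/2}\sqrt{\log(n/K)}$ of (\ref{LNK}) to the additive form $U(n,K) = K^{3/2}\sqrt{\log(n/K)} + o\!\big(K^{3/2}\sqrt{\log n}\big)$. This is a short Stirling estimate: writing $2\binom{K}{2} = K^2 - K$ and using $\log\binom{n}{K} = K\log(n/K) + O(K)$ uniformly for $K = n^\alpha = o(n)$ (so that the $-\tfrac12\log(K\log(n/K))$ term inside $U$ is absorbed), one gets $U(n,K)^2 = K^3\log(n/K)\big(1 + O(1/\log n) + O(1/K)\big)$, hence $U(n,K) = K^{3/2}\sqrt{\log(n/K)}\big(1 + O(1/\log n) + O(1/K)\big)$, and the correction is $O\!\big(K^{3/2}/\sqrt{\log n}\big) + O\!\big(K^{1/2}\sqrt{\log n}\big) = o\!\big(K^{3/2}\sqrt{\log n}\big)$; this is already contained in the proof of Lemma~\ref{Lemma 4.80}, and alternatively follows from (\ref{eq:Vnk})–(\ref{eq:VvsLUAbsolute}) together with the same estimate applied to $V(n,K)$. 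Substituting back yields $\Psi^{\mathcal{N}(1/2,1/4)}_K = \tfrac{K^2}{4} + \tfrac12 K^{3/2}\sqrt{\log(n/K)} + o\!\big(K^{3/2}\sqrt{\log n}\big)$, which is the claim.

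I do not expect a genuine obstacle: the Corollary is a consequence of the per-regime theorems, and its proof amounts to two routine checks. The only points needing mild care are (i) confirming that the three qualitatively different error bounds are all $o\!\big(K^{3/2}\sqrt{\log n}\big)$ — the tightest being the $\alpha=1/2$ boundary error $K a_n$ and the $\alpha\uparrow 1$ error $(K^{5/2}/n)\sqrt{\log n}$ — and (ii) the passage from the relative asymptotics of $U(n,K)$ to the additive-error form, which requires the $O(K)$-accurate expansion of $\log\binom{n}{K}$ rather than only its leading term.
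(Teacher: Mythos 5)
Your proposal is correct and follows essentially the same route as the paper: combine Theorems~\ref{Theorem 3.6}, \ref{Theorem 3.7}, \ref{Theorem 3.8} to get $\Psi^{\mathcal{N}}_K = U(n,K) + o(K^{3/2}\sqrt{\log n})$, apply the asymptotics (\ref{LNK}) for $U(n,K)$, and convert back via $\Psi^{\mathcal{N}(1/2,1/4)}_K = \tfrac12\binom{K}{2} + \tfrac12\Psi^{\mathcal{N}}_K$. Your version is somewhat more explicit about checking each regime's error term and about upgrading the ratio asymptotics to additive form, but the substance is identical.
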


\subsection{Generally Distributed Disorder}
As mentioned in the introduction, the 
asymptotic estimates  in Subsections~\ref{Subsection 3.1}, \ref{Subsection 3.2} will be obtained  using the  first and second moment method,
combined with various concentration inequalities, specifically the Borell-TIS inequality for the Gaussian case and the Talagrand's concentration
inequality for the Rademacher case. Extending this to the case of a general distribution on the disorder appears problematic. Instead,
we resort to the Lindeberg's type argument which facilitates the reduction of the case of a general distribution to the case of a Gaussian distribution.  Our main result is below.
\begin{theorem}\label{Theorem 3.11}
Suppose $\alpha \in (0, 1)$ and  $\mathcal{D}$ is a sub-Gaussian distribution that satisfies 
$\E[Z]=\frac{1}{2}$ and $ \E[Z^2] = \frac{1}{2}$  when $Z\stackrel{d}{=} \mathcal{D}$.
Then 
\begin{align*}
\E\left[\Psi^{\mathcal{D}}_K \right]= \E\left[\Psi^{\mathcal{N}\left( \frac{1}{2}, \frac{1}{4} \right)}_K\right] + O\left(K^{\frac{4}{3}} \log(n)^{\frac{7}{6}}\right).
\end{align*}
Furthermore, if $\mathcal{D}$ corresponds to bounded random variables, the estimate above holds also w.h.p. as $n\to\infty$ as opposed to just expectation.
\end{theorem}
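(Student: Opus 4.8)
The plan is to run a Lindeberg-type interpolation between the general sub-Gaussian disorder $\mathcal{D}$ and the Gaussian disorder $\mathcal{N}(1/2,1/4)$, but to exploit symmetry to discard most of the error terms that a naive interpolation would generate. Fix an enumeration of the $T=\binom{n}{2}$ edge variables and, for a permutation $\pi\in\mathfrak{S}_T$ chosen uniformly at random, let $Y^{(t)}_\pi$ be the disorder vector in which the first $t$ coordinates (in the order given by $\pi$) are drawn from $\mathcal{D}$ and the remaining ones from $\mathcal{N}(1/2,1/4)$, all independent, so $Y^{(0)}$ has all-Gaussian weights and $Y^{(T)}$ has all-$\mathcal{D}$ weights. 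Since $\max_{S}Z_S$ is not smooth, I would first replace $\Psi^{\mathcal{D}}_K$ by the soft-max $F_\beta(Z)=\beta^{-1}\log\sum_{|S|=K}\exp(\beta Z_S)$; standard bounds give $0\le F_\beta - \Psi_K\le \beta^{-1}\log\binom{n}{K}=O(\beta^{-1}K\log n)$, so choosing $\beta$ appropriately (roughly $\beta\asymp K^{-1/3}\log(n)^{-1/6}$ up to the target error budget) this replacement costs only $O(K^{4/3}\log(n)^{7/6})$. I would then telescope $\E[F_\beta(Y^{(T)}_\pi)]-\E[F_\beta(Y^{(0)}_\pi)]=\sum_{t=1}^{T}\big(\E[F_\beta(Y^{(t)}_\pi)]-\E[F_\beta(Y^{(t-1)}_\pi)]\big)$ and Taylor-expand each summand in the single coordinate being switched, around the value at which that coordinate is deleted. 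Because $\mathcal{D}$ and $\mathcal{N}(1/2,1/4)$ share their first two moments, the zeroth, first and second order terms cancel in expectation, leaving a third-order remainder controlled by $\sup|\partial^3_{e} F_\beta|$ times the third absolute moments of the two disorders (finite by sub-Gaussianity).

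The symmetrization is the crux. The third derivative $\partial^3_{e}F_\beta$ is, up to constants, a degree-three polynomial in the Gibbs marginals $\mu_e = \sum_{S\ni e}p_S$ where $p_S\propto e^{\beta Z_S}$, and it is $O(\beta^2)$ in magnitude but, more importantly, it is essentially supported on edges $e$ that are ``active'', i.e.\ that lie in sets $S$ carrying non-negligible Gibbs mass — and there are only $\approx \binom{K}{2}$ such edges for the near-optimal $S$, not $\binom{n}{2}$. The point of averaging over a uniformly random interpolation order $\pi$ is that, at interpolation time $t$, the coordinate being switched is a \emph{uniformly random} one among all $T$ edges, so the expected contribution of step $t$ is $\frac1T\sum_e \E[\,|\partial^3_e F_\beta(Y^{(t)}_\pi)|\,]$ rather than a worst-case edge; summing over $t$ the $1/T$ cancels $T$ and one is left with a single average of $\sum_e|\partial^3_e F_\beta|$, which by the ``only $\binom{K}{2}$ active edges'' heuristic is $O(\beta^2 K^2)$ rather than $O(\beta^2 n^2)$. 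Making ``active'' precise is where the work lies: at the endpoints $t\in\{0,T\}$ the weights are i.i.d., so $\E[\mu_e]$ is the same for all $e$ and $\sum_e \E[\mu_e^3]$-type quantities are easy to bound by $\binom{K}{2}/\binom{n}{2}$ times trivial factors; for intermediate $t$ the disorder is a mixture of two distributions and no such exchangeability holds edge-by-edge, but averaging over $\pi$ restores an averaged exchangeability which suffices. I would formalize this by bounding $\E_\pi\E[\mu_e^k]$ uniformly via the fact that $\sum_e \mu_e = \binom{K}{2}$ deterministically (each $S$ contributes $\binom{K}{2}$ to the edge-marginal sum) together with a large-deviations / concentration estimate — using the Talagrand inequality available for bounded disorder, and the Borell–TIS control of the Gaussian part — to show the Gibbs mass concentrates on sets with $Z_S$ near $\Psi_K$, so $\max_e\mu_e$ is $o(1)$ and hence $\sum_e\mu_e^3\le (\max_e\mu_e)^2\sum_e\mu_e = o(1)\cdot\binom{K}{2}$.

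Optimizing $\beta$: the soft-max error is $O(\beta^{-1}K\log n)$ and the accumulated interpolation error is $O(\beta^2 K^2)$ (times lower-order log factors from the derivative bound and third moments); balancing $\beta^{-1}K\log n \asymp \beta^2 K^2$ gives $\beta\asymp (\log n)^{1/3}K^{-1/3}$ and common value $\asymp K^{4/3}(\log n)^{2/3}$, and tracking the remaining logarithmic losses (from bounding $\|\partial^3 F_\beta\|$ and from the concentration statements) upgrades the exponent on $\log n$ to $7/6$, yielding the claimed $O(K^{4/3}\log(n)^{7/6})$. For the ``w.h.p.'' strengthening in the bounded case, I would not interpolate random variables but combine the expectation bound just proved with the two-sided Talagrand concentration $\p(|\Psi^{\mathcal{D}}_K-\mathrm{med}|>t)\le 4\exp(-t^2/(2K^2))$ (valid for bounded disorder, as in the proof of Theorems~\ref{Theorem 3.2}, \ref{Theorem 3.4}) applied to both $\Psi^{\mathcal{D}}_K$ and $\Psi^{\mathcal{N}(1/2,1/4)}_K$: each is within $O(K\sqrt{\log n})\subset O(K^{4/3}\log(n)^{7/6})$ of its mean w.h.p., and the means differ by $O(K^{4/3}\log(n)^{7/6})$, so the two random variables agree up to that error w.h.p.

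The main obstacle I anticipate is making rigorous the claim that the third-derivative terms are concentrated on $\approx\binom{K}{2}$ active edges \emph{uniformly over the interpolation time $t$} — i.e.\ controlling $\E_\pi\E\big[\sum_e|\partial^3_e F_\beta(Y^{(t)}_\pi)|\big]$ for the mixed-disorder vectors. The endpoints are genuinely easy by exchangeability, but the intermediate mixtures require the simultaneous use of (i) the deterministic identity $\sum_e\mu_e=\binom K2$, (ii) concentration of $\Psi_K$ for \emph{partially Gaussian, partially bounded} disorder (which needs a hybrid Talagrand/Borell–TIS argument), and (iii) the averaging over $\pi$ to convert per-edge worst cases into averages; getting the logarithmic factors in the derivative bound tight enough not to destroy the $7/6$ exponent is the delicate bookkeeping.
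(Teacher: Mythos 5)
Your proposal follows essentially the same route as the paper: soft-max regularization with inverse temperature $\beta$, a Lindeberg telescope whose order is a uniformly random permutation of the edges, cancellation of the first two Taylor orders by moment matching, a bound on the accumulated third-order terms of size roughly $\beta^2 K^2$ (up to logs) rather than $\beta^2 n^2$, optimization over $\beta$, and Talagrand plus Borell--TIS to upgrade the expectation bound to a w.h.p.\ statement in the bounded case. The final exponents and the choice $\beta\asymp K^{-1/3}\log(n)^{-1/6}$ match.

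Two points in your execution would need correction. First, your claim that averaging over $\pi$ makes the step-$t$ contribution equal to $\frac1T\sum_e\E\bigl[|\partial^3_e F_\beta(Y^{(t)}_\pi)|\bigr]$ is not right: conditionally on the state reached at time $t$, the coordinate just switched is uniform only over the $t$ already-switched edges (and, for the backward Taylor term, over the not-yet-switched ones), not over all $T$ edges. The correct count — done in the paper by double counting over states $\mathbf{S}_g$ at level $p$, where a given switch edge occurs $p!q!/p$ (resp.\ $p!q!/q$) times — produces a harmonic sum $\sum_p(1/p+1/q)\asymp\log T$, which is one of the logarithmic losses you must track. Second, the machinery you reserve for the "main obstacle" — concentration of the Gibbs measure, $\max_e\mu_e=o(1)$, and a hybrid Talagrand/Borell--TIS argument for partially mixed disorder — is unnecessary and would be hard to carry out; since $|\partial^3_e f_\beta|\le\beta^2\,U(e)/(U(e)+V(e))=\beta^2\mu_e$, the deterministic identity $\sum_e\mu_e=\binom K2$ together with Cauchy--Schwarz over the split $g(e)=1$ versus $g(e)=0$ already closes the argument at every interpolation time, with no probabilistic input about the Gibbs measure. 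Relatedly, the remaining half power of $\log n$ in the exponent $7/6$ does not come from derivative bounds but from truncating the unbounded prefactor $|X_e-Y_e|(|X_e|^2+|Y_e|^2)$ at level $A\asymp\sqrt{\log n}$ (paying $e^{-\Theta(A^2)}$ on the tail via sub-Gaussianity), and one must also handle the unknown intermediate point in the Taylor remainder, which the paper does by monotonicity of $x\mapsto e^{\beta x}A/(e^{\beta x}A+B)$ in the switched coordinate.
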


We now comment on our proof approach. A fairly direct application of the Lindeberg's argument (which we nevertheless describe in the body of the proof)
can be used to obtain the following bound.
\begin{align*}
\E\left[\Psi^{\mathcal{D}}_K \right]= \E\left[\Psi^{\mathcal{N}\left( \frac{1}{2}, \frac{1}{4} \right)}_K\right] + O\left(K^{\frac{2}{3}}n^{\frac{2}{3}} \log(n)^{\frac{2}{3}}\right).
\end{align*}
This bound unfortunately is not very meaningful when $\alpha< 4/5$ as in this case  $K^{2\over 3}n^{2\over 3}$ dominates $K^{3\over 2}=n^{3\alpha\over 2}$.
The latter  is the leading order  term in all of our asymptotic results, and thus such a bound is not informative. 
In order to obtain a tighter asymptotics which is claimed in Theorem~\ref{Theorem 3.11},
we resort to a certain ''averaged'' out version of the Lindeberg's argument, where the order of edges swapped from the distribution $\mathcal{D}$ 
to the Gaussian distribution is chosen uniformly at random, thus providing us with a   convenient algebraic symmetry
of  the associated terms. This symmetry is exploited to obtain tighter error bounds. 
We believe our proof approach is of interest in itself.

\subsection{The Planted Clique Problem and the Overlap Gap Property}\label{Subsection 3.4}

We now turn to the widely studied  setting of the Planted Clique model where one observes a graph  sampled in two steps, first the graph  is generated randomly according to the \ER Bernoulli model $\G(n,1/2)$, and then a clique of size $K=n^\alpha, \alpha\in (0,1)$ is planted in the obtained graph. Without  loss of generality
we assume that the set of nodes of the planted clique, denoted by $\mathcal{PC}$ occupies the nodes $1,\ldots,K$. The model also has the following 
natural extension to the case of general weight distributions $\mathcal{D}$. Fix a parameter $\mu>0$ and assume edges $(i,j)$ have weights
$Z_{ij}\stackrel{d}{=}\mathcal{D}$ when at least one of $i$ or $j$ (or both) exceeds $K$, and distributions $\mu+\mathcal{D}$ (shift by $\mu$)
when $1\le i,j\le K$. 
Technically, this is not a generalization of the Bernoulli case, since in the latter
case the distribution of the edge weights inside the clique changes from Bernoulli is deterministic. 
In all cases, the Hidden Clique model will be denoted by $\G(n,K,\mathcal{D})$, with $\mathcal{D}=\Bern(1/2)$ corresponding
to the (unweighted) Hidden Clique model, with some abuse of notation. 
Both the unweighted and the  Gaussian setting of a hidden ''clique'' model have been
studied widely, as already mentioned in the introduction.

Similarly to the  quantities $\Psi^{\mathcal{D}}_K$  we define  their following overlap-restricted version 
for parameter a $z$ ranging in $\{\lfloor{\frac{K^2}{n}\rfloor}, \lfloor{\frac{K^2}{n}\rfloor}+1,...,K \}$:
\begin{equation}
\Psi^{\mathcal{D}}_K(z) \triangleq \max_{S \subset [n], |S|=K , |S \cap \mathcal{PC}|=z} Z_S,
\end{equation}
where $Z_S$ is again defined by (\ref{eq:Z_S}). We note that $\lfloor{K^2/n\rfloor}=0$ when $K=n^\alpha$ and $\alpha<1/2$, but 
grows polynomially in $n$ when $\alpha>1/2$. We also note that trivially $\Psi^{\mathcal{D}}_K=\max_z\Psi^{\mathcal{D}}_K(z)$.

Using the overlap-restricted version $\Psi^{\mathcal{D}}_K(z)$ of the optimization value, we  define the overlap gap property as follows.
\begin{defn}\label{Definition 3.12}
Given sequences $0 < \zeta_{1,n}< \zeta_{2,n} <K$ and $r^2_n > r^1_n>0$, 
the Planted Clique model $\G(n,K,\mathcal{D})$ exhibits an Overlap Gap Property (OGP) with parameters $(\zeta_{1,n},\zeta_{2,n},r^1_n,r^2_n)$
if  the following holds w.h.p. as $n\to \infty$:
    \begin{enumerate}
        \item There exists subsets $A,A'\subset [n]$ with $|A|=|A'|=K, |A\cap \mathcal{PC}| \leq \zeta_{1,n} <  \zeta_{2,n} \leq |A'\cap \mathcal{PC}| $ 
        and $|E[A]|, |E[A']|  \geq r^2_n$.
        \item For any subset $A\subset [n]$ with $|A|=K, \zeta_{1,n} \leq | A\cap \mathcal{PC}| \leq \zeta_{2,n}  $ it holds  $|E[A]| \leq r^1_n$.
    \end{enumerate}
    Where $|E[A]| $ is the sum of $Z_{ij}$ over $i,j \in A$.
\end{defn}
The intuition behind this definition is as follows. The parameters $r^1_n$ and $r^2_n$ describe thresholds for optimization values
(so that they are non-vacuous only when they are below the optimum value $\Psi^{\mathcal{D}}_K$). The model exhibits the overlap 
gap property if every set with edge density at least $r^1_n$ is either ''close'' to the clique $\mathcal{PC}$ (case $|A\cap \mathcal{PC}| \leq \zeta_{1,n}$),
or far from it (case $|A\cap \mathcal{PC}| \geq \zeta_{2,n}$), or conversely every set with intermediate (between $\zeta_{1,n}$ and 
$\zeta_{2,n}$) overlap with the hidden clique $\mathcal{PC}$ has edge density at most $r^1_n$. This is the second part of the definition.
The first part says that both within the ''close'' range  and within the ''far'' range there exist sets with edge density ''significantly'' above
$r^1_n$ (that is above $r^2_n$). In the ''close'' range case for the non-weighted Hidden Clique model the value of $\Psi^{\mathcal{D}}_K$
is clearly ${K\choose 2}$, so the definition is non-trivial only when $r^2_n$ at most this value. Figure~\ref{Figure 1} illustrates the idea
of this definition.

\begin{minipage}[t]{\linewidth}
\begin{center}
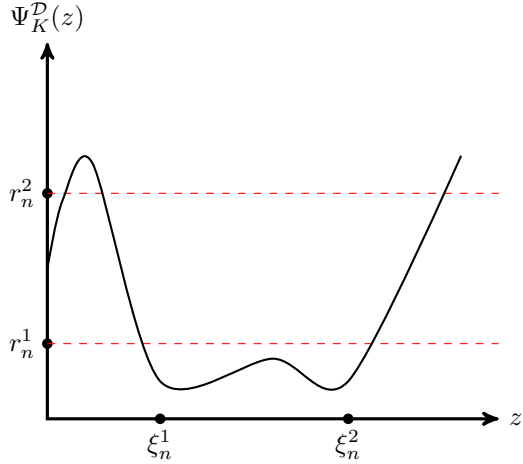

\begin{tikzpicture}[
        scale=2,
        IS/.style={blue, thick},
        axis/.style={very thick, ->, >=stealth', line join=miter},
        important line/.style={thick}, dashed line/.style={dashed, thin},
        every node/.style={color=black},
        dot/.style={circle,fill=black,minimum size=4pt,inner sep=0pt,
            outer sep=-1pt},
    ]

\hspace*{13em}
\draw[axis,<->] (3,0) node(xline)[right] {$z$} -|
                    (0,2.5) node(yline)[above] {$\Psi_K^{\mathcal{D}}(z)$};
\node[dot,label=left:$r_n^1$] at (0,0.5) {};
\node[dot,label=left:$r_n^2$] at (0,1.5) {};

\node[dot,label=below:$\xi_n^1$] at (0.75,0) {};
\node[dot,label=below:$\xi_n^2$] at (2,0) {};
\draw [thick,dashed line, red] (0,0.5) -- (3,0.5);
\draw [thick,dashed line, red] (0,1.5) -- (3,1.5);

\draw [thick, xshift=0cm] plot [smooth, tension=0.5] coordinates 
  {(0, 1)  (0.1,1.45) (0.3,1.7) (0.75,0.25) (1.5,0.4) (2,0.25) (2.75,1.75)};

\end{tikzpicture} 
\end{center}
\end{minipage}\hfill

\captionof{figure}{Overlap-Gap Property Illustration}\label{Figure 1}

\begin{theorem}\label{Theorem 3.13}
Suppose
 $\alpha \in \left(0, \frac{1}{2}  \right)$ and $K=n^\alpha$. There exists  $0 <C_1$, $0 < D_1 <D_2$  
 such that the following holds w.h.p. as $n\to +\infty$: 

$$ \max_{z \in I} \Psi^{\Bern(\frac{1}{2})}_K(z)  \leq \min \left\{ \Psi^{\Bern(\frac{1}{2})}_K\left( 0\right) , \Psi^{\Bern(\frac{1}{2})}_K\left( \left\lfloor \frac{K}{2} \right\rfloor \right)    \right\} - C_1 K\log(K), $$
where $ I\triangleq \mathbb{Z} \cap [D_1 \sqrt{K\log(K)}, D_2 \sqrt{K\log(K)}]$. Namely, the  Planted Clique model $\G(n,K,\Bern(1/2))$
 exhibits OGP as defined in Definition~\ref{Definition 3.12} with parameters

\begin{align*}
    \zeta_{i,n} &= D_i \sqrt{K \log (K)}, i=1,2, \\
    r^2_n& = \min \left\{ \Psi^{\Bern(\frac{1}{2})}_K\left(0 \right) , \Psi^{\Bern(\frac{1}{2})}_K\left( \left\lfloor\frac{K}{2} \right\rfloor\right)    \right\} , \\
    r^1_n &= r^2_n - C_1 K \log(K).
\end{align*}
\end{theorem}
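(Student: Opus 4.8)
The plan is to reduce the theorem to a comparison between two explicit scalars and then to a second‑order estimate on binomial coefficients. Fix $z$ and a $K$‑set $S$ with $|S\cap\mathcal{PC}|=z$; since planting only alters edges with both endpoints in $\mathcal{PC}$, we have $Z_S=\binom{z}{2}+B_S$, where $B_S$ is a sum of $m_z\triangleq\binom{K}{2}-\binom{z}{2}$ i.i.d.\ $\Bern(1/2)$ variables, and the number of such $S$ is $N_z\triangleq\binom{K}{z}\binom{n-K}{K-z}$. Put $W(z)\triangleq\sqrt{2m_z\log N_z}$, so that $W(0)=V(n-K,K)$. I would establish: \textbf{(i)} uniformly over $z\in I$, w.h.p.\ $\Psi^{\Bern(1/2)}_K(z)\le\tfrac12\binom{K}{2}+\tfrac12\binom{z}{2}+\tfrac12 W(z)+o(K\log K)$; \textbf{(ii)} w.h.p.\ $\Psi^{\Bern(1/2)}_K(0)\ge\tfrac12\binom{K}{2}+\tfrac12 W(0)-o(K\log K)$; \textbf{(iii)} w.h.p.\ $\min\{\Psi^{\Bern(1/2)}_K(0),\Psi^{\Bern(1/2)}_K(\lfloor K/2\rfloor)\}=\Psi^{\Bern(1/2)}_K(0)$; and \textbf{(iv)} there exist $0<D_1<D_2$ and $c=c(\alpha)>0$ with $W(0)-W(z)-\binom{z}{2}\ge c\,K\log K$ for every $z\in I$ and all large $n$. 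Granting these, for $z\in I$ we get $\Psi^{\Bern(1/2)}_K(z)-\Psi^{\Bern(1/2)}_K(0)\le-\tfrac12\big(W(0)-W(z)-\binom{z}{2}\big)+o(K\log K)\le-\tfrac{c}{3}K\log K$ for large $n$, which together with (iii) is exactly the displayed inequality with $C_1=c/3$. The remaining clauses of Definition~\ref{Definition 3.12} then hold by taking $A,A'$ to be the optimizers defining $\Psi^{\Bern(1/2)}_K(0)$ and $\Psi^{\Bern(1/2)}_K(\lfloor K/2\rfloor)$: since $\sqrt{K\log K}=o(K)$ we have $0<\zeta_{1,n}<\zeta_{2,n}<\lfloor K/2\rfloor<K$ for large $n$, $|E[A]|,|E[A']|\ge r^2_n$ by the definition of $r^2_n$, and part~2 is precisely $\max_{z\in I}\Psi^{\Bern(1/2)}_K(z)\le r^1_n$.

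For (i) I would apply Hoeffding's inequality, $\p(B_S\ge m_z/2+t)\le e^{-2t^2/m_z}$, and union bound over the $N_z$ sets $S$ and the at most $n$ integers $z\in I$ at the threshold $t_z\triangleq\sqrt{\tfrac{m_z}{2}(\log N_z+3\log n)}$; the failure probability is at most $n\cdot n^{-3}\to0$, and on the complement $\Psi^{\Bern(1/2)}_K(z)\le\binom{z}{2}+m_z/2+t_z=\tfrac12\binom{K}{2}+\tfrac12\binom{z}{2}+\tfrac12\sqrt{2m_z(\log N_z+3\log n)}$, where the $3\log n$ inside the root contributes only $O(\sqrt{K\log n})=o(K\log K)$ since $\log N_z=\Theta(K\log n)$ uniformly on $I$. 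For (ii), a set with $S\cap\mathcal{PC}=\emptyset$ lies entirely outside $\mathcal{PC}$, so $\Psi^{\Bern(1/2)}_K(0)\stackrel{d}{=}\tfrac12\binom{K}{2}+\tfrac12\Psi^{\mathcal{R}}_K$ with the latter $\Psi^{\mathcal{R}}_K$ taken on $n-K$ vertices; since $\log K/\log(n-K)\to\alpha<1/2$, Theorem~\ref{Theorem 3.1} applies there and yields $\Psi^{\mathcal{R}}_K\ge L(n-K,K)-O(\sqrt{K\log(n)^3})$, and by \eqref{eq:VvsLUAbsolute} together with $V(n-K,K)=W(0)$ this becomes $\Psi^{\Bern(1/2)}_K(0)\ge\tfrac12\binom{K}{2}+\tfrac12 W(0)-o(K\log K)$. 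For (iii), Hoeffding applied to the random part of the set made of $\lfloor K/2\rfloor$ clique nodes and $\lceil K/2\rceil$ nodes outside $\mathcal{PC}$ gives $\Psi^{\Bern(1/2)}_K(\lfloor K/2\rfloor)\ge\tfrac12\binom{K}{2}+\tfrac12\binom{\lfloor K/2\rfloor}{2}-O(K\sqrt{\log n})=\tfrac{5}{16}K^2(1-o(1))$, while (ii) and the upper bound of Theorem~\ref{Theorem 3.1} give $\Psi^{\Bern(1/2)}_K(0)=\tfrac14 K^2(1+o(1))$; as $\tfrac{5}{16}>\tfrac14$, the minimum is attained at $0$.

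The crux is (iv), which is purely deterministic. On $I$ both $m_z$ and $N_z$ are decreasing in $z$ --- for $N_z$ because $N_{z+1}/N_z=(K-z)^2/\big((z+1)(n-2K+z+1)\big)<1$ once $z\gg K^2/n$, which holds on $I$ exactly when $\alpha<1/2$ --- so $W$ is decreasing on $I$, and $\binom{z}{2}\le\tfrac12 D_2^2 K\log K$ there; hence it suffices to lower bound $W(0)-W(z)$ at the left endpoint $z=D_1\sqrt{K\log K}$. Writing $W(0)^2-W(z)^2=2\binom{K}{2}A(z)+2\binom{z}{2}\log N_z$ with $A(z)\triangleq\log\binom{n-K}{K}-\log N_z$, the second‑order Stirling expansions of $\binom{K}{z},\binom{n-K}{K-z},\binom{n-K}{K}$ give $A(z)=z\big(\log\tfrac{nz}{K^2}-1\big)+O(z^2/K+\log K)$, with remainder $o(z\log n)$ on $I$. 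Crucially $z\ge D_1\sqrt{K\log K}$ and $\log K=\alpha\log n$ force $\log\tfrac{nz}{K^2}\ge(1-\tfrac{3\alpha}{2})\log n-O(\log\log n)\ge\tfrac15\log n$ for large $n$ \emph{because $\alpha<1/2<2/3$}, so $A(z)\ge\tfrac16 z\log n$; then, using $\binom{z}{2}\log N_z\ge0$ and $W(z)\le W(0)$,
\[
W(0)-W(z)=\frac{W(0)^2-W(z)^2}{W(0)+W(z)}\ \ge\ \frac{\binom{K}{2}\,z\log n}{6\,W(0)}\ =\ \frac{z\log n}{6}\sqrt{\frac{\binom{K}{2}}{2\log\binom{n-K}{K}}}\ \ge\ c_1(\alpha)\,z\sqrt{K\log n}\ \ge\ c_2(\alpha)\,D_1\,K\log K
\]
for constants $c_1(\alpha),c_2(\alpha)>0$ and large $n$ (using $K\log n=\Theta(K\log K)$). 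Combined with $\binom{z}{2}\le\tfrac12 D_2^2 K\log K$, we obtain $W(0)-W(z)-\binom{z}{2}\ge\big(c_2(\alpha)D_1-\tfrac12 D_2^2-o(1)\big)K\log K$ for all $z\in I$; choosing $D_2$ small enough that $D_2<2c_2(\alpha)$ and then any $D_1\in\big(\tfrac{D_2^2}{2c_2(\alpha)},D_2\big)$ makes the bracket a positive constant, which proves (iv).

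The main obstacle is (iv): one must push the Stirling expansions of the three binomial coefficients to \emph{second} order --- the leading terms cancel in $A(z)$, leaving the decisive $z\log(nz/K^2)$ term --- confirm that every discarded remainder is genuinely $o(z\log n)=o(K\log K)$, and verify that $\tfrac{D_2^2}{2c_2(\alpha)}<D_1<D_2$ is simultaneously satisfiable. This is where $\alpha<1/2$ enters twice: it makes $\log(nz/K^2)\gtrsim\log n$ on $I$ (which fails once $\alpha\ge2/3$), and it is what lets Theorem~\ref{Theorem 3.1} supply the $o(K\log K)$‑accurate lower bound on $\Psi^{\Bern(1/2)}_K(0)$ in step (ii). Everything else is a routine combination of Hoeffding's inequality, a union bound, and the already‑proven Theorem~\ref{Theorem 3.1}.
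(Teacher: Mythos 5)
Your proposal is correct, and it follows the same overall strategy as the paper — a first--moment upper bound on $\Psi^{\Bern(1/2)}_K(z)$ for $z$ in the intermediate window $I$, combined with lower bounds at the two endpoints $z=0$ (via Theorem~\ref{Theorem 3.1} applied to the unplanted graph on $[n]\setminus\mathcal{PC}$) and $z=\lfloor K/2\rfloor$ (via a planted--half construction giving $\tfrac{5}{16}K^2$ versus $\tfrac14K^2$). The difference lies in how the two key technical ingredients are implemented. The paper imports the first moment curve $\Gamma_K(z)$ and its properties from \cite{GamarnikZadik}: Proposition~\ref{Proposition 15.2} for the upper bound $\Psi^{\Bern(1/2)}_K(z)\le\Gamma_K(z)$, the increment formula of Lemma~\ref{Lemma 15.3} telescoped in Lemma~\ref{Lemma 15.4} to show $\Gamma_K$ dips by $C_1K\log K$ on $I$, the $h^{-1}$ expansion of Lemma~\ref{Lemma 4.9} to identify $\Gamma_K(0)$, and Lemma~\ref{Lemma 15.5} (which requires the argmax--independence argument) for $\Psi^{\Bern(1/2)}_K(\lfloor K/2\rfloor)$. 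You instead work with the Gaussian--style threshold $\tfrac12\binom{K}{2}+\tfrac12\binom{z}{2}+\tfrac12 W(z)$, which agrees with $\Gamma_K(z)$ up to $o(K\log K)$, prove the upper bound directly by Hoeffding plus a union bound over $S$ and $z$, establish the dip by a from--scratch second--order Stirling expansion of $\log\binom{K}{z}\binom{n-K}{K-z}$ (your step (iv), which correctly isolates the decisive $z\log(nz/K^2)$ term and correctly resolves the $D_1<D_2$ versus $c_2D_1>D_2^2/2$ constraint), and handle $z=\lfloor K/2\rfloor$ with a single fixed set rather than Lemma~\ref{Lemma 15.5} — which suffices here and in fact yields the same $\tfrac{5}{16}K^2$ leading constant. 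Your route is more self--contained and elementary; the paper's route has the advantage that the same $\Gamma_K$ machinery, anchored at $\lfloor C_0K^2/n\rfloor$ rather than $0$, carries over verbatim to the $\alpha\in[1/2,2/3)$ case of Theorem~\ref{Theorem 3.14}, whereas your monotonicity and sign arguments in step (iv) exploit $z\gg K^2/n$, i.e.\ $\alpha<1/2$, at the base point $z=0$.
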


The claim above extends to the case $\alpha\in [1/2,2/3)$ modulo the validity of Conjecture~\ref{conjecture 3.3}.
\begin{theorem}\label{Theorem 3.14}
Assume  Conjecture \ref{conjecture 3.3} is valid. There exists $C_0>0$ such that the statement of Theorem \ref{Theorem 3.13} holds for the case 
$\alpha \in \left[\frac{1}{2}, \frac{2}{3} \right)$ by replacing 
$\Psi^{\Bern(\frac{1}{2})}_K\left(0 \right)$ with $\Psi^{\Bern(\frac{1}{2})}_K\left(\left\lfloor{C_0 \frac{K^2}{n}} \right\rfloor \right)$.
\end{theorem}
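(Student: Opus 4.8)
The plan is to reduce the OGP to a single inequality, exactly as in Theorem~\ref{Theorem 3.13}, and then run the same three-part argument with one modification. Concretely, by Definition~\ref{Definition 3.12} it suffices to produce constants $C_0,C_1>0$ and $0<D_1<D_2$ such that, with $I\triangleq\mathbb Z\cap[D_1\sqrt{K\log K},D_2\sqrt{K\log K}]$, w.h.p.
\[\max_{z\in I}\Psi^{\Bern(1/2)}_K(z)\ \le\ \min\bigl\{\Psi^{\Bern(1/2)}_K(\lfloor C_0K^2/n\rfloor),\ \Psi^{\Bern(1/2)}_K(\lfloor K/2\rfloor)\bigr\}-C_1K\log K;\]
part~2 of the definition is this inequality, and part~1 is witnessed by the density-maximizing sets at $z=\lfloor C_0K^2/n\rfloor$ and $z=\lfloor K/2\rfloor$, which lie on the two sides of $I$ because $\lfloor C_0K^2/n\rfloor\le D_1\sqrt{K\log K}$ and $\lfloor K/2\rfloor\ge D_2\sqrt{K\log K}$ for large $n$ (here $\alpha<2/3$ is used). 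One may take $C_0=1$, or any fixed $C_0\ge1$. The three parts are: (1) a first-moment upper bound on $\max_{z\in I}\Psi^{\Bern(1/2)}_K(z)$; (2) a lower bound on $\Psi^{\Bern(1/2)}_K(\lfloor C_0K^2/n\rfloor)$; (3) a lower bound on $\Psi^{\Bern(1/2)}_K(\lfloor K/2\rfloor)$. Only part~(2) differs from Theorem~\ref{Theorem 3.13}, and it is there that Conjecture~\ref{conjecture 3.3} enters.

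For (1), I would note that a $K$-set with overlap $z$ carries $\binom z2$ deterministic clique edges and $m_z\triangleq\binom K2-\binom z2$ free slots with i.i.d.\ $\Bern(1/2)$ weights, and there are $N_z=\binom Kz\binom{n-K}{K-z}$ such sets. The Hoeffding bound plus a union bound over these sets and over $z\in I$ gives, w.h.p.\ for all $z\in I$,
\[\Psi^{\Bern(1/2)}_K(z)\ \le\ \tfrac12\binom K2+\tfrac12\binom z2+\sqrt{\tfrac12\binom K2\bigl(\log N_z+O(\log n)\bigr)}.\]
A Stirling estimate yields $\log N_z\le\log\binom nK-\Theta(K^2/n)+z[(2\alpha-1)\log n-\log z+O(1)]$; for $z=D\sqrt{K\log K}$ the bracket is $(\tfrac{3\alpha}{2}-1)\log n+O(\log\log n)$, which is negative precisely because $\alpha<2/3$. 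Expanding the square root around $\tfrac14V(n,K)^2=\tfrac12\binom K2\log\binom nK$ and using $\log\binom nK\sim K(1-\alpha)\log n$ and $V(n,K)\sim K^{3/2}\sqrt{(1-\alpha)\log n}$, the two $z$-dependent terms combine to $\tfrac\alpha4D(D-D^*)K\log n+o(K\log n)$ with $D^*\triangleq\frac{1-3\alpha/2}{2\sqrt{\alpha(1-\alpha)}}>0$, strictly negative for $D\in(0,D^*)$. Choosing $[D_1,D_2]$ a closed subinterval of $(0,D^*)$ around $D^*/2$, this gives, for some $C_1=C_1(\alpha)>0$, that w.h.p.\ $\max_{z\in I}\Psi^{\Bern(1/2)}_K(z)\le\tfrac12\binom K2+\tfrac12V(n,K)-2C_1K\log K$.

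For (2), I would set $z=\lfloor C_0K^2/n\rfloor$, fix $R=\{1,\dots,z\}\subset\mathcal{PC}$, and let $T$ be a densest $(K-z)$-node subset of $\{K+1,\dots,n\}$. Since $T$ is measurable with respect to the edges inside $\{K+1,\dots,n\}$, the count of $R$--$T$ edges is, conditionally on $T$, a $\Bern(1/2)$-binomial on $z(K-z)$ trials independent of the conditioning, hence is $\ge\tfrac12z(K-z)-O(\sqrt{z(K-z)\log n})$ w.h.p. Combining this with $\Psi^{\Bern(1/2)}_{K-z}(\mathbb{G}(n-K,1/2))\ge\tfrac12\binom{K-z}2+\tfrac12L(n-K,K-z)-o(K\log n)$ --- which is Conjecture~\ref{conjecture 3.3} on $n-K$ nodes with $K-z=(n-K)^{\alpha''}$, $\alpha''\to\alpha$, invoking Theorem~\ref{Theorem 3.1} instead on the sub-range where $\alpha''$ dips below $1/2$ --- together with the identity $\binom z2+\tfrac12z(K-z)+\tfrac12\binom{K-z}2=\tfrac12\binom K2+\tfrac12\binom z2$ and the elementary facts $\binom z2=o(K\log n)$ and $V(n,K)-L(n-K,K-z)=o(K\log n)$ (both valid for $\alpha<2/3$), I would obtain $\Psi^{\Bern(1/2)}_K(\lfloor C_0K^2/n\rfloor)\ge\tfrac12\binom K2+\tfrac12V(n,K)-o(K\log n)$ w.h.p. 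For (3), any admissible set at overlap $\lfloor K/2\rfloor$ has free part binomial on $\Theta(K^2)$ trials, so the lower Hoeffding tail gives $\Psi^{\Bern(1/2)}_K(\lfloor K/2\rfloor)\ge\tfrac12\binom K2+\Theta(K^2)$ w.h.p., which dominates $\tfrac12\binom K2+\tfrac12V(n,K)$ since $V(n,K)=o(K^2)$; hence the $\min$ in the target inequality equals $\Psi^{\Bern(1/2)}_K(\lfloor C_0K^2/n\rfloor)$ w.h.p. Assembling (1)--(3) and absorbing the $o(K\log K)$ errors leaves the target inequality with constant $C_1$, proving the theorem.

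The main obstacle is the uniform-in-$z$ Stirling accounting in part~(1): one must show $\log N_z$ falls short of $\log\binom nK$ by a term of the exact order $\sqrt{K\log K}\,\log n$ for every $z\in I$, since it is this deficit, after taking the square-root, that produces a genuine $\Theta(K\log K)$ gap; this is also the source of the threshold $2/3$, the gap constant being proportional to $(1-\tfrac{3\alpha}{2})^2$ and degenerating as $\alpha\uparrow2/3$. A secondary, purely quantitative difficulty is to check that every error incurred in part~(2) --- from Conjecture~\ref{conjecture 3.3}, from the binomial fluctuation of the $R$--$T$ edge count, from the $\binom z2$ shift, and from replacing $L(n-K,K-z)$ by $V(n,K)$ --- is $o(K\log K)$ uniformly over $\alpha\in[1/2,2/3)$. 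It is for the densest-subgraph input that the unconditional Theorem~\ref{Theorem 3.2}, whose $O(K\log n)$ error is of the same order as the gap, is not enough; this is the sole reason Theorem~\ref{Theorem 3.14} is conditional on Conjecture~\ref{conjecture 3.3}.
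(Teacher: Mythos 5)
Your proposal is correct and follows essentially the same route as the paper: a first-moment upper bound on $\Psi^{\Bern(1/2)}_K(z)$ for $z$ in the intermediate window (which the paper packages as the first-moment curve $\Gamma_K$ of Proposition~\ref{Proposition 15.2} and telescopes via Lemmas~\ref{Lemma 15.3}--\ref{Lemma 15.4}, while you redo the Hoeffding/union-bound and Stirling accounting directly), the identical gluing construction for the lower bound at overlap $\lfloor C_0K^2/n\rfloor$ (your $R$--$T$ argument is exactly Lemma~\ref{Lemma 15.5}, with Conjecture~\ref{conjecture 3.3} entering at the same point and the same algebraic identity cancelling the quadratic terms), and the same $\tfrac{5}{16}K^2$ versus $\tfrac14 K^2$ comparison at overlap $K/2$. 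The only discrepancies are cosmetic (e.g.\ a factor of $2$ in your $D^*$), and do not affect the existence of a valid interval $[D_1,D_2]$.
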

Theorems \ref{Theorem 3.13}, \ref{Theorem 3.14} establish the presence of OGP for the Bernoulli planted clique problem. It is notable that in the Bernoulli case, the OGP holds up to $\alpha=2/3$, whereas the conjectured algorithmic threshold is
$\alpha=1/2$. We refer to \cite{GamarnikZadik} for an extensive  discussion of this issue. We conjecture that the OGP does not hold for 
$\alpha>2/3$. The sense in which the ''non-OGP'' property holds is also discussed in \cite{GamarnikZadik}.

\label{Main Results}

\section{Preliminary Results.}\label{Preliminary Results}
We begin by establishing a series of preliminary technical results. Subsection \ref{Combinatorial Results} is devoted to a series of combinatorial statements,
and Subsection \ref{Tail Bounds} is devoted to concentration inequalities and related tail bounds.

\subsection{Combinatorial Results.} \label{Combinatorial Results}

\begin{lemma}\label{Lemma 4.1}
For $K \in [n]$  we have:
$$  \sum_{0\leq l \leq K }  \binom{K}{l} \binom{n-K}{K-l} = \binom{n}{K} .$$
\end{lemma}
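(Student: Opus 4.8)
This is the Vandermonde convolution identity, and the plan is to prove it by a direct double-counting argument, with a generating-function alternative as a sanity check. First I would fix the ground set $[n]$ and partition it into the block $A = \{1,\dots,K\}$ of size $K$ and the block $B = \{K+1,\dots,n\}$ of size $n-K$ (note $K \in [n]$ guarantees $n - K \geq 0$, so $B$ is well defined, possibly empty). The right-hand side $\binom{n}{K}$ counts all $K$-element subsets $S \subset [n]$.

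Next I would classify each such $S$ by the quantity $l \triangleq |S \cap A|$, which ranges over $\{0,1,\dots,K\}$ since $|A| = K$ and $|S| = K$. For a fixed value of $l$, choosing $S$ amounts to independently selecting the $l$ elements of $S \cap A$ from the $K$ elements of $A$ — in $\binom{K}{l}$ ways — and the remaining $K - l$ elements of $S \cap B$ from the $n - K$ elements of $B$ — in $\binom{n-K}{K-l}$ ways. Hence the number of $K$-subsets with $|S \cap A| = l$ is exactly $\binom{K}{l}\binom{n-K}{K-l}$. Summing over the disjoint cases $l = 0,1,\dots,K$ and using that every $K$-subset falls into exactly one case yields
\[
\binom{n}{K} = \sum_{0 \leq l \leq K} \binom{K}{l}\binom{n-K}{K-l},
\]
which is the claim. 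One small point to record is that the summand is automatically $0$ (by the usual convention $\binom{a}{b} = 0$ for $b > a$ or $b < 0$) whenever $K - l > n - K$, so the terms with $l < 2K - n$ contribute nothing, consistently with the fact that such subsets cannot exist; this requires no special handling.

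As an independent verification I would compare the coefficient of $x^K$ on both sides of the polynomial identity $(1+x)^n = (1+x)^K (1+x)^{n-K}$: the left side contributes $\binom{n}{K}$, while expanding the right side by the binomial theorem and collecting the coefficient of $x^K$ gives $\sum_{l} \binom{K}{l}\binom{n-K}{K-l}$. I do not expect any genuine obstacle here — the only thing to be slightly careful about is the binomial-coefficient convention for out-of-range arguments, which the double-counting argument sidesteps cleanly.
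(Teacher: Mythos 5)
Your proof is correct and uses essentially the same argument as the paper: both classify the $K$-subsets of $[n]$ by the number $l$ of elements taken from the block $\{1,\dots,K\}$ and sum over the disjoint cases (the paper phrases it as coloring those elements blue and the rest red). The generating-function check is a fine extra but adds nothing beyond the paper's double-counting proof.
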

\begin{proof}
The right hand side is the number of subsets of size $K$ in $\{1,...,n\}$. Now assume that we color the elements $\{1,...,K\}$ as blue and the remaining elements $\{K+1,...,n\}$ as red, for $l\in [0, K]$ there are $\binom{K}{l} \binom{n-K}{K-l}$ unique ways of constructing a $K$ size subset of $\{1,...,n\}$ that contains exactly $l$ blue elements (and thus $K-l$ red elements). Hence the left hand side counts the number of unique subsets of size $K$ (by conditioning on the number of picked blue elements), the proof of the Lemma follows.
\end{proof}

\begin{lemma}\label{Lemma 4.6}
Let $0 \leq K \leq n$ and  $0 \leq l \leq K $ be integers, then the following identity holds:
$$ \binom{n}{l} \binom{n-l}{K-l} \binom{n-K}{K-l} \binom{n}{K}^{-2} = \binom{K}{l} \binom{n-K}{K-l} \binom{n}{K}^{-1} .$$
\end{lemma}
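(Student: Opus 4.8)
The plan is to prove the identity
$$\binom{n}{l}\binom{n-l}{K-l}\binom{n-K}{K-l}\binom{n}{K}^{-2}=\binom{K}{l}\binom{n-K}{K-l}\binom{n}{K}^{-1}$$
by a direct algebraic manipulation, since both sides are built from factorials and the statement is an identity rather than an inequality. First I would cancel the common factor $\binom{n-K}{K-l}$ appearing on both sides (it is strictly positive whenever the binomial coefficients are well-defined, i.e. $K-l\le n-K$; in degenerate cases where it vanishes both sides are simply $0$ and the identity holds trivially). This reduces the claim to showing
$$\binom{n}{l}\binom{n-l}{K-l}\binom{n}{K}^{-1}=\binom{K}{l},$$
equivalently $\binom{n}{l}\binom{n-l}{K-l}=\binom{n}{K}\binom{K}{l}$.

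The cleanest way to finish is the standard ``subset-of-a-subset'' (or trinomial revision) identity: both sides count the number of ways to choose an ordered pair $(A,B)$ with $B\subseteq A\subseteq[n]$, $|B|=l$, $|A|=K$. The left-hand side first picks $B$ ($\binom{n}{l}$ ways) and then extends it to $A$ by adding $K-l$ of the remaining $n-l$ elements ($\binom{n-l}{K-l}$ ways); the right-hand side first picks $A$ ($\binom{n}{K}$ ways) and then picks $B$ inside it ($\binom{K}{l}$ ways). Alternatively, one can just expand everything in factorials:
$$\binom{n}{l}\binom{n-l}{K-l}=\frac{n!}{l!(n-l)!}\cdot\frac{(n-l)!}{(K-l)!(n-K)!}=\frac{n!}{l!(K-l)!(n-K)!}=\frac{n!}{K!(n-K)!}\cdot\frac{K!}{l!(K-l)!},$$
which is exactly $\binom{n}{K}\binom{K}{l}$.

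There is no real obstacle here; the only mild care needed is the bookkeeping of degenerate ranges. If $\binom{n-K}{K-l}=0$ (which happens precisely when $2K-l>n$) both sides vanish, so the identity is trivially true, and otherwise the cancellation is legitimate and the factorial computation above goes through verbatim. I would therefore present the proof as: (i) reduce to the case $\binom{n-K}{K-l}>0$ and cancel it; (ii) verify $\binom{n}{l}\binom{n-l}{K-l}=\binom{n}{K}\binom{K}{l}$ either combinatorially via the subset-of-a-subset count or by the one-line factorial expansion.
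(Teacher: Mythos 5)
Your proof is correct and amounts to the same factorial bookkeeping as the paper's, which simply expands all binomials into factorials and cancels directly; your version just organizes the cancellation by first removing the common factor $\binom{n-K}{K-l}$ and invoking the standard identity $\binom{n}{l}\binom{n-l}{K-l}=\binom{n}{K}\binom{K}{l}$. The combinatorial subset-of-a-subset interpretation and the handling of the degenerate case $2K-l>n$ are nice but not needed beyond what the direct expansion already gives.
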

\begin{proof}
\begin{align*}
    \binom{n}{l} \binom{n-l}{K-l} \binom{n-K}{K-l} \binom{n}{K}^{-2} &= \frac{n!}{l!(n-l)!} \frac{(n-l)!}{(K-l)!(n-K)!} \frac{(n-K)!}{(K-l)!(n-2K+l)!} \frac{K!^2 (n-K)!^2 }{n!^2}\\
    &= \frac{1}{l!} \frac{1}{(K-l)!} \frac{1}{(K-l)!(n-2K+l)!} \frac{K!^2 (n-K)!^2 }{n!}\\
    &= \frac{K!}{l! (K-l)!} \frac{(n-K)!}{(K-l)!(n-2K+l)!} \frac{K! (n-K)!}{n!}\\
    &= \binom{K}{l} \binom{n-K}{K-l} \binom{n}{K}^{-1}.
\end{align*}
\end{proof}

\begin{lemma}\label{Lemma 4.2}
Define $\phi: [0, +\infty) \to \mathbb{R} : x \mapsto -x\log(x) + x -1$ (extended by continuity at $x=0$ : $\phi(0)=-1$). $\phi$ is strictly increasing on $[0,1]$, strictly decreasing on $[1,+\infty)$ with maximum value $0$ uniquely achieved at $x=1$.
\end{lemma}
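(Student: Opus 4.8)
The plan is to study the function $\phi(x) = -x\log(x) + x - 1$ via elementary calculus on $(0,+\infty)$, then handle the boundary point $x=0$ separately by a continuity/limit argument. First I would compute the derivative: $\phi'(x) = -\log(x) - x\cdot\frac{1}{x} + 1 = -\log(x)$. This is the key computation, and it is clean because the $-1$ coming from differentiating $-x\log x$ cancels exactly against the $+1$ from differentiating $x$. From $\phi'(x) = -\log(x)$ we read off immediately that $\phi'(x) > 0$ for $x \in (0,1)$, $\phi'(x) < 0$ for $x \in (1,+\infty)$, and $\phi'(1) = 0$. Hence $\phi$ is strictly increasing on $(0,1]$ and strictly decreasing on $[1,+\infty)$, so it attains a strict global maximum at $x=1$, with value $\phi(1) = -1\cdot 0 + 1 - 1 = 0$.

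Next I would address the endpoint $x=0$. Since $x\log x \to 0$ as $x\downarrow 0$, we have $\phi(x) \to -1 = \phi(0)$, so the continuity extension is consistent, and the monotonicity on $(0,1]$ extends to $[0,1]$: for $0 \le x < y \le 1$ we get $\phi(x) < \phi(y)$ either by the mean value theorem on $(0,1]$ (when $x>0$) or by taking a limit $x\downarrow 0$ of the inequality $\phi(x') < \phi(y)$ for small $x' \in (x,y)$ combined with strictness established just inside the interval. In particular $\phi(0) = -1 < \phi(y)$ for every $y \in (0,1]$, confirming $0$ is the unique maximizer.

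There is essentially no obstacle here; the only mild subtlety is making the ``strictly increasing on the closed interval $[0,1]$'' claim rigorous at the non-differentiable-looking endpoint $x=0$ (where in fact $\phi'(x) = -\log x \to +\infty$), but this is dispatched by the limiting argument above. I would present the derivative computation, the sign analysis, the evaluation $\phi(1)=0$, and the endpoint remark, in that order.
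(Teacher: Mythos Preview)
Your proposal is correct and follows the same approach as the paper, which simply computes $\phi'(x) = -\log(x)$ and notes that the result then follows. Your version is considerably more detailed (particularly in treating the endpoint $x=0$), but the core idea is identical.
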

\begin{proof}
We have for $x\in (0, +\infty)$ $\phi'(x) = -\log(x)$, the result then readily follows.
\end{proof}

\begin{lemma}\label{Lemma 4.4}
Let $\alpha\in (0,1)$, then for $l = \eta \frac{K^2}{n}$ and $\eta \leq \frac{n}{K}$, we have:
$$\left( \frac{Ke}{l}\right)^l  \left(\frac{(n-K)e}{K-l} \right) ^{K-l}  \left( \frac{K}{n} \right)^K \left( \frac{n-K}{n} \right)^{n-K} \leq   \exp\left(W_{n,K,l}  \right), $$
where $W_{n,K,l} \triangleq l \left(-\log(\eta) + 1 + \frac{K}{n} \right) $.
\end{lemma}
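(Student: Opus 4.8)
The plan is to bound the left-hand side directly by taking logarithms and estimating each of the four factors. Writing $F_{n,K,l}$ for the logarithm of the left-hand side, we have
\[
F_{n,K,l} = l\log\!\left(\frac{Ke}{l}\right) + (K-l)\log\!\left(\frac{(n-K)e}{K-l}\right) + K\log\!\left(\frac{K}{n}\right) + (n-K)\log\!\left(\frac{n-K}{n}\right),
\]
and the goal is to show $F_{n,K,l} \le W_{n,K,l} = l\bigl(-\log\eta + 1 + K/n\bigr)$, where $l = \eta K^2/n$.

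First I would substitute $l = \eta K^2/n$ into the first term: $l\log(Ke/l) = l\log\!\left(\frac{ne}{\eta K}\right) = l\bigl(1 + \log(n/K) - \log\eta\bigr)$. Comparing with the target, this already contributes the $l(1-\log\eta)$ part of $W_{n,K,l}$, so it remains to show that the sum of the remaining three terms, namely $l\log(n/K) + (K-l)\log\!\left(\frac{(n-K)e}{K-l}\right) + K\log(K/n) + (n-K)\log((n-K)/n)$, is at most $lK/n$. The natural device here is the function $\phi(x) = -x\log x + x - 1 \le 0$ from Lemma~\ref{Lemma 4.2}, which packages the inequality $\log x \le x - 1$ in a convenient multiplicative form. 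I would rewrite $(K-l)\log\!\left(\frac{(n-K)e}{K-l}\right)$ by pulling out $n-K$: setting $x = \frac{K-l}{n-K}$, this term equals $(n-K)\bigl(x\log(1/x) + x\bigr) = (n-K)\bigl(\phi(x) + 1\bigr) = (n-K)\phi(x) + (n-K)$. Since $\phi(x)\le 0$, we drop it and keep $(n-K)$. Likewise $(n-K)\log((n-K)/n) = (n-K)\log(1 - K/n) \le (n-K)\cdot(-K/n) \cdot$ (using $\log(1-u)\le -u$), which gives $-(n-K)K/n = -K + K^2/n$.

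Assembling these bounds, the three leftover terms are at most $l\log(n/K) - K\log(n/K) + (n-K) - K + K^2/n = (l-K)\log(n/K) + (n - 2K) + K^2/n$. This does not obviously collapse to $lK/n$, so I expect the bookkeeping to require a slightly sharper treatment of the $(K-l)\log(\cdot)$ term — rather than discarding $\phi(x)$ entirely, I would retain it and combine with the $l\log(n/K)$ and $-K\log(n/K)$ contributions, expanding $\phi\!\left(\frac{K-l}{n-K}\right)$ and using $l\ll K \ll n$ together with $\eta \le n/K$ to control the cross terms; an alternative is to write everything over a common combinatorial identity and invoke the concavity of $\log$. The main obstacle is precisely this: making sure the error terms that are $o(lK/n)$ in the relevant regime are genuinely absorbed, and that no term of order $K$ or $K\log(n/K)$ survives uncancelled. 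Once the algebra is arranged so that the surviving terms are exactly $l(1-\log\eta) + lK/n$ plus nonpositive contributions, exponentiating gives the claim.
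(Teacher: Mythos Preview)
Your overall framework matches the paper's: take logarithms and control each piece via elementary inequalities of the form $\phi(x)=-x\log x+x-1\le 0$ (equivalently, convexity of $t\mapsto t\log t$). The gap is in the execution, specifically in which instance of $\phi$ to use.

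Applying $\phi$ with $x=(K-l)/(n-K)$ is, as you observed, far too crude: it leaves an uncancelled $n-2K$. But your proposed fix --- ``retain $\phi\!\left(\tfrac{K-l}{n-K}\right)$ and expand'' --- does not help, because expanding that $\phi$ just unwinds the definition and returns you to the original expression $(K-l)\log\!\big((n-K)e/(K-l)\big)$. The correct move is to apply $\phi$ with the \emph{different} argument $x=(K-l)/K$; equivalently, Taylor-expand $(K-l)\log(K-l)$ around $K\log K$. Since $K\phi\!\big((K-l)/K\big)\le 0$, this yields the clean one-line bound
\[
(K-l)\log(K-l)\;\ge\; K\log K - l\log K - l,
\]
which is exactly the inequality the paper isolates. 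Substituting this into your $F_{n,K,l}$ and regrouping all the $\log(n-K)$ and $\log n$ terms gives
\[
F_{n,K,l}\;\le\; l(1-\log\eta)\;+\;K\;+\;(n-l)\log\!\Big(1-\tfrac{K}{n}\Big),
\]
and now the first-order bound $\log(1-K/n)\le -K/n$ alone suffices: $K+(n-l)(-K/n)=lK/n$, so $F_{n,K,l}\le l(1-\log\eta+K/n)=W_{n,K,l}$ exactly, with no error terms to absorb. (The paper in fact keeps the second-order term $-K^2/(2n^2)$ in $\log(1-K/n)$ as well, but then discards the resulting nonpositive contribution $lK^2/(2n^2)-K^2/(2n)$, so first order already does the job.)
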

\begin{proof}

We have:
\begin{align*}
    \left( \frac{Ke}{l}\right)^l  \left(\frac{(n-K)e}{K-l} \right) ^{K-l}  \left( \frac{K}{n} \right)^K \left( \frac{n-K}{n} \right)^{n-K}
    &= \frac{ e^K K^{l+K} (n-K)^{n-l}}{l^l n^n (K-l)^{K-l}}\\
    &= \exp(\hat W_{n,K,l}),
\end{align*}
where:
\begin{align*}
    \hat W_{n,K,l} &\triangleq K + (l+K)\log(K)  - l\log(l) -n\log(n) + (n-l)\log(n-K)- (K-l)\log(K-l)
\end{align*}
Next we analyze the last two terms $U \triangleq (n-l)\log(n-K)$ and $V\triangleq (K-l)\log(K-l)$. We have:
\begin{align*}
    \log(n-K) &= \log(n) + \log\left( 1 - \frac{K}{n}\right)\\
    &= \log(n) - \frac{K}{n} - \frac{K^2}{2n^2} -\sum_{i\geq 3} \frac{K^i}{n^i}\\
    &\leq\log(n) - \frac{K}{n} - \frac{K^2}{2n^2}.
\end{align*}
Thus:
\begin{align*}
    U &= (n-l)\log(n-K)\\
    &\leq (n-l) \left[\log(n) - \frac{K}{n} - \frac{K^2}{2n^2}   \right]\\
    &= n\log(n) - K - \frac{K^2}{2n} - l\log(n) + \frac{lK}{n} + \frac{lK^2}{2n^2}.
\end{align*}
Next we have
\begin{align*}
    V &= (K-l) \log(K-l)\\
    &= (K-l)\left( \log(K) + \log\left(1-\frac{l}{K}\right)\right)\\
    &= K\log(K) - l \log(K) - (K-l) \sum_{m\geq 1}  \frac{1}{m}\left(\frac{l}{K}\right)^m\\
    &= K\log(K) - l \log(K) - \sum_{m\geq 1}  \frac{K}{m}\left(\frac{l}{K}\right)^m+ \sum_{m\geq 2}  \frac{K}{m-1}\left(\frac{l}{K}\right)^m\\
    &=  K\log(K) - l \log(K) - l +\sum_{m\geq 2}  \left( \frac{K}{m-1} - \frac{K}{m}\right)\left(\frac{l}{K}\right)^m\\
    &= K\log(K) - l \log(K) - l +\sum_{m\geq 2}  \frac{K}{m(m-1)}\left(\frac{l}{K}\right)^m\\
    &\geq K\log(K) - l \log(K) - l.
\end{align*}

Therefore
\begin{align*}
    \hat W_{n,K,l} &\leq  K + (l+K)\log(K)  - l\log(l) -n\log(n)  \\
    &+ n\log(n) - K - \frac{K^2}{2n} - l\log(n) + \frac{lK}{n} + \frac{lK^2}{2n^2}\\
    &-K \log(K) + l\log(K) + l\\ 
    &\leq  2l \log(K) -l\log(l) +l-l\log(n) + \frac{lK}{n}+\frac{lK^2}{2n^2} - \frac{K^2}{2n} \\
    &\leq l \left( \log \left( \frac{K^2}{nl}\right) +1 +   \frac{K}{n}  \right)  \\
    &= l\left(-\log(\eta) + 1 + \frac{K}{n}\right) \\
    &= W_{n,K,l},
\end{align*}
which completes the proof.

\end{proof}





\begin{lemma}\label{Lemma 4.5}
let $K  \in  [n]$. The following asymptotic relations hold:

\begin{alignat*}{2}
   \binom{n}{K} &= (1+o(1)) \frac{1}{\sqrt{2 \pi K }}\left( \frac{ne}{K}\right)^K   \text{ , }&& \omega(1) = K =o(\sqrt n )\\
   \binom{n}{K} &= (1+o(1)) \sqrt{\frac{n}{2 \pi K(n-K)} } \left( \frac{n}{K}\right)^k\left( \frac{n}{n-K}\right)^{n-K} \text{ , }&&\omega(1) = K =o(n)\\
     \log \left( \binom{n}{K}\right) &\sim K \log\left(\frac{n}{K}\right) \text{ , }&& K =o(n)\\
    \binom{n}{K} &\leq \left( \frac{K e}{n} \right)^K \text{ , }&& 1 \leq K \leq n\\
    \frac{\binom{n-K}{K}}{\binom{n}{K}} &= 1 + o(1) \text{ , }&& K=o(\sqrt{n})\\
     \frac{K^2}{n} \frac{\binom{n-K}{K}}{\binom{n}{K}} &=o(1) \text{ , } && K=\omega(\sqrt{n})
\end{alignat*}
\end{lemma}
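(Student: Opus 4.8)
The plan is to prove each of the six asymptotic relations essentially independently, using Stirling's formula as the single main tool, and then read off the cruder estimates as corollaries of the sharper ones. Recall Stirling in the form $m! = (1+o(1))\sqrt{2\pi m}\,(m/e)^m$ as $m\to\infty$, together with the elementary bound $m!\ge (m/e)^m$ valid for all $m\ge 1$; I would state these at the outset so they can be quoted freely below.

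\textbf{Relations 1 and 2 (the two Stirling expansions of $\binom{n}{K}$).} First I would treat relation 2, the more general one. Writing $\binom{n}{K}=n!/(K!(n-K)!)$ and substituting Stirling for all three factorials — legitimate since $K\to\infty$, $n\to\infty$, and $n-K\to\infty$ because $K=o(n)$ — the exponential parts combine to $(n/e)^n/((K/e)^K((n-K)/e)^{n-K}) = (n/K)^K (n/(n-K))^{n-K}$, and the polynomial prefactors combine to $\sqrt{n/(2\pi K(n-K))}$; the three $(1+o(1))$ factors multiply to a single $(1+o(1))$. For relation 1, valid in the narrower window $\omega(1)=K=o(\sqrt n)$, I would start from relation 2 and simplify the prefactor and the $(n/(n-K))^{n-K}$ term: $\sqrt{n/(2\pi K(n-K))}=(1+o(1))/\sqrt{2\pi K}$ since $K/n\to 0$, and $(n/(n-K))^{n-K}=(1-K/n)^{-(n-K)}=e^{K}(1+o(1))$ using $\log(1-K/n)^{-(n-K)} = (n-K)(K/n + K^2/(2n^2)+\cdots) = K + O(K^2/n) = K+o(1)$, the last step being exactly where $K=o(\sqrt n)$ is needed. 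Multiplying the surviving $e^K$ into $(n/K)^K$ gives $(ne/K)^K$, yielding relation 1.

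\textbf{Relation 3 (the logarithmic asymptotics).} Taking logs in relation 2 (or using $\binom{n}{K}\le (ne/K)^K$ for the upper bound and $\binom{n}{K}\ge (n/K)^K$ for a crude lower bound), one gets $\log\binom{n}{K} = K\log(n/K) + O(K) + O(\log n)$; dividing by $K\log(n/K)$ and noting $\log(n/K)\to\infty$ (since $K=o(n)$) kills the $O(K)$ term, and $\log(n)/(K\log(n/K))\to 0$ as well, giving the claimed $\sim$. \textbf{Relation 4 (the uniform upper bound).} This is the standard $\binom{n}{K}\le n^K/K!\le (ne/K)^K$ via $K!\ge (K/e)^K$; it holds for all $1\le K\le n$ with no asymptotics needed, so I would simply cite the elementary inequality.

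\textbf{Relations 5 and 6 (ratios of consecutive-type binomials).} Write $\binom{n-K}{K}/\binom{n}{K} = \prod_{i=0}^{K-1}\frac{n-K-i}{n-i} = \prod_{i=0}^{K-1}\bigl(1-\frac{K}{n-i}\bigr)$. Taking logs, $\log\frac{\binom{n-K}{K}}{\binom{n}{K}} = \sum_{i=0}^{K-1}\log\bigl(1-\frac{K}{n-i}\bigr)$, and since $0\le i\le K-1=o(n)$ every term satisfies $\frac{K}{n-i}=(1+o(1))\frac{K}{n}\to 0$ uniformly, so each $\log(1-K/(n-i)) = -K/(n-i)+O(K^2/n^2)$, whence the sum is $-(1+o(1))K^2/n + O(K^3/n^2) = -(1+o(1))K^2/n$. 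When $K=o(\sqrt n)$ this exponent is $o(1)$, giving relation 5; when $K=\omega(\sqrt n)$ the ratio is $\exp(-(1+o(1))K^2/n)$ and multiplying by $K^2/n = n^{o(1)}\cdot$(polynomial) still tends to $0$ because the exponential decay $e^{-\Theta(K^2/n)}$ with $K^2/n\to\infty$ dominates any polynomial factor — that gives relation 6.

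\textbf{Main obstacle.} None of the steps is deep; the one place demanding genuine care is relation 1, where the restriction $K=o(\sqrt n)$ is exactly what makes the error $O(K^2/n)$ in $\log(1-K/n)^{-(n-K)}$ negligible, and one must not conflate this with the weaker regime of relation 2 — getting the quantifiers right across the six cases (which hypothesis powers which estimate) is the only real bookkeeping burden, and I would be explicit about it in each case.
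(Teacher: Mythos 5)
Your proposal is correct, and it actually does more work than the paper, which simply cites \cite{das2020brief} for the first four relations and only proves relations 5 and 6 itself. For those last two, your route differs from the paper's: you expand the ratio as the telescoping product $\prod_{i=0}^{K-1}\bigl(1-\tfrac{K}{n-i}\bigr)$ and sum the logarithms term by term, whereas the paper writes $\binom{n-K}{K}/\binom{n}{K}=\frac{(n-K)!\,(n-K)!}{n!\,(n-2K)!}$ and applies Stirling to all four factorials before Taylor-expanding $\log(1-K/n)$ and $\log(1-2K/n)$. Both arrive at the same key estimate $\exp\bigl(-K^2/n+O(K^3/n^2)\bigr)$, from which relations 5 and 6 follow exactly as you argue. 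Your product expansion is slightly more elementary and avoids needing Stirling's validity for $(n-2K)!$ (i.e.\ $n-2K\to\infty$), though in the paper's regime $K=n^\alpha$, $\alpha<1$, this is immaterial. Two minor remarks: your derivation of relation 1 from relation 2, isolating $K^2/n=o(1)$ as the step where $K=o(\sqrt n)$ enters, is exactly the right bookkeeping; and your relation 4 in the form $\binom{n}{K}\le (ne/K)^K$ is the correct statement (the paper's display contains a typo with $K$ and $n$ transposed, as its later applications confirm).
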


\begin{proof}
The proof of the first 4 statements can be found  in \cite{das2020brief}. We now prove the remaining two statements. We have using Stirling formula:
\begin{align*}
 \frac{\binom{n-K}{K}}{\binom{n}{K}} &=  \frac{(n-K)! (n-K)!}{n! (n-2K)!}\\
&\sim \sqrt{ \frac{(n-K)(n-K)}{n (n-K)}}  \frac{(n-K)^{2n-2K}}{n^n (n-2K)^{n-2K}}\\
&\sim  \frac{(n-K)^{2n-2K}}{n^n (n-2K)^{n-2K}}\\
&=  \frac{(1-\frac{K}{n})^{2n-2K}}{(1-\frac{2K}{n})^{n-2K}}\\
&=  \exp \left(  (2n-2K) \log \left(1-\frac{K}{n}\right) - (n-2K)\log\left(1-\frac{2K}{n}\right)\right)\\
&=  \exp \left(  (2n-2K) \left(-\frac{K}{n} - \frac{K^2}{2 n^2}\right) - (n-2K)\left(-\frac{2K}{n} -\frac{2 K^2}{n^2}\right) + O\left( \frac{K^3}{n^2}\right)\right)\\
&=  \exp \left(  -\frac{K^2}{n} + O\left(\frac{K^3}{n^2}\right) \right) .\numberthis \label{ratio:binom}
\end{align*}
If $K=o(\sqrt{n})$ then $ \frac{\binom{n-K}{K}}{\binom{n}{K}} = 1 + o(1)$, and if $K=\omega(\sqrt{n})$ then $ \frac{K^2}{n} \frac{\binom{n-K}{K}}{\binom{n}{K}} \sim \frac{K^2}{n} \exp \left(  -\frac{K^2}{n} + O\left(\frac{K^3}{n^2}\right) \right) = o(1)$.

\end{proof}

\begin{lemma}\label{Lemma 4.80}
Recall $V(n,K), L(n,K)$ and $U(n,K)$ defined by (\ref{V(n,K)}), (\ref{L(n,K)}), (\ref{U(n,K)}) respectively. For  $\omega(1) = K  =o(n)$ identities (\ref{eq:Vnk}), (\ref{eq:VvsLUAbsolute}), and (\ref{LNK}) hold.
\end{lemma}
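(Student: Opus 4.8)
\textbf{Proof plan for Lemma \ref{Lemma 4.80}.}

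The plan is to work throughout with the quantity $m \triangleq 2\binom{K}{2} = K(K-1)$, so that $V(n,K) = \sqrt{m\log\binom{n}{K}}$, $L(n,K) = \sqrt{m\log(\binom{n}{K}/K)}$, and $U(n,K) = \sqrt{m\log(\binom{n}{K}/\sqrt{K\log(n/K)})}$. The single fact I need from the regime $\omega(1)=K=o(n)$ is the asymptotic $\log\binom{n}{K} \sim K\log(n/K)$ from Lemma \ref{Lemma 4.5}, together with the cruder two-sided estimate $K\log(n/K) \lesssim \log\binom{n}{K} \le K\log(ne/K)$ coming from the same lemma; in particular $\log\binom{n}{K} = \Theta(K\log(n/K))$ and, since $K\to\infty$, $\log\binom{n}{K} = \omega(\log K)$ and $\log\binom{n}{K} = \omega(\log(K\log(n/K)))$ (the latter because $\log(K\log(n/K)) = O(\log(Kn)) = O(\log n)$ while $K\log(n/K)$ grows at least like $K$, which dominates $\log n$ once $\alpha$ is bounded below — more carefully, $K \ge \log^2 n$ eventually since $K=n^{\alpha}$, so this is fine; but to keep the lemma in the stated generality $K=o(n)$ I would instead note $\log(K\log(n/K)) \le 2\log n = o(K\log(n/K))$ whenever $K/\log n \to\infty$, and handle the borderline $K = \Theta(\log n)$ separately if needed, though the paper only ever applies this with $K=n^\alpha$).

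First I would prove (\ref{LNK}): all three quantities equal $\sqrt{m}$ times $\sqrt{\log\binom{n}{K}}$ adjusted by a lower-order additive term inside the logarithm, and since $\log\binom{n}{K}$ dominates both $\log K$ and $\log\sqrt{K\log(n/K)}$, each of $\log(\binom{n}{K}/K)$ and $\log(\binom{n}{K}/\sqrt{K\log(n/K)})$ is $(1+o(1))\log\binom{n}{K} \sim K\log(n/K)$; combined with $m = K(K-1) \sim K^2$ this gives $V,L,U \sim \sqrt{K^2 \cdot K\log(n/K)} = K^{3/2}\sqrt{\log(n/K)}$. Next, (\ref{eq:Vnk}): writing $V - L$ or $V - U$ requires subtracting two square roots, so I would use the identity $\sqrt{a} - \sqrt{b} = (a-b)/(\sqrt a + \sqrt b)$ with $a = m\log\binom{n}{K}$ and $b$ the corresponding shifted version. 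Then $V/L = \sqrt{\log\binom{n}{K}/\log(\binom{n}{K}/K)} = (1 - \log K/\log\binom{n}{K})^{-1/2}$, and since $\log K/\log\binom{n}{K} = O(\log K /(K\log(n/K))) = O(1/K)$ (using $\log K = O(\log K)$ against $K\log(n/K) \ge K$), a first-order expansion of $(1-x)^{-1/2}$ gives $V/L = 1 + O(1/K)$; the same computation with $\log\sqrt{K\log(n/K)}$ in place of $\log K$ gives $V/U = 1+O(1/K)$, because $\log\sqrt{K\log(n/K)} = O(\log n) = O(\log K)$ in the relevant range (here again I'd want $\log n = O(K)$, true for $K = n^\alpha$). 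Finally, (\ref{eq:VvsLUAbsolute}): $V - L = m(\log K)/(V + L)$, and $V + L \asymp K^{3/2}\sqrt{\log(n/K)}$ while $m\log K \asymp K^2\log K$, so $V - L = O(K^2\log K / (K^{3/2}\sqrt{\log(n/K)})) = O(\sqrt{K}\,\log K/\sqrt{\log(n/K)}) = O(\sqrt{K\log n})$; the identical bound for $V - U$ follows since $\log\sqrt{K\log(n/K)} = O(\log n)$.

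The main obstacle is purely bookkeeping rather than conceptual: making sure the suppressed additive terms inside the logarithms ($\log K$, $\tfrac12\log(K\log(n/K))$) are genuinely $O(\log n)$ and hence negligible against $\log\binom{n}{K} = \Theta(K\log(n/K))$ uniformly over the stated range of $K$, and keeping the $O(1/K)$ in (\ref{eq:Vnk}) honest — the expansion $(1-x)^{-1/2} = 1 + \tfrac12 x + O(x^2)$ is valid only once $x$ is bounded away from $1$, which is where $K\to\infty$ is used. I would also double-check the direction of the inequality $L \le U \le V$ is consistent (it is, since $K \ge \sqrt{K\log(n/K)} \ge 1$ eventually, as $K \ge \log(n/K)$ for $K = n^\alpha$), so that all the $o(1)$ and $O(\cdot)$ statements are two-sided as claimed. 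None of the steps needs anything beyond Lemma \ref{Lemma 4.5}, elementary $\log$ and square-root manipulations, and the hypothesis $K\to\infty$.
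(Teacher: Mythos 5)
Your proposal is correct and follows essentially the same route as the paper: both rest on $\log\binom{n}{K}\sim K\log(n/K)$ (part 3 of Lemma \ref{Lemma 4.5}) and elementary square-root manipulations, with your identity $\sqrt{a}-\sqrt{b}=(a-b)/(\sqrt{a}+\sqrt{b})$ being just a rewriting of the paper's Taylor expansion of $\sqrt{1-x}$. Your worry about the borderline regime is unnecessary in the generality stated, since $\log\bigl(K\log(n/K)\bigr)\le 2\log n = o\bigl(K\log(n/K)\bigr)$ already follows from $K\to\infty$ and $n/K\to\infty$; in any case the paper applies the lemma only with $K=n^{\alpha}$.
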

\begin{proof}
We first prove property (\ref{eq:Vnk}). We have:
\begin{align*}
    \frac{V(n,K)}{L(n,K)} &= \sqrt{\frac{ \log(\binom{n}{K})} {\log(\binom{n}{K} \frac{1}{K})}}\\
    &= \sqrt{ 1 + \frac{\log(K)}{\log(\binom{n}{K} \frac{1}{K})  } }\\
    &= 1 + O\left(\frac{\log(K)}{\log(\binom{n}{K} \frac{1}{K}) } \right)\\
    &= 1 + O\left( \frac{\log(K)}{K \log(\frac{n}{K}) - \log(K)}  \right) \numberthis \label{10.0} \\
    &= 1 + O\left(\frac{1}{K} \right).
\end{align*}
Where we used part 3 of Lemma \ref{Lemma 4.5} in (\ref{10.0}). Similar computations yield $\frac{V(n,K)}{U(n,K)}=1 + O\left(\frac{1}{K} \right)$. Next we prove property (\ref{eq:VvsLUAbsolute}). We have:
\begin{align*}
    V(n,K) - L(n,K) &= \sqrt{2 \binom{K}{2} \log\left(\binom{n}{K}\right)}  - \sqrt{2 \binom{K}{2} \log\left(\binom{n}{K} \frac{1}{K}\right)} \\
    &= \sqrt{2 \binom{K}{2} \log\left(\binom{n}{K}\right)}  \left( 1 - \sqrt{1 - \frac{\log(K)}{\log(\binom{n}{K})}}\right)\\
    &\sim \sqrt{K^2 K\log\left(\frac{n}{K}\right)} \frac{1}{2}\frac{\log(K)}{K\log(\frac{n}{K}) } \numberthis \label{11.0}\\
    &=O \left(  \sqrt{K \log(n)}\right),
\end{align*}
where we used part 3 of Lemma \ref{Lemma 4.5} in (\ref{11.0}). Similarly, we get $V(n,K) - U(n,K) = O\left(\sqrt{K \log(n)}\right)$. Finally, note that property (\ref{LNK}) follows immediately from property (\ref{eq:Vnk}) and part 3 of Lemma \ref{Lemma 4.5}.
\end{proof}

\begin{lemma}\label{Lemma 4.9}
Let $h : \left[0,  \frac{1}{2} \right] \to [0,1] $ be the binary entropy function defined by $h(x) = -x \log(x) - (1-x) \log(1-x)$. Then for $\epsilon = \epsilon_n \to 0$, it holds:
$$ h^{-1}(\log 2 - \epsilon) = \frac{1}{2} + \frac{1}{\sqrt{2}}\sqrt{\epsilon} - \frac{1}{6\sqrt{2}}\epsilon^{\frac{3}{2}} + O\left(\epsilon^{\frac{5}{2}} \right) .$$
\end{lemma}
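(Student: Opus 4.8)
\emph{Proof proposal.} The plan is to reduce the statement to a Taylor expansion of $h$ about $x=\tfrac12$ followed by a series reversion. Throughout, $h^{-1}$ is understood as the inverse of the restriction of $h$ to $[\tfrac12,1]$ (the branch taking values $\ge \tfrac12$), which is the one relevant to the later applications and the one for which the stated $+\sqrt{\epsilon}$ sign is correct. Since $h$ is real-analytic on $(0,1)$ with $h(\tfrac12)=\log 2$, with $h'(x)=\log\frac{1-x}{x}$ so that $h'(\tfrac12)=0$, and with $h$ strictly decreasing on $[\tfrac12,1]$ and possessing a continuous inverse there, the quantity $x_n:=h^{-1}(\log 2-\epsilon_n)$ is well-defined for all large $n$ and satisfies $x_n\to\tfrac12$. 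Writing $t=t_n:=x_n-\tfrac12\ge 0$, it suffices to show $t=\tfrac{1}{\sqrt 2}\sqrt{\epsilon}-\tfrac{1}{6\sqrt 2}\epsilon^{3/2}+O(\epsilon^{5/2})$.

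First I would record the low-order derivatives of $h$ at $\tfrac12$. Differentiating $h'(x)=\log\frac{1-x}{x}$ gives $h''(x)=-\frac{1}{x(1-x)}$, $h'''(x)=\frac{1}{x^2}-\frac{1}{(1-x)^2}$, $h^{(4)}(x)=-\frac{2}{x^3}-\frac{2}{(1-x)^3}$, hence $h''(\tfrac12)=-4$, $h'''(\tfrac12)=0$, $h^{(4)}(\tfrac12)=-32$; moreover, by the symmetry $h(\tfrac12+t)=h(\tfrac12-t)$ every odd-order derivative at $\tfrac12$ vanishes. Since $h$ is analytic on the disc $|t|<\tfrac12$, Taylor's theorem (with the remainder genuinely $O(t^6)$ as $t\to 0$) yields
\[
h\!\left(\tfrac12+t\right)=\log 2-2t^2-\tfrac43 t^4+O(t^6),\qquad t\to 0.
\]
Evaluating at $t=t_n$ and using $h(\tfrac12+t_n)=\log 2-\epsilon$ produces the relation $\epsilon=2t^2+\tfrac43 t^4+O(t^6)$.

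Next I would invert this relation. Setting $s=t^2$, the map $s\mapsto\epsilon=2s+\tfrac43 s^2+O(s^3)$ is analytic near $0$ with nonzero derivative $2$ at the origin, so by the analytic inverse function theorem $s=s(\epsilon)$ is analytic near $0$; bootstrapping (substitute $s=\tfrac\epsilon2+O(\epsilon^2)$ into $s=\tfrac\epsilon2-\tfrac23 s^2+O(s^3)$) gives $s(\epsilon)=\tfrac{\epsilon}{2}-\tfrac{\epsilon^2}{6}+O(\epsilon^3)$. Since $t=\sqrt{s(\epsilon)}\ge 0$ for small $\epsilon>0$, expanding the square root,
\[
t=\sqrt{\tfrac{\epsilon}{2}}\,\Bigl(1-\tfrac{\epsilon}{3}+O(\epsilon^2)\Bigr)^{1/2}=\frac{1}{\sqrt 2}\sqrt{\epsilon}-\frac{1}{6\sqrt 2}\,\epsilon^{3/2}+O(\epsilon^{5/2}),
\]
gives the claim after adding back $\tfrac12$. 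One should check the $O(\cdot)$ terms survive the reversion: because $t=\Theta(\sqrt{\epsilon})$, the $O(t^6)$ error contributes $O(\epsilon^3)$ to the $s$-equation and $O(\epsilon^{5/2})$ after the square root, so no displayed term is lost.

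There is no genuine obstacle here; the only points requiring a little care are (i) fixing which branch of $h^{-1}$ is meant, since the $[0,\tfrac12]$ branch would reverse the sign of the $\sqrt{\epsilon}$ term, and (ii) verifying that the Taylor remainder of $h$ at $\tfrac12$ — equivalently, the tail of its power series, convergent on $|t|<\tfrac12$ — is truly $O(t^6)$ uniformly as $t\to 0$, which holds because $h$ is analytic at $\tfrac12$. The remaining steps are elementary power-series manipulation.
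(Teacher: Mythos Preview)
Your proof is correct and follows essentially the same approach as the paper: Taylor expand $h$ about $\tfrac12$ to get $\epsilon=2t^2+\tfrac43 t^4+O(t^6)$ and then invert. The paper packages the inversion via the auxiliary function $\Phi(x)=\sqrt{\log 2-h(\tfrac12+x)}$ and Lagrange inversion, whereas you substitute $s=t^2$, invert the analytic map $s\mapsto\epsilon$, and take a square root; these are two organizations of the same computation, and your version is arguably more direct. Your explicit remark about the branch of $h^{-1}$ (values $\ge\tfrac12$) is a useful clarification that the paper leaves implicit.
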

\begin{proof}
We reproduce here the proof given for Lemma 11 in \cite{GamarnikZadik}.
Let $\Phi(x) \triangleq \sqrt{\log(2) - h(\frac{1}{2}) + x}, x \in[0, \frac{1}{2}]$. We calculate that for the sequence of derivatives at $0$, $a_i \triangleq \Phi^{(i)}(0), i\in \mathbb{Z}_{\geq 0}$ it holds $a_0 =0, a_1=\sqrt{2}, a_2=0, a_3=2\sqrt{2}$ and $a_4=0$. Notice that for all $\epsilon \in (0, \log(2))$ and $\Phi^{-1}$ the inverse of $\Phi$,
$$h^{-1} (\log(2) - \epsilon) = 1 + \Phi^{-1}(\sqrt{2}).$$
The result of the lemma follows if we establish that the Taylor expansion of $\Phi^{-1}$ around $y=0$ is given by 
\begin{align*} \Phi^{-1}(y) =\frac{1}{\sqrt{2}}y - \frac{1}{6\sqrt{2}}y^3  + O(y^5). \numberthis \label{Taylor}
\end{align*}
Clearly $\Phi^{-1}(0)=0$. For $b_i = \left(\Phi^{-1}\right)^{(i)}(0), i\in \mathbb{Z}_{\geq 0}$ by standard calculations using Lagrange inversion theorem we have $b_0=0$,
$$b_1=\frac{1}{a_1}=\frac{1}{\sqrt{2}},$$
$$b_2 = -\frac{a_2}{2a_1}=0,$$
$$b_3 = \frac{1}{2\sqrt{2}}\left[-\frac{a_3}{a_1}  + 3 \left(\frac{a_2}{a_1}\right)^2\right]=-\frac{1}{\sqrt{2}},$$
and
$$b_4 = \frac{1}{4} \left[-\frac{a_4}{a_1}+ \frac{10}{3} \frac{a_2a_3}{a_1^2} -60 \frac{a_2}{a_1}\right].$$
From this point, Taylor expansion yields that for small $y$
$$\Phi^{-1}(y) = b_0 + b_1y + \frac{b_2}{2}y^2 + \frac{b_3}{6}y^3 + \frac{b_4}{24} y^4 + O(y^5),$$
which given the values of $b_i, i=0,1,2,3,4$ yields (\ref{Taylor}). Which concludes the proof.

\end{proof}

\subsection{Tail Bounds.} \label{Tail Bounds}




\begin{theorem}\label{Theorem 4.8}
There exists a universal constant $ \beta$ such that for $x\ge \beta  K$:
$$ \p\left(\left| \Psi^{\Bern(\frac{1}{2})}_K(G) - \E[      \Psi^{\Bern(\frac{1}{2})}_K(G)   ] \right|\geq x\right) \leq 4\exp\left(-\frac{x^2}{4 K^2}\right) .$$
\end{theorem}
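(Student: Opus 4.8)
The plan is to prove this concentration bound via Talagrand's convex distance inequality, which is the standard tool for establishing concentration of functions of independent bounded random variables that vary in a controlled way along coordinates. Here the underlying independent variables are the edge indicators $Z_{ij}$, $1\le i<j\le n$, each taking values in $\{0,1\}$, so the disorder lives in the product space $\{0,1\}^{\binom n2}$ equipped with the uniform (product) measure. The function of interest is $f(Z) = \Psi^{\Bern(1/2)}_K(G)$, and the key structural facts are: (i) $f$ is $1$-Lipschitz with respect to the Hamming distance weighted appropriately — flipping a single edge $Z_{ij}$ changes $\Psi_K$ by at most $1$; and (ii) $f$ is ``certifiable'' in the sense required by Talagrand, namely whenever $f(Z)\ge s$ there is a witness set of at most $\binom K2$ coordinates (the edges spanned by an optimal $K$-subset achieving density $\ge s$) whose values alone certify $f\ge s$.

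The main steps, in order, would be as follows. First, recall Talagrand's inequality in the form: for a product probability space and any measurable $A$ with $\mathbb P(A)>0$, one has $\mathbb P(A)\,\mathbb P(d_T(\cdot,A)\ge t)\le e^{-t^2/4}$, where $d_T$ is the convex distance. Second, combine this with the certifiability/Lipschitz structure: if $f$ is $1$-Lipschitz and $c$-certifiable (witness size $\le c$), then for $A=\{f\le a\}$ and any point $x$ with $f(x)\ge a+t\sqrt c$, one gets $d_T(x,A)\ge t$, hence $\mathbb P(f\le a)\,\mathbb P(f\ge a+t\sqrt c)\le e^{-t^2/4}$; here $c=\binom K2\le K^2/2$. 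Third, apply this with $a$ equal to a median $m$ of $f$ to get a one-sided bound $\mathbb P(f\ge m + t\sqrt c)\le 2e^{-t^2/4}$, and symmetrically (taking $A=\{f\ge a\}$ and using certifiability of the complementary event, or just re-running the argument) $\mathbb P(f\le m - t\sqrt c)\le 2e^{-t^2/4}$; adding these yields $\mathbb P(|f-m|\ge t\sqrt c)\le 4e^{-t^2/4}$, i.e.\ $\mathbb P(|f-m|\ge x)\le 4\exp(-x^2/(4c))\le 4\exp(-x^2/(2K^2))$ after rescaling $x = t\sqrt c$. Fourth, transfer from the median to the mean: integrating the median-deviation tail bound shows $|\,\mathbb E[f]-m\,| = O(\sqrt c) = O(K)$, so there is a universal constant $\beta$ such that for all $x\ge\beta K$ the mean-centered tail $\mathbb P(|f-\mathbb E[f]|\ge x)$ is controlled by the median-centered one at $x/2$ (say), and $4\exp(-(x/2)^2/(2K^2)) = 4\exp(-x^2/(8K^2))$, which after adjusting constants in the exponent gives the stated $4\exp(-x^2/(4K^2))$; one should simply track constants to land on the displayed form, or note the statement only claims existence of $\beta$ and a bound of this shape.

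The main obstacle — really the only non-routine point — is verifying the two structural properties of $\Psi_K$ rigorously and with the right constant on the witness size. Lipschitzness is immediate since changing one $Z_{ij}$ from $0$ to $1$ or back changes $Z_S$ by at most $1$ for every $S$, hence changes the maximum over $S$ by at most $1$. Certifiability requires a small argument: if $\Psi_K(Z)\ge s$, pick an optimal $S^*$ with $|S^*|=K$ and $Z_{S^*}\ge s$; then the coordinates $\{(i,j): i<j,\ i,j\in S^*\}$ form a set of size $\binom K2$, and on any other disorder $Z'$ agreeing with $Z$ on these coordinates we have $\Psi_K(Z')\ge Z'_{S^*} = Z_{S^*}\ge s$ — so the witness size is exactly $\binom K2$. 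The only real care needed is then bookkeeping: which version of Talagrand to cite, whether to route through the convex-distance formulation or a packaged ``bounded differences with certificates'' corollary (e.g.\ the form in Talagrand's paper or in Molloy--Reed / Alon--Spencer), and chasing the numerical constants so that the $x\ge\beta K$ restriction absorbs the mean-vs-median gap and the final constant in the exponent comes out as $1/(4K^2)$. None of this is deep, but it is the part where an error could creep in, so I would state the certifiability lemma explicitly before invoking the concentration machinery.
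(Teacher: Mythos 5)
Your proposal is correct and follows essentially the same route as the paper's proof: Talagrand's inequality on the product space $\{0,1\}^{\binom{n}{2}}$, the observation that $\Psi_K^{\Bern(1/2)}$ is $1$-Lipschitz and certifiable by the edges of an optimal $K$-subset, a median-centered two-sided tail bound using $\binom{K}{2}\le K^2/2$, and a median-to-mean transfer absorbed into the restriction $x\ge\beta K$. The only cosmetic difference is that you take a uniform certificate size $\binom{K}{2}$ while the paper uses the value-dependent certificate $f(s)=s$ (the $s$ present edges) and then bounds the median by $K^2/2$; both yield the same exponent up to the constant-chasing you already flag.
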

\begin{proof}
 We recall some definitions and notations related to the application of Talagrand's inequality in our case, as stated in Section 7.7 in \cite{AlonProbabilistic}:

\begin{defn}
Let $\Omega = \prod_{i=1}^{N} \Omega_i$, where $N \triangleq \binom{n}{2}$ and each $\Omega_i$ is a probability space associated to a $\Bern(1/2)$ random variable and $\Omega$ has the product measure. We call $h : \Omega \to \mathbb{R}$ Lipschitz if $|h(x)- h(y)|\leq 1 $ whenever $x,y$ differ in at most one coordinate.
Let $f: \mathbb{N} \to \mathbb{N}$. We say that $h$ is $f$-certifiable if, whenever $h(x) \geq s$, there exists $I \subset \{ 1,...,\binom{n}{2} \}$ with $|I| \leq f(s) $ so that all $y \in \Omega$  that agree with $x$ on the coordinates $I$ have $h(y) \geq s$.
\end{defn}

For our purposes, we consider $X = h(.)\triangleq \Psi^{\Bern(\frac{1}{2})}_K(G)$ where we think of $G$ as a vector in $\{0,1\}^{\binom{n}{2}}$, and note in particular that $|h(G) - h(G')|\leq 1$ whenever $G,G'$ differ in at most 1 coordinate (edge).

We claim that $h$  defined as above is $f$-certifiable with $f(s) \triangleq s$. Indeed, if $h(x) \geq s$, then at least $s$ edges associated with a size $K$ subset of $[n]$ have value $1$ according to $x$. We can then pick $I$ to be the subset of $s$ of these edges. Then any configuration $y$ of edges which includes these $s$ edges must correspond to a densest subgraph of value at least $s$, i.e $h(y)\geq s$. Next, we recall the Talagrand inequality, stated as Theorem 7.7.1 in \cite{AlonProbabilistic}:

\begin{theorem}
Assuming that $h$ is $f$-certifiable, we have for all $b,t$:
$$ \p \left(X \leq b - t\sqrt{f(b)} \right)\p\left( X \geq b \right) \leq e^{-\frac{t^2}{4}} . $$
\end{theorem}
Applying the above theorem in our case with $b \triangleq Median(X)$ and $f(s)\triangleq s$ yields:
$$ \p \left(X \leq Median(X) - t\sqrt{Median(X)} \right)\leq 2e^{-\frac{t^2}{4}} , $$
or equivalently:
$$ \p \left(X - Median(X) \leq - t \right)\leq 2e^{-\frac{t^2}{4 Median(X)}} . $$
Switching $X$ with $\binom{K}{2}-X$ (the certifiability argument still holds but this time we consider the non existing edges as certificates...) yields :
$$ \p \left(  -X + Median(X) \leq -t \right)\leq 2e^{-\frac{t^2}{4 Median\left(\binom{K}{2} - X\right)}} . $$
Since $0 \leq X \leq \binom{K}{2} \leq \frac{K^2}{2}$ we see that $Median(X), Median\left( \binom{K}{2}-X\right) \leq \frac{K^2}{2}$ so that the previous concentration inequalities yield:
$$ \p \left( | X - Median(X)| \geq t \right)\leq 4e^{-\frac{t^2}{2K^2}}  $$
We next claim that $|\E[X] - Median(X)|\leq 4K\sqrt{\pi }$. Indeed using the  concentration inequality above we have:
\begin{align*}
    |\E[X] - Median(X)| &\leq \E [| X -Median(X)| ]\\
    &= \int_{0}^{+\infty} \p \left( | X - Median(X)| \geq t \right) dt\\
    &\leq  \int_{0}^{+\infty}4e^{-\frac{t^2}{2K^2}} \\
    &= 4K\sqrt{2} \sqrt{\frac{\pi}{2}}\\
    &= 4K \sqrt{\pi}.
\end{align*}
Hence, for $t = 4K \sqrt{\pi } + u$ where  $u> 4K \sqrt{\pi }   $ we have:
$$
    \p \left( | X - \E[X]| \geq t \right) \leq \p \left( | X - Median(X)| \geq u \right) 
    \leq 4 e^{-\frac{u^2}{2K^2}} = 4e^{- \frac{(t-4K\sqrt{\pi})^2}{2K^2}} .
$$
Provided  $t \geq \beta K$ where $\beta$ is a constant, we obtain the claim.
\end{proof}

\begin{lemma}\label{Lemma 4.10}
Let $\epsilon_1,..., \epsilon_n$ be i.i.d Rademacher r.v, then if $S_n = \epsilon_1+...+\epsilon_n$, there exists a universal constant $\theta$ such that we have for $x \geq 1$ :
$$\p(S_n \geq x\sqrt{n}) \leq \theta \p(\mathcal{N}(0,1) \geq x).$$
\end{lemma}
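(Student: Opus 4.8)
The plan is to prove the inequality $\p(S_n \ge x\sqrt n) \le \theta\, \p(\mathcal N(0,1)\ge x)$ for all $x\ge 1$ by splitting the range of $x$ into a moderate-deviation regime and a large-deviation regime, and using a Gaussian lower bound of Mill's-ratio type together with the Hoeffding bound. First I would record the elementary two-sided estimate for the Gaussian tail: there are constants $c_1,c_2>0$ so that for all $x\ge 1$,
\[
c_1\,\frac{e^{-x^2/2}}{x}\le \p(\mathcal N(0,1)\ge x)\le c_2\, e^{-x^2/2}.
\]
The upper bound here is standard ($\p(\mathcal N(0,1)\ge x)\le \tfrac12 e^{-x^2/2}$), and the lower bound is the classical Mill's ratio inequality $\p(\mathcal N(0,1)\ge x)\ge \frac{x}{x^2+1}\frac{1}{\sqrt{2\pi}}e^{-x^2/2}$. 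So it suffices to produce, for each $x\ge 1$, an upper bound on $\p(S_n\ge x\sqrt n)$ of the form $C\, e^{-x^2/2}/x$.

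The first regime is $1\le x\le \sqrt n$ (equivalently $x\sqrt n \le n$, so the event is not trivially empty and we are in the range where a Gaussian-type exponent is attainable). Here I would apply Hoeffding's inequality, which gives $\p(S_n\ge x\sqrt n)\le e^{-x^2/2}$ directly (each $\epsilon_i\in[-1,1]$, so $\p(S_n\ge t)\le e^{-t^2/(2n)}$ with $t=x\sqrt n$). Combining with the Mill's-ratio lower bound,
\[
\frac{\p(S_n\ge x\sqrt n)}{\p(\mathcal N(0,1)\ge x)}\le \frac{e^{-x^2/2}}{c_1 e^{-x^2/2}/x}=\frac{x}{c_1}\le \frac{\sqrt n}{c_1},
\]
which is not yet bounded by a constant. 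To fix this I would instead keep the sharper Hoeffding/Bernstein-type bound or simply note that the factor $x$ is killed by improving the numerator bound: one can use $\p(S_n\ge x\sqrt n)\le e^{-x^2/2}\cdot(\text{something }o(1)\text{ in }x)$ is false in general, so the cleaner route is to restrict the ``moderate'' regime to $1\le x\le \log n$ (say), where $x/c_1\le (\log n)/c_1$ is still growing — so this naive split does not close. The honest approach is: in the regime $1\le x\le \epsilon\sqrt n$ for a small constant $\epsilon$, use the local CLT / Berry–Esseen-type moderate deviation estimate for bounded i.i.d.\ sums (e.g.\ the classical fact that $\p(S_n\ge x\sqrt n)=(1+o(1))\p(\mathcal N(0,1)\ge x)$ uniformly for $x=o(n^{1/6})$, and more crudely $\p(S_n\ge x\sqrt n)\le C\,\p(\mathcal N(0,1)\ge x)$ uniformly for $x\le \epsilon\sqrt n$ via a Cramér-type expansion), and in the complementary regime $x>\epsilon\sqrt n$ use the crude bound $\p(S_n\ge x\sqrt n)\le \p(S_n = n)=2^{-n}$ when $x\sqrt n$ exceeds $n-2$, together with the Gaussian lower bound $\p(\mathcal N(0,1)\ge x)\ge c_1 e^{-x^2/2}/x\ge c_1 e^{-x^2/2}/(x)$ — but here $x$ can be as large as $\sqrt n$, so $e^{-x^2/2}$ can be as small as $e^{-n/2}\gg 2^{-n}$, making the ratio bounded.

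So the cleanest structure is: (i) for $1\le x\le \epsilon\sqrt n$, invoke a uniform Cramér-type moderate deviation bound for sums of bounded i.i.d.\ mean-zero variables to get $\p(S_n\ge x\sqrt n)\le C_1\p(\mathcal N(0,1)\ge x)$; (ii) for $\epsilon\sqrt n < x\le \sqrt n$, bound $\p(S_n\ge x\sqrt n)\le \p(S_n\ge \epsilon n)\le e^{-\epsilon^2 n/2}$ by Hoeffding and bound below $\p(\mathcal N(0,1)\ge x)\ge \p(\mathcal N(0,1)\ge \sqrt n)\ge c_1 e^{-n/2}/\sqrt n$, and check $e^{-\epsilon^2 n/2}/(c_1 e^{-n/2}/\sqrt n)=\frac{\sqrt n}{c_1}e^{-(\epsilon^2-1)n/2}\to 0$ since $\epsilon<1$; (iii) for $x>\sqrt n$, the event $\{S_n\ge x\sqrt n\}$ forces $S_n> n$, impossible, so the probability is $0$. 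Taking $\theta=\max\{C_1, 1\}$ finishes it. I expect the main obstacle to be citing/justifying the uniform Cramér-type moderate deviation estimate in regime (i) with the correct uniformity in $x$ (this is where one must be careful not to lose a polynomial-in-$x$ factor); everything else is elementary Hoeffding plus Mill's ratio.
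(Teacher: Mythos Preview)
Your regime (ii) argument contains a sign error that breaks the proof. With $\epsilon<1$ you have $1-\epsilon^2>0$, so
\[
\frac{e^{-\epsilon^2 n/2}}{c_1\,e^{-n/2}/\sqrt n}
=\frac{\sqrt n}{c_1}\,e^{(1-\epsilon^2)n/2}\longrightarrow \infty,
\]
not $0$. The point is that Hoeffding at the \emph{lower} threshold $\epsilon n$ gives a tail of order $e^{-\epsilon^2 n/2}$, which is exponentially \emph{larger} than the Gaussian tail at $x=\sqrt n$ (of order $e^{-n/2}/\sqrt n$). So the crude ``upper-bound the numerator at the bottom of the interval, lower-bound the denominator at the top'' comparison goes the wrong way here. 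A repair is possible: apply the full Chernoff/entropy bound at level $x\sqrt n$ itself, which gives $\p(S_n\ge x\sqrt n)\le \exp(-x^2/2 - x^4/(12n))$, and the extra factor $e^{-x^4/(12n)}$ kills the polynomial $x$ coming from Mill's ratio once $x\ge \epsilon\sqrt n$. But as written your regime (ii) does not close.

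For comparison, the paper's proof is a one-line citation: Pinelis' tail comparison inequality (Theorem~1.1 in the cited reference) gives directly
\[
\p(S_n\ge x\sqrt n)\le \Bigl(1+\tfrac{C}{x}\Bigr)\,\p(\mathcal N(0,1)\ge x)
\]
for a universal constant $C$, and then $x\ge 1$ yields $\theta=1+C$. This is precisely the ``uniform Cram\'er-type moderate deviation bound'' you allude to in regime (i), but established for \emph{all} $x>0$ at once, so no splitting into regimes is needed.
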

\begin{proof}
Using Theorem 1.1 in \cite{Pinelis}, we see that there exists a universal constant $C\approx 14,10...$ such that $\p(S_n \geq x\sqrt{n}) \leq \p(\Nc(0,1) > x) \left(1 +\frac{C}{x}\right) \leq (1+C) \p(\Nc(0,1)\geq 0)$. The result of the Lemma follows by taking $\theta \triangleq 1 +C$.

\end{proof}

\begin{lemma}\label{Lemma 4.11}
Let $X_0 $ be a sum of $N_0$ rademacher i.i.d r.v and $ \hat X_1, \hat X_2$ be two sums of $\hat N$ rademacher i.i.d r.v. Then for $\beta \geq 0$:
$$ \p(X_1, X_2 \geq \beta) \leq 3(N_0+\hat N)\exp\left(- \frac{\gamma^2}{1+\rho}\right), $$
where $X_i \triangleq X_0 + \hat X_i , i=1,2$ , $\gamma \triangleq \frac{\beta}{\sqrt{N_0 + \hat N}}$ , and $\rho \triangleq \frac{N_0}{\hat N +  N_0 }$.
\end{lemma}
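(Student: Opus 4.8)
The plan is to condition on the common summand $X_0$ and exploit that, given $X_0$, the variables $\hat X_1$ and $\hat X_2$ are independent and identically distributed. Since $X_0,\hat X_1,\hat X_2$ are independent and $\hat X_1\stackrel{d}{=}\hat X_2$, for every $s$ in the (finite) support of $X_0$ we have $\p\left(X_1\ge\beta,\,X_2\ge\beta \mid X_0=s\right)=\p\left(\hat X_1\ge\beta-s\right)^2$, and therefore
$$\p\left(X_1\ge\beta,\,X_2\ge\beta\right)=\sum_{s}\p(X_0=s)\,\p\left(\hat X_1\ge\beta-s\right)^2 .$$
The support of $X_0$ consists of at most $N_0+1\le 3(N_0+\hat N)$ points (assuming $N_0,\hat N\ge 1$; the degenerate cases follow immediately from a single Hoeffding bound), so it suffices to show that each summand is at most $\exp\left(-\gamma^2/(1+\rho)\right)$, after which a crude count over the values of $s$ finishes the proof.

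For the first factor, Hoeffding's inequality applied to the Rademacher sum $X_0$ together with symmetry gives $\p(X_0=s)\le\p(X_0\ge|s|)\le\exp\left(-s^2/(2N_0)\right)$ for all $s$. For the second factor, Hoeffding gives $\p(\hat X_1\ge u)\le\exp\left(-u^2/(2\hat N)\right)$ when $u\ge 0$, while trivially $\p(\hat X_1\ge u)\le 1$ when $u<0$. Consequently, if $s\le\beta$ the $s$-th summand is at most $\exp\left(-\frac{s^2}{2N_0}-\frac{(\beta-s)^2}{\hat N}\right)$, and if $s>\beta$ it is at most $\exp\left(-\frac{s^2}{2N_0}\right)\le\exp\left(-\frac{\beta^2}{2N_0}\right)$.

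It remains to optimize over $s$. Minimizing $g(s)\triangleq\frac{s^2}{2N_0}+\frac{(\beta-s)^2}{\hat N}$ over $s\in\R$ yields minimizer $s^\star=\frac{2N_0\beta}{2N_0+\hat N}\in[0,\beta]$ with minimal value $g(s^\star)=\frac{\beta^2}{2N_0+\hat N}$. Since $\gamma^2=\frac{\beta^2}{N_0+\hat N}$ and $1+\rho=\frac{2N_0+\hat N}{N_0+\hat N}$, this means $\frac{\beta^2}{2N_0+\hat N}=\frac{\gamma^2}{1+\rho}$, and also $\frac{\beta^2}{2N_0}\ge\frac{\gamma^2}{1+\rho}$ because $\hat N\ge 0$. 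Combining this with the two cases of the previous paragraph shows that every summand is at most $\exp\left(-\gamma^2/(1+\rho)\right)$, and summing over at most $3(N_0+\hat N)$ values of $s$ proves the lemma.

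The only genuinely delicate point is that conditioning on $X_0$ produces arguments $\beta-s$ of arbitrary sign, for which the Hoeffding tail estimate is vacuous; dealing with the branch $s>\beta$ (and checking it is still dominated by $\exp(-\gamma^2/(1+\rho))$) is the crux, the rest being a one-variable minimization. I remark that one can instead bound $\p\left(\hat X_1\ge\beta-X_0\right)^2\le\exp\left(-2\lambda(\beta-X_0)+\lambda^2\hat N\right)$ via the exponential moment inequality $\cosh(\lambda)\le e^{\lambda^2/2}$, take the expectation over $X_0$ using $\E\!\left[e^{2\lambda X_0}\right]\le e^{2\lambda^2N_0}$, and then optimize over $\lambda\ge 0$; this route avoids both the union bound and the polynomial prefactor, giving the sharper estimate $\exp\left(-\gamma^2/(1+\rho)\right)$ with constant $1$.
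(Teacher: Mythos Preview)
Your proof is correct and follows essentially the same approach as the paper's: condition on $X_0$, split according to the sign of $\beta-s$, apply Hoeffding's inequality to both $X_0$ and $\hat X_1$, minimize the resulting quadratic in $s$, and then count the number of terms to produce the $3(N_0+\hat N)$ prefactor. The only cosmetic differences are that you count the support of $X_0$ as $N_0+1$ directly (rather than bounding the number of summands by $2(N_0+\hat N)$ as the paper does) and drop an inessential factor of $2$ in the Hoeffding bound on $\p(X_0=s)$; your additional remark on the exponential-moment route is a nice observation that indeed removes the polynomial prefactor, though it is not used in the paper.
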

\begin{proof}

We have:
\begin{align*}
    \p(X_1, X_2 \geq \beta) &= \sum_{x} \p(X_0=x)\p(\hat X_1, \hat X_2 \geq (\beta  - x))\\
    &= \sum_{x, \beta-x \geq 0} \p(X_0=x)\p(\hat X_1 \geq (\beta  - x))^2 +\sum_{x, \beta-x < 0} \p(X_0=x)\p(\hat X_1 \geq (\beta  - x))^2 \\ 
    &\leq \sum_{x, \beta-x \geq 0} \p(|X_0|\geq |x|)\p(\hat X_1 \geq (\beta  - x))^2 +\sum_{x, \beta-x < 0} \p(X_0=x) .
\end{align*}
Using Hoeffding inequality we see that $\p( |X_0|\geq |x|)\leq 2\exp\left(- \frac{x^2}{2N_0} \right)$ and $\p(\hat X_1 \geq (\beta  - x)) \leq  \exp \left( -\frac{(\beta-x)^2}{\hat N} \right)$. Therefore:
\begin{align*}
      \p(X_1, X_2 \geq \beta)  &\leq \sum_{x, \beta-x \geq 0} 2\exp \left(- \frac{x^2}{2N_0} - \frac{(\beta - x)^2}{\hat N}\right) + \p(X_0  > \beta)\\
    &\leq 2(N_0+\hat N) \max_{x, \beta -x \geq 0} \exp\left(- \frac{x^2}{2N_0} - \frac{(\beta - x)^2}{\hat N}\right) + \exp\left(-\frac{\beta ^2}{2 N_0}\right) .
\end{align*}
The minimum of the polynomial $\frac{x^2}{2N_0} + \frac{(\beta - x)^2}{\hat N}$ is reached at $x= \frac{2\beta N_0}{2N_0 + \hat N}$. For this value we have
$$  - \frac{x^2}{2N_0} - \frac{(\beta - x)^2}{\hat N} = - \frac{2 \beta ^2 N_0}{(2N_0 + \hat N)^2} - \frac{\beta^2 \hat N}{(2N_0 + \hat N)^2} = -\frac{\beta^2}{2N_0 + \hat N}= - \frac{\gamma^2}{1+\rho}.$$
Moreover, we have:
$$-\frac{\beta ^2}{2 N_0} \leq -\frac{\beta^2}{2N_0 + \hat N} =   - \frac{\gamma^2}{1+\rho},$$
therefore:
\begin{align*}
    \p(X_1, X_2 \geq \beta)  &\leq 2(N_0 + \hat N) \exp\left(- \frac{\gamma^2}{1+\rho}\right) + \exp\left(- \frac{\gamma^2}{1+\rho}\right)\\
    &\leq (2(N_0+\hat N) +1) \exp\left(- \frac{\gamma^2}{1+\rho}\right)\\
    &\leq 3(N_0+\hat N) \exp\left(- \frac{\gamma^2}{1+\rho}\right).
\end{align*}
Which concludes the proof.
\end{proof}

\begin{lemma}\label{Lemma 4.12}
Let $X_1,...,X_n$ be Rademacher i.i.d r.v and $S_n = X_1+...+X_n$. Suppose $0 \leq \gamma = o(\sqrt{n})$. Then:
$$ \p(S_n \geq \gamma \sqrt{n} ) \geq \frac{1}{\sqrt{2n}} \exp\left( - \frac{\gamma^2}{2} -\frac{\gamma^4}{12n} + O\left(\frac{\gamma^5}{n^{3\over 2}}\right)\right). $$
\end{lemma}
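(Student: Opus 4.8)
The plan is to bound the tail by a single binomial term and estimate that term via Stirling's formula together with a Taylor expansion of the binary entropy $h$. Since $S_n$ assumes only values congruent to $n$ modulo $2$, let $k_0$ be the least integer with $k_0\ge\gamma\sqrt n$ and $k_0\equiv n\pmod 2$; then $\{S_n\ge\gamma\sqrt n\}=\{S_n\ge k_0\}\supseteq\{S_n=k_0\}$, and $\gamma\sqrt n\le k_0<\gamma\sqrt n+2$. Writing $m_0\triangleq (n+k_0)/2$, $p_0\triangleq m_0/n=\tfrac12+\tfrac{k_0}{2n}$ and $q_0\triangleq 1-p_0$, we get $\p(S_n\ge\gamma\sqrt n)\ge\p(S_n=k_0)=\binom{n}{m_0}2^{-n}$. (If $\gamma\sqrt n<1$ then $k_0\in\{0,1,2\}$, and the claim is immediate from $\binom{n}{m_0}2^{-n}\sim\sqrt{2/(\pi n)}=\tfrac{2}{\sqrt\pi}\cdot\tfrac1{\sqrt{2n}}>\tfrac1{\sqrt{2n}}$ while the right-hand side of the lemma tends to $\tfrac1{\sqrt{2n}}$; so assume $\gamma\sqrt n\ge1$ from now on.)

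First I would apply the explicit Stirling bounds $k!=\sqrt{2\pi k}(k/e)^ke^{\lambda_k}$ with $0<\lambda_k<\tfrac1{12k}$ to obtain
\begin{align*}
\binom{n}{m_0}2^{-n}=\frac{1}{\sqrt{2\pi n\,p_0q_0}}\,e^{n(h(p_0)-\log 2)}\,e^{\lambda_n-\lambda_{m_0}-\lambda_{n-m_0}}.
\end{align*}
Because $\gamma=o(\sqrt n)$ we have $k_0=o(n)$, hence $m_0,n-m_0\in[n/5,4n/5]$ for $n$ large, so $e^{\lambda_n-\lambda_{m_0}-\lambda_{n-m_0}}=1-o(1)$; and $p_0q_0=\tfrac14-\bigl(\tfrac{k_0}{2n}\bigr)^2\le\tfrac14$, so $\tfrac1{\sqrt{2\pi n\,p_0q_0}}\ge\sqrt{\tfrac2{\pi n}}=\tfrac{2}{\sqrt\pi}\cdot\tfrac1{\sqrt{2n}}$. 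Therefore
\begin{align*}
\p(S_n\ge\gamma\sqrt n)\ \ge\ \frac{1}{\sqrt{2n}}\Bigl(\frac{2}{\sqrt\pi}-o(1)\Bigr)\,e^{\,n(h(p_0)-\log 2)} .
\end{align*}

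Next I would analyze $n(h(p_0)-\log 2)$. Using $\log(1\pm2\delta)=\pm2\delta-2\delta^2\pm\tfrac83\delta^3-\cdots$, a direct Taylor computation (consistent with the inverse expansion in Lemma~\ref{Lemma 4.9}) yields $h(\tfrac12+\delta)-\log 2=-2\delta^2-\tfrac43\delta^4+O(\delta^6)$ as $\delta\to0$; with $\delta_0=k_0/(2n)=o(1)$ this gives $n(h(p_0)-\log 2)=-\tfrac{k_0^2}{2n}-\tfrac{k_0^4}{12n^3}+O(k_0^6/n^5)$. Substituting $k_0=\gamma\sqrt n+\theta$, $\theta\in[0,2)$, and expanding, $-\tfrac{k_0^2}{2n}=-\tfrac{\gamma^2}{2}-\tfrac{\gamma\theta}{\sqrt n}-\tfrac{\theta^2}{2n}$, $-\tfrac{k_0^4}{12n^3}=-\tfrac{\gamma^4}{12n}+O(\gamma^3/n^{3/2})$ (the still lower-order terms absorbed using $\gamma\ge1/\sqrt n$), and $O(k_0^6/n^5)=O(\gamma^6/n^2)$. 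Hence $n(h(p_0)-\log 2)=-\tfrac{\gamma^2}{2}-\tfrac{\gamma^4}{12n}+E_n$ where, for a suitable constant $C$ and a sequence $\epsilon_n\to0$, one has $E_n\ge-\epsilon_n-C\gamma^5/n^{3/2}$: each error term is either $o(1)$ — the parity terms $\tfrac{\gamma\theta}{\sqrt n},\tfrac{\theta^2}{2n}$ and, when $\gamma<1$, also $\gamma^3/n^{3/2}$ — or is genuinely $O(\gamma^5/n^{3/2})$ — the $\gamma\ge1$ part of $\gamma^3/n^{3/2}$, together with $\gamma^6/n^2\le(\gamma/\sqrt n)\,\gamma^5/n^{3/2}$.

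Combining the last two displays, $\p(S_n\ge\gamma\sqrt n)\ge\tfrac1{\sqrt{2n}}\,e^{-\gamma^2/2-\gamma^4/(12n)-C\gamma^5/n^{3/2}}\cdot\bigl(\tfrac{2}{\sqrt\pi}-o(1)\bigr)e^{-\epsilon_n}$, and since $\bigl(\tfrac{2}{\sqrt\pi}-o(1)\bigr)e^{-\epsilon_n}\to\tfrac{2}{\sqrt\pi}>1$, this last factor exceeds $1$ for all large $n$, which yields the stated bound (with the $O(\gamma^5/n^{3/2})$ realized as $-C\gamma^5/n^{3/2}$). The one delicate point — and the main obstacle — is the bookkeeping in the previous paragraph: the parity-offset error $\gamma\theta/\sqrt n$ is \emph{not} $O(\gamma^5/n^{3/2})$ when $\gamma$ stays bounded, so it is essential that it merely tends to $0$ (which is precisely where the hypothesis $\gamma=o(\sqrt n)$ enters) and is then swallowed by the strict constant gap $\tfrac{2}{\sqrt\pi}>1$ coming from the Stirling prefactor, rather than by the error term inside the exponential.
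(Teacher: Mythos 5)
Your proof is correct, and it follows the same basic strategy as the paper's -- reduce the Rademacher tail to a binomial quantity, express it through the binary entropy (equivalently the divergence $D(\cdot\|\tfrac12)$), and Taylor-expand at $\tfrac12+\tfrac{\gamma}{2\sqrt n}$ to order four -- but it differs in the key input. The paper invokes a cited information-theoretic bound (Lemma~\ref{Lemma 4.13}, from \cite{InformationTheory}) that lower-bounds the entire tail sum $\sum_{k\ge\lambda n}\binom{n}{k}$ by $e^{-nD(\lambda\|\frac12)}/\sqrt{8n\lambda(1-\lambda)}$, and then only needs the elementary estimate $8\lambda(1-\lambda)\le 2$ plus the expansion of $D$. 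You instead retain a single point mass $\p(S_n=k_0)$ and derive the prefactor from explicit Stirling bounds, which makes the argument self-contained but forces you to confront two issues the cited lemma hides: the parity/rounding offset $k_0-\gamma\sqrt n\in[0,2)$, and the fact that the resulting cross terms $\gamma\theta/\sqrt n$ are only $o(1)$ rather than $O(\gamma^5/n^{3/2})$. You handle both correctly, absorbing the $o(1)$ errors into the strict slack $2/\sqrt\pi>1$ in the Stirling prefactor (versus the paper's $1/\sqrt{2n}$), and you correctly read the one-sided $O(\gamma^5/n^{3/2})$ in the statement as permitting a term $-C\gamma^5/n^{3/2}$. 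The trade-off is transparency versus bookkeeping: the paper's route is shorter at the cost of an external reference, while yours is longer but fully elementary; both yield the identical expansion $-\gamma^2/2-\gamma^4/(12n)+O(\gamma^5/n^{3/2})$ in the exponent.
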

\begin{proof}
We first cite the statement of Lemma 4.7.2 (p.115) in \cite{InformationTheory} :
\begin{lemma}\label{Lemma 4.13}
Suppose $\lambda > \frac{1}{2}$. Then:
$$ \sum_{k= \lambda n}^{n}  \binom{n}{k}  \geq  \frac{e^{- n  D(\lambda || \frac{1}{2})}}{\sqrt{8n \lambda(1-\lambda)}}, $$
where $D(x||p) \triangleq x \log\left(\frac{x}{p}\right) + (1-x) \log\left(\frac{1-x}{1-p}\right)$
\end{lemma}
Using the above Lemma:
\begin{align*}
    \p(S_n \geq \gamma \sqrt{n} ) &= \p\left(Binom\left(n, \frac{1}{2}\right) \geq \frac{\gamma \sqrt{n}}{2} + \frac{n}{2}\right)\\
    &\geq \frac{1}{\sqrt{8n\lambda(1-\lambda)}} \exp\left(-n D\left(\frac{\gamma }{2\sqrt{n}} + \frac{1}{2} || \frac{1}{2}\right)\right)\\
    &\geq \frac{1}{\sqrt{2n}} \exp\left(-n D\left(\frac{\gamma }{2\sqrt{n}} + \frac{1}{2} || \frac{1}{2}\right)\right).\\
\end{align*}
where we used  $\max_{x \in (0,1)} 8 x(1-x)=2$.
For $t=o(1)$:
\begin{align*}
D\left(t + \frac{1}{2} || \frac{1}{2}\right) &=  \left(t + \frac{1}{2}\right) \log(2t + 1) + \left(-t + \frac{1}{2}\right) \log( -2t+ 1) \\
&= \left(t + \frac{1}{2}\right)\left(2t - 2t^2 + \frac{8}{3}t^3 - 4t^4+O(t^5)\right) + \left(-t + \frac{1}{2}\right) \left(-2t - 2t^2-\frac{8}{3}t^3 - 4t^4 +O(t^5)\right) \\
&= 2t^2 + \frac{4}{3}t^4+  O(t^5),\\
\end{align*}
Hence:
$$D\left(\frac{\gamma }{2\sqrt{n}} + \frac{1}{2} || \frac{1}{2}\right)=  \frac{\gamma^2}{2 n} + \frac{\gamma^4}{12 n^2} + O\left (\frac{\gamma^5}{n^{5\over 2}}\right), $$
leading to:
$$\p(S_n \geq \gamma \sqrt{n} ) \geq \frac{1}{\sqrt{2n}} \exp\left(- \frac{\gamma^2}{2} -\frac{\gamma^4}{12n} + O\left (\frac{\gamma^5}{n^{3\over 2}}\right)\right) .$$

\end{proof}
The following Theorem can be found in \cite{Savage}.
\begin{theorem}\label{Theorem 4.14}
Let $X \sim \mathcal{N}(\mathbf{0}_{\mathbb{R}^n}, \Sigma)$ be a centered Gaussian random vector in $\mathbb{R}^n$ with non singular covariance matrix $\Sigma$, and let $ C \in \mathbb{R}^n$. Suppose $\Delta \triangleq C^T \Sigma^{-1}>0$ then:
$$ \p( X \geq C) \leq  \left(\prod_{i=1}^{n} \Delta_i\right)^{-1} \frac{|\Sigma^{-1}|^{\frac{1}{2}}}{(2\pi)^{\frac{n}{2}}} \exp\left(- \frac{1}{2} C^T \Sigma^{-1} C\right),$$
where $|A|$ denotes the absolute value of the determinant of $A$.
\end{theorem}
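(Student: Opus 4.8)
The plan is to derive the bound directly from the closed form of the multivariate Gaussian density, by a translation that moves the region of integration to the positive orthant. First I would write the tail probability as the integral of the density $f(x) = \frac{|\Sigma^{-1}|^{1/2}}{(2\pi)^{n/2}}\exp\left(-\tfrac12 x^T \Sigma^{-1} x\right)$ over the set $\{x : x_i \ge C_i, \ i = 1,\dots,n\}$, and then substitute $x = C + y$ so that this set becomes $\{y \ge 0\}$ (componentwise).

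Next I would expand the quadratic form as
\[
x^T \Sigma^{-1} x = C^T \Sigma^{-1} C + 2\, y^T \Sigma^{-1} C + y^T \Sigma^{-1} y .
\]
Since $\Sigma^{-1}$ is symmetric, $\Sigma^{-1} C = (C^T \Sigma^{-1})^T = \Delta^T$, so the middle term equals $2\, y^T \Delta^T = 2\sum_{i=1}^n \Delta_i y_i$; and since $\Sigma^{-1}$ is positive definite (being the inverse of a nonsingular covariance matrix), the last term is nonnegative and can be dropped. This produces the pointwise lower bound $x^T \Sigma^{-1} x \ge C^T \Sigma^{-1} C + 2\sum_i \Delta_i y_i$ valid on the entire region of integration.

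Substituting this inequality into the integrand and factoring out the $y$-independent constant $\exp\left(-\tfrac12 C^T \Sigma^{-1} C\right)$ gives
\[
\p(X \ge C) \le \frac{|\Sigma^{-1}|^{1/2}}{(2\pi)^{n/2}} \exp\left(-\tfrac12 C^T \Sigma^{-1} C\right) \int_{y \ge 0} \exp\left(-\sum_{i=1}^n \Delta_i y_i\right) dy ,
\]
and the orthant integral factorizes over coordinates as $\prod_{i=1}^n \int_0^\infty e^{-\Delta_i y_i}\, dy_i = \prod_{i=1}^n \Delta_i^{-1}$, where the hypothesis $\Delta > 0$ is precisely what ensures both convergence and this evaluation. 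Collecting the constants yields the claimed estimate.

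I do not expect a genuine obstacle here; the only points requiring care are the identification $\Delta^T = \Sigma^{-1} C$ through the symmetry of $\Sigma^{-1}$, the appeal to positive definiteness of $\Sigma^{-1}$ to discard the term $y^T \Sigma^{-1} y$, and the observation that the sign condition $\Delta > 0$ is exactly what makes the orthant integral finite and explicitly computable. Since the statement is quoted from \cite{Savage}, one may alternatively just invoke that reference.
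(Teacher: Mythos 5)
Your argument is correct and complete: the translation $x = C + y$, the expansion of the quadratic form, the identification of the cross term with $2\sum_i \Delta_i y_i$ via the symmetry of $\Sigma^{-1}$, the discarding of the nonnegative term $y^T\Sigma^{-1}y$, and the factorized orthant integral $\prod_i \Delta_i^{-1}$ together give exactly the stated bound, with the hypothesis $\Delta>0$ used precisely where you say it is. The paper itself offers no proof of this statement --- it is quoted from \cite{Savage} --- and your derivation is the standard one underlying that reference, so nothing further is needed.
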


\begin{corollary}\label{Corollary 4.15}
If $X,Y$ are two centered Gaussian r.v with variance $\binom{K}{2}$ and covariance $\binom{l}{2}$ ($2 \leq l \leq K-1$) then for any $\gamma >0$ we have:

$$\p \left( X,Y \geq \gamma \binom{K}{2} \right) \leq \frac{1}{ 2 \pi \gamma^2} \frac{\left[ \binom{K}{2} + \binom{l}{2} \right]^2}{\binom{K}{2}^2 \sqrt{\binom{K}{2}^2 - \binom{l}{2}^2}} \exp \left(- \gamma^2 \frac{\binom{K}{2}^2}{\binom{K}{2} + \binom{l}{2}}\right) . $$
\end{corollary}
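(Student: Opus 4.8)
\section{Proof of Corollary~\ref{Corollary 4.15} --- Proposal}

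The plan is to apply Savage's Theorem (Theorem~\ref{Theorem 4.14}) directly in dimension $n=2$ to the Gaussian vector $(X,Y)$ and then carry out the resulting matrix algebra. Writing $a\triangleq\binom{K}{2}$ and $b\triangleq\binom{l}{2}$ for brevity, the covariance matrix is $\Sigma=\begin{pmatrix} a & b\\ b & a\end{pmatrix}$ and the threshold vector is $C=\gamma a\,(1,1)^T$. The hypothesis $2\le l\le K-1$ gives $0\le b<a$, so $\det\Sigma=a^2-b^2>0$; hence $\Sigma$ is nonsingular with $\Sigma^{-1}=\frac{1}{a^2-b^2}\begin{pmatrix} a&-b\\-b&a\end{pmatrix}$ and $|\Sigma^{-1}|^{1/2}=(a^2-b^2)^{-1/2}$, which also confirms that Theorem~\ref{Theorem 4.14} is applicable.

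Next I would compute the two inputs that Savage's bound requires. A one-line calculation gives $\Delta=C^T\Sigma^{-1}=\frac{\gamma a}{a+b}(1,1)$, whose entries are strictly positive since $\gamma>0$ and $a+b>0$, so the hypothesis $\Delta>0$ holds; consequently $\prod_{i=1}^{2}\Delta_i=\gamma^2a^2/(a+b)^2$. For the exponent, $C^T\Sigma^{-1}C=\Delta\cdot C=\frac{\gamma a}{a+b}\cdot 2\gamma a=\frac{2\gamma^2a^2}{a+b}$, so that $\exp\left(-\frac12 C^T\Sigma^{-1}C\right)=\exp\left(-\frac{\gamma^2a^2}{a+b}\right)$.

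Plugging these quantities into the conclusion of Theorem~\ref{Theorem 4.14} with $n=2$ yields
$$\p\left(X,Y\ge\gamma a\right)\le\frac{(a+b)^2}{\gamma^2a^2}\cdot\frac{1}{2\pi\sqrt{a^2-b^2}}\cdot\exp\left(-\frac{\gamma^2a^2}{a+b}\right),$$
and resubstituting $a=\binom{K}{2}$, $b=\binom{l}{2}$ reproduces the claimed inequality verbatim. I do not expect any genuine obstacle here: the only place the range $2\le l\le K-1$ enters is through the strict inequality $\binom{l}{2}<\binom{K}{2}$, which is exactly what guarantees both the invertibility of $\Sigma$ and the positivity of $\Delta$ needed to invoke Savage's Theorem. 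As a sanity check one could instead diagonalize $\Sigma$ in the orthonormal basis $\frac{1}{\sqrt{2}}(1,1),\frac{1}{\sqrt{2}}(1,-1)$, in which $\Sigma$ has eigenvalues $a+b$ and $a-b$ and $C$ points along the first eigenvector, but the direct computation above is the shortest route.
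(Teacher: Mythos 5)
Your proposal is correct and follows exactly the paper's own route: a direct application of Savage's Theorem in dimension $n=2$ with $\Sigma=\begin{pmatrix}\binom{K}{2}&\binom{l}{2}\\\binom{l}{2}&\binom{K}{2}\end{pmatrix}$ and $C=\gamma\binom{K}{2}(1,1)^T$, followed by the same computations of $\Sigma^{-1}$, $\Delta$, and the exponent. All intermediate quantities agree with those in the paper, so there is nothing to add.
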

\begin{proof}
We apply Theorem \ref{Theorem 4.14} with $n=2, \mu = \begin{pmatrix}
0\\
0
\end{pmatrix} , \Sigma= \begin{pmatrix}
\binom{K}{2} & \binom{l}{2}\\
\binom{l}{2} & \binom{K}{2}
\end{pmatrix}$ and $ C = \begin{pmatrix}
\gamma \binom{K}{2}\\
\gamma \binom{K}{2}
\end{pmatrix}$. Note that $\Sigma$ is not singular since $ l \leq K-1$ and 
$|\Sigma^{-1}| = \frac{1}{\binom{K}{2}^2 - \binom{l}{2}^2}$ .We thus have $\Delta = \frac{\gamma \binom{K}{2}}{\binom{K}{2}^2 -\binom{l}{2}^2}  ( \binom{K}{2} - \binom{l}{2}, \binom{K}{2} - \binom{l}{2}  )$. Therefore we get:
\begin{align*}
    \p\left(X, Y \geq \gamma \binom{K}{2}\right) &\leq \left(\frac{\gamma \binom{K}{2}}{\binom{K}{2}^2 -\binom{l}{2}^2} \left(\binom{K}{2} - \binom{l}{2}\right) \right)^{-2} \frac{|M|^{\frac{1}{2}}}{2\pi} \exp\left(-\frac{1}{2} C^T M C\right)\\
    &= \frac{\left( \binom{K}{2}^2 - \binom{l}{2}^2\right)^2}{\gamma^2 \binom{K}{2}^2 \left( \binom{K}{2}- \binom{l}{2}\right)^2} \frac{1}{2\pi \sqrt{\binom{K}{2}^2 - \binom{l}{2}^2}} \exp\left( -\frac{1}{2} \gamma^2 \binom{K}{2}^2 \frac{2\binom{K}{2} - 2 \binom{l}{2}}{\binom{K}{2}^2 - \binom{l}{2}^2} \right)\\
    &= \frac{1}{ 2 \pi \gamma^2} \frac{\left[ \binom{K}{2} + \binom{l}{2} \right]^2}{\binom{K}{2}^2 \sqrt{\binom{K}{2}^2 - \binom{l}{2}^2}} \exp \left(- \gamma^2 \frac{\binom{K}{2}^2}{\binom{K}{2} + \binom{l}{2}}\right).
\end{align*}
\end{proof}
The following bound known as Borell-TIS inequality is standard in the theory of Gaussian processes and  can be found for example in  \cite{GaussianIneqs}.
\begin{theorem}\label{Theorem 4.16}
Given a sequence of centered Gaussian processes $(f_t)_{t\in T}$ (where $T$ is a topological space) such that $||f||_T \triangleq \sup_{t\in T} |f_t|$ is a.s finite. Let $\sigma^2 _T \triangleq \sup_{t\in T} \E[|f_t|^2]$. Then $\E[||f||_T] $ and $\sigma_T$ are both finite and for each $t>0$:
$$  \p\left(\sup_{t\in T} f_t > \E[\sup_{t\in T}f_t] + t\right) \leq \exp \left( -\frac{t^2}{2\sigma^2_T}\right),$$
and by symmetry:
$$  \p\left(|\sup_{t\in T} f_t - \E[\sup_{t\in T}f_t] | > t\right) \leq 2\exp \left( -\frac{t^2}{2\sigma^2_T}\right).$$
\end{theorem}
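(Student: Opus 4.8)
This is the classical Borell--TIS inequality, so the plan is to recall its standard proof, which rests on three ingredients: a reduction from an arbitrary (separable) index set to a finite one, a Lipschitz representation of the supremum as a function of an i.i.d.\ standard Gaussian vector, and the Gaussian concentration inequality for Lipschitz functions. Once the one-sided bound is in hand, the symmetric version is immediate by applying it to both $f$ and $-f$ and taking a union bound.

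For the reduction, I would use that, since $\|f\|_T$ is a.s.\ finite, one may work with a separable version of the process: there is a countable $T_0\subseteq T$ with $\sup_{t\in T}f_t=\sup_{t\in T_0}f_t$ almost surely. Writing $T_0$ as an increasing union of finite sets $S_1\subseteq S_2\subseteq\cdots$, the partial suprema $M_m\triangleq\sup_{t\in S_m}f_t$ increase to $\sup_{t\in T}f_t$ a.s. A separate bootstrapping argument of Landau--Shepp and Fernique type --- starting from the existence of a $c$ with $\p(\|f\|_T\le c)\ge 3/4$ and iterating a Gaussian symmetrization estimate --- shows $\E[\|f\|_T]<\infty$, hence by monotone convergence $\E[M_m]\to\E[\sup_{t\in T}f_t]$. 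It therefore suffices to prove the tail bound for finite $T$ with centering constant $\E[M_m]$, and then let $m\to\infty$, using that $M_m$ and $\E[M_m]$ both increase to their limits.

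For finite $T=\{t_1,\dots,t_m\}$, write $(f_{t_1},\dots,f_{t_m})=A\xi$ with $\xi\sim\mathcal{N}(\0,I_N)$ and with the rows $a_i$ of $A$ satisfying $|a_i|^2=\E[f_{t_i}^2]\le\sigma_T^2$. The function $F(x)\triangleq\max_{i\le m}\langle a_i,x\rangle$ obeys $|F(x)-F(y)|\le\max_i|\langle a_i,x-y\rangle|\le\sigma_T\,|x-y|$, so $F$ is $\sigma_T$-Lipschitz. Now invoke Gaussian concentration for Lipschitz functions: if $F$ is $L$-Lipschitz and $\xi\sim\mathcal{N}(\0,I_N)$ then $\p(F(\xi)\ge\E F(\xi)+t)\le\exp(-t^2/(2L^2))$ for all $t>0$. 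Taking $L=\sigma_T$ yields the first inequality of the theorem in the finite case, hence in general; replacing $F$ by $-F$ (still $\sigma_T$-Lipschitz) gives the matching lower-tail bound, and the union bound gives the symmetric statement.

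The only substantive content is the Gaussian Lipschitz-concentration step, which in turn follows either from the Gaussian isoperimetric inequality of Borell and Sudakov--Tsirelson or, more self-containedly, from the Gaussian logarithmic Sobolev inequality fed into Herbst's argument applied to the Laplace transform of $F(\xi)-\E F(\xi)$ (equivalently, a semigroup interpolation along the Ornstein--Uhlenbeck flow). That step, together with the measurability and separability bookkeeping and the a priori finiteness of $\E[\sup_{t\in T}f_t]$ needed to descend to finite index sets, is the part I expect to be delicate; in the present paper it is simply imported from the cited reference \cite{GaussianIneqs} rather than reproved.
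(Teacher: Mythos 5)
The paper does not prove this statement at all: Theorem~\ref{Theorem 4.16} is quoted as the standard Borell--TIS inequality and imported directly from the cited reference, so there is no in-paper argument to compare against. Your outline is the correct classical proof (separable reduction to finite index sets, the $\sigma_T$-Lipschitz representation $F(x)=\max_i\langle a_i,x\rangle$ of the finite supremum, Gaussian concentration for Lipschitz functions, and the Landau--Shepp/Fernique bootstrap for finiteness of the mean), and you correctly note that the paper itself simply cites the result rather than reproving it.
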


\begin{lemma}\label{Lemma 4.17}
 Let $S_n$ be a sum of $n$ Rademacher i.i.d random variables. Let  $w_n, m_n\geq 1 $ be such that $w_n m_n =o(\sqrt{n})$ and $w_n^5=o(n^{\frac{3}{2}})$. It holds
$$ \frac{\p\left( S_n \geq w_n \sqrt{n} -m_n \right)}{\p \left( S_n \geq w_n \sqrt{n} + m_n \right)} =O(1).$$
\end{lemma}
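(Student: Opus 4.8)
The plan is to bypass Gaussian approximation and reduce the statement to an elementary comparison of two binomial tail sums. Since $S_n \stackrel{d}{=} 2\,\mathrm{Bin}(n,1/2) - n$, one has $\p(S_n \ge c\sqrt n) = 2^{-n}\sum_{k \ge (n+c\sqrt n)/2}\binom{n}{k}$ for every real $c$. I apply this with $a \triangleq w_n\sqrt n - m_n$ and $b \triangleq w_n\sqrt n + m_n$ and set $k_1 \triangleq \lceil (n+a)/2 \rceil$, $k_2 \triangleq \lceil (n+b)/2 \rceil$, so that
\[
\frac{\p(S_n \ge w_n\sqrt n - m_n)}{\p(S_n \ge w_n\sqrt n + m_n)} = 1 + \frac{\sum_{k=k_1}^{k_2-1}\binom{n}{k}}{\sum_{k\ge k_2}\binom{n}{k}},
\]
and it suffices to prove that the second summand is $O(1)$.

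Next I would record the elementary consequences of the hypotheses. Since $m_n \ge 1$, the assumption $w_n m_n = o(\sqrt n)$ already forces $m_n = o(\sqrt n)$ and $w_n = o(\sqrt n)$; hence $w_n/\sqrt n \to 0$ and $w_n\sqrt n \le n/4$ for all large $n$, the deviation $a$ is positive eventually, all the tails above are finite and positive, and every index $k$ appearing below lies in $[\,n/2,\, n/2 + w_n\sqrt n\,]$, a range on which $k \mapsto \binom{n}{k}$ is non-increasing. Writing $D \triangleq k_2 - k_1$ we also have $0 \le D \le m_n + 1$ and $k_2 + D - 1 \le n$. The central step is then to discard all but the first $D$ terms of the denominator and pair them against the numerator term by term, using the mediant inequality:
\[
\frac{\sum_{k=k_1}^{k_2-1}\binom{n}{k}}{\sum_{k\ge k_2}\binom{n}{k}} \le \frac{\sum_{j=0}^{D-1}\binom{n}{k_1+j}}{\sum_{j=0}^{D-1}\binom{n}{k_2+j}} \le \max_{0 \le j \le D-1} \frac{\binom{n}{k_1+j}}{\binom{n}{k_2+j}} .
\]

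To finish, I would estimate each ratio through the product identity $\binom{n}{k_1+j}/\binom{n}{k_2+j} = \prod_{i=k_1+j}^{k_2+j-1}\frac{i+1}{n-i}$, which has exactly $D \le m_n + 1$ factors, all with $i$ in the range above. Writing $i = n/2 + s$ with $0 \le s \le w_n\sqrt n$ and using $n/2 - s \ge n/4$, each factor satisfies $\frac{i+1}{n-i} = 1 + \frac{2s+1}{n/2-s} \le 1 + \frac{12 w_n}{\sqrt n}$ for all large $n$; hence the product is at most $\bigl(1 + 12 w_n/\sqrt n\bigr)^{m_n+1} \le \exp\bigl(12(w_n m_n + w_n)/\sqrt n\bigr) = e^{o(1)} = O(1)$, by $w_n m_n = o(\sqrt n)$ and $w_n = o(\sqrt n)$. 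Combining the three displays gives the desired $O(1)$ bound. (The argument in fact only uses $w_n m_n = o(\sqrt n)$ together with $w_n, m_n \ge 1$; the hypothesis $w_n^5 = o(n^{3/2})$ is superfluous for this proof.)

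The only mildly delicate part, and the one I would expect to demand attention, is the bookkeeping in the middle step — checking $a > 0$ for large $n$, that $k_2 + D - 1 \le n$ so the paired denominator terms genuinely appear in $\sum_{k \ge k_2}\binom{n}{k}$, that the two paired sums have equally many terms, and that the per-factor estimate is uniform in $j$ — all of which are immediate from the two smallness assumptions. A route through the preceding results — bounding the numerator above by $\theta\,\p(\mathcal{N}(0,1) \ge w_n - m_n/\sqrt n)$ via Lemma~\ref{Lemma 4.10} and the standard Gaussian tail inequality, and the denominator below via Lemma~\ref{Lemma 4.12} — does not close as cleanly, since the prefactor in the former is of order $1/w_n$ whereas the one in the latter is of order $1/\sqrt n$, a discrepancy of a polynomial factor $\sqrt n / w_n$ that these two bounds alone cannot absorb; this is why I would adopt the self-contained binomial comparison above.
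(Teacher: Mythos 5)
Your proof is correct, and for the decisive step it takes a genuinely different and more elementary route than the paper. Both arguments begin the same way: write the ratio as $1+$ (mass of the window $[w_n\sqrt n-m_n,\,w_n\sqrt n+m_n]$)/(tail beyond $w_n\sqrt n+m_n$) and reduce to comparing binomial coefficients at indices separated by $O(m_n)$ — the paper does this by bounding the window above by $(2m_n+1)$ copies of its largest point mass and the tail below by a comparable window of $(2m_n+1)$ point masses, whereas you keep the sums intact and use the mediant inequality after truncating the tail to its first $D\le m_n+1$ terms; these are essentially equivalent. The real divergence is in how the single ratio $\binom{n}{k_1+j}/\binom{n}{k_2+j}$ is estimated. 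The paper passes through the entropy approximation $\binom{n}{k}\asymp n^{-1/2}e^{nh(k/n)}$ and a fourth-order Taylor expansion of $h$ about $1/2$, and it is precisely in controlling the quartic and quintic terms of that expansion that the hypothesis $w_n^5=o(n^{3/2})$ is consumed. You instead use the exact telescoping identity $\binom{n}{k_1+j}/\binom{n}{k_2+j}=\prod_{i=k_1+j}^{k_2+j-1}\tfrac{i+1}{n-i}$, with at most $m_n+1$ factors each bounded by $1+O(w_n/\sqrt n)$, so that only $w_nm_n=o(\sqrt n)$ (plus $w_n,m_n\ge 1$) is needed. This buys three things: no asymptotic expansions or Stirling-type approximations, no parity or lattice-point bookkeeping of the sort the paper elides when it equates probabilities of half-integer thresholds with single binomial coefficients, and the observation — which you state correctly — that the hypothesis $w_n^5=o(n^{3/2})$ is superfluous for the lemma (harmlessly so, since Corollary~\ref{Corollary 3.19} supplies it anyway). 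Your closing diagnosis of why the route through Lemmas~\ref{Lemma 4.10} and~\ref{Lemma 4.12} cannot close — the mismatch of prefactors $1/w_n$ versus $1/\sqrt n$ — is also accurate and is presumably why the paper does not take it either.
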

\begin{proof}
Note that $m_n = o(w_n\sqrt{n})$. We have:
\begin{align*}
\frac{\p\left( S_n \geq w_n \sqrt{n} -m_n \right)}{\p \left( S_n \geq w_n \sqrt{n} + m_n \right)}  &\leq 1 + \frac{\p\left( w_n \sqrt{n} +m_n \geq  S_n \geq w_n \sqrt{n} -m_n \right)}{\p \left( S_n \geq w_n \sqrt{n} + m_n \right)} \\
&\leq  1 + \frac{\p\left( w_n \sqrt{n} +m_n \geq  S_n \geq w_n \sqrt{n} -m_n \right)}{\p \left( w_n \sqrt{n} + 2m_n \geq S_n \geq w_n \sqrt{n} + m_n \right)} \\
&\leq 1 +  \frac{(2m_n+1) \p(S_n = w_n \sqrt{n} -m_n)}{(2m_n+1) \p(S_n = w_n \sqrt{n}+ 2m_n)}\\
&= 1 +  \frac{ \p(S_n = w_n \sqrt{n} -m_n)}{ \p(S_n = w_n \sqrt{n} +2m_n)}\\
&= 1 + \frac{ \p(Bin(n,\frac{1}{2}) = \frac{n}{2} +\frac{w_n \sqrt{n}}{2} -\frac{m_n}{2})}{ \p(Bin(n,\frac{1}{2}) = \frac{n}{2} + \frac{w_n \sqrt{n}}{2} +m_n)}\\
&=  1 + \frac{ \binom{n}{\frac{n}{2} + \frac{w_n \sqrt{n}}{2} -\frac{m_n}{2}}}{ \binom{n}{\frac{n}{2} + \frac{w_n \sqrt{n}}{2} +m_n}}\\
&\sim 1 + \frac{\sqrt{\frac{1}{n} } \exp(nh(\frac{1}{2} + \frac{w_n}{2\sqrt{n}} - \frac{m_n}{2n})) }{\sqrt{\frac{1}{n}}\exp(nh(\frac{1}{2} + \frac{w_n}{2\sqrt{n}} + \frac{m_n}{n})) }\\
&\sim 1 + \exp\left( n\left[h\left(\frac{1}{2} + \frac{w_n}{2\sqrt{n}} - \frac{m_n}{2n}\right) -h\left(\frac{1}{2} + \frac{w_n}{2\sqrt{n}} + \frac{m_n}{n}\right)  \right] \right)\\
&= 1 + \exp\left(n \left[ -2\left(\frac{w_n}{2\sqrt{n}} - \frac{m_n}{2n}\right)^2+ 2\left(\frac{w_n}{2\sqrt{n}} + \frac{m_n}{n}\right)^2 \right] \right)\\
&\times \exp \left (n\left[ - \frac{4}{3}\left(\frac{w_n}{2\sqrt{n}} - \frac{m_n}{2n}\right)^4 + \frac{4}{3}\left(\frac{w_n}{2\sqrt{n}} + \frac{m_n}{n}\right)^4 + O\left(\frac{w_n^5}{n^{\frac{5}{2}}}\right) \right]\right) \numberthis \label{hpart}\\
&= 1 + \exp\left(n \left[ -\frac{m_n^2}{2n^2} + \frac{w_n m_n}{n\sqrt{n}} + \frac{2m_n^2}{n^2} + \frac{2w_n m_n}{n\sqrt{n}} + O\left(\frac{w_n^3}{n^{3\over 2}} \frac{m_n}{n}\right)+ O\left(\frac{w_n^5}{n^{\frac{5}{2}}}\right) \right] \right)\\
&= 1 + \exp\left(- \frac{m_n^2}{2n} + \frac{w_n m_n}{\sqrt{n}} + \frac{2m_n^2}{n} + \frac{2w_n m_n}{\sqrt{n}} + o(1)\right) \numberthis \label{o1part1}\\
&= 1 + \exp( o(1)) \numberthis \label{o1part2}\\
&=O(1),
\end{align*}
where we used $h(\frac{1}{2}+ \delta) = \log(2) - 2\delta^2 - \frac{4}{3}\delta^4 + O(\delta^5)$ in (\ref{hpart}), and $\frac{w_n^3 m}{n}, \frac{w_n^5}{n^{\frac{3}{2}}} =o(1)$ in (\ref{o1part1}), and $m_n=o(\sqrt{n}), w_n m_n =o(\sqrt{n}) $ in (\ref{o1part2}). Which yields the claim of the Lemma.
\end{proof}

\begin{corollary}\label{Corollary 3.19}
Let $ K =n^{\alpha}$ with $\alpha\in(0,1), n \in \mathbb{Z}_{\geq 0}$ and $Z_S,Z_T$ be sums of $\binom{K}{2}$ Rademacher random variables such that they share exactly $\binom{l}{2}$ of them. Let $\gamma_n = (2 + \delta_n) \sqrt{\frac{\log(\frac{n}{K})}{K}}$ with $\delta_n =o(1)$ and let $ C_0 \geq 2$ be a constant independent of $n$. Then for any $l\leq C_0 \log(n)$:
$$ \frac{ \p\left( Z_S,Z_T \geq \gamma_n \binom{K}{2}\right)  }{ \p\left( Z_S \geq \gamma_n \binom{K}{2} \right)^2} = O(1) .$$
\end{corollary}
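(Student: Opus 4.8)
The plan is to condition on the common randomness, reducing the bound to a one--variable tail comparison. Set $\beta\triangleq\gamma_n\binom{K}{2}$ and decompose $Z_S=Z_0+A$, $Z_T=Z_0+B$, where $Z_0$ is the sum of the $\binom{l}{2}$ Rademacher variables common to $S$ and $T$, and $A,B$ are the sums of the remaining $\hat N\triangleq\binom{K}{2}-\binom{l}{2}$ variables indexed by $S$, respectively $T$; the triple $(Z_0,A,B)$ is mutually independent and $A\stackrel{d}{=}B\stackrel{d}{=}\hat Z$, the sum of $\hat N$ i.i.d.\ Rademachers. If $l\le 1$ then $Z_0\equiv 0$, $Z_S$ and $Z_T$ are independent, and the ratio equals $1$; so assume $l\ge 2$. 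Writing $g(z)\triangleq\p(\hat Z\ge\beta-z)$, which is non-decreasing with $g(0)=\p(\hat Z\ge\beta)>0$ (note $\beta=\Theta(K^{3/2}\sqrt{\log(n/K)})=o(\hat N)$), conditioning on $Z_0$ gives $\p(Z_S\ge\beta,\ Z_T\ge\beta)=\E[g(Z_0)^2]$ and $\p(Z_S\ge\beta)=\E[g(Z_0)]$, so the target quantity is $\E[g(Z_0)^2]\big/\E[g(Z_0)]^2$.

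Now I would bound the two factors separately. For the denominator: $g(Z_0)\ge g(0)\,\mathbf 1\{Z_0\ge 0\}$ by monotonicity and non-negativity of $g$, and $Z_0$ is symmetric, so $\p(Z_0\ge 0)\ge \tfrac12$ and hence $\E[g(Z_0)]\ge g(0)/2$. For the numerator: $|Z_0|\le\binom{l}{2}$ deterministically, so $g(Z_0)\le g(|Z_0|)\le\p(\hat Z\ge\beta-\binom{l}{2})$ pointwise, whence $\E[g(Z_0)^2]\le\p(\hat Z\ge\beta-\binom{l}{2})^2$. Combining, the target ratio is at most $4\big(\p(\hat Z\ge\beta-\binom{l}{2})/\p(\hat Z\ge\beta)\big)^2$, so it suffices to produce a universal constant $C$ with $\p(\hat Z\ge\beta-\binom{l}{2})\le C\,\p(\hat Z\ge\beta)$ for all $2\le l\le C_0\log n$.

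This last bound I would obtain from Lemma~\ref{Lemma 4.17}, applied to $\hat Z$ with the substitutions $n\leftarrow\hat N$, $w_n\leftarrow\beta/\sqrt{\hat N}$ (so $w_n\sqrt{\hat N}=\beta$), $m_n\leftarrow\binom{l}{2}$: it yields $\p(\hat Z\ge\beta-\binom{l}{2})\le C\,\p(\hat Z\ge\beta+\binom{l}{2})\le C\,\p(\hat Z\ge\beta)$. Its hypotheses hold because $\binom{l}{2}=O((\log n)^2)$, $\hat N=\Theta(K^2)$, $\beta=\Theta(K^{3/2}\sqrt{\log(n/K)})$ and $K=n^\alpha\to\infty$, so $w_n\to\infty$, $m_n\ge 1$, $w_n m_n=O(\sqrt{K}(\log n)^{5/2})=o(\sqrt{\hat N})$ and $w_n^5=O((K\log n)^{5/2})=o(\hat N^{3/2})$. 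I would not try to bound the numerator directly through the Hoeffding-type estimate of Lemma~\ref{Lemma 4.11}: that loses the factor $\exp(\Theta((\log n)^2))$ coming from the $\gamma_n^4\binom{K}{2}$-order correction in the Rademacher tail, which is present in $\p(Z_S\ge\beta)^2$ via Lemma~\ref{Lemma 4.12} but invisible to Hoeffding.

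The one point requiring care — and the main obstacle — is that the constant $C$ coming from Lemma~\ref{Lemma 4.17} must be uniform over $l\in\{2,\dots,\lfloor C_0\log n\rfloor\}$. This follows because the error terms controlled in the proof of that lemma (essentially $w_n m_n/\sqrt{\hat N}$, $w_n^3 m_n/\hat N$, $w_n^5/\hat N^{3/2}$ and $m_n^2/\hat N$) are, up to constants, maximized at $l=\lfloor C_0\log n\rfloor$ and tend to $0$ there, while $\hat N$ itself varies only within $[\binom{K}{2}-O((\log n)^2),\binom{K}{2}]$; hence a single $C$ serves all admissible $l$ and all large $n$. With that bookkeeping in hand, the estimate follows by combining the bounds above, giving a ratio at most $4C^2=O(1)$.
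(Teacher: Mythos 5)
Your proof is correct and follows essentially the same route as the paper's: decompose $Z_S,Z_T$ into the shared sum plus independent parts, use $|Z_0|\le\binom{l}{2}$ to shift the threshold, and reduce the resulting ratio to Lemma~\ref{Lemma 4.17} (the paper bounds the denominator by $\p(Z_1\ge\beta+\binom{l}{2})^2$ via the worst-case shift rather than your symmetry argument $\E[g(Z_0)]\ge g(0)/2$, but this is a cosmetic difference). Your explicit attention to uniformity of the constant over $l\le C_0\log n$ is a point the paper glosses over, and is handled correctly.
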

\begin{proof}
Write $Z_S = Z_1 + X, Z_T = Z_2 + X$ where $X$ is the sum of the shared $\binom{l}{2}$ Rademacher r.v so that $Z_1, Z_2$ are independents. We then have:
\begin{align*}
     \p\left( Z_S,Z_T \geq \gamma_n \binom{K}{2}\right)  &\leq  \p\left( Z_1,Z_2 \geq \gamma_n \binom{K}{2} - \binom{l}{2}\right)\\
     &= \p\left( Z_1 \geq \gamma_n \binom{K}{2} - \binom{l}{2}\right)^2.
\end{align*}
Similarly:
$$ \p\left( Z_S \geq \gamma_n \binom{K}{2} \right) \geq  \p\left( Z_1 \geq \gamma_n \binom{K}{2} + \binom{l}{2} \right).  $$
We then have:
\begin{align*}
  \frac{ \p\left( Z_S,Z_T \geq \gamma_n \binom{K}{2}\right)  }{ \p\left( Z_S \geq \gamma_n \binom{K}{2} \right)^2} &\leq \frac{ \p\left( Z_1 \geq \gamma_n \binom{K}{2} - \binom{l}{2}\right)^2}{ \p\left( Z_1 \geq \gamma_n \binom{K}{2} + \binom{l}{2} \right)^2}\\
  &= \left( \frac{\p\left( S_N \geq w_N \sqrt{N} -m_N \right)}{\p \left( S_N\geq w_N \sqrt{N} + m_N \right)} \right) ^2,
\end{align*}
where $S_N \triangleq Z_1, N \triangleq \binom{K}{2} - \binom{l}{2}$ ,  $m \triangleq \binom{l}{2}$ and $w_n \triangleq \gamma_n \frac{\binom{K}{2}}{\sqrt{ \binom{K}{2} -\binom{l}{2}  }}$. Note:
$$ w_N m_N = \gamma_n\frac{\binom{K}{2}}{\sqrt{\binom{K}{2} - \binom{l}{2}}} \binom{l}{2} \lesssim \gamma_n \sqrt{\binom{K}{2}}  \frac{l^2}{2}\lesssim 2 \sqrt{\frac{\log(\frac{n}{K})}{K}} \frac{K}{\sqrt 2} \frac{C_0^2\log(n)^2}{2} =\Theta\left( \sqrt{ K \log(n)^{5\over 2}} \right)=o\left(K \right) = o(\sqrt{N}).  $$
We are thus in the setting of Lemma \ref{Lemma 4.17}, which implies
$$\frac{\p\left( S_N \geq w_N \sqrt{N} -m_N \right)}{\p \left( S_N \geq w_N \sqrt{N} + m_N \right)} = O(1). $$
This concludes the proof. 
\end{proof}

\section{Upper Bounds for the Rademacher Disorder.} \label{Proof of Upper Bounds in Theorems 3.1, 3.2, 3.4}
In this section we derive upper bounds appearing in Theorems \ref{Theorem 3.1}, \ref{Theorem 3.2}, \ref{Theorem 3.4} using the first moment method. Introduce the following variable for $\gamma_n >0$:
$$ U_{\gamma_n}  \triangleq \sum_{S \subset V(G), |S|=K} \mathbf{1}\left(Z_S \geq \gamma_n \binom{K}{2}\right), $$
where $Z_S\triangleq\sum_{\substack{1\le i<j\le n\\ i,j \in S}} Z_{ij}$ under the Rademacher distribution of the disorder $Z_{ij}, 1\leq i<j \leq n$. Let $\gamma_n \triangleq (1 + \delta_n) 2\sqrt{\frac{\log \left( \frac{n}{K}\right)}{K}}$ where $\delta_n$ is some sequence satisfying $\delta_n=o(1)$. We will make a more concrete choice of the sequence $\delta_n$ later. We will progressively make assumptions on $\gamma_n $ (and therefore implicitly on $\delta_n$) allowing us to describe a set of possible values of $\gamma_n$ leading to desired asymptotic bounds on the first moment, then we will check that said values are consistent with the assumption imposed on $\delta_n$.

Applying first moment inequality yields:
\begin{align*}
     \p\left(U_{\gamma_n} \geq 1\right) &\leq \E[U_{\gamma_n}] \\
     &= \binom{n}{K} \p\left( Z_S \geq \gamma_n \binom{K}{2}\right)\\
     & \leq \binom{n}{K} \theta \p\left(\mathcal{N}(0,1) \geq \gamma_n \sqrt{\binom{K}{2}} \right)\numberthis \label{13}\\
     &\leq \theta \binom{n}{K} \frac{1}{ \sqrt{2 \pi}\gamma_n \sqrt{\binom{K}{2}}} \exp\left( - \frac{\gamma_n^2 \binom{K}{2}}{2}\right) \numberthis \label{14}\\
    &\sim \theta \binom{n}{K} \frac{1}{ \sqrt{2 \pi} 2\sqrt{\frac{\log(\frac{n}{K})}{K}} \sqrt{\frac{K^2}{2}}} \exp\left( - \frac{\gamma_n^2 \binom{K}{2}}{2}\right) \numberthis \label{15}\\
     &= \frac{\theta}{2\sqrt{\pi}} \binom{n}{K} \frac{1}{\sqrt{K \log(\frac{n}{K})}} \exp\left( - \frac{\gamma_n^2 \binom{K}{2}}{2}\right),
\end{align*}
where we used Lemma \ref{Lemma 4.10} in (\ref{13}), standard Gaussian tail bound in $(\ref{14})$\footnote{Recall that for $x>0$ it holds $\p(\mathcal{N}(0,1) \geq x )\leq \frac{1}{x\sqrt{2\pi}} \exp\left(-\frac{x^2}{2}\right)$}, and replaced $\gamma_n$ by it's asymptotic equivalent in (\ref{15}). We see then that in order to have $U_{\gamma_n}=0$ w.h.p as $n\to +\infty$, it suffices to pick $\gamma_n $(i.e, pick $\delta_n$) such that $\binom{n}{K} \frac{1}{\sqrt{K \log(\frac{n}{K})}} \exp\left( - \frac{\gamma_n^2 \binom{K}{2}}{2}\right) = \epsilon_n$ where $ \epsilon_n$ is any positive sequence s.t $\epsilon_n  =o(1)$. Taking the log of the former yields:
$$ \binom{n}{K} \frac{1}{\sqrt{K \log(\frac{n}{K})}} \exp\left( - \frac{\gamma_n^2 \binom{K}{2}}{2}\right) = \epsilon_n \iff   \gamma_n = \sqrt{ \frac{2}{\binom{K}{2}}} \sqrt{       -\log(\epsilon_n)  + \log\left(   \binom{n}{K}\frac{1}{\sqrt{ K \log(\frac{n}{K})}}  \right)    } .$$
We choose $\epsilon_n$ s.t $ \log(\epsilon_n) = o\left( \log \left( \binom{n}{K} \frac{1}{\sqrt{K \log(\frac{n}{K})}} \right)\right)  = o\left( K \log(n) \right) $  where the second equality comes from part 3 of Lemma \ref{Lemma 4.5}. We then have: 

\begin{align*}
 &\gamma_n = \sqrt{ \frac{2}{\binom{K}{2}}}\sqrt{       -\log(\epsilon_n)  + \log\left(   \binom{n}{K}\frac{1}{\sqrt{ K \log(\frac{n}{K})}}  \right)    }  \\
 &\iff \gamma_n = \sqrt{ \frac{2}{\binom{K}{2}}} \sqrt{ \log\left(   \binom{n}{K}\frac{1}{\sqrt{ K \log(\frac{n}{K})}}  \right)   \left( 1 + \frac{-\log(\epsilon_n)}{\log\left( \binom{n}{K} \frac{1}{\sqrt{K \log(\frac{n}{K})}}\right)}\right) } \\
 &\iff \gamma_n = \sqrt{ \frac{2}{\binom{K}{2}}} \sqrt{ \log\left(   \binom{n}{K}\frac{1}{\sqrt{ K \log(\frac{n}{K})}}  \right) }   \left( 1 + O\left( \frac{-\log(\epsilon_n)}{\log\left( \binom{n}{K} \frac{1}{\sqrt{K \log(\frac{n}{K})}}\right)}\right) \right) \numberthis \label{16}\\
 &\iff \gamma_n = \sqrt{ \frac{2}{\binom{K}{2}}} \left(    \sqrt{      \log\left(   \binom{n}{K}\frac{1}{\sqrt{ K \log(\frac{n}{K})}}  \right)    }  + O\left( \frac{-\log(\epsilon_n)}{ \sqrt{      \log\left(   \binom{n}{K}\frac{1}{\sqrt{ K \log(\frac{n}{K})}}  \right)    }}\right)  \right)\\
    &\iff \gamma_n =   \sqrt{ \frac{2}{\binom{K}{2}}     \log\left(   \binom{n}{K}\frac{1}{\sqrt{ K \log(\frac{n}{K})}}  \right)    }  + O\left(\frac{-\log(\epsilon_n)}{K^{\frac{3}{2}}\sqrt{\log n}}\right), \\
\end{align*}
where \ref{16} follows from using the Taylor expansion of $\sqrt{1 + u}$ when $u\to 0$ with $u =\frac{-\log(\epsilon_n)}{\log\left( \binom{n}{K} \frac{1}{\sqrt{K \log(\frac{n}{K})}}\right)}=o(1) $. It remains to check that the above expression of $\gamma_n$ is consistent with its definition in terms of $\delta_n$. That is it suffices to prove that :
$$\sqrt{ \frac{2}{\binom{K}{2}}     \log\left(   \binom{n}{K}\frac{1}{\sqrt{ K \log(\frac{n}{K})}}  \right)    }  + O\left(\frac{-\log(\epsilon_n)}{K^{\frac{3}{2}}\sqrt{\log n}}\right) \sim 2 \sqrt{\frac{\log(\frac{n}{K})}{K}}.$$
We have by part 3 of Lemma \ref{Lemma 4.5}:
\begin{align*}
    \sqrt{ \frac{2}{\binom{K}{2}}     \log\left(   \binom{n}{K}\frac{1}{\sqrt{ K \log(\frac{n}{K})}}  \right)    } &= \sqrt{ \frac{2}{\binom{K}{2}}     \log\left(   \binom{n}{K}\right) -  \frac{2}{\binom{K}{2}}    \log\left(\sqrt{K \log\left(\frac{n}{K}\right)}\right)   }\\
    &\sim \sqrt{ \frac{2}{\frac{K^2}{2}} K \log\left( \frac{n}{K} \right) }\\
    &=2\sqrt{\frac{\log(\frac{n}{K})}{K}},
\end{align*}
and:
\begin{align*}
    O\left(\frac{-\log(\epsilon_n)}{K^{\frac{3}{2}}\sqrt{\log n}}\right) &= o\left( \frac{K \log(n)}{K^{\frac{3}{2}} \sqrt{\log(n)}} \right)\\
    &= o\left(\sqrt{\frac{\log(\frac{n}{K})}{K}}\right).
\end{align*}
Thus $\gamma_n \triangleq \sqrt{ \frac{2}{\binom{K}{2}}     \log\left(   \binom{n}{K}\frac{1}{\sqrt{ K \log(\frac{n}{K})}}  \right)    }  + O\left(\frac{-\log(\epsilon_n)}{K^{\frac{3}{2}}\sqrt{\log n}}\right) \sim 2 \sqrt{\frac{\log(\frac{n}{K})}{K}}$. We then have  w.h.p as $n\to +\infty$ that $U_{\gamma_n}=0$, equivalently:
\begin{align*}
\Psi^{\Rc}_{K}(G)  &\leq  \binom{K}{2} \left( \sqrt{ \frac{2}{\binom{K}{2}}     \log\left(   \binom{n}{K}\frac{1}{\sqrt{ K \log(\frac{n}{K})}}  \right)    }  + O\left(\frac{-\log(\epsilon_n)}{K^{\frac{3}{2}}\sqrt{\log(n)}}\right) \right)\\
&= \sqrt{2 \binom{K}{2} \log\left(   \binom{K}{n}\frac{1}{\sqrt{ K \log(\frac{n}{K})}}  \right)  } + O\left(-\log(\epsilon_n) \sqrt{\frac{K}{\log(n)}}\right).
\end{align*}

Recalling that $\epsilon_n$ was any sequence satisfying $\epsilon_n \to 0, \log(\epsilon_n) = o(K \log n)$, we obtain an upper bound:
$$\Psi^{\Rc}_{K}(G)  \leq \sqrt{2 \binom{K}{2} \log\left(   \binom{K}{n}\frac{1}{\sqrt{ K \log(\frac{n}{K})}}  \right)  } + O\left(a_n \sqrt{\frac{K}{\log(n)}}\right) ,$$
where $a_n$ is any sequence satisfying $a_n \to +\infty, a_n = o(K \log n)$. Furthermore, since this is an upper bound the condition $a_n = o(K \log n)$ can be dropped.

\section{Lower Bounds for the Rademacher Disorder. Preliminary Estimates.} \label{Preliminary Proposition}

The second moment arguments we are about to use  will require us to control the following type of sums
$$\sum_{2 \leq l \leq  K-1} \binom{K}{l} \binom{n-K}{K-l} \binom{n}{K}^{-1} K^m \exp\left( \gamma_n ^2 \frac{\binom{K}{2} \binom{l}{2}}{\binom{K}{2} + \binom{l}{2}} \right) \triangleq \sum_{2 \leq l \leq K-1} S_{n,K,m,l},$$
where $\gamma_n \triangleq (1+\delta_n) 2\sqrt{\frac{\log(\frac{n}{K})}{K}}$, $\delta_n$ is a sequence s.t $
\delta_n =o(1)$ and $m$ is a fixed integer. We summaries the main properties of the above summations in the following proposition.
\begin{proposition}\label{Proposition 6.1}
The following holds
\begin{alignat*}{5}
      \exp(K)&\smashoperator[l]{\sum_{l = \lceil \log(K) \rceil}^{K-1}} &&S_{n,K,m,l}  &&= o(1),  \quad &&\alpha \in \left(0,\frac{1}{2}\right) \\
     &\smashoperator[l]{\sum_{l = 2}^{\lfloor \frac{K^2}{n}\log(K) \rceil} }&&S_{n,K,m,l} &&\lesssim K^m,  \quad&& \alpha \in \left [\frac{1}{2}, \frac{2}{3}\right) \\
    &\smashoperator[l]{\sum_{ l = \lceil \frac{K^2}{n}\log(K) \rceil}^{  K-1} }&&S_{n,K,m,l} &&= o(1),  \quad&& \alpha \in \left[\frac{1}{2}, \frac{2}{3}\right)\\
    &\smashoperator[l]{\sum_{l = 2}^{\ K-1}} &&S_{n,K,m,l}&&= O\left(\exp\left(\frac{K^3}{n^2} \log(n) \right)\right),  \quad&& \alpha \in \left[\frac{2}{3}, 1\right)
\end{alignat*}
\end{proposition}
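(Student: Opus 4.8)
The plan is to handle all four cases of Proposition~\ref{Proposition 6.1} by the same recipe: isolate the combinatorial weight from the exponential factor, split the range of $l$ around the peak of the weight, and bound each piece, switching tools near $l=K$. Write $S_{n,K,m,l}=p_l\,K^m\,E_l$, where $p_l\triangleq\binom Kl\binom{n-K}{K-l}\binom nK^{-1}$ is, by Lemma~\ref{Lemma 4.1}, the probability that two independent uniformly random $K$-subsets of $[n]$ overlap in exactly $l$ vertices, and $E_l\triangleq\exp\!\big(\gamma_n^2\,\tfrac{\binom K2\binom l2}{\binom K2+\binom l2}\big)$. Two elementary facts drive everything. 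First, $p_l$ is unimodal with peak at $l^{\ast}\approx K^2/n$, and for $l$ bounded away from $K$ Lemma~\ref{Lemma 4.4} gives $p_l\le\mathrm{poly}(n)\exp(W_{n,K,l})$ with $W_{n,K,l}=l\big(-\log(ln/K^2)+1+K/n\big)$; for $l$ close to $K$ this entropy bound is too lossy and one must instead use the exact binomial-ratio identities of Lemmas~\ref{Lemma 4.6} and~\ref{Lemma 4.5}. Second, $E_l$ is increasing in $l$ with $E_l\le\exp(\gamma_n^2\binom l2)\le\exp\!\big((1+o(1))\tfrac{2l^2\log(n/K)}{K}\big)$, this bound being essentially tight for $l=o(K)$ but off by a constant factor in the exponent — hence by an $\exp(\Theta(K))$ factor — when $l=\Theta(K)$, where one keeps the exact form $\tfrac{\binom K2\binom l2}{\binom K2+\binom l2}=\binom l2\big/\big(1+\binom l2/\binom K2\big)$.

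For $\alpha\in(0,1/2)$ one has $l^{\ast}=K^2/n\to0$, so the entire range $l\ge\lceil\log K\rceil$ sits in the decaying tail of $p_l$. I would split $[\lceil\log K\rceil,K-1]$ into a moderate part $l\le K/2$ and a near-diagonal part $l>K/2$. On the moderate part, combine Lemma~\ref{Lemma 4.4} with the crude bound $E_l\le\exp(\gamma_n^2\binom l2)$: since $l\le K/2$, the logarithmic weight $-l\log(ln/K^2)\asymp-l\log n$ dominates the quadratic contribution $\Theta(l^2\log n/K)$, so the combined exponent is $\le-c\,l\log(n/K)$ for a fixed $c>0$, and these terms form a geometric series in $l$ summing to at most $\mathrm{poly}(n)\exp\!\big(-\Omega(\log K\cdot\log n)\big)$. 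On the near-diagonal part the crude bound on $E_l$ must be dropped; substituting the Stirling estimate $\binom nK=(1+o(1))(ne/K)^K/\sqrt{2\pi K}$ of Lemma~\ref{Lemma 4.5} and the exact $p_l$, the leading term $-\log\binom nK\sim-K\log(n/K)$ exactly cancels the leading term that $E_l$ produces at $l\approx K$, and the plan is to show that the surviving remainder (controlled using the precise value of $\delta_n$) is negative by a margin large enough to absorb the $\exp(K)$ prefactor. Multiplying the two parts by $\exp(K)$ gives the first claim.

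For $\alpha\in[1/2,2/3)$ and $\alpha\in[2/3,1)$ the peak $l^{\ast}=K^2/n$ diverges and the mechanism is different. Over the core range $l\le\lceil l^{\ast}\log K\rceil$ one has, uniformly, $\gamma_n^2\binom l2\le(1+o(1))\tfrac{2(l^{\ast}\log K)^2\log(n/K)}{K}=(1+o(1))\tfrac{2K^3\log^2K\,\log(n/K)}{n^2}$, which is $o(1)$ exactly when $\alpha<2/3$ and is $O\!\big(\tfrac{K^3}{n^2}\log n\big)$ for every $\alpha<1$; since $\sum_l p_l\le1$, this gives $\sum_{l\le l^{\ast}\log K}S_{n,K,m,l}\lesssim K^m$ for $\alpha<2/3$ and $\le\exp\!\big(O(\tfrac{K^3}{n^2}\log n)\big)K^m$ in general. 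For the tail $l>\lceil l^{\ast}\log K\rceil$, Lemma~\ref{Lemma 4.4} gives $W_{n,K,l}\le-\Omega(l^{\ast}\log K\log\log K)$ at the left endpoint and decreasing thereafter, while $E_l$ contributes at most $O(\tfrac{K^3}{n^2}\log n)$ until $l$ nears $K$, where one again switches to the direct Stirling estimate of the previous paragraph; since $l^{\ast}\log K\log\log K=\omega(\log n)$ once $\alpha\ge1/2$, the tail is $o(1)$ for $\alpha\in[1/2,2/3)$ and is absorbed into $O(\exp(\tfrac{K^3}{n^2}\log n))$ for $\alpha\ge2/3$. Assembling the core and tail estimates yields the remaining three displays; the $l\le\lceil\log K\rceil$ initial segment, which overlaps the range treated in Corollary~\ref{Corollary 3.19}, is already known to contribute only an $O(K^m)$ amount.

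The hard part will be the near-diagonal regime $l\asymp K$: there $\log(p_lE_l)$ is the difference of two quantities each of order $K\log(n/K)$, Lemma~\ref{Lemma 4.4} is useless, and the sign and magnitude of the cancellation remainder must be pinned down very precisely — the four target bounds ($o(1)$, $\lesssim K^m$, and $O(\exp(\Theta(\tfrac{K^3}{n^2}\log n)))$) separate regimes differing only by $\mathrm{polylog}$ or $\exp(\mathrm{polylog})$ factors, so every lower-order term in the Stirling expansions of $\binom Kl$, $\binom{n-K}{K-l}$ and $\binom nK$, together with the exact choice of the sequence $\delta_n$, has to be carried through. A secondary annoyance is that $l\mapsto p_lE_l$ need not be unimodal (the monotone $E_l$ competes with the rise-then-fall of $p_l$), so the splitting points above must be chosen so that each sub-range is genuinely geometric or flat, rather than relying on a single global maximizer.
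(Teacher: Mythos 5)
Your overall template --- peel off the combinatorial weight $p_l=\binom Kl\binom{n-K}{K-l}\binom nK^{-1}$, use $\sum_l p_l=1$ on the core range around $l^\ast=K^2/n$, and play the entropy bound of Lemma~\ref{Lemma 4.4} against the exponential gain on the tail --- is the same as the paper's, and your core-range estimates for $\alpha\ge 1/2$ (a uniform bound on $\gamma_n^2\binom l2$ of order $K^3n^{-2}\mathrm{polylog}(n)$ followed by Lemma~\ref{Lemma 4.1}) coincide with the paper's analysis of the corresponding block $B_1$.

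The gap is in the regime $l=\Theta(K)$, and it is twofold. First, your ``moderate part'' bound is wrong as stated: replacing $\tfrac{\binom K2\binom l2}{\binom K2+\binom l2}$ by $\binom l2$ and combining with $W_{n,K,l}$ gives, per unit $l$ and with $x=l/K$, an exponent $-\log x-(1-2x)\log(n/K)+O(1)$, which is \emph{positive} (equal to $\log 2+O(1)$) at $x=1/2$; on a window of width $\Theta(K/\log n)$ below $l=K/2$ each term is $\exp(+\Theta(K))$, so the claimed geometric series does not close, let alone absorb the extra $\exp(K)$ prefactor. The paper avoids this by never discarding the denominator: the bound $\tfrac{\binom K2\binom l2}{\binom K2+\binom l2}\le\tfrac12\tfrac{K^2l^2}{K^2+l^2}$ carries the factor $\tfrac{1}{1+x^2}$, and the resulting coefficient of $\log(n/K)$ in the exponent is $-\tfrac{(1-x)^2}{1+x^2}$, strictly negative for every $x<1$; this is precisely what lets a single function $g_n(x)$ control the entire range $\lceil\log K\rceil\le l\le K-1$. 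Second, your near-diagonal part $l>K/2$ is not actually proved: you write that ``the plan is to show that the surviving remainder is negative'' and then, in your closing paragraph, correctly identify this cancellation as the unresolved hard step. The paper does not perform the Stirling cancellation you envisage; it keeps the Lemma~\ref{Lemma 4.4} bound over the whole range, establishes monotonicity of $g_n$ on $(\log K/K,x_1)$ and on $(1/\sqrt3,1)$, compares the sums on the monotone pieces with integrals (handled by monotone convergence), and shows $g_n\le-\tfrac\beta2\log(n/K)$ on the middle interval $[x_1,x_2]$. Until you either adopt this exact-covariance bookkeeping or genuinely carry out the cancellation at $l\approx K$ --- where, as you note, the two competing exponents each have size $K\log(n/K)$ and the outcome depends on the precise choice of $\delta_n$ --- the first, third and fourth displays of the proposition are not established.
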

\begin{proof}[Proof of Proposition \ref{Proposition 6.1}]$ $
\subsection{Case : \texorpdfstring{$\alpha\in \left(0, \frac{1}{2}\right)$}{TEXT}}
Write each index $l$ as $\eta \frac{K^2}{n}$ where $ \frac{n \log(K)}{K^2} \leq \eta \leq \frac{n(K-1)}{K^2 } $, in particular note that $\eta = \omega(1)$. We have by part 4 of Lemma \ref{Lemma 4.5} :
\begin{align*}
    \binom{K}{l} \binom{n-K}{K-l} \binom{n}{K}^{-1}  &\leq   \left( \frac{Ke}{l} \right)^l \left( \frac{(n-K)e}{K-l}\right)^{K-l} \binom{n}{K}^{-1}.
\end{align*}
Hence:
\begin{align*}
\hspace*{-0.5cm}
     \sum_{ \lceil \log(K) \rceil \leq l \leq K-1} S_{n,K,m,l}   &\leq \binom{n}{K}^{-1}\sum_{\lceil \log(K) \rceil  \leq l \leq K-1} \left( \frac{Ke}{l} \right)^l \left( \frac{(n-K)e}{K-l}\right)^{K-l}  K^m \exp\left( \gamma_n ^2 \frac{\binom{K}{2} \binom{l}{2}}{\binom{K}{2} + \binom{l}{2}} \right)\\
    &\sim \sqrt{2\pi K} \left(\frac{K}{n} \right)^{K} \left(\frac{n-K}{n} \right)^{n-K} \\ & \times\sum_{\lceil \log(K) \rceil  \leq l \leq K-1} \left( \frac{Ke}{l} \right)^l \left( \frac{(n-K)e}{K-l}\right)^{K-l}  K^m \exp\left( \gamma_n ^2 \frac{\binom{K}{2} \binom{l}{2}}{\binom{K}{2} + \binom{l}{2}} \right) \numberthis \label{17}\\ 
     &\leq \sum_{\lceil \log(K) \rceil  \leq l \leq K-1} K^m\sqrt{2\pi K} \exp\left(W_{n,K,l} +\gamma_n^2 \frac{\binom{K}{2} \binom{l}{2}}{\binom{K}{2} + \binom{l}{2}} \right) \numberthis \label{18}\\
      &\leq \sum_{\lceil \log(K) \rceil  \leq l \leq K-1} 3 K^{m+1} \exp\left(W_{n,K,l} +\gamma_n^2 \frac{\binom{K}{2} \binom{l}{2}}{\binom{K}{2} + \binom{l}{2}} \right).\\
\end{align*}
Where $W_{n,K,l} \triangleq l\left(-\log(\eta) + 1+\frac{K}{n}\right) $ and we used part 2 of  Lemma \ref{Lemma 4.5} in (\ref{17}) on $\binom{n}{K}^{-1}$, and Lemma \ref{Lemma 4.4} in (\ref{18}). Note that for $a,b \geq 1$ it holds
$$ \frac{ \binom{a}{2} \binom{b}{2}}{\binom{a}{2} + \binom{b}{2}} \leq \frac{1}{2}\frac{a^2 b^2}{a^2 + b^2} .$$
We then  have for $l\geq \lceil \log(K) \rceil $ :
\begin{align*}
    \gamma_n^2 \frac{\binom{K}{2} \binom{l}{2}}{\binom{K}{2} + \binom{l}{2}} &\leq (1+ \delta_n)^2 4\frac{ \log\left(\frac{n}{K}\right)}{K} \frac{1}{2}\frac{K^2 l^2}{K^2 + l^2}\\
    &= 2\log\left(\frac{n}{K}\right) \frac{K}{n} \eta  \frac{l}{1+\eta^2 \frac{K^2}{n^2}} (1+ \delta_n)^2\\
    &= 2\log\left(\frac{n}{K}\right) \frac{K}{n} \eta  \frac{l}{1+\eta^2 \frac{K^2}{n^2}} (1+ \xi_n),
\end{align*}
where $\xi_n \triangleq 2\delta_n +\delta_n^2 = o(1)$. For large enough $n$ we have:
\begin{align*}
    W_{n,K,l} +\gamma_n^2 \frac{\binom{K}{2} \binom{l}{2}}{\binom{K}{2} + \binom{l}{2}} &\leq l\left(-\log(\eta) +1 +\frac{K}{n}\right) + 2(1+\xi_n)\log\left(\frac{n}{K}\right) \frac{K}{n} \eta  \frac{l}{1+\eta^2 \frac{K^2}{n^2}} .
\end{align*}
Let $x \triangleq \eta \frac{K}{n} = \frac{l}{K}$. Note that $o(1) = \log(K)/K \leq x < 1$ and
$$\log\left(\frac{n}{K}\right) \frac{K}{n} \eta  \frac{l}{1+\eta^2 \frac{K^2}{n^2}} = \log\left(\frac{n}{K} \right) \frac{xl}{1+x^2} . $$
Therefore:
\begin{align*}
    W_{n,K,l}+\gamma_n^2 \frac{\binom{K}{2} \binom{l}{2}}{\binom{K}{2} + \binom{l}{2}} &\leq l\left(-\log\left(\frac{nx}{K}\right) +1+ 2\log\left(\frac{n}{K}\right) \frac{x(1+\xi_n)}{1+x^2}  + \frac{K}{n} \right)\\
    &= l\left( - \log(x)  - \frac{(x-1)^2}{1+x^2} \log\left(\frac{n}{K}\right)+ \frac{2x\xi_n}{1+x^2}\log\left( \frac{n}{K} \right) +1+ \frac{K}{n}\right).
\end{align*}
For $K,n$ large enough we have:
\begin{align*}
    &l\left( - \log(x)  - \frac{(x-1)^2}{1+x^2} \log\left(\frac{n}{K}\right)+ \frac{2x\xi_n}{1+x^2}\log\left( \frac{n}{K} \right)  +1+ \frac{K}{n}\right)\\  &\leq l\left( - \log(x)  - \frac{(x-1)^2}{1+x^2} \log\left(\frac{n}{K}\right) + \frac{2x\xi_n}{1+x^2}\log\left( \frac{n}{K} \right) +2\right)  \\
    &= K \left( - x\log(x)  - \frac{x(x-1)^2}{1+x^2} \log\left(\frac{n}{K}\right) + \frac{2x^2\xi_n}{1+x^2}\log\left( \frac{n}{K} \right) +2x\right) \numberthis \label{d(x)}\\
    &\leq K g_n(x),
\end{align*}
where  
\begin{align*}
    g_n(x) \triangleq  - x\log(x)  - \frac{x(x-1)^2}{1+x^2} \log\left(\frac{n}{K}\right) + 2|\xi_n|\log\left( \frac{n}{K} \right) +2x. \numberthis \label{g_n}
\end{align*}
Let $x_1$ be the first solution to the equation $\frac{4x}{(1+x^2)^2} = \frac{1-2\alpha}{1-\alpha}$ and note then that on $(0, x_1)$ we have by  monotonicity $\frac{4x}{(1+x^2)^2} < \frac{1-2\alpha}{1-\alpha}$ . We claim that $g_n(x)$ is strictly decreasing on $\left(\frac{\log(K)}{K}, x_1\right)$. Indeed, we have:
$$ \frac{\partial g_n(x)}{\partial x} = 1 - \log \left( \frac{n}{K} \right) - \log(x) + \frac{4x}{(1+x^2)^2} \log\left(\frac{n}{K}\right) .$$
For $x \in \left(\frac{\log(K)}{K}, x_1\right)$:
\begin{align*}
    \frac{\partial g_n(x)}{\partial x} &\leq 1 - \log\left(\frac{n}{K}\right) - \log\left(\frac{\log(K)}{K}\right) + \frac{4x}{(1+x^2)^2} \log\left(\frac{n}{K}\right)\\
    &= 1-\log(\log(K)) - \log\left(\frac{n}{K^2}\right)+ \frac{4x}{(1+x^2)^2} \log\left(\frac{n}{K}\right)\\
    &= 1- \log(\log(K)) - (1-2\alpha) \log(n) + (1-\alpha) \frac{4x}{(1+x^2)^2}  \log(n)\\
    &= 1- \log(\log(K)) +(1-2\alpha) \log(n)\left[-1 + \frac{1-\alpha}{1-2\alpha}\frac{4x}{(1+x^2)^2}  \right] ,
\end{align*}
and since $\frac{1-\alpha}{1-2\alpha}\frac{4x}{(1+x^2)^2} -1 <0$ for $x< x_1$ we see that for sufficiently large $n,K$ the above is negative, and thus $g_n$ is strictly decreasing over $\left(\frac{\log(K)}{K}, x_1\right)$.

Now let $x_2 \triangleq \frac{1}{\sqrt{3}}$. For $x \in \left( x_2, 1 \right)$ the function $x \mapsto\frac{4x}{(1+x^2)^2}$ reaches it maximum $\frac{3\sqrt 3}{4}>1$ at $x = x_2$ and takes values strictly larger than $1$ on the interval $(x_2, 1)$, we then have for $x \in(x_2,1)$:

\begin{align*}
     \frac{\partial g_n(x)}{\partial x} &\geq 1 - \log\left(\frac{n}{K}\right) - \log(1) +  \frac{4x}{(1+x^2)^2} \log\left(\frac{n}{K}\right)\\
     &\geq 1  - \log\left(\frac{n}{K}\right) +  \log\left(\frac{n}{K}\right)\\
     &\geq 1.
\end{align*}
Therefore $g_n$ is strictly increasing over $(x_2,1)$. We can then upper bound the summations of $S_{n,K,m,l}$ over the values $x\in \left(\frac{\log(K)}{K}, x_1\right) \cup (x_2, 1)$ by their associated integrals. For clarity we use the notation $x(l) = x = \frac{l}{K}$ to indicate that $x(l)$ depends on $l$. Noting that $l \mapsto 3K^{m+1} \exp(K g_n (x(l)))$ is decreasing over $l \in \left(\log(K), K x_1  \right)$ we have:

\begin{align*}
    \sum_{ \lceil \log(K) \rceil \leq l \leq \lfloor Kx_1 \rfloor} S_{n,K,m,l} \exp(K)&\lesssim \sum_{\lceil \log(K) \rceil\leq l \leq \lfloor Kx_1 \rfloor} 3 K^{m+1} \exp\left(W_{n,K,l} +\gamma_n^2 \frac{\binom{K}{2} \binom{l}{2}}{\binom{K}{2} + \binom{l}{2}} \right) \exp(K)\\
    &\leq\sum_{\lceil \log(K) \rceil \leq l \leq \lfloor Kx_1 \rfloor} 3K^{m+1} \exp(K g_n(x(l)) + K) \\
    &\leq  \sum_{\lceil \log(K) \rceil \leq l \leq \lfloor Kx_1 \rfloor}  \int_{l-1}^{l} 3K^{m+1} \exp(K g_n(x(u))+K)  du\\
    &\leq \int_{\log(K) -1}^{K x_1} 3K^{m+1}  \exp( K g_n(x(u))+K) du\\
    &= \int_{\frac{\log(K)}{K}- \frac{1}{K}}^{x_1}3K^{m+1}  \exp( K  g_n(x)+K) K dx\\
    &= \int_{\frac{\log(K)}{K}-\frac{1}{K}}^{x_1} 3K^{m+2}  \exp( K  g_n(x)+K) dx \\
    &= o(1),
\end{align*}
where the last line follows by applying the Monotone Convergence Theorem. Indeed, note that for any $x \in (0,1)$, the sequence of functions $x\to K^{m+2}\exp(K g_n(x) +K)$ is monotonically decreasing in $n$. Moreover, for large enough $n$ we have $g_n(x)<-2$. Therefore, this is a monotonically decreasing sequence of functions converging pointwise to the null function over $(0,1)$. Thus the application of MCT. One proves Similarly that
$$ \sum_{ \lceil Kx_2 \rceil \leq l \leq K-1} S_{n,K,m,l} \leq  \sum_{\lceil K x_2 \rceil\leq l \leq K-1} 3K^{m+1}  \exp(K g_n(x(l))+K) = o(1) .$$
Hence, in order to establish that $\sum_{\lceil \log(K) \rceil \leq l \leq K-1} S_{n,K,m,l} \exp(K)= o(1) $, it suffices to prove that:
$$    \sum_{\lceil K x_1 \rceil  \leq l \leq \lfloor Kx_2 \rfloor}3K^{m+1}  \exp(K g_n(x(l))+K) =o(1) .$$
Let $x\in [x_1, x_2]$ and note that $\max_{ x \in (0,1)} -x\log(x) = e^{-1}$, therefore:
\begin{align*}
    g_n(x) &=  - x\log(x)  - \frac{x(x-1)^2}{1+x^2} \log\left(\frac{n}{K}\right)+ 2|\xi_n| \log\left(\frac{n}{K}\right) +2x\\
    &\leq e^{-1}+ \left[2|\xi_n|- \min_{x \in (x_1,x_2)}   \left(\frac{x(x-1)^2}{1+x^2}\right)\right] \log\left(\frac{n}{K}\right) + 2\\
    &= e^{-1} +2 + (2|\xi_n| - \beta)  \log\left(\frac{n}{K}\right),
\end{align*}
where $\beta \triangleq \min_{x \in (x_1,x_2)}  \left(\frac{x(x-1)^2}{1+x^2}\right) >0$ and $\beta$ only depends on the parameter $\alpha$.  Therefore, for large enough $K,n$ we have $ g_n(x) \leq - \frac{\beta}{2} \log(\frac{n}{K})-1$ on $[x_1, x_2]$, and thus:
\begin{align*}
W_{n,K,l} + \gamma_n^2 \frac{\binom{K}{2} \binom{l}{2}}{\binom{K}{2}+\binom{l}{2}} +K&\leq K g_n(x) +K\\
&\leq - K \log\left(\frac{n}{K}\right) \frac{\beta}{2}.
 \end{align*}
 Hence:
 \begin{align*}
     \sum_{ \lceil K x_1 \rceil  \leq l \leq \lfloor K x_2 \rfloor} 3 K^{m+1} \exp(K g_n(x(l))+K) &\leq \sum_{\lceil K x_1 \rceil \leq l \leq \lfloor K x_2 \rfloor} 3K^{m+1}  \exp\left(- K \log\left(\frac{n}{K}\right) \frac{\beta}{2}\right)\\
     &\leq (Kx_2 - K x_1+1) 3K^{m+1}  \exp\left(- K \log\left(\frac{n}{K}\right) \frac{\beta}{2}\right)\\
     &\leq 3K^{m+2}  \exp\left(- K \log\left(\frac{n}{K}\right) \frac{\beta}{2}\right)\\
     &= o(1),
 \end{align*}
which concludes the proof of $\sum_{\lceil \log(K) \rceil \leq l \leq K-1} S_{n,K,m,l} \exp(K)= o(1)$.\\

\subsection{Case : \texorpdfstring{$\alpha\in \left[ \frac{1}{2}, \frac{2}{3}\right)$}{TEXT}} $ $\newline
Let: 
$$ \sum_{2 \leq l \leq K-1} S_{n,K,m,l} = B_1 +B_2, $$
where $B_1$ is the sum of $S_{n,K,m,l}$ for $2 \leq l \leq  \lfloor \frac{K^2}{n} \log(K) \rfloor $, $B_2$ for $\lceil \frac{K^2}{n} \log(K) \rceil\leq l \leq K-1$.

\subsubsection*{Analysis of $B_1$}

For large enough $n$ we have $\gamma_n \leq 3 \sqrt{\frac{\log(\frac{n}{K})}{K}} \leq 3 \sqrt{\frac{\log(n)}{K}}$, therefore:
\begin{align*}
    \gamma_n^2  \frac{\binom{K}{2} \binom{l}{2}}{\binom{K}{2} + \binom{l}{2}} &\leq 9 \frac{\log(n)}{K} \binom{l}{2}\\
    &\leq 9 \frac{\log(n)}{K} \frac{l^2}{2}\\
    &\leq \frac{9}{2} \frac{\log(n)}{K} \frac{K^4}{n^2}\log(K)^2\\
    &\leq \frac{9}{2}  \frac{K^3}{n^2} \log(n)^3.
\end{align*}
We thus have:
\begin{align*}
B_1& \leq \sum_{2\leq l \leq\frac{K^2}{n} \log(n)}  \binom{K}{l} \binom{n-K}{K-l} \binom{n}{K}^{-1} K^m  \exp \left(   \frac{9}{2} \frac{K^3}{n^2}\log\left(n\right)^3\right) \\
&\leq K^m \exp \left( \frac{9}{2}\frac{K^3}{n^2}\log\left(n\right)^3\right) \sum_{0\leq l \leq K }  \binom{K}{l} \binom{n-K}{K-l} \binom{n}{K}^{-1} \\
&= K^m \exp \left( \frac{9}{2} \frac{K^3}{n^2}\log\left(n\right)^3 \right) \numberthis \label{19} ,
\end{align*}
where we used the combinatorial identity of Lemma (\ref{Lemma 4.1}) in \ref{19}. Since $\alpha < \frac{2}{3}$ we have $ \exp \left(  \frac{9}{2} \frac{K^3}{n^2}\log\left(n\right)^3 \right)= \exp(o(1)) =1$, henceforth
$$  B_1 \lesssim K^m.$$
\subsubsection*{Analysis of $B_2 $}
Using part 4 of Lemma \ref{Lemma 4.5} we have :
\begin{align*}
    \binom{K}{l} \binom{n-K}{K-l} \binom{n}{K}^{-1}  &\leq   \left( \frac{Ke}{l} \right)^l \left( \frac{(n-K)e}{K-l}\right)^{K-l} \binom{n}{K}^{-1}.
\end{align*}
Hence:
\begin{align*}
\hspace*{-0.3cm}
     B_2  &\leq \binom{n}{K}^{-1}\sum_{  l = \lceil \frac{K^2}{n} \log(K)\rceil}^{ K-1} \left( \frac{Ke}{l} \right)^l \left( \frac{(n-K)e}{K-l}\right)^{K-l}  K^m \exp\left( \gamma_n ^2 \frac{\binom{K}{2} \binom{l}{2}}{\binom{K}{2} + \binom{l}{2}} \right)\\
    &\sim \sqrt{2\pi K} \left(\frac{K}{n} \right)^{K} \left(\frac{n-K}{n} \right)^{n-K}\sum_{  l = \lceil \frac{K^2}{n} \log(K) \rceil}^{K-1} \left( \frac{Ke}{l} \right)^l \left( \frac{(n-K)e}{K-l}\right)^{K-l}  K^m \exp\left( \gamma_n ^2 \frac{\binom{K}{2} \binom{l}{2}}{\binom{K}{2} + \binom{l}{2}} \right) \numberthis \label{20}\\ 
     &\leq \sum_{ l=\lceil  \frac{K^2}{n} \log(K) \rceil}^{ K-1} K^m\sqrt{2\pi K} \exp\left(W_{n,K,l} +\gamma_n^2 \frac{\binom{K}{2} \binom{l}{2}}{\binom{K}{2} + \binom{l}{2}} \right)  \numberthis \label{21} \\
     &\leq \sum_{  l= \lceil \frac{K^2}{n} \log(K) \rceil }^{ K-1} 3K^{m+1} \exp\left(W_{n,K,l} +\gamma_n^2 \frac{\binom{K}{2} \binom{l}{2}}{\binom{K}{2} + \binom{l}{2}} \right),
\end{align*}
where $W_{n,K,l} \triangleq l\left(-\log(\eta) + 1+ \frac{K}{n}\right), l= \eta \frac{K^2}{n}$ and we used part 2 of Lemma \ref{Lemma 4.5} in (\ref{20}) and Lemma \ref{Lemma 4.4} in (\ref{21}). 
Similarly to previous computations (see case $\alpha \in \left(0, \frac{1}{2}\right)$), we have for large enough $n$:
\begin{align*}
    W_{n,K,l} +\gamma_n^2 \frac{\binom{K}{2} \binom{l}{2}}{\binom{K}{2} + \binom{l}{2}}  &\leq l\left( - \log(x)  - \frac{(x-1)^2}{1+x^2} \log\left(\frac{n}{K}\right)  + 2|\xi_n| \log\left(\frac{n}{K}\right)+2\right)\\
    &= K g_n(x),
\end{align*}
where $x \triangleq \eta \frac{K}{n} = \frac{l}{K}$ and $g_n(x)$ is given by (\ref{g_n}).
Let $x^{n}_1 \triangleq \frac{p}{\log(\frac{n}{K})}$ where $p$ is a constant that we will fix later, and $x_2 \triangleq \frac{1}{\sqrt 3}$. We claim that  $g_n(x)$ is strictly decreasing on $\left(\frac{K}{n} \log(K), x^n_1\right)$ and strictly increasing on $(x_2,1)$. Indeed, we have:
$$ \frac{\partial g_n(x)}{\partial x} = 1 - \log \left( \frac{n}{K} \right) - \log(x) + \frac{4x}{(1+x^2)^2} \log\left(\frac{n}{K}\right) .$$
For $x \in \left(\frac{K}{n} \log(K), x^n_1\right)$:
\begin{align*}
    \frac{\partial g_n(x)}{\partial x} &\leq 1 - \log\left(\frac{n}{K}\right) - \log\left(\frac{K}{n} \log(K)\right) +4x^n_1 \log\left(\frac{n}{K}\right)\\
    &= 1-\log(\log(K)) +4p.
\end{align*}
Therefore for large enough $K$ we have $\frac{\partial g_n(x)}{\partial x} \leq -\frac{\log(\log(K))}{2}$, and $g_n$ is strictly decreasing on $\left( \frac{K}{n} \log(K), x^n_1\right)$ for large enough $n,K$. Now let $x \in \left( x_2, 1 \right)$. Note that $\frac{4x}{(1+x^2)^2}$ reaches it maximum $\frac{3\sqrt 3}{4}>1$ at $x = x_2$ and takes values strictly higher than $1$ on the interval $(x_2, 1)$. We then have for $x \in(x_2,1)$:
\begin{align*}
     \frac{\partial g_n(x)}{\partial x} &\geq 1 - \log\left(\frac{n}{K}\right) - \log(1) +  \frac{4x}{(1+x^2)^2} \log\left(\frac{n}{K}\right)\\
     &\geq 1  - \log\left(\frac{n}{K}\right) + \log\left(\frac{n}{K}\right)\\
     &\geq 1,
\end{align*}
therefore $g_n$ is strictly increasing over $(x_2,1)$. We can then upper bound the summations  over $x\in \left(\frac{K}{n}\log(K), x^n_1\right) \cup (x_2, 1)$ by their associated integrals as follows (we use the notation  $x(l) = x = \frac{l}{K}$):
\begin{align*}
   \sum_{ l = \lceil \frac{K^2}{n} \log(K) \rceil}^{ \lfloor Kx_1 \rfloor} 3K^{m+1} \exp\left(W_{n,K,l} +\gamma_n^2 \frac{\binom{K}{2} \binom{l}{2}}{\binom{K}{2} + \binom{l}{2}} \right) &\leq \sum_{ l = \lceil \frac{K^2}{n}\log(K) \rceil}^{ \lfloor Kx_1\rfloor} 3K^{m+1} \exp(K g_n(x(l))) \\
   &\leq  \sum_{ l = \lceil \frac{K^2}{n}\log(K) \rceil  -1}^{\lfloor Kx_1 \rfloor} 3K^{m+1}  \int_{l-1}^{l} \exp(K g_n(x(u)))  du\\
    &\leq \int_{ \frac{K^2}{n} \log(K)-1 }^{K x_1} 3K^{m+1}  \exp( K g_n(x(u))) du\\
    &= \int_{\frac{K}{n} \log(K)-\frac{1}{K} }^{x_1}3K^{m+1}  \exp( K  g_n(x)) K dx\\
    &= \int_{ \frac{K}{n} \log(K) - \frac{1}{K} }^{x_1} 3K^{m+2}  \exp( K  g_n(x)) dx \\
    &= o(1),
\end{align*}
where the last line follows by applying the Monotone Convergence Theorem since the sequence of functions $x\to K^{m+2}\exp(K g_n(x))$ is monotonically decreasing and for any fixed $x$ we have $g_n(x)<-1$ for sufficiently large $n,K$.  One proves Similarly that:
$$  \sum_{l = \lceil K x_2 \rceil }^{ K-1} 3K^{m+1} \exp\left(W_{n,K,l} +\gamma_n^2 \frac{\binom{K}{2} \binom{l}{2}}{\binom{K}{2} + \binom{l}{2}} \right)  \leq \sum_{l=\lceil K x_2 \rceil   }^{K-1} 3K^{m+1}  \exp(K g_n(x(l))) = o(1) .$$
Hence, in order to establish that $ B_2 = o(1) $ it suffices to prove that:
$$   \sum_{ l = \lceil K x^n_1 \rceil  }^{\lfloor Kx_2\rfloor}3K^{m+1}  \exp\left( W_{n,K,l} +\gamma_n^2 \frac{\binom{K}{2} \binom{l}{2}}{\binom{K}{2} + \binom{l}{2}}\right) =o(1) $$
Using (\ref{d(x)}), we have
\begin{align*}
    W_{n,K,l} +\gamma_n^2 \frac{\binom{K}{2} \binom{l}{2}}{\binom{K}{2} + \binom{l}{2}}  &\leq l\left( - \log(x)  - \frac{(x-1)^2}{1+x^2} \log\left(\frac{n}{K}\right)  + \frac{2x\xi_n}{1+x^2} \log\left(\frac{n}{K}\right)+2\right)\\
    &= K d_n(x)
\end{align*}
where $d(x) \triangleq - x\log(x)  - \frac{x(x-1)^2}{1+x^2} \log\left(\frac{n}{K}\right) +\frac{2x^2\xi_n}{1+x^2} \log\left(\frac{n}{K}\right)+2x $. Since $\max_{ x \in (0,1)} -x\log(x) = e^{-1}$, we have for $x\in [x^n_1, x_2]$:
\begin{align*}
    d_n(x) 
    &\leq e^{-1} - \min_{x \in (x^n_1,x_2)}  \left(\frac{x(x-1)^2 -2x^2\xi_n}{1+x^2}\right) \log\left(\frac{n}{K}\right) + 2.
\end{align*}
Note that for large enough $n$, we have $\min_{x \in (x^n_1,x_2)} \left[ (1-x)^2-2x\xi_n  \right]>0$. Therefore
\begin{align*}
   \min_{x \in (x^n_1,x_2)}  \left(\frac{x(x-1)^2 -2x^2\xi_n}{1+x^2}\right) \log\left(\frac{n}{K}\right)&= \min_{x \in (x^n_1,x_2)}  \left(\frac{(x-1)^2 -2x\xi_n}{1+x^2} \frac{x}{1+x^2}\right) \log\left(\frac{n}{K}\right)\\
   & \geq  \min_{x \in (x^n_1,x_2)} \left[ (1-x)^2-2x\xi_n  \right] \frac{x_1^n}{2} \log\left(\frac{n}{K}\right)\\
   &=  \min_{x \in (x^n_1,x_2)} \left[ (1-x)^2-2x\xi_n  \right] \frac{p}{2\log\left(\frac{n}{K}\right)} \log\left(\frac{n}{K}\right)\\
   &=  \frac{p}{2}\min_{x \in (x^n_1,x_2)} \left[ (1-x)^2-2x\xi_n  \right] ,
\end{align*}
For $n$ large enough the function $x \mapsto (1-x)^2 -2x \xi_n$ is strictly decreasing on $(0, x_2)$, therefore
\begin{align*}
    \min_{x \in (x^n_1,x_2)} \left[ (1-x)^2-2x\xi_n  \right]  &= \left(1-\frac{1}{\sqrt{3}}\right) ^2-\frac{2\xi_n}{\sqrt{3}},
\end{align*}
hence, for large enough $n$ we have 
\begin{align*}
    \min_{x \in (x^n_1,x_2)} \left[ (1-x)^2-2x\xi_n  \right]  &\geq \frac{1}{2}\left(1-\frac{1}{\sqrt{3}}\right)^2 ,
\end{align*}
and thus
\begin{align*}
    d_n(x)&\leq e^{-1} - \min_{x \in (x_1,x_2)}  \left(\frac{x(x-1)^2 -2x^2\xi_n}{1+x^2}\right) \log\left(\frac{n}{K}\right) + 2\\
    &\leq  e^{-1}  - \frac{p}{4}\left(1-\frac{1}{\sqrt{3}}\right)^2 +2
\end{align*}
We can pick $p$ large enough so that $d_n(x) \leq -1$ on $(x^n_1, x_2)$. Therefore, for big enough $K,n$ it holds that:
 $$ \exp\left(W_{n,K,l} + \gamma_n^2 \frac{\binom{K}{2} \binom{l}{2}}{\binom{K}{2}+\binom{l}{2}}  \right) \leq \exp\left(- K\right).$$
Then, the summation in $B_2$ over $x\in (x^n_1, x_2)$ is exponentially decreasing to $0$:
 \begin{align*}
     \sum_{ l =\lceil K x^n_1 \rceil }^{\lfloor K x_2 \rfloor} 3K^{m+1} \exp\left(W_{n,K,l} +\gamma_n^2 \frac{\binom{K}{2} \binom{l}{2}}{\binom{K}{2} + \binom{l}{2}} \right) &\leq \sum_{ l = \lceil K x^n_1 \rceil }^{\lfloor K x_2 \rfloor} 3 K^{m+1} \exp(K d_n(x)) \\
     &\leq \sum_{ l = \lceil K x^n_1 \rceil }^{\lfloor K x_2 \rfloor} 3K^{m+1}  \exp\left(-K\right)\\
     &\leq (Kx_2 - K x^n_1+1) 3K^{m+1}  \exp\left(- K \right)\\
     &\leq 3K^{m+2}  \exp\left(- K \right)\\
     &= o(1).
 \end{align*}
We thus have proven that $B_2 = o(1)$, which then yields  $\sum_{l = \lceil \frac{K^2}{n} \log(K) \rceil}^{K-1} S_{n,K,m,l} \leq K^m$, concluding the proof of the 2nd statement of Proposition \ref{Proposition 6.1}.\\

\subsection{Case : \texorpdfstring{$\alpha\in \left [\frac{2}{3},1\right)$}{TEXT}}$ $\newline
We divide the summation around $M \frac{K^2}{n}$ where $M$ is a positive constant that we will pick later. Let $B_1$ be the sum of $S_{n,K,m,l}$ for $2 \leq l \leq \lfloor M \frac{K^2}{n} \rfloor$ and $B_2$ the sum for $\lceil M \frac{K^2}{n} \rceil \leq l \leq K-1 $.
\subsection*{Analysis of $B_2$}
Using Lemmas \ref{Lemma 4.5}, \ref{Lemma 4.4} we have as per previous computations:
$$B_2  \lesssim  \sum_{  l = \lceil \frac{K^2}{n} M  \rceil }^{K-1} 3K^{m+1}\exp\left(W_{n,K,l} +\gamma_n^2 \frac{\binom{K}{2} \binom{l}{2}}{\binom{K}{2} + \binom{l}{2}} \right), $$
where  $W_{n,K,l} \triangleq  l\left(- \log(\eta) +1 + \frac{K}{n}\right) $. We show similarly to the analysis of the case $\alpha \in \left[ \frac{1}{2}, \frac{2}{3}\right)$ that $B_2=o(1)$. Indeed, we have for $l \geq \lceil M \frac{K^2}{n} \rceil$ and sufficiently large $n$:
$$ W_{n,K,l}+ \gamma_n^2 \frac{\binom{K}{2}\binom{l}{2}}{\binom{K}{2}+ \binom{l}{2}} \leq K g_n(x) ,$$
where $ x \triangleq \frac{\eta}{K} = \frac{l}{K} \in \left( M \frac{K}{n}, 1\right) $ and $g_n(x)$ is given by (\ref{g_n}). We claim that we can pick $p, M$ constants so that $g_n$ is decreasing on $\left(M \frac{K}{n}, \frac{p}{\log(\frac{n}{K})}\right)$ and increasing on $\left(\frac{1}{\sqrt 3}, 1\right)$. Note that we only need to check the 1st part as the 2nd has been established previously in the analysis of the case $\alpha \in \left[\frac{1}{2} , \frac{2}{3}\right)$. We have for $x \in \left(M \frac{K}{n}, \frac{p}{\log(\frac{n}{K}} \right)$:
\begin{align*}
    \frac{\partial g_n(x)}{\partial x} &\leq 1 - \log\left(\frac{n}{K}\right) - \log\left(M\frac{K}{n} \right) + \frac{4x}{(1+x^2)^2} \log\left(\frac{n}{K}\right)\\
    &\leq 1-\log(M) +\frac{4p}{(1+x^2)^2} \\
    &\leq 1 - \log(M) +4p.
\end{align*}
Hence if we pick $M > \exp(4p +1)$ we see that $g_n$ is strictly decreasing on $\left(M \frac{K}{n}, \frac{p}{\log(\frac{n}{K})}\right)$, this allows us to bound the summations in $B_2$ for $x \in \left(M \frac{K}{n}, \frac{p}{\log(\frac{n}{K})}\right) \cup \left(\frac{1}{\sqrt 3}, 1\right)$ with their associated integrals and use the Monotone Convergence Theorem to conclude that:
$$ \sum_{x \in  \left( M \frac{K}{n} ,  \frac{p}{\log(\frac{n}{K})}  \right) \cup \left( \frac{1}{\sqrt 3} , 1\right)} 3K^{m+1}  \exp\left( Kg_n(x)\right) = o(1).$$
The case  $x \in \left[\frac{p}{\log(\frac{n}{K})} , \frac{1}{\sqrt 3}\right)$ is dealt with similarly as in the argument given for $ \alpha\in \left[ \frac{1}{2}, \frac{2}{3}\right)$, by picking $p$ large enough so  that $ e^{-1}  - \frac{p}{4}\left(1-\frac{1}{\sqrt{3}}\right)^2 +2 \leq -1$.  Which readily yields  $ B_2 =o(1) $. 
\subsection*{Analysis of $B_1$}
It remains to consider the case $l \leq \lfloor M\frac{K^2}{n} \rfloor$. We have for large enough $n$ : $\gamma_n \leq 3 \sqrt{\frac{\log(\frac{n}{K})}{K}} \leq 3 \sqrt{\frac{\log(n)}{K}}$, therefore:
Hencefoth:
\begin{align*}
    \gamma_n^2  \frac{\binom{K}{2} \binom{l}{2}}{\binom{K}{2} + \binom{l}{2}} &\leq 9 \frac{\log(n)}{K} \binom{l}{2}\\
    &\leq 9 \frac{\log(n)}{K} \frac{l^2}{2}\\
    &\leq \frac{9}{2} \frac{\log(n)}{K} \frac{K^4}{n^2}\\
    &\leq \frac{9M^2}{2}  \frac{K^3}{n^2} .
\end{align*}
Therefore
\begin{align*}
    \sum_{2 \leq l \leq K-1} S_{n,K,m,l} &= B_1 + B_2\\
    &=  B_1 +   o(1) \\
    &\leq \left( \sum_{2 \leq l \leq \lfloor M\frac{K^2}{n} \rfloor} \binom{K}{l} \binom{n-K}{K-l} \binom{n}{K}^{-1} 3K^{m+1}  \exp\left(9M^2 \frac{K^3}{n^2  }\log\left(n\right)\right) \right) +   o(1)\\
    &\leq 3K^{m+1} \exp\left(9M^2 \frac{K^3}{n^2  }\log\left(n\right)\right) \sum_{0\leq l \leq K} \binom{K}{l} \binom{n-K}{K-l} \binom{n}{K}^{-1} + o(1)\\
    &\leq 3K^{m+1} \exp\left(O\left(\frac{K^3}{n^2} \log\left(\frac{n}{K}\right)\right)\right) + o(1) \numberthis \label{22}\\
    &\leq \exp\left(O\left( \frac{K^3}{n^2} \log \left(\frac{n}{K} \right) \right)  \right),
\end{align*}
where we used Lemma \ref{Lemma 4.1} in (\ref{22}). We thus have for $\alpha \in \left[\frac{2}{3}, 1\right)$ that $\sum_{2\leq l \leq K-1} \leq \exp\left(O\left(\frac{K^3}{n^2} \log\left(n\right)\right)\right) .$
\end{proof}

\section{Rademacher Disorder. Proof of Theorem \ref{Theorem 3.1}.} \label{Proof of Lower Bound in Theorem 3.1}

We reuse here the notations and variables introduced in Section \ref{Proof of Upper Bounds in Theorems 3.1, 3.2, 3.4}. We consider $\gamma_n = (1+ \delta_n) 2\sqrt{\frac{\log(\frac{n}{K})}{K}}$ where $\delta_n =o(1)$ will be chosen later. Recall the notation $ U_{\gamma_n}  \triangleq \sum_{S \subset V(G), |S|=K} \mathbf{1}\left(Z_S \geq \gamma_n \binom{K}{2}\right) $. We  use the second moment method to lower bound $\Psi^{\Rc}_K(G)$ and we first start by computing the second moment :
\begin{align*}
    \E [U_{\gamma_n}^2] &= \sum_{0 \leq l \leq K} \sum_{\substack{S,T \subset V(G) \\|S|=|T|=K, |S\cap T|=l}} \p\left(Z_S, Z_T \geq \gamma_n \binom{K}{2}\right). 
\end{align*}
The dependence of each $\p\left(Z_S, Z_T \geq \gamma_n \binom{K}{2}\right) $ in terms of the subsets $S,T$ is uniquely determined by the overlap size $|S\cap T|$. When $l\in \{0,1,K\}$ the term $\p\left(Z_S, Z_T \geq \gamma_n \binom{K}{2}\right)$ is easy to evaluate, thus we partition the sum when $l=0,1,K$ and when $2 \leq l \leq K-1$. We write $\E [U_{\gamma_n}^2] = A(\gamma_n) + B(\gamma_n)$ with:
\begin{align*}
    A(\gamma_n) & \triangleq \binom{n}{K} \binom{n-K}{K} \p\left(Z_S \geq \gamma_n \binom{K}{2}\right)^2 + \binom{n}{1}\binom{n-1}{K-1}\binom{n-K}{K-1} \p \left(Z_S \geq \gamma_n \binom{K}{2}\right)^2 \\&+ \binom{n}{K} \p \left(Z_S \geq \gamma_n \binom{K}{2}\right),\label{A}\tag{A}\\
B(\gamma_n) &\triangleq  \sum_{2 \leq l \leq K-1} \binom{n}{l} \binom{n-l}{K-l} \binom{n-K}{K-l} \p\left(Z_S, Z_T \geq \gamma_n \binom{K}{2}\right) ,\label{B}\tag{B}
\end{align*}
where $|S \cap T| = l$. Straightforward computations yield
\begin{align*}
    \frac{ n \binom{n-1}{K-1} \binom{n-K}{K-1}  }{\binom{n}{K} \binom{n-K}{K}} = \frac{K^2}{n-2K+1} \sim \frac{K^2}{n} = o(1), \numberthis \label{23}
\end{align*}
since $K =o(\sqrt{n})$. We then have
 \begin{align*}
     A(\gamma_n) \sim  \binom{n}{K} \binom{n-K}{K} \p\left(Z_S \geq \gamma_n \binom{K}{2}\right)^2 + \binom{n}{K} \p \left(Z_S \geq \gamma_n \binom{K}{2}\right). \numberthis \label{eq:A:term}
 \end{align*}
 We now turn to bounding $B(\gamma_n)$. We have by Lemma \ref{Lemma 4.11}:
\begin{align*}
     \p\left(Z_S, Z_T \geq \gamma_n \binom{K}{2} \right) &\leq 3\binom{K}{2} \exp\left( - \frac{\gamma_n^2 \binom{K}{2}}{1 + \frac{\binom{l}{2}}{\binom{K}{2}}} \right)\\
     &\leq 3 K^2 \exp\left( - \frac{\gamma_n^2 \binom{K}{2}^2}{\binom{K}{2}+\binom{l}{2}}\right),
\end{align*}
Since $\gamma_n \sim 2\sqrt{\frac{\log(\frac{n}{K})}{K}}$ we see that $\gamma_n \sqrt{\binom{K}{2}} = O\left(\sqrt{K \log(n)}\right) = o\left(\sqrt{\binom{K}{2}}\right)$. We then have by Lemma \ref{Lemma 4.12}
\begin{align*}
 \p\left(Z_S \geq \gamma_n \binom{K}{2}\right) &= \p \left(Z_S \geq \left[\gamma_n \sqrt{\binom{K}{2}} \right]  \sqrt{\binom{K}{2}} \right)\\
 &\geq  \frac{1}{\sqrt{2 \binom{K}{2}}} \exp\left(- \frac{\gamma_n^2 \binom{K}{2}}{2} - \frac{\gamma_n^4 \binom{K}{2}^2}{12 \binom{K}{2}} + O\left(\frac{\gamma^5 \binom{K}{2}^{5\over 2}}{\binom{K}{2}^{3\over 2}}\right)\right) \\
 &\geq  \frac{1}{K} \exp\left( - \frac{\gamma_n^2 \binom{K}{2}}{2} - \frac{\gamma_n^4 \binom{K}{2}}{12 }  +o(1) \right), \numberthis \label{25}
 \end{align*}
 where we used $\frac{\gamma^5 \binom{K}{2}^{5\over 2}}{\binom{K}{2}^{3\over 2}} =\Theta\left( \frac{ K^{-5\over 2}\log(n/K)^{5\over 2} K^5}{K^3}\right)= \Theta(\frac{\log(n/K)^{5\over 2}}{K^{1\over 2 }})=o(1)$. Therefore:
 \begin{align*}
 \frac{ \p\left(Z_S, Z_T \geq \gamma_n \binom{K}{2} \right)} { \p\left(Z_S \geq \gamma_n \binom{K}{2}\right)^2} &\leq 3K^4 \exp\left(   - \frac{\gamma_n^2 \binom{K}{2}^2}{\binom{K}{2} +\binom{l}{2}} +\gamma_n^2 \binom{K}{2} +\frac{\gamma_n^4 \binom{K}{2}}{6 } + o(1)\right) \\
 &\leq 3K^4 \exp\left(    \frac{\gamma_n^2 \binom{K}{2}\binom{l}{2}}{\binom{K}{2} +\binom{l}{2}} +\frac{\gamma_n^4 \binom{K}{2}}{6 }  + o(1) \right),
  \end{align*}  
We then have:
\begin{align*}
\bar B(\gamma_n) & \triangleq \frac{B(\gamma_n)}{\binom{n}{K}^2\p\left(Z_S \geq \gamma_n \binom{K}{2}\right)^2} \numberthis \label{Bbar}\\
&\leq  \sum_{2 \leq l \leq K-1} \binom{n}{l} \binom{n-l}{K-l} \binom{n-K}{K-l} \binom{n}{K}^{-2} 3K^4 \exp\left(    \frac{\gamma_n^2 \binom{K}{2}\binom{l}{2}}{\binom{K}{2} +\binom{l}{2}} +\frac{\gamma_n^4 \binom{K}{2}}{6 }  + o(1)  \right) \\
&\lesssim  \sum_{2 \leq l \leq K-1} \binom{n}{l} \binom{n-l}{K-l} \binom{n-K}{K-l} \binom{n}{K}^{-2} 3K^4 \exp\left(    \frac{\gamma_n^2 \binom{K}{2}\binom{l}{2}}{\binom{K}{2} +\binom{l}{2}}  +\frac{\gamma_n^4 \binom{K}{2}}{6 } \right) \numberthis \label{oterm}\\
&= \sum_{2 \leq l \leq K-1} \binom{K}{l} \binom{n-K}{K-l}  \binom{n}{K}^{-1} 3K^4 \exp\left(    \frac{\gamma_n^2 \binom{K}{2}\binom{l}{2}}{\binom{K}{2} +\binom{l}{2}}  +\frac{\gamma_n^4 \binom{K}{2}}{6 } \right)  \numberthis\label{26}\\
&=\sum_{2 \leq l \leq K-1} \binom{K}{l} \binom{n-K}{K-l} \binom{n}{K}^{-1} 3K^4 \exp\left(    \frac{\gamma_n^2 \binom{K}{2}\binom{l}{2}}{\binom{K}{2} +\binom{l}{2}} +\Theta(\log(n)^2) \right), \numberthis \label{28}
\end{align*}
where we used Lemma \ref{Lemma 4.6} in (\ref{26}), and the fact that the $o(1)$ term in the exponent is uniform in $l$ in (\ref{oterm}). It follows
\begin{align*}
\frac{\E[U_{\gamma_n}]^2}{\E[U_{\gamma_n} ^2]} &= \frac{ \binom{n}{K}^{2} \p\left( Z_S \geq \gamma_n \binom{K}{2}\right)^2  }{A(\gamma_n)+B(\gamma_n)} \\
&\gtrsim   \frac{ \binom{n}{K}^{2} \p\left( Z_S \geq \gamma_n \binom{K}{2}\right)^2  }{\binom{n}{K} \binom{n-K}{K} \p\left(Z_S \geq \gamma_n \binom{K}{2}\right)^2 + \binom{n}{K} \p \left(Z_S \geq \gamma_n \binom{K}{2}\right)+ B(\gamma_n)} \\
&=     \frac{ 1 }{ \frac{\binom{n-K}{K}}{\binom{n}{K}}  +  \frac{1}{ \binom{n}{K}\p\left( Z_S \geq \gamma_n \binom{K}{2}\right) } + \bar B(\gamma_n)}        \\
&=     \frac{ 1 }{1+o(1)  +  \frac{1}{ \binom{n}{K}\p\left( Z_S \geq \gamma_n \binom{K}{2}\right) } + \bar B(\gamma_n)},  \numberthis \label{27}
\end{align*}
where we used  part 5 of Lemma \ref{Lemma 4.5} in (\ref{27}).  

\begin{lemma}\label{Lemma 7.1}
Under the assumptions of Theorem \ref{Theorem 3.1} we have:
$$ \bar B(\gamma_n) = o(1).$$

\end{lemma}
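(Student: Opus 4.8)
The plan is to show that $\bar B(\gamma_n)=o(1)$ by splitting the sum over overlaps $l$ at the threshold $l = \lceil \log K \rceil$ and invoking the machinery already built up in Proposition~\ref{Proposition 6.1}. Recall from (\ref{28}) that
\[
\bar B(\gamma_n) \lesssim \sum_{2\le l\le K-1} \binom{K}{l}\binom{n-K}{K-l}\binom{n}{K}^{-1} 3K^4 \exp\left(\frac{\gamma_n^2\binom{K}{2}\binom{l}{2}}{\binom{K}{2}+\binom{l}{2}} + \Theta(\log(n)^2)\right),
\]
so up to the harmless factor $3K^4 e^{\Theta(\log(n)^2)}$ this is precisely a sum of the quantities $S_{n,K,m,l}$ (with $m=4$, say) studied in Proposition~\ref{Proposition 6.1}. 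The point is that in the regime $\alpha\in(0,1/2)$ we have $K=o(\sqrt n)$, and the $\Theta(\log(n)^2)$ term together with the $3K^4$ prefactor is negligible compared with the $\exp(K)$ savings available in the first statement of Proposition~\ref{Proposition 6.1}.

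First I would handle the ``large overlap'' range $\lceil\log K\rceil \le l \le K-1$. Here the first bullet of Proposition~\ref{Proposition 6.1} gives directly $\exp(K)\sum_{l=\lceil\log K\rceil}^{K-1} S_{n,K,4,l} = o(1)$, hence $\sum_{l=\lceil\log K\rceil}^{K-1} S_{n,K,4,l} = o(e^{-K})$. Since $3K^4 e^{\Theta(\log(n)^2)} = e^{O(\log(n)^2)} = o(e^K)$ (because $K = n^\alpha$ grows polynomially and dominates any power of $\log n$), the contribution of this range to $\bar B(\gamma_n)$ is $o(1)$. Second, for the ``small overlap'' range $2\le l \le \lceil\log K\rceil - 1$ (which contains only $O(\log K)$ terms), I would bound each term individually: using $\binom{l}{2}/(\binom{K}{2}+\binom{l}{2}) \le \binom{l}{2}/\binom{K}{2}$ and $\gamma_n^2\binom{K}{2} = \Theta(K\log(n/K))$, one gets $\gamma_n^2\binom{K}{2}\binom{l}{2}/(\binom{K}{2}+\binom{l}{2}) = O(\log(K)^2\log(n)/K) = o(1)$, so that exponential factor is $1+o(1)$. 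Meanwhile $\binom{K}{l}\binom{n-K}{K-l}\binom{n}{K}^{-1}$ is, by Lemma~\ref{Lemma 4.4} (or a direct estimate), at most $e^{W_{n,K,l}}\sqrt{2\pi K}(K/n)^K((n-K)/n)^{n-K}$; with $l=\eta K^2/n$ and $\eta$ bounded, $W_{n,K,l} = l(-\log\eta + 1 + K/n)$, and for $l \le \log K$ this whole quantity is $O((K^2/n)^2) = o(1/K^4)$ when $\alpha<1/2$ — comfortably killing the $3K^4$ factor. Summing $O(\log K)$ such terms keeps the total $o(1)$.

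The main obstacle, and the step requiring the most care, is verifying that in the small-overlap range the product $\binom{K}{l}\binom{n-K}{K-l}\binom{n}{K}^{-1}$ really does decay fast enough to absorb both the $3K^4$ prefactor and the residual exponential weight; this is where $\alpha<1/2$ (i.e. $K^2/n \to 0$) is essential, since for $l=2$ the ratio is $\Theta((K^2/n)^2)$ and we need this to beat $K^4$, forcing $K^4 (K^2/n)^2 = K^8/n^2 \to 0$, equivalently $\alpha < 1/4$ — so in fact a more refined bound than the crude $(K^2/n)^2$ is needed. The cleaner route is to not isolate $l=2$ but observe that, as in the proof of Proposition~\ref{Proposition 6.1}, even over $2 \le l \le \log K$ one can sum the $S_{n,K,4,l}$ against the $\exp(K)$ factor: the argument there bounding $\exp(K)\sum_{l} S_{n,K,m,l}$ by an integral $\int 3K^{m+2}\exp(Kg_n(x)+K)\,dx$ with $g_n(x) < -2$ actually covers all $l$ from $\lceil\log K\rceil$ up, but a parallel and strictly easier estimate applies for $l$ below $\log K$ because there $x = l/K$ is even smaller and $-x\log x$ is tiny; one checks $W_{n,K,l} + \gamma_n^2\binom{K}{2}\binom{l}{2}/(\binom{K}{2}+\binom{l}{2}) + K + 5\log K + \Theta(\log(n)^2) \le -cl$ for a constant $c>0$ and all $2\le l\le\log K$, once $n$ is large, simply because $W_{n,K,l} \le l(-\log(\log K/K) + 2) = -l\log(\log K /K)(1+o(1))$ is a large negative multiple of $l$ that dwarfs the additive $\Theta(\log(n)^2)$ error. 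Hence the entire sum over $2\le l\le K-1$, multiplied by $3K^4 e^{\Theta(\log(n)^2)}$, is $o(1)$, which is exactly $\bar B(\gamma_n)=o(1)$.
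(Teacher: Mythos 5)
Your treatment of the large-overlap range $\lceil\log K\rceil\le l\le K-1$ is correct and identical to the paper's: part 1 of Proposition~\ref{Proposition 6.1} supplies an $e^{K}$ of slack, and $3K^4e^{\Theta(\log(n)^2)}=e^{O(\log(n)^2)}=o(e^{K})$, so that block of $\bar B(\gamma_n)$ is $o(1)$.

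The small-overlap range $2\le l\le\lfloor\log K\rfloor$ is where your argument breaks. The bound (\ref{28}) carries an unavoidable multiplicative factor $\exp\bigl(\gamma_n^4\binom{K}{2}/6\bigr)=\exp(\Theta(\log(n)^2))$, inherited from the $\gamma_n^4$ correction in the lower bound (\ref{25}) on the single-set tail. Your claim is that $W_{n,K,l}$ is a large negative multiple of $l$ that ``dwarfs the additive $\Theta(\log(n)^2)$ error.'' It does not: writing $l=\eta K^2/n$, one has $W_{n,K,l}=l(-\log\eta+1+K/n)\le -l(1-2\alpha)\log(n)(1+o(1))$, which for fixed small $l$ (say $l=2$) has magnitude only $O(\log n)$, while the error term you must absorb is $+\Theta(\log(n)^2)$. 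Concretely, the $l=2$ term of the right-hand side of (\ref{28}) is of order $(K^4/n^2)\cdot K^4\cdot e^{\Theta(\log(n)^2)}$, and since $e^{\Theta(\log(n)^2)}$ is super-polynomial in $n$, no polynomial decay of the combinatorial prefactor can compensate, for any $\alpha\in(0,1/2)$. (There is also a sign slip in your display: $-\log(\log K/K)=\log(K/\log K)>0$, so the quantity you exhibit is a large \emph{positive} multiple of $l$; and the extra $+K$ you insert into the exponent, which is polynomial in $n$, would by itself destroy the inequality.) The paper avoids this entirely by not using (\ref{28}) on the small-overlap block: instead it invokes Corollary~\ref{Corollary 3.19} (built on the local ratio estimate of Lemma~\ref{Lemma 4.17}) to get $\p(Z_S,Z_T\ge\gamma_n\binom{K}{2})/\p(Z_S\ge\gamma_n\binom{K}{2})^2\le M$ uniformly for $l\le\log K$, with no $e^{\Theta(\log(n)^2)}$ loss, and then bounds $\sum_{2\le l\le\lfloor\log K\rfloor}\binom{K}{l}\binom{n-K}{K-l}\binom{n}{K}^{-1}=O\bigl(\log(K)(K^2/n)^2\bigr)=o(1)$. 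Your proof needs this (or an equivalently sharp joint-versus-marginal-squared comparison) for the small-$l$ block; as written, that block is not controlled.
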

We shall for now skip the proof of the above Lemma and show how it leads to asymptotic lower bounds on $\Psi^{\Rc}_K(G)$. We have by Paley-Zygmund inequality and (\ref{27}) combined with Lemma \ref{Lemma 7.1}:
$$ \p(U_{\gamma_n} \geq 1) \gtrsim\frac{ 1 }{1+o(1)  +  \frac{1}{ \binom{n}{K}\p\left( Z_S \geq \gamma_n \binom{K}{2}\right) } } . $$
If $\binom{n}{K} \p\left(Z_S \geq \gamma_n \binom{K}{2}\right) = w_n$ for some positive sequence $w_n= \omega(1)$ then $U_{\gamma_n} \geq 1$ w.h.p as $n\to \infty$. Using inequality (\ref{25}) note that it suffices to have:
\begin{align*}
    \binom{n}{K}\frac{1}{K} \exp\left( - \frac{\gamma_n^2 \binom{K}{2}}{2} \left(1 +\frac{\gamma_n^2}{6}\right) \right) = w_n &\iff \log\left(\binom{n}{K} \frac{1}{K}\right) - \frac{\gamma_n^2}{2}\left(1 +\frac{\gamma_n^2}{6}\right) \binom{K}{2} = w_n\\
    &\iff \gamma_n \sqrt{1 +\frac{\gamma_n^2}{6}} =\sqrt{\frac{2}{\binom{K}{2}}} \sqrt{ \log\left(\binom{n}{K} \frac{1}{K}\right) - w_n} .
\end{align*}
If we pick $w_n$ s.t $w_n = o\left( \log\left( \binom{n}{K} \frac{1}{K} \right)     \right) = o(K \log(n)) $, then we have by Taylor expansion
\begin{align*}
  \gamma_n \sqrt{1 +\frac{\gamma_n^2}{6}}&= \sqrt{\frac{2}{\binom{K}{2}}} \sqrt{ \log\left(\binom{n}{K} \frac{1}{K}\right)} - O\left(  \sqrt{\frac{2}{\binom{K}{2}}} \frac{w_n}{\sqrt{\log(\binom{n}{K}\frac{1}{K}})} \right)  \\
  &= \sqrt{\frac{2}{\binom{K}{2}} \log\left(\binom{n}{K} \frac{1}{K}\right)} - O\left( \frac{w_n}{K^{\frac{3}{2}} \sqrt{\log(\frac{n}{K})}} \right).
\end{align*}
Where the last line follows from part 3 of  Lemma \ref{Lemma 4.5}. In particular, we see that $\gamma_n\sim 2 \sqrt{\frac{\log(n/K)}{K}} = o(1)$. Therefore, we have
\begin{align*}
    \gamma_n &= \frac{1}{ \sqrt{1 + \frac{\gamma_n^2}{6}}} \left[ \sqrt{\frac{2}{\binom{K}{2}} \log\left(\binom{n}{K} \frac{1}{K}\right)} - O\left( \frac{w_n}{K^{\frac{3}{2}} \sqrt{\log(\frac{n}{K})}} \right)\right]\\
    &= \left(1 - \frac{\gamma_n^2}{12} + O(\gamma_n^4)\right )\left[ \sqrt{\frac{2}{\binom{K}{2}} \log\left(\binom{n}{K} \frac{1}{K}\right)} - O\left( \frac{w_n}{K^{\frac{3}{2}} \sqrt{\log(\frac{n}{K})}} \right)\right]\\
    &= \left(1 +O\left(\frac{\log(n)}{K}\right) \right )\left[ \sqrt{\frac{2}{\binom{K}{2}} \log\left(\binom{n}{K} \frac{1}{K}\right)} - O\left( \frac{w_n}{K^{\frac{3}{2}} \sqrt{\log(\frac{n}{K})}} \right)\right]\\
    &= \sqrt{\frac{2}{\binom{K}{2}} \log\left(\binom{n}{K} \frac{1}{K}\right)} - O\left( \frac{w_n}{K^{\frac{3}{2}} \sqrt{\log(\frac{n}{K})}} \right) + O\left( \frac{\log(n)}{K}\sqrt{\frac{2}{\binom{K}{2}} \log\left(\binom{n}{K} \frac{1}{K}\right)} + \frac{\log(n)}{K}\frac{w_n}{K^{\frac{3}{2}} \sqrt{\log(\frac{n}{K})}}   \right)\\
    &=\sqrt{\frac{2}{\binom{K}{2}} \log\left(\binom{n}{K} \frac{1}{K}\right)} - O\left( \frac{w_n}{K^{\frac{3}{2}} \sqrt{\log(\frac{n}{K})}} \right) + O\left( \sqrt{ \frac{\log(n)^3}{K^3}} + w_n\sqrt{\frac{\log(n)}{K^5}} \right)\\
    &=\sqrt{\frac{2}{\binom{K}{2}} \log\left(\binom{n}{K} \frac{1}{K}\right)} - O\left( \frac{w_n}{K^{\frac{3}{2}} \sqrt{\log(\frac{n}{K})}} \right) + O\left( \sqrt{ \frac{\log(n)^3}{K^3}}  \right).
\end{align*}

For the above choice of $\gamma_n$ we have with high probability as $n\to +\infty$:
\begin{align*}
    \Psi^{\Rc}_{K}(G) &\geq \binom{K}{2} \left(      \sqrt{\frac{2}{\binom{K}{2}}\log\left(\binom{n}{K} \frac{1}{K}\right)} - O\left( \frac{w_n}{K^{\frac{3}{2}} \sqrt{\log(\frac{n}{K})}}  +  \sqrt{ \frac{\log(n)^3}{K^3}} \right)       \right) \\
    &=  \sqrt{2\binom{K}{2}\log\left(\binom{n}{K} \frac{1}{K}\right)} - O\left( w_n \sqrt{\frac{K}{\log(n)}} + \sqrt{K\log(n)^{3}}\right)  ,
\end{align*}
since $w_n$ was chosen arbitrary with the conditions $w_n = o(K \log(n)), w_n = \omega(1)$, we see that $w_n$ can be taken so that 
$$w_n \sqrt{\frac{K}{\log(n)}} = o\left(  \sqrt{K\log(n)^{3}} \right), $$
 which concludes the proof of the lower bound of Theorem \ref{Theorem 3.1}. We now give a proof of Lemma \ref{Lemma 7.1}.

\begin{proof}[Proof of Lemma \ref{Lemma 7.1}]

Write each index $l$ as $\eta \frac{K^2}{n}$ where $ \frac{2n}{K^2} \leq \eta \leq \frac{n(K-1)}{K^2 } $ and divide the summation in $\bar B(\gamma_n)$ as $\bar B _1(\gamma_n) +\bar B _2(\gamma_n)$ where $\bar B _1(\gamma_n)$ is the sum over indices $2\leq l \leq \lfloor \log(K) \rfloor$ and $\bar B _2(\gamma_n)$ the sum over indices $\lceil \log(K) \rceil\leq l \leq K-1$. From part $1$ of Proposition \ref{Proposition 6.1} and (\ref{28}) we have $\bar B_2(\gamma_n) \lesssim 3 \sum_{\log(K) \leq l \leq K-1} S_{n,K,4, l} \exp(K)= o(1)$ . We now analyze the term $\bar B _1(\gamma_n)$.  We have 
using  Corollary \ref{Corollary 3.19}
$$ \exists M>0, \forall l\leq  \log(K), \frac{\p(Z_S,Z_T \geq \gamma_n \binom{K}{2})}{\p(Z_S \geq \gamma_n \binom{K}{2})^2} \leq M.$$
Therefore 
\begin{align*}
    \bar B_1(\gamma_n) &= \binom{n}{K}^{-1}\sum_{2 \leq l \leq \lfloor \log(K) \rfloor} \binom{K}{l} \binom{n-K}{K-l}\frac{\p(Z_S,Z_T \geq \gamma_n \binom{K}{2})}{\p(Z_S \geq \gamma_n \binom{K}{2})^2}\\
    &\leq M \binom{n}{K}^{-1}\sum_{2 \leq l \leq \lfloor \log(K) \rfloor} \binom{K}{l} \binom{n-K}{K-l}\\
    &\lesssim M \binom{n}{K}^{-1} \sum_{2 \leq l \leq \lfloor \log(K) \rfloor} K^l \sqrt{\frac{1}{2\pi (K-l)}} \left(\frac{(n-K)e}{K-l} \right)^{K-l} \numberthis \label{38}\\
    &\lesssim M \sqrt{2\pi K} \left( \frac{K}{ne}\right)^K \sum_{2 \leq l \leq \lfloor \log(K) \rfloor} K^l \sqrt{\frac{1}{2\pi (K-l)}} \left(\frac{(n-K)e}{K-l} \right)^{K-l} \numberthis \label{39}\\
    &\leq M\sqrt{2\pi}  \sum_{2 \leq l \leq \lfloor \log(K) \rfloor} \sqrt{\frac{K}{K-l}}   \left(\frac{K(K-l)}{(n-K)e} \right)^l \left(\frac{(n-K)K}{(K-l)n}  \right)^K,
\end{align*} 
where (\ref{38}) follows from noting $\binom{K}{l} \leq K^l$ and part 4 of Lemma \ref{Lemma 4.5}, and (\ref{39}) from part 1 of Lemma \ref{Lemma 4.5}. For large enough $n,K$, we can assume that $n-K \geq n/ 2$ and $K-l \geq K/2$ so that
\begin{align*}
\bar B_1(\gamma_n) \lesssim M\sqrt{2\pi}  \sum_{2 \leq l \leq \lfloor \log(K) \rfloor}  \left(\frac{2K^2}{ne} \right)^l \left( \frac{1 - \frac{K}{n}}{1 - \frac{l}{K}} \right)^K.
\end{align*}
Note that
\begin{align*}
    \left( \frac{1 - \frac{K}{n}}{1 - \frac{l}{K}} \right)^K &= \exp\left(K \left[\log\left( 1- \frac{K}{n}\right) - \log\left( 1- \frac{l}{K} \right)\right] \right)\\
    &=\exp\left( K\left[ - \frac{K}{n} + O\left(\frac{K^2}{n^2} \right) - \left( - \frac{l}{K} + O\left(\frac{l^2}{K^2}\right)\right) \right] \right)\\
    &= \exp \left( l + O\left(\frac{\log(K)^2}{K}\right) \right),
\end{align*}
where the last line follows from noting $l^2/K = o(K^3/n^2)$. Therefore
\begin{align*}
    \bar B_1(\gamma_n)  &\lesssim M \sqrt{2\pi} \sum_{2 \leq l \leq \lfloor \log(K) \rfloor}  \left(\frac{2K^2}{ne} \right)^l \exp \left( l + O\left(\frac{\log(K^2)}{K}\right) \right)\\
    &= M \sqrt{2\pi} \sum_{2 \leq l \leq \lfloor \log(K) \rfloor}   \left(\frac{2K^2}{n} \right)^l \exp \left( O\left(\frac{\log(K)^2}{K}\right) \right)\\
    &\sim M \sqrt{2\pi} \sum_{2 \leq l \leq \lfloor \log(K) \rfloor}   \left(\frac{2K^2}{n} \right)^l\\
    &\leq M\sqrt{2\pi} \log(K)  \left(\frac{2K^2}{n} \right)^2\\
    &= o(1),
\end{align*}
which readily implies that $\bar B_1(\gamma_n) = o(1)$. This implies in combination with $\bar B_2(\gamma_n) =o(1)$ that  $\bar B(\gamma_n) = o(1)$ completing the proof of lemma \ref{Lemma 7.1}.
\end{proof}

\section{Rademacher Disorder. Proof of Theorem \ref{Theorem 3.2}.} \label{Proof of Lower Bound in Theorem 3.2}

We reuse the same notations as in Section \ref{Proof of Lower Bound in Theorem 3.1}, in particular we consider $\gamma_n = (1+ \delta_n) 2\sqrt{\frac{\log(\frac{n}{K})}{K}}$ where $\delta_n =o(1)$ will be chosen later. Since $\alpha \in [\frac{1}{2},  \frac{2}{3})$ we have by (\ref{23}):
$$ \frac{ n \binom{n-1}{K-1} \binom{n-K}{K-1}  }{\binom{n}{K} \binom{n-K}{K}} \sim \frac{K^2}{n}.$$
Therefore
\begin{align*}
    A(\gamma_n) &\sim  \left( 1 + \frac{K^2}{n}\right)\binom{n}{K} \binom{n-K}{K} \p\left(Z_S \geq \gamma_n \binom{K}{2}\right)^2 + \binom{n}{K} \p \left(Z_S \geq \gamma_n \binom{K}{2}\right) \numberthis \label{30} .
 \end{align*}
Using (\ref{28})\footnote{Note that although we used (\ref{28}) in the context of section \ref{Proof of Lower Bound in Theorem 3.1}, we did not make use of the assumption $\alpha \in \left(0, \frac{1}{2} \right)$ when establishing (\ref{28}). }
\begin{align*}
\bar B(\gamma_n) &\lesssim \sum_{2 \leq l \leq K-1} \binom{K}{l} \binom{n-K}{K-l} \binom{n}{K}^{-1} 3K^4 \exp\left(    \frac{\gamma_n^2 \binom{K}{2}\binom{l}{2}}{\binom{K}{2} +\binom{l}{2}} +\Theta(\log(n)^2)  \right)\\
&= 3 \exp(\Theta(\log(n)^2) )\sum_{2 \leq l \leq K-1} S_{n,K,4,l} \\
&\lesssim 3 K^4\exp( \Theta(\log(n)^2) )\numberthis \label{31}\\
&= \exp(\Theta(\log(n)^2) ),
\end{align*}
where (\ref{31}) follows from part 2 of Proposition \ref{Proposition 6.1}. We then have
    \begin{align*}
\frac{\E[U_{\gamma_n}]^2}{\E[U_{\gamma_n} ^2]} &= \frac{ \binom{n}{K}^{2} \p( Z_S \geq \gamma_n \binom{K}{2})^2  }{A(\gamma_n) + B(\gamma_n)}\\
&\gtrsim \frac{ \binom{n}{K}^{2} \p( Z_S \geq \gamma_n \binom{K}{2})^2  }{ \left( 1+\frac{K^2}{n}\right)\binom{n}{K} \binom{n-K}{K} \p(Z_S \geq \gamma_n \binom{K}{2})^2 + \binom{n}{K} \p (Z_S \geq \gamma_n \binom{K}{2})+ B(\gamma_n)} \\
&=     \frac{ 1 }{ \left(1 + \frac{K^2}{n} \right) \frac{\binom{n-K}{K}}{\binom{n}{K}}  +  \frac{1}{ \binom{n}{K}\p( Z_S \geq \gamma_n \binom{K}{2}) } + \bar B(\gamma_n)}        \\
&\geq      \frac{ 1 }{O(K)  +  \frac{1}{ \binom{n}{K}\p( Z_S \geq \gamma_n \binom{K}{2}) } + \exp(\Theta(\log(n)^2)) } \numberthis \label{32} \\
&=      \frac{ 1 }{ \frac{1}{ \binom{n}{K}\p( Z_S \geq \gamma_n \binom{K}{2}) } + \exp(\Theta(\log(n)^2)) }  \numberthis \label{43}.
\end{align*}
Where (\ref{32}) follows from $K^2/n = O(K)$ and noting that $ \binom{n-K}{K} / \binom{n}{K} \leq 1  $, while (\ref{43}) follows from $K=O(\exp(\Theta(\log(n)^2)))$. If we assume that $\gamma_n $ is picked such that  $\binom{n}{K} \p\left(Z_S \geq \gamma_n \binom{K}{2}\right) = w_n $ where $w_n =\omega(1)$, then by Paley-Zygmund inequality and (\ref{43})
\begin{align*}
 \p\left(\Psi^{\Rc}_K(G) \geq \gamma_n \binom{K}{2}\right) &=\p(U_{\gamma_n} \geq 1)\\
 &\geq \frac{\E[U_{\gamma_n}]^2}{\E[U_{\gamma_n} ^2]}\\
 &\geq \frac{ 1 }{o(1)  + \exp(\Theta(\log(n)^2) )}\\
 &\geq  \exp(- C_1\log(n)^2),
\end{align*}
 for some positive constant $C_1>0$. On the other hand, applying Theorem \ref{Theorem 4.8} yields the following inequality for  $t\geq \beta K$ and universal constants $C_0, \beta>0$:
\begin{align*}
    \p\left( |\Psi^{\Rc}_K(G) - \E[\Psi^{\Rc}_K(G)]| \geq t\right) &\leq \exp\left(- C_0\frac{t^2}{K^2}\right).
\end{align*}
In particular, for $t^* \triangleq \sqrt{\frac{C_1}{C_0}} K\log(n) $ and large enough $K$ (so that $t^* \geq \beta K$) we have:
$$   \p\left( |\Psi^{\Rc}_K(G) - \E[\Psi^{\Rc}_K(G)]| \geq t^*\right) \leq \p\left(\Psi^{\Rc}_K(G) \geq \gamma_n \binom{K}{2}\right)   ,$$
which implies
$$  \gamma_n \binom{K}{2} \leq t^* + \E[\Psi^{\Rc}_K(G)] .$$
Therefore
\begin{align*}
    \p\left(\Psi^{\Rc}_K(G) \geq \gamma_n \binom{K}{2} - 2t^* \right) &\geq \p\left( |\Psi^{\Rc}_K(G) - \E[\Psi^{\Rc}_K(G)] | \leq t^*\right)\\
    &\geq 1 - \exp\left(-C_1 \log( n)^2\right)\\
    &= 1 - o(1).
\end{align*} 
Hence, $ \Psi^{\Rc}_K(G) \geq \gamma_n \binom{K}{2} -O(K\log(n))$ with high probability for any $\gamma_n $ satisfying the previously mentioned properties. In particular, we can take $\gamma_n$ as in Section \ref{Proof of Lower Bound in Theorem 3.1} which would yield the following lower bound inequality for any $w_n$ s.t $w_n =\omega(1)$ and $w_n = o(K\log(n))$

$$ \Psi^{\Rc}_K(G) \geq \sqrt{2\binom{K}{2}\log\left(\binom{n}{K} \frac{1}{K}\right)} - O\left( \sqrt{K\log(n)^3}\right) - O(K\log(n))$$
i.e
$$ \Psi^{\Rc}_K(G) \geq \sqrt{2\binom{K}{2}\log\left(\binom{n}{K} \frac{1}{K}\right)}  - O(K\log(n)) \text{ w.h.p as } n\to +\infty$$
which completes the proof of the lower bound in Theorem \ref{Theorem 3.2}.

\section{Rademacher Disorder. Proof of Theorem \ref{Theorem 3.4}.} \label{Proof of Lower Bound in Theorem 3.4}
We reuse the same notations as in Sections \ref{Proof of Lower Bound in Theorem 3.1}, \ref{Proof of Lower Bound in Theorem 3.2}. We recall that by Paley-Zygmund inequality, the following holds
\begin{align*}
\frac{\E[U_{\gamma_n}]^2}{\E[U_{\gamma_n} ^2]} &\geq \frac{\binom{n}{K}^2 \p \left( Z_S \geq \gamma_n \binom{K}{2} \right)^2}{A(\gamma_n) + B(\gamma_n)}\\
&\geq \frac{1}{ \frac{A(\gamma_n)}{\binom{n}{K}^2 \p \left( Z_S \geq \gamma_n \binom{K}{2} \right)^2} + \bar B(\gamma_n) }, \numberthis \label{33}
\end{align*}
where $A(\gamma_n), B(\gamma_n), \bar B(\gamma_n)$ are given by (\ref{A}), (\ref{B}) and (\ref{Bbar}). Combining (\ref{30}) and part 6 of Lemma \ref{Lemma 4.5} we have
\begin{align*}
    A(\gamma_n) &= o \left( \binom{n}{K}^2 \p \left( Z_S \geq \gamma_n \binom{K}{2} \right)^2 \right) + \binom{n}{K} \p \left( Z_S \geq \gamma_n \binom{K}{2} \right) \numberthis \label{46}.
\end{align*}
Using (\ref{28})
\begin{align*}
\bar B (\gamma_n)&\leq \sum_{2 \leq l \leq K-1} \binom{K}{l} \binom{n-K}{K-l} \binom{n}{K}^{-1} 3K^{4} \exp \left(\gamma_n^2 \frac{\binom{K}{2}\binom{l}{2}}{\binom{K}{2}+\binom{l}{2}} + \Theta(\log(n)^2) \right) \\
&= 3\exp\left(\Theta(\log(n)^2) \right)  \sum_{2 \leq l \leq K-1} S_{n,K,4,l}\\
&\leq  \exp\left(\Theta(\log(n)^2) \right)\exp \left( O\left( \frac{K^3}{n^2} \log(n)  \right) \right) \numberthis \label{47}\\
&= \exp \left( O\left( \frac{K^3}{n^2} \log(n)  \right) \right),
\end{align*}
where (\ref{47}) follows from part 4 of Proposition \ref{Proposition 6.1}. We then have by (\ref{33}) and (\ref{46})
\begin{align*}
\frac{\E[U_{\gamma_n}]^2}{\E[U_{\gamma_n} ^2]} &\gtrsim   \frac{ 1 }{o(1)  +  \frac{1}{ \binom{n}{K}\p\left( Z_S \geq \gamma_n \binom{K}{2}\right) } + \bar B(\gamma_n) } \\
&\geq      \frac{ 1 }{o(1)  +  \frac{1}{ \binom{n}{K}\p( Z_S \geq \gamma_n \binom{K}{2}) } + \exp(C_2 \frac{K^3}{n^2} \log(n))}   .
\end{align*}    
For some big enough constant $C_2$. Recall that $\gamma_n = (1+\delta_n) 2\sqrt{\frac{\log\left( \frac{n}{K} \right)}{K}}$. If we pick $\delta_n$ such that $\binom{n}{K} \p\left(Z_S \geq \gamma_n \binom{K}{2}\right) = w_n $ where $w_n$ is a positive real sequence satisfying $w_n = \omega(1)$, then  by Paley-Zygmund inequality
\begin{align*}
 \p(U_{\gamma_n} \geq 1) &\geq \frac{\E[U_{\gamma_n}]^2}{\E[U_{\gamma_n} ^2]}\\
 &\geq \frac{ 1 }{o(1)  + \exp(C_2 \frac{K^3}{n^2} \log(n))}\\
 &\geq \exp\left(-C_1 \frac{K^3}{n^2}\log(n)\right ) ,
\end{align*}
 for some positive constant $C_1>0$. We then have by Theorem \ref{Theorem 4.8}  for  $t\geq\beta K$ and universal constants $C_0, \beta >0$:
\begin{align*}
    \p( |\Psi^{\Rc}_K(G) - \E[\Psi^{\Rc}_K(G)]| \geq t) &\leq \exp\left(-C_0\frac{t^2}{ K^2}\right)
\end{align*}
In particular, for $t^* \triangleq \sqrt{\frac{C_1}{C_0} }\frac{K^{5/2}}{n} \sqrt{\log(n)}$ and large enough $K$, we have
$$   \p\left( |\Psi^{\Rc}_K(G) - \E[\Psi^{\Rc}_K(G)]| \geq t^*\right) \leq \p\left(\Psi^{\Rc}_K(G) \geq \gamma_n \binom{K}{2}\right),   $$
which implies
$$  \gamma_n \binom{K}{2} \leq t^* + \E[\Psi^{\Rc}_K(G)] .$$
Therefore
\begin{align*}
    \p\left(\Psi^{\Rc}_K(G) \geq \gamma_n \binom{K}{2} - 2t^* \right) &\geq \p\left( |\Psi^{\Rc}_K(G) - \E[\Psi^{\Rc}_K(G)] | \leq t^*\right)\\
    &\geq 1 - \exp\left(-C_1 \frac{K^3}{n^2}\log(n)\right)\\
    &= 1 - o(1).
\end{align*} 
Hence, $ \Psi^{\Rc}_K(G) \geq \gamma_n \binom{K}{2} - O\left(\frac{K^{5/2}}{n} \sqrt{\log(n)}\right)$ for any choice of $\gamma_n$ satisfying the previously stated assumptions. In particular, we can take $\gamma_n$ as in Sections \ref{Proof of Lower Bound in Theorem 3.1}, \ref{Proof of Lower Bound in Theorem 3.2} which would yield the following lower bound inequality for any $w_n$ s.t $w_n =\omega(1)$ and $w_n = o(K\log(n))$
$$ \Psi^{\Rc}_K(G) \geq \sqrt{2\binom{K}{2}\log\left(\binom{n}{K} \frac{1}{K}\right)} - O\left( \sqrt{K\log(n)^3}\right) -O\left( \frac{K^{\frac{5}{2}}}{n} \sqrt{\log(n)} \right)$$
i.e
$$ \Psi^{\Rc}_K(G) \geq \sqrt{2\binom{K}{2}\log\left(\binom{n}{K} \frac{1}{K}\right)}  - O\left( \frac{K^{\frac{5}{2}}}{n} \sqrt{\log(n)} \right) \text{ w.h.p as } n\to +\infty$$
which ends the proof of the lower bound in Theorem \ref{Theorem 3.4}.

\section{Upper Bounds for the Gaussian Disorder.} \label{Proof of Upper Bounds in Theorems 3.6, 3.7, 3.8}

We follow the same proof steps as in Section \ref{Proof of Upper Bounds in Theorems 3.1, 3.2, 3.4} to upper bound $\Psi^{\Nc}_K(G)$, and keep similar notations for $\gamma_n , U_{\gamma_n}$. Namely:
$$ U_{\gamma_n}  \triangleq \sum_{S \subset V(G), |S|=K,  Z_{i,j} \sim \mathcal{N}} \mathbf{1}\left(Z_S \geq \gamma_n \binom{K}{2}\right), $$
where $\gamma_n \triangleq (1 + \delta_n) 2\sqrt{\frac{\log \left( \frac{n}{K}\right)}{K}}$ and $\delta_n$ is some  sequence satisfying $\delta_n =o(1)$ that we will fix later. We have using standard Gaussian tail bounds
\begin{align*}
     \p\left(U_{\gamma_n} \geq 1\right) &\leq \E[U_{\gamma_n}] \\
    &= \binom{n}{K} \p\left( Z_S \geq \gamma_n \binom{K}{2}\right)\\
     &= \binom{n}{K} \p\left(\mathcal{N}(0,1) \geq \gamma_n \sqrt{\binom{K}{2}} \right)\\
     &\lesssim  \binom{n}{K} \frac{1}{ \sqrt{2\pi}\gamma_n \sqrt{\binom{K}{2}}} \exp\left( - \frac{\gamma_n^2 \binom{K}{2}}{2}\right)\\
     &\sim \frac{1}{2\sqrt{\pi}} \binom{n}{K} \frac{1}{\sqrt{K \log(\frac{n}{K})}} \exp\left( - \frac{\gamma_n^2 \binom{K}{2}}{2}\right).
\end{align*}
From here on we are in the same setting as in Section \ref{Proof of Upper Bounds in Theorems 3.1, 3.2, 3.4}, which then yields the same upper bound as in Theorem \ref{Theorem 3.1}.

\section{Gaussian Disorder. Proof of Theorem \ref{Theorem 3.6}.}\label{Proof of Lower Bound in Theorem 3.6}
Throughout this proof we are in the setting $\alpha \in \left(0,\frac{1}{2}\right) \cup \left(\frac{1}{2}, \frac{2}{3}\right)$ as stated in Theorem \ref{Theorem 3.6}. It will be important to note that we are then either in the regime $K = o(\sqrt{n})$ or $K=\omega(\sqrt{n})$. We will use the second moment method to lower bound $\Psi^{\Nc}_K(G)$ and keep the same notations as in Section \ref{Proof of Upper Bounds in Theorems 3.6, 3.7, 3.8} for $\gamma_n, U_{\gamma_n}$. Recall
\begin{align*}
    \E [U_{\gamma_n}^2] &= A(\gamma_n) + B(\gamma_n)
\end{align*}
where
\begin{align*}
    A(\gamma_n) &\triangleq \binom{n}{K} \binom{n-K}{K} \p\left(Z_S \geq \gamma_n \binom{K}{2}\right)^2 + \binom{n}{1}\binom{n-1}{K-1}\binom{n-K}{K-1} \p \left(Z_S \geq \gamma_n \binom{K}{2}\right)^2 \\
    &+\binom{n}{K} \p \left(Z_S \geq \gamma_n \binom{K}{2}\right),\numberthis\label{AG}\\
B(\gamma_n) &\triangleq  \sum_{2 \leq l \leq K-1} \binom{n}{l} \binom{n-l}{K-l} \binom{n-K}{K-l} \p\left(Z_S, Z_T \geq \gamma_n \binom{K}{2}\right), \numberthis\label{BG}
\end{align*}
with $|S \cap T| = l$. Recall from (\ref{23}) that:
$$ \frac{ n \binom{n-1}{K-1} \binom{n-K}{K-1}  }{\binom{n}{K} \binom{n-K}{K}}  \sim \frac{K^2}{n}$$
We can then distinguish between two asymptotic behaviors of $A(\gamma_n)$:
\begin{alignat*}{3}
    A(\gamma_n) &\sim  \frac{K^2}{n}&&\binom{n}{K} \binom{n-K}{K} \p\left(Z_S \geq \gamma_n \binom{K}{2}\right)^2 + \binom{n}{K} \p \left(Z_S \geq \gamma_n \binom{K}{2}\right)&&,  \text{ if } K = \omega(\sqrt{n})\\
    A(\gamma_n) &\sim  &&\binom{n}{K} \binom{n-K}{K} \p\left(Z_S \geq \gamma_n \binom{K}{2}\right)^2 + \binom{n}{K} \p \left(Z_S \geq \gamma_n \binom{K}{2}\right)&&, \text{ if } K = o(\sqrt{n})
\end{alignat*}
We have by corollary \ref{Corollary 4.15} and $2 \leq l \leq K-1$
\begin{align*}
\p \left( Z_S, Z_T \geq \gamma_n \binom{K}{2} \right) &\leq \frac{1}{ 2 \pi \gamma_n^2} \frac{\left[ \binom{K}{2} + \binom{l}{2} \right]^2}{\binom{K}{2}^2 \sqrt{\binom{K}{2}^2 - \binom{l}{2}^2}} \exp \left(- \gamma_n^2 \frac{\binom{K}{2}^2}{\binom{K}{2} + \binom{l}{2}}\right) .
\end{align*}
Therefore
$$ B(\gamma_n) \leq \sum_{2 \leq l \leq K-1} \binom{n}{l} \binom{n-l}{K-l} \binom{n-K}{K-l} \frac{1}{ 2 \pi \gamma_n^2} \frac{\left[ \binom{K}{2} + \binom{l}{2} \right]^2}{\binom{K}{2}^2 \sqrt{\binom{K}{2}^2 - \binom{l}{2}^2}} \exp \left(- \gamma_n^2 \frac{\binom{K}{2}^2}{\binom{K}{2} + \binom{l}{2}}\right) .  $$
We can then lower bound asymptotically the ratio $\frac{\E[U_{\gamma_n}]^2}{\E[U_{\gamma_n} ^2]}$  using Lemma \ref{Lemma 4.5} as follows: 
\begin{itemize}
    \item If $K = \omega(\sqrt{n})$ :
    \begin{align*}
\frac{\E[U_{\gamma_n}]^2}{\E[U_{\gamma_n} ^2]} &\gtrsim   \frac{ \binom{n}{K}^{2} \p\left( Z_S \geq \gamma_n \binom{K}{2}\right)^2  }{\frac{K^2}{n}\binom{n}{K} \binom{n-K}{K} \p\left(Z_S \geq \gamma_n \binom{K}{2}\right)^2 + \binom{n}{K} \p \left(Z_S \geq \gamma_n \binom{K}{2}\right)+ B(\gamma_n)} \\
&=     \frac{ 1 }{\frac{K^2}{n} \frac{\binom{n-K}{K}}{\binom{n}{K}}  +  \frac{1}{ \binom{n}{K}\p\left( Z_S \geq \gamma_n \binom{K}{2}\right) } + \bar B(\gamma_n)}        \\
&=     \frac{ 1 }{o(1)  +  \frac{1}{ \binom{n}{K}\p\left( Z_S \geq \gamma_n \binom{K}{2}\right) } + \bar B(\gamma_n)}  .  \numberthis  \label{34}
\end{align*}

    \item If $K = o(\sqrt{n})$ : 
    \begin{align*}
\frac{\E[U_{\gamma_n}]^2}{\E[U_{\gamma_n} ^2]} &\gtrsim   \frac{ \binom{n}{K}^{2} \p\left( Z_S \geq \gamma_n \binom{K}{2}\right)^2  }{\binom{n}{K} \binom{n-K}{K} \p\left(Z_S \geq \gamma_n \binom{K}{2}\right)^2 + \binom{n}{K} \p \left(Z_S \geq \gamma_n \binom{K}{2}\right)+ B(\gamma_n)} \\
&=     \frac{ 1 }{ \frac{\binom{n-K}{K}}{\binom{n}{K}}  +  \frac{1}{ \binom{n}{K}\p\left( Z_S \geq \gamma_n \binom{K}{2}\right) } + \bar B(\gamma_n)}        \\
&=     \frac{ 1 }{1+o(1)  +  \frac{1}{ \binom{n}{K}\p\left( Z_S \geq \gamma_n \binom{K}{2}\right) } + \bar B(\gamma_n)}.  \numberthis \label{35}
\end{align*}
\end{itemize}
Where
\begin{align*}
\bar B(\gamma_n) &\triangleq \frac{B(\gamma_n)}{\binom{n}{K}^{2} \p\left( Z_S \geq \gamma_n \binom{K}{2}\right)^2}\\
&\leq \sum_{2 \leq l \leq K-1} \binom{n}{l} \binom{n-l}{K-l} \binom{n-K}{K-l} \binom{n}{K}^{-2} \frac{1}{ 2 \pi \gamma_n^2} \frac{\left[ \binom{K}{2} + \binom{l}{2} \right]^2}{\binom{K}{2}^2 \sqrt{\binom{K}{2}^2 - \binom{l}{2}^2}} \frac{ \exp \left(- \gamma_n^2 \frac{\binom{K}{2}^2}{\binom{K}{2} + \binom{l}{2}}\right)}{\p\left(Z_S\geq \gamma \binom{K}{2}\right)^2}\\
&= \sum_{2 \leq l \leq K-1} \binom{K}{l} \binom{n-K}{K-l} \binom{n}{K}^{-1}\frac{1}{ 2 \pi \gamma_n^2} \frac{\left[ \binom{K}{2} + \binom{l}{2} \right]^2}{\binom{K}{2}^2 \sqrt{\binom{K}{2}^2 - \binom{l}{2}^2}} \frac{\exp \left(- \gamma_n^2 \frac{\binom{K}{2}^2}{\binom{K}{2} + \binom{l}{2}}\right)}{\p(Z_S\geq \gamma_n \binom{K}{2})^2},\numberthis \label{36}
\end{align*}
where we used Lemma \ref{Lemma 4.6} in (\ref{36}). We have using standard Gaussian tail bound
\begin{align*}
    \p\left(Z_S\geq \gamma_n \binom{K}{2}\right) &= \p\left(\mathcal{N}(0,1)\geq  \gamma_n \sqrt{\binom{K}{2}}\right) \\
&\geq  \frac{1}{\sqrt{2 \pi}} \frac{\gamma_n \sqrt{\binom{K}{2}}}{1 + \gamma_n^2 \binom{K}{2}} \exp\left(-\frac{\gamma_n^2 \binom{K}{2}}{2}\right)\\
&\sim \frac{1}{\sqrt{ \pi}} \frac{1}{\gamma_n K} \exp\left(-\frac{\gamma_n^2}{2} \binom{K}{2}\right).
\end{align*}
Henceforth
\begin{align*}
\bar B(\gamma_n) &\lesssim \sum_{2 \leq l \leq K-1} \binom{K}{l} \binom{n-K}{K-l} \binom{n}{K}^{-1} \frac{1}{ 2 \pi \gamma_n^2} \frac{\left[ \binom{K}{2} + \binom{l}{2} \right]^2}{\binom{K}{2}^2 \sqrt{\binom{K}{2}^2 - \binom{l}{2}^2}}  \pi \gamma^2  K^2               \exp \left(- \gamma_n^2 \frac{\binom{K}{2}^2}{\binom{K}{2} + \binom{l}{2}} + \gamma_n^2 \binom{K}{2}\right)  \\
&= \sum_{2 \leq l \leq K-1} \binom{K}{l} \binom{n-K}{K-l} \binom{n}{K}^{-1} \frac{K^2}{ 2 } \frac{\left[ \binom{K}{2} + \binom{l}{2} \right]^2}{\binom{K}{2}^2 \sqrt{\binom{K}{2}^2 - \binom{l}{2}^2}}            \exp \left( \gamma_n^2 \frac{\binom{K}{2} \binom{l}{2}}{\binom{K}{2} + \binom{l}{2}} \right) \numberthis \label{37}. 
\end{align*}
We claim the following result
\begin{lemma}\label{Lemma 12.1}
\begin{alignat*}{3}
    \bar B(\gamma_n) &= && o(1)  , &&\text{ If } \alpha \in \left( 0,\frac{1}{2} \right)\\
    \bar B(\gamma_n) &\lesssim && 1 , &&\text{ If }  \alpha \in \left( \frac{1}{2},\frac{2}{3} \right)
\end{alignat*}
\end{lemma}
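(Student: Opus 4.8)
The starting point is the bound~(\ref{37}): up to a factor $1+o(1)$,
$$\bar B(\gamma_n)\ \lesssim\ \sum_{2\le l\le K-1} P(l)\,\binom{K}{l}\binom{n-K}{K-l}\binom{n}{K}^{-1}\exp\!\left(\gamma_n^2\frac{\binom{K}{2}\binom{l}{2}}{\binom{K}{2}+\binom{l}{2}}\right),\qquad P(l)\triangleq\frac{K^2}{2}\cdot\frac{\big[\binom{K}{2}+\binom{l}{2}\big]^2}{\binom{K}{2}^2\sqrt{\binom{K}{2}^2-\binom{l}{2}^2}}.$$
Thus, modulo the polynomial prefactor $P(l)$, the summand is exactly the quantity $S_{n,K,0,l}$ controlled by Proposition~\ref{Proposition 6.1} (which is purely an estimate on this deterministic sum, valid for any $\gamma_n=(1+\delta_n)2\sqrt{\log(n/K)/K}$ with $\delta_n=o(1)$, hence applicable here). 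The plan is to bound $P(l)$ — crudely in general, sharply for $l=o(K)$ — and then invoke Proposition~\ref{Proposition 6.1}, dealing with the small-$l$ block by a direct combinatorial estimate when $\alpha\in(0,1/2)$.

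First I would record two bounds for $P(l)$. Using $\binom{l}{2}\le\binom{K}{2}$ in the numerator and, for $l\le K-1$, $\binom{K}{2}^2-\binom{l}{2}^2=\big(\binom{K}{2}-\binom{l}{2}\big)\big(\binom{K}{2}+\binom{l}{2}\big)\ge(K-1)\binom{K}{2}$ in the denominator, one obtains $P(l)=O(\sqrt K)$ uniformly in $l$. If moreover $l=o(K)$, then $\binom{l}{2}=o\big(\binom{K}{2}\big)$, so $\binom{K}{2}+\binom{l}{2}\sim\binom{K}{2}$ and $\sqrt{\binom{K}{2}^2-\binom{l}{2}^2}\sim\binom{K}{2}$, hence $P(l)=(1+o(1))\,K^2/\big(2\binom{K}{2}\big)=O(1)$. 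This $O(1)$-versus-$O(\sqrt K)$ dichotomy is the crux: it prevents the spurious $K^{1/2}$-loss one would otherwise incur when crossing over the range of $l$.

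For $\alpha\in(1/2,2/3)$ we have $K=\omega(\sqrt n)$, so $\tfrac{K^2}{n}\log K\to\infty$ while $\tfrac{K^2}{n}\log K=o(K)$ (as $2\alpha-1<\alpha$). I would split the sum at $l=\lfloor\tfrac{K^2}{n}\log K\rceil$. On the upper block $l\ge\lceil\tfrac{K^2}{n}\log K\rceil$ use $P(l)=O(\sqrt K)\le O(K)$ and absorb the prefactor into $S_{n,K,1,l}$; the third identity of Proposition~\ref{Proposition 6.1} gives $o(1)$. On the lower block every index has $l=o(K)$, so $P(l)=O(1)$ and the block is $\lesssim\sum_{l\le\lfloor\frac{K^2}{n}\log K\rceil}S_{n,K,0,l}\lesssim K^0=O(1)$ by the second identity of Proposition~\ref{Proposition 6.1}; adding the two blocks gives $\bar B(\gamma_n)\lesssim1$. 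For $\alpha\in(0,1/2)$ we have $K=o(\sqrt n)$; here I would split at $l=\lfloor\log K\rfloor$. On the upper block $l\ge\lceil\log K\rceil$, again $P(l)=O(\sqrt K)\le O(K)$, so the block is $\lesssim\sum_{l\ge\lceil\log K\rceil}S_{n,K,1,l}$, and the first identity of Proposition~\ref{Proposition 6.1} shows even $e^{K}$ times this sum is $o(1)$. On the lower block $2\le l\le\lfloor\log K\rfloor$ we have $l=o(K)$, so $P(l)=O(1)$, and $\gamma_n^2\binom{l}{2}\le\gamma_n^2\binom{\lfloor\log K\rfloor}{2}=O\big(\tfrac{\log(n/K)\log^2K}{K}\big)=o(1)$, so the exponential is $1+o(1)$; hence the block is $\lesssim\sum_{2\le l\le\lfloor\log K\rfloor}\binom{K}{l}\binom{n-K}{K-l}\binom{n}{K}^{-1}$, which is estimated exactly as in the proof of Lemma~\ref{Lemma 7.1} — $\binom{K}{l}\le K^l$, parts~1 and~4 of Lemma~\ref{Lemma 4.5}, and $\big(\tfrac{1-K/n}{1-l/K}\big)^K=e^{\,l+o(1)}$ — giving $\lesssim\sum_{2\le l\le\lfloor\log K\rfloor}(2K^2/n)^l\lesssim(2K^2/n)^2=o(1)$ since $K=o(\sqrt n)$. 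Therefore $\bar B(\gamma_n)=o(1)$.

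The only genuinely delicate point is the sharp estimate $P(l)=O(1)$ for $l=o(K)$, together with the choice of split points $\lfloor\log K\rfloor$ (when $\alpha<1/2$) and $\lfloor\tfrac{K^2}{n}\log K\rceil$ (when $\alpha\in(1/2,2/3)$) so that each block matches a clause of Proposition~\ref{Proposition 6.1}; the remainder, including the reuse of the small-overlap combinatorial computation from the proof of Lemma~\ref{Lemma 7.1}, is bookkeeping.
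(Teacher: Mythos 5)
Your proof is correct and follows essentially the same route as the paper's: bound the polynomial prefactor $P(l)$ (crudely by a power of $K$ in general, by $O(1)$ when $l=o(K)$), split the sum in $l$ at the same thresholds, and invoke the corresponding clauses of Proposition~\ref{Proposition 6.1}. For $\alpha\in(0,\tfrac12)$ you are in fact slightly more careful than the paper, which cites part 1 of Proposition~\ref{Proposition 6.1} for the entire range $2\le l\le K-1$ even though that clause only covers $l\ge\lceil\log K\rceil$; your separate treatment of the block $2\le l\le\lfloor\log K\rfloor$ via the small-overlap combinatorial estimate from the proof of Lemma~\ref{Lemma 7.1} (which gives $O\big((2K^2/n)^2\big)=o(1)$ since $K=o(\sqrt n)$ there) closes that gap cleanly.
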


\begin{proof}

Consider first the case $ \alpha \in \left( 0, \frac{1}{2} \right)$, we have for all $2 \leq l\leq K-1$:

\begin{align*}
     \frac{K^2}{ 2 } \frac{\left[ \binom{K}{2} + \binom{l}{2} \right]^2}{\binom{K}{2}^2 \sqrt{\binom{K}{2}^2 - \binom{l}{2}^2}}  &\leq \frac{K^2}{2} \frac{4 \binom{K}{2}^2}{\binom{K}{2}^2 \sqrt{\binom{K}{2}^2 - \binom{K-1}{2}^2}}.\\
     &\leq 2K^2.\numberthis \label{54}
\end{align*}
Using part 1 of Proposition \ref{Proposition 6.1} 
\begin{align*}
    \bar B(\gamma_n) &\leq 2 \sum_{2 \leq l \leq K-1} S_{n,K,2,l}\\
    &= o(1).
\end{align*}
Hence $\bar B(\gamma_n) =o(1)$. Consider now the case $\alpha \in \left( \frac{1}{2}, \frac{2}{3}\right)$. If $2 \leq l \leq \lfloor\frac{K^2}{n}\log(K)\rfloor$, then $l = o(K)$ and we have
\begin{align*}
     \frac{K^2}{ 2 } \frac{\left[ \binom{K}{2} + \binom{l}{2} \right]^2}{\binom{K}{2}^2 \sqrt{\binom{K}{2}^2 - \binom{l}{2}^2}} &\leq \frac{K^2}{ 2 } \frac{\left[ \binom{K}{2} + \binom{\lfloor\frac{K^2}{n}\log(K)\rfloor}{2} \right]^2}{\binom{K}{2}^2 \sqrt{\binom{K}{2}^2 - \binom{2}{2}^2}} \\
     &\sim \frac{K^2}{2} \frac{\binom{K}{2}^2}{\binom{K}{2}^2 \sqrt{\binom{K}{2}^2}}\\
     &\sim 1 \numberthis \label{simterm}.
\end{align*}
Therefore
\begin{align*}
    \sum_{l = 2 }^{\lfloor \frac{K^2}{n}\log(K)\rfloor} \binom{K}{l} \binom{n-K}{K-l} \binom{n}{K}^{-1}  \frac{K^2}{ 2 } \frac{\left[ \binom{K}{2} + \binom{l}{2} \right]^2}{\binom{K}{2}^2 \sqrt{\binom{K}{2}^2 - \binom{l}{2}^2}} \exp\left( \gamma_n^2 \frac{\binom{K}{2} \binom{l}{2}}{\binom{K}{2} + \binom{l}{2}} \right)&\lesssim \sum_{l = 2 }^{\lfloor\frac{K^2}{n}\log(K)\rfloor} S_{n,K,0,l}\\
    &\lesssim 1,
\end{align*}
where the last line follows from part 2 of Proposition \ref{Proposition 6.1}. If $ l \geq \lceil \frac{K^2}{n}\log(K) \rceil$, we have using (\ref{54})
\begin{align*}
    \sum_{\lceil \frac{K^2}{n}\log(K) \rceil  \leq l \leq K-1} \binom{K}{l} \binom{n-K}{K-l} \binom{n}{K}^{-1}  \frac{K^2}{ 2 } \frac{\left[ \binom{K}{2} + \binom{l}{2} \right]^2}{\binom{K}{2}^2 \sqrt{\binom{K}{2}^2 - \binom{l}{2}^2}} \exp\left( \gamma_n^2 \frac{\binom{K}{2} \binom{l}{2}}{\binom{K}{2} + \binom{l}{2}} \right)&\leq \sum_{\lceil \frac{K^2}{n}\log(K) \rceil  \leq l \leq K-1} S_{n,K,0,l}\\
    &=o(1),
\end{align*}
where the last line follows from part 3 of Proposition \ref{Proposition 6.1}. This readily implies that $\bar B(\gamma_n) \lesssim 1$, concluding the proof of Lemma \ref{Lemma 12.1}.
\end{proof}
We now show how Lemma \ref{Lemma 12.1} leads to asymptotic lower bounds for $\Psi^{\Nc}_K(G)$. We have by Paley-Zygmund inequality:
\begin{align*}
    \p\left(\Psi^{\Nc}_K(G) \geq \gamma_n \binom{K}{2}\right) &= \p\left( U_{\gamma_n} \geq 1\right)\\
    &\geq \frac{ \E[U_{\gamma_n}]^2}{\E[U_{\gamma_n}^2]}\\
    &\gtrsim\frac{1}{1 + o(1) + \frac{1}{\binom{K}{n} \p\left(Z_S \geq \gamma_n \binom{K}{2}\right)}}.
\end{align*}
Indeed, if $\alpha \in (\frac{1}{2}, \frac{2}{3})$, then the last line  follows from part 2 of Lemma \ref{Lemma 12.1} coupled with inequality (\ref{34}), and if $ \alpha\in (0, \frac{1}{2})$ from part 1 of Lemma \ref{Lemma 12.1} coupled with inequality (\ref{35}). If $\gamma_n $ also satisfies $ \binom{K}{n} \p(Z_S \geq \gamma_n \binom{K}{2})= w_n$ where $w_n$ is a positive sequence s.t $w_n =\omega(1)$, then 
$$ \frac{ \E[U_{\gamma_n}]^2}{\E[U_{\gamma_n}^2]}\gtrsim 1.$$
Note that since $\gamma_n \sqrt{\binom{K}{2}}  =\omega(1)$, we have by standard Gaussian tail bounds\footnote{Recall that for $x=\omega(1)$ it holds $\p(\mathcal{N}(0,1) \geq x )\geq \frac{x}{1+x^2} \frac{1}{\sqrt{2\pi}} \exp\left(-\frac{x^2}{2}\right) \sim \frac{1}{x\sqrt{2\pi}} \exp\left(-\frac{x^2}{2}\right)$} 
\begin{align*}
    \binom{n}{K} \p\left(Z_S \geq \gamma_n \binom{K}{2}\right) &= \binom{n}{K} \p\left(\mathcal{N}\left(0, \binom{K}{2}\right) \geq \gamma_n \binom{K}{2}\right)\\
    &= \binom{n}{K} \p\left(\mathcal{N}\left(0, 1\right) \geq \gamma_n \sqrt{\binom{K}{2}}\right)\\
    &\sim \binom{n}{K}\frac{1}{\sqrt{2\pi} \gamma_n \sqrt{\binom{K}{2}}} \exp\left(- \frac{\gamma_n^2}{2} \binom{K}{2}\right)\\
    &\sim \frac{1}{2} \binom{n}{K}\frac{1}{\sqrt{ K \log(\frac{n}{K})}}\exp\left(- \frac{\gamma_n^2}{2} \binom{K}{2}\right).
\end{align*}
 If assume that $w_n$ satisfies $ \log(w_n) = o\left(  \binom{n}{K} \frac{1}{\sqrt{K\log(\frac{n}{K})}}\right) = o(K \log(n)) $ where the last equality follows from part 3 of Lemma \ref{Lemma 4.5}, then
\begin{align*}
    \binom{n}{K} \p\left(Z_S \geq \gamma_n \binom{K}{2}\right) = w_n &\iff \binom{n}{K}\frac{1}{\sqrt{ K \log(\frac{n}{K})}}\exp\left(- \frac{\gamma_n^2}{2} \binom{K}{2}\right) = w_n\\
    &\iff \gamma_n = \sqrt{ \frac{2}{\binom{K}{2}}}\sqrt{       -\log(w_n)  + \log\left(   \binom{n}{K}\frac{1}{\sqrt{ K \log(\frac{n}{K})}}  \right)    } \\
    &\iff \gamma_n = \sqrt{ \frac{2}{\binom{K}{2}}} \left(    \sqrt{      \log\left(   \binom{n}{K}\frac{1}{\sqrt{ K \log(\frac{n}{K})}}  \right)    }  + O\left( \frac{-\log(w_n)}{ \sqrt{      \log\left(   \binom{n}{K}\frac{1}{\sqrt{ K \log(\frac{n}{K})}}  \right)    }}\right)  \right)\\
    &\iff \gamma_n =   \sqrt{ \frac{2}{\binom{K}{2}}     \log\left(   \binom{n}{K}\frac{1}{\sqrt{ K \log(\frac{n}{K})}}  \right)    }  + O\left(\frac{-\log(w_n)}{K^{\frac{3}{2}}\sqrt{\log(n)}}\right) .\\
\end{align*}
Note that the above choice of $\gamma_n$ satisfies $\gamma_n \sim 2\sqrt{\log(n/K)/K}$. There fore, for $\gamma_n$ as above we have  w.h.p as $n\to +\infty$
\begin{align*}
\Psi^{\Nc}_{K}(G)  &\geq  \binom{K}{2} \left( \sqrt{ \frac{2}{\binom{K}{2}}     \log\left(   \binom{n}{K}\frac{1}{\sqrt{ K \log(\frac{n}{K})}}  \right)    }  + O\left(\frac{-\log(w_n)}{K^{\frac{3}{2}}\sqrt{\log(n)}}\right) \right)\\
&= \sqrt{2 \binom{K}{2} \log\left(   \binom{K}{n}\frac{1}{\sqrt{ K \log(\frac{n}{K})}}  \right)  } + O\left(-\log(w_n) \sqrt{\frac{K}{\log(n)}}\right).
\end{align*}
Since $w_n$ was chosen arbitrary with the condition $-\log(w_n) = o(K \log(n))$, we see that $-\log(w_n)$ can be replaced with any negative real sequence $a_n$ satisfying $a_n =o(K\log(n))$, which concludes the proof of the lower bound in Theorem \ref{Theorem 3.6}.

\section{Gaussian Disorder. Proof of Theorem \ref{Theorem 3.7}.} \label{Proof of Lower Bound in Theorem 3.7}

We keep the same notations as in Section \ref{Proof of Lower Bound in Theorem 3.6}, in particular recall that for $K = w(\sqrt n)$ we have from (\ref{34})
\begin{align*}
\frac{\E[U_{\gamma_n}]^2}{\E[U_{\gamma_n} ^2]} &\gtrsim   \frac{ 1 }{o(1)  +  \frac{1}{ \binom{n}{K}\p\left( Z_S \geq \gamma_n \binom{K}{2}\right) } + \bar B(\gamma_n)} \numberthis \label{55}
\end{align*}
where
\begin{align*}
\bar B(\gamma_n) &\leq \sum_{2 \leq l \leq K-1} \binom{K}{l} \binom{n-K}{K-l} \binom{n}{K}^{-1}  \frac{K^2}{ 2 } \frac{\left[ \binom{K}{2} + \binom{l}{2} \right]^2}{\binom{K}{2}^2 \sqrt{\binom{K}{2}^2 - \binom{l}{2}^2}} \exp\left( \gamma_n^2 \frac{\binom{K}{2} \binom{l}{2}}{\binom{K}{2} + \binom{l}{2}} \right) \numberthis \label{56}\\
&\leq \sum_{2 \leq l \leq K-1} \binom{K}{l} \binom{n-K}{K-l} \binom{n}{K}^{-1} 2K^2 \exp\left( \gamma_n^2 \frac{\binom{K}{2} \binom{l}{2}}{\binom{K}{2} + \binom{l}{2}} \right)\\
&= 2 \sum_{2 \leq l \leq K-1} S_{n,K,2,l} \numberthis \label{57}\\
&= o\left( \exp \left( \frac{K^3}{n^2} \log(n) \right) \right),
\end{align*}
where (\ref{56}) follows from (\ref{54}), and (\ref{57}) follows froom
 part 4 of Proposition \ref{Proposition 6.1}. We then have by (\ref{38}) and for large enough constant $C_2$
\begin{align*}
\frac{\E[U_{\gamma_n}]^2}{\E[U_{\gamma_n} ^2]} &\gtrsim  \frac{ 1 }{o(1)  +  \frac{1}{ \binom{n}{K}\p( Z_S \geq \gamma \binom{K}{2}) } + \exp(C_2 \frac{K^3}{n^2} \log(n))}   .
\end{align*}
Recalling that $\gamma_n = (1+\delta_n) 2\sqrt{\frac{\log\left( \frac{n}{K} \right)}{K}}$, if we pick $\delta_n=o(1)$ such that $\binom{n}{K} \p\left(Z_S \geq \gamma_n \binom{K}{2}\right) = w_n $ where $w_n$ is a positive real sequence satisfying $w_n=\omega(1)$, we then have by Paley-Zygmund inequality
\begin{align*}
 \p(U_{\gamma_n} \geq 1) &\geq \frac{\E[U_{\gamma_n}]^2}{\E[U_{\gamma_n} ^2]}\\
 &\geq \frac{ 1 }{o(1)  + \exp\left(C_2 \frac{K^3}{n^2} \log(n)\right)}\\
 &\geq \exp\left(-C_1 \frac{K^3}{n^2}\log(n)\right ) ,
\end{align*}
 for some positive constant $C_1>0$. We then have by Borell-TIS inequality (Theorem \ref{Theorem 4.16}) for  $t>0$ :
\begin{align*}
    \p\left( \Psi^{\Nc}_K(G) - \E[\Psi^{\Nc}_K(G)] \geq t\right) &\leq \exp\left(-\frac{t^2}{2 \binom{K}{2}}\right)\\
    &= \exp\left(- \frac{t^2}{K(K-1)}\right)\\
    &\leq \exp\left( -C_0 \frac{t^2}{K^2}\right),
\end{align*}
where $C_0$ is a deterministic constant. 
In particular, for $t^* \triangleq \sqrt{\frac{C_1}{C_0}}\frac{K^{5/2}}{n} \sqrt{\log(n)}$ we have
$$   \p\left( \Psi^{\Nc}_K(G) - \E[\Psi^{\Nc}_K(G)] \geq t^*\right) \leq \p\left(\Psi^{\Nc}_G(K) \geq \gamma_n \binom{K}{2}\right),   $$
which implies 
$$  \gamma_n \binom{K}{2} \leq t^* + \E[\Psi^{\Nc}_K(G)].$$
Therefore
\begin{align*}
    \p\left(\Psi^{\Nc}_K(G) \geq \gamma_n \binom{K}{2} - 2t^* \right) &\geq \p\left( |\Psi^{\Nc}_K(G) - \E[\Psi^{\Nc}_K(G)] | \leq t^*\right)\\
    &\geq 1 - 2\exp\left(-C_1 \frac{K^3}{n^2}\log(n)\right)\\
    &= 1 - o(1).
\end{align*} 
Hence, $ \Psi^{\Nc}_K(G) \geq \gamma_n \binom{K}{2} - O\left(\frac{K^{5/2}}{n} \sqrt{\log(n)}\right)$ for any choice of $\gamma_n$ satisfying the previously stated assumptions. Moreover, we see in light of the computations in Section \ref{Proof of Lower Bound in Theorem 3.6} that we can pick
$$\gamma_n =   \sqrt{ \frac{2}{\binom{K}{2}}     \log\left(   \binom{n}{K}\frac{1}{\sqrt{ K \log(\frac{n}{K})}}  \right)    }  - \frac{a_n}{K^{\frac{3}{2}}\sqrt{\log(n)}} , $$
where $a_n$  is a positive real sequence s.t $a_n =o(K \log(n))$ and $a_n =\omega(1)$. Hence, with high probability as $n\ to +\infty $ and for any such sequence $a_n$ it holds
\begin{align*}
    \Psi^{\Nc}_K(G) &\geq \gamma_n  \binom{K}{2}\\
    &=   \sqrt{ 2 \binom{K}{2}     \log\left(   \binom{n}{K}\frac{1}{\sqrt{ K \log(\frac{n}{K})}}  \right)    }  -a_n \sqrt{\frac{K}{\log(n)}} - O\left( \frac{K^{\frac{5}{2}}}{n} \sqrt{\log(n)} \right)\\
    &=   \sqrt{ 2 \binom{K}{2}     \log\left(   \binom{n}{K}\frac{1}{\sqrt{ K \log(\frac{n}{K})}}  \right)    } - O\left( \frac{K^{\frac{5}{2}}}{n} \sqrt{\log(n)} \right),
\end{align*}
where the last follows from picking $a_n$ s.t $a_n \sqrt{\frac{K}{\log(n)}} = o\left( \frac{K^{\frac{5}{2}}}{n} \sqrt{\log(n)}  \right) $
This concludes the proof of the lower bound in Theorem \ref{Theorem 3.7}.

\section{Gaussian Disorder. Proof of Theorem \ref{Theorem 3.8}.} \label{Proof of Lower Bound in Theorem 3.8}

We keep the same notations as in Sections \ref{Proof of Lower Bound in Theorem 3.6}, \ref{Proof of Lower Bound in Theorem 3.7}. In particular recall  
\begin{align*}
\frac{\E[U_{\gamma_n}]^2}{\E[U_{\gamma_n} ^2]} &= \frac{\binom{n}{K}^2 \p\left( Z_S \geq \gamma_n \binom{K}{2} \right)^2}{A(\gamma_n) + B(\gamma_n)}\\
&\gtrsim   \frac{ \binom{n}{K}^{2} \p\left( Z_S \geq \gamma_n \binom{K}{2}\right)^2  }{\left(1+ \frac{K^2}{n}\right)\binom{n}{K} \binom{n-K}{K} \p\left(Z_S \geq \gamma_n \binom{K}{2}\right)^2 + \binom{n}{K} \p \left(Z_S \geq \gamma_n \binom{K}{2}\right)+ B(\gamma_n)} \numberthis \label{58}\\ 
&= \frac{1}{\left(1+\frac{K^2}{n} \right) \frac{\binom{n-K}{K}}{\binom{n}{K}} + \frac{1}{\binom{n}{K} \p(Z_S \geq \gamma_n \binom{K}{2})} + \bar B(\gamma_n)} \numberthis \label{59},
\end{align*}
where (\ref{58}) follows from using (\ref{23}) coupled with the definition of $A(\gamma_n)$ in (\ref{AG}).
From (\ref{ratio:binom}) it follows
\begin{align*}
    \left(1+\frac{K^2}{n} \right) \frac{\binom{n-K}{K}}{\binom{n}{K}} &= \Theta(1) \exp\left( - \frac{K^2}{n} + O\left(  \frac{K^3}{n^2} \right)\right)\\
    &= \Theta(1) \exp(-\Theta(1) + o(1))\\
    &= \Theta(1).
\end{align*}
Therefore
$$  \frac{\E[U_{\gamma_n}]^2}{\E[U_{\gamma_n} ^2]} \gtrsim \frac{1}{\Theta(1)+ \frac{1}{\binom{n}{K} \p(Z_S \geq \gamma_n \binom{K}{2})} + \bar B(\gamma_n)} $$
Next we show that $\bar B(\gamma_n) \leq 1 + o(1)$. We divide the summation in $\bar B(\gamma_n)$ around $\frac{K^2}{n} \log(K)$, and let $\bar B_1(\gamma_n)$ be the summation in $\bar B(\gamma_n)$ for $2 \leq l \leq \lfloor \frac{K^2}{n}\log(K) \rfloor$ and $\bar B_2(\gamma_n)$ for $\lceil \frac{K^2}{n}\log(K) \rceil \leq l \leq K-1$. Note that for $l\leq \lfloor \frac{K^2}{n} \log(K) \rfloor$ we have by (\ref{simterm}) (uniformly in $l$)  $\frac{K^2}{ 2 } \frac{\left[ \binom{K}{2} + \binom{l}{2} \right]^2}{\binom{K}{2}^2 \sqrt{\binom{K}{2}^2 - \binom{l}{2}^2}}   \lesssim 1$ and for $l \geq \lceil \frac{K^2}{n} \log(K) \rceil$ we have by (\ref{54})$\frac{K^2}{ 2 } \frac{\left[ \binom{K}{2} + \binom{l}{2} \right]^2}{\binom{K}{2}^2 \sqrt{\binom{K}{2}^2 - \binom{l}{2}^2}}  \leq 2K^2$. Using parts 2 and  3 of Proposition \ref{Proposition 6.1}, it holds
\begin{align*}
    \bar B(\gamma_n) &= \bar B_1(\gamma_n) + \bar B_2(\gamma_n)\\
    &\lesssim \sum_{2 \leq l \leq \lfloor \frac{K^2}{n} \log(K)\rfloor} S_{n,K,0,l} + \sum_{\lceil \frac{K^2}{n} \log(K)\rceil \leq l \leq K-1} 2S_{n,K,2,l}\\
    &\lesssim 1.
\end{align*}
Therefore
\begin{align*}
     \frac{\E[U_{\gamma_n}]^2}{\E[U_{\gamma_n} ^2]} &\gtrsim \frac{1}{\Theta(1) +1 + o(1) + \frac{1}{\binom{n}{K} \p(Z_S \geq \gamma_n \binom{K}{2})} }\\
     &= \frac{1}{\Theta(1) + \frac{1}{\binom{n}{K} \p(Z_S\geq \gamma_n \binom{K}{2})}}.
\end{align*}
Recall that $\gamma_n = (1+\delta_n) 2\sqrt{\frac{\log\left( \frac{n}{K} \right)}{K}}$. If we pick $\delta_n=o(1)$ such that 
 $\binom{n}{K} \p\left(Z_S \geq \gamma_n \binom{K}{2}\right) = w_n $ where $w_n$ is a positive real sequence satisfying $w_n =\omega(1)$ then by Paley-Zygmund inequality
\begin{align*}
 \p(U_{\gamma_n} \geq 1) &\geq \frac{\E[U_{\gamma_n}]^2}{\E[U_{\gamma_n} ^2]}\\
 &\geq \frac{ 1 }{\Theta(1) + o(1) }\\
 &=\Theta(1).
\end{align*}
Therefore, there exists a constant $ C_1 \in (0,1)$ such that for sufficiently large $n$
$$ \p(U_{\gamma_n} \geq 1) \geq C_1 .$$
 We then have by Borell-TIS (Theorem \ref{Theorem 4.16}) inequality for  $t>0$ 
\begin{align*}
    \p( \Psi^{\Nc}_K(G) - \E[\Psi^{\Nc}_K(G)] \geq t) &\leq \exp\left(-\frac{t^2}{2 \binom{K}{2}}\right)\\
    &= \exp\left(- \frac{t^2}{K(K-1)}\right)\\
    &\leq \exp\left( -C_0 \frac{t^2}{K^2}\right).
\end{align*}
In particular, for $t^* \triangleq \sqrt{ \frac{-\log(C_1)}{C_0}}K \sqrt{m_n}$ where $a_n$ is a positive sequence s.t $a_n =\omega(1), a_n >1$, we have
$$   \p\left( \Psi^{\Nc}_K(G) - \E[\Psi^{\Nc}_K(G)] \geq t^*\right) \leq \p\left(\Psi^{\Nc}_K(G) \geq \gamma_n \binom{K}{2}\right),  $$
which implies
$$  \gamma_n \binom{K}{2} \leq t^* + \E[\Psi^{\Nc}_K(G)]. $$
Therefore
\begin{align*}
    \p\left(\Psi^{\Nc}_K(G) \geq \gamma_n \binom{K}{2} - 2t^* \right) &\geq \p\left( |\Psi^{\Nc}_K(G) - \E[\Psi^{\Nc}_K(G)] | \leq t^*\right)\\
    &\geq 1 - 2\exp\left(- C_1 w_n\right)\\
    &= 1 - o(1).
\end{align*} 
Hence, $ \Psi^{\Nc}_K(G) \geq \gamma_n \binom{K}{2} - O(K a_n)$ for any choice of $a_n, \gamma_n$ satisfying the previously stated assumptions, moreover, we see in light of the computations in Sections \ref{Proof of Lower Bound in Theorem 3.6}, \ref{Proof of Lower Bound in Theorem 3.7} that we can pick
$$\gamma_n =   \sqrt{ \frac{2}{\binom{K}{2}}     \log\left(   \binom{n}{K}\frac{1}{\sqrt{ K \log(\frac{n}{K})}}  \right)    }  - \frac{m_n}{K^{\frac{3}{2}}\sqrt{\log(n)}} , $$
where $m_n$ is any positive real sequence s.t $m_n =o(K \log(n))$ and $m_n =\omega(1)$. Henceforth, we have with high probability as $n\to +\infty $ and for any such sequences $m_n, a_n$:

\begin{align*}
    \Psi^{\Nc}_K(G) &\geq \gamma_n  \binom{K}{2}\\
    &=   \sqrt{ 2 \binom{K}{2}     \log\left(   \binom{n}{K}\frac{1}{\sqrt{ K \log(\frac{n}{K})}}  \right)    }  -m_n \sqrt{\frac{K}{\log(n)}} -O(K a_n)\\
    &=   \sqrt{ 2 \binom{K}{2}     \log\left(   \binom{n}{K}\frac{1}{\sqrt{ K \log(\frac{n}{K})}}  \right)    } - K a_n,
\end{align*}
where the last follows from picking $m_n$ s.t $m_n \sqrt{\frac{K}{\log(n)}} = o( Ka_n ) $ and including the constant hidden by $O(K a_n)$ in $a_n$. This concludes the proof of Theorem \ref{Theorem 3.8}.

 \section{Proof of Corollaries \ref{Corollary 3.5}, \ref{Corollary 3.9}.} \label{Proof of Corollary 3.5}

\begin{proof}[Proof of Corollary \ref{Corollary 3.5}]
In light of Theorems \ref{Theorem 3.1}, \ref{Theorem 3.2}, \ref{Theorem 3.4}, we see that for $K=n^{\alpha}, \alpha \in \left(0,1\right)$ we have:
$$ \Psi^{\Rc}_K(G) \gtrsim L(n,K) . $$
$$ \Psi^{\Rc}_K(G) \lesssim U(n,K) . $$
From (\ref{LNK}), we have $L(n,K), U(n,K) \sim K^{\frac{3}{2}} \sqrt{\log\left( \frac{n}{K} \right)}$, thus $ \Psi^{\Rc}_K(G) \sim K^{\frac{3}{2}} \sqrt{\log\left( \frac{n}{K} \right)} $. Therefore by definition of $\Psi^{\Bern\left(\frac{1}{2}\right)}_K(G)$
$$   \Psi^{ \Bern\left(\frac{1}{2} \right)}_K(G) = \frac{K^2}{4} +  \frac{K^{\frac{3}{2}} \sqrt{\log(\frac{n}{K})}}{2} + o\left(K^{\frac{3}{2}} \sqrt{\log(n)}\right) , \text{w.h.p as } n\to +\infty$$
\end{proof}

\begin{proof}[Proof of Corollary \ref{Corollary 3.9}]
In light of Theorems \ref{Theorem 3.6}, \ref{Theorem 3.7}, \ref{Theorem 3.8}, we see that for $K=n^{\alpha}, \alpha \in \left(0,1\right)$ we have
$$ \Psi^{\Nc}_K(G) \sim U(n,K) . $$
From (\ref{LNK}), we have $ U(n,K) \sim K^{\frac{3}{2}} \sqrt{\log\left( \frac{n}{K} \right)}$, thus $ \Psi^{\Nc}_K(G) \sim K^{\frac{3}{2}} \sqrt{\log\left( \frac{n}{K} \right)} $. Therefore by definition of $\Psi^{\Nc\left(\frac{1}{2}, \frac{1}{4}\right)}_K(G)$
$$   \Psi^{ \Nc\left(\frac{1}{2}, \frac{1}{4} \right)}_K(G) = \frac{K^2}{4} +  \frac{K^{\frac{3}{2}} \sqrt{\log(\frac{n}{K})}}{2} + o\left(K^{\frac{3}{2}} \sqrt{\log(n)}\right) , \text{w.h.p as } n\to +\infty$$
\end{proof}

\section{Overlap Gap Property.}

\subsection{Bernoulli Planted Clique Model. Preliminary Estimates.}
In this section we will always assume that $K=n^{\alpha}$ with $\alpha < \frac{2}{3}$ unless stated otherwise.
\begin{defn}{(First Moment curve)}
The first moment curve is the real-valued function $\Gamma_{K}$ defined on $ \{\lfloor{\frac{K^2}{n}\rfloor}, \lfloor{\frac{K^2}{n}\rfloor}+1,...,K \} \to \mathbb{R}$ and given by

\begin{alignat*}{3}
     \Gamma_{K}(z) &= \binom{K}{2}&&, &&\text{ If } z=K\\
     \Gamma_{K}(z) &= \binom{z}{2} + h^{-1} \left( \log(2) - \frac{\log(\binom{K}{z}) \binom{n-K}{ K-z}}{\binom{K}{2} - \binom{z}{2}}\right) \left( \binom{K}{2} - \binom{z}{2} \right)&&, &&\text{ If } z \in \left\{\lfloor{\frac{K^2}{n}\rfloor}, \lfloor{\frac{K^2}{n}\rfloor}+1,...,K \right\} 
\end{alignat*}

where $h^{-1}$ is the (rescaled) inverse of the binary entropy function $h : \left[0,  \frac{1}{2} \right] \to [0,1] $ defined by $h(x) = -x \log(x) - (1-x) \log(1-x)$.
\end{defn}

We first recall the following central proposition that can be obtained from part $1$ of Proposition $1$ in \cite{GamarnikZadik}. It establishes that the first moment curve upper bounds the density of subgraphs of size $K$ and overlap $z$. Evaluating the tightness of this inequality will be key in understanding the effect of overlap on optimal subgraphs.

\begin{proposition}\label{Proposition 15.2}
Let $K, n \in \mathbb{N} $ with $K \leq n$ we have with high probability as $n \to +\infty$ for every $z\in \{\lfloor{\frac{K^2}{n}\rfloor}, \lfloor{\frac{K^2}{n}\rfloor}+1,...,K \} $
$$ \Psi_{K}^{\Bern(\frac{1}{2})}(G)(z) \leq \Gamma_{K}(z). $$
\end{proposition}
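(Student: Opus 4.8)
The plan is to prove Proposition \ref{Proposition 15.2} by a first moment (union bound) argument over all admissible subsets, treating the degenerate value $z=K$ separately. When $z=K$, the only $S$ with $|S|=K$ and $|S\cap\mathcal{PC}|=K$ is $S=\mathcal{PC}$ itself, and since the planted clique is complete, $Z_S=\binom{K}{2}=\Gamma_K(K)$ holds deterministically. For $z<K$ the structural point is that, for any fixed $S$ with $|S\cap\mathcal{PC}|=z$, the $\binom{z}{2}$ pairs inside $S\cap\mathcal{PC}$ are edges with probability one (they lie in the planted clique), while each of the remaining $m_z\triangleq\binom{K}{2}-\binom{z}{2}$ pairs in $S$ has at least one endpoint outside $\mathcal{PC}$ and is therefore an i.i.d.\ $\Bern(1/2)$ variable, unaffected by the planting. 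Hence $Z_S\stackrel{d}{=}\binom{z}{2}+\Bin(m_z,1/2)$, and there are exactly $\binom{K}{z}\binom{n-K}{K-z}$ such subsets.

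Consequently, for any threshold $t$,
\[
\p\!\left(\Psi_{K}^{\Bern(\frac{1}{2})}(G)(z)\geq t\right)\leq \binom{K}{z}\binom{n-K}{K-z}\,\p\!\left(\Bin(m_z,1/2)\geq t-\tbinom{z}{2}\right).
\]
I would then plug in the standard entropy form of the binomial upper tail: for $k=am_z$ with $a\in(1/2,1)$, $\p(\Bin(m_z,1/2)\geq k)\leq m_z\exp\!\big(-m_z(\log 2-h(a))\big)$, with $h$ the natural-log binary entropy. Taking $t=\Gamma_K(z)$ corresponds precisely to the choice $a=h^{-1}\!\big(\log 2-\log(\binom{K}{z}\binom{n-K}{K-z})/m_z\big)$, which makes $m_z(\log 2-h(a))=\log(\binom{K}{z}\binom{n-K}{K-z})$, so the displayed bound collapses to the polynomial correction factor $m_z$, and becomes $o(1)$ once one carries the slightly sharper local-CLT form of the tail (which saves a $\sqrt{m_z}$) or, equivalently, exploits integrality of $Z_S$ together with the strictly positive slope $\log\frac{a}{1-a}$ of $a\mapsto\log 2-h(a)$ away from $a=1/2$. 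A final union bound over the at most $K+1\le n$ admissible values of $z$ costs only a polynomial factor, absorbed by the per-$z$ bounds, and yields the bound simultaneously for all $z$ w.h.p. This is exactly the content of part~1 of Proposition~1 in \cite{GamarnikZadik}, which I would cite for the detailed bookkeeping of the lower-order corrections hidden in the definition of $\Gamma_K$.

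The main obstacle is the calibration at the two ends of the range of $z$. Near $z=K$ one must check that the argument $\log 2-\log(\binom{K}{z}\binom{n-K}{K-z})/m_z$ of $h^{-1}$ stays in $(0,\log 2]$; this holds because for $K-z=j$ one has $m_z=\Theta(Kj)$ while $\log(\binom{K}{z}\binom{n-K}{K-z})=O(j\log n)$, so the ratio is $\Theta(\log n/K)=o(1)$ and the argument is close to, but below, $\log 2$. Near the lower end $z\approx\lfloor K^2/n\rfloor$ one must ensure $m_z=\Theta(K^2)$ is large enough that the binomial tail estimate sits in its Gaussian regime and the $\sqrt{m_z}$-type corrections are genuinely of lower order than $\Gamma_K(z)$. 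Making the $O(1)$-versus-$o(1)$ distinction in the union bound rigorous — i.e.\ extracting enough slack to beat the polynomial union-bound factors — is the one genuinely delicate step, and is precisely where the sharp (local-CLT) form of the binomial tail, or the slack built into the first-moment curve of \cite{GamarnikZadik}, is needed.
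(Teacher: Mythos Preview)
Your proposal is correct and matches the paper's treatment: the paper does not give an independent proof of this proposition but simply recalls it as part~1 of Proposition~1 in \cite{GamarnikZadik}, exactly as you do at the end of your sketch. The first-moment outline you give --- the distributional decomposition $Z_S\stackrel{d}{=}\binom{z}{2}+\Bin(m_z,1/2)$, the count $\binom{K}{z}\binom{n-K}{K-z}$, and the observation that $\Gamma_K(z)$ is calibrated so the entropy exponent exactly cancels the log-count --- is precisely the argument behind that cited result, and you correctly flag that the polynomial slack needed to beat the union-bound factor is where the careful tail estimate (or the built-in slack in the first-moment curve of \cite{GamarnikZadik}) is required.
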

For our purposes we present here some results on first moment curve approximation of the function $\Gamma_K(G)(z)$. We start by recalling the result of Lemma $7$ in the non overparametrization setting ($k=\bar k = K$) in \cite{GamarnikZadik}:
\begin{lemma}\label{Lemma 15.3}
For $K\leq n, \epsilon >0$ and a sufficiently big constant $C_0 = C_0(\epsilon)$, if $ C_0 \frac{K^2}{n} \leq z \leq (1-\epsilon) K$, then
$$ \Gamma_K(z+1)-\Gamma_K(z) = z \left(\frac{1}{2} + o(1)\right) - \Theta \left[  \sqrt{\frac{K}{\log(n)}} \log\left(\frac{(z+1)n}{K^2}\right)  \right]+O(1).$$
\end{lemma}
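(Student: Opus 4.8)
The plan is to compute the finite difference directly from the closed form of $\Gamma_K$, handling $h^{-1}$ via its Taylor expansion from Lemma~\ref{Lemma 4.9} (this is essentially the computation behind Lemma~7 of \cite{GamarnikZadik} specialized to $k=\bar k=K$). Throughout, set $N=\binom{K}{2}$, $M(z)=\binom{z}{2}$ (so $N-M(z)=\tfrac12(K-z)(K+z-1)$), $B(z)=\binom{K}{z}\binom{n-K}{K-z}$, and $\epsilon(z)=\log B(z)/(N-M(z))$, and work uniformly over $C_0\tfrac{K^2}{n}\le z\le(1-\epsilon)K$; note $z,z+1$ both lie in the domain of $\Gamma_K$ and, for $K$ large, avoid the boundary case $z=K$. \textbf{Step 1 (crude orders).} Here $z\le(1-\epsilon)K$ gives $N-M(z)=\Theta(K^2)$; by part~3 of Lemma~\ref{Lemma 4.5}, $\log\binom{n-K}{K-z}\sim(K-z)\log\tfrac{n-K}{K-z}=\Theta(K\log\tfrac nK)$, while $\log\binom{K}{z}\le K\log 2=o(K\log\tfrac nK)$, so $\log B(z)=\Theta(K\log\tfrac nK)=\Theta(K\log n)$ and hence $\epsilon(z)=\Theta(\log n/K)\to 0$. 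This makes Lemma~\ref{Lemma 4.9} applicable with $\epsilon=\epsilon(z)$.

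\textbf{Step 2 (expansion of $\Gamma_K$).} Substituting Lemma~\ref{Lemma 4.9} into the definition,
\[
\Gamma_K(z)=\frac{N+M(z)}{2}+\frac{1}{\sqrt2}\sqrt{(N-M(z))\log B(z)}-\frac{1}{6\sqrt2}\frac{(\log B(z))^{3/2}}{\sqrt{N-M(z)}}+R(z),
\]
with $|R(z)|=O(\epsilon(z)^{5/2}(N-M(z)))=O((\log n)^{5/2}/\sqrt K)=o(1)$. Then $\Gamma_K(z+1)-\Gamma_K(z)$ is the sum of $\tfrac12(M(z+1)-M(z))=z/2$, the main term $f(z+1)-f(z)$ with $f(z)=\tfrac1{\sqrt2}\sqrt{(N-M(z))\log B(z)}$, the correction $-(f_2(z+1)-f_2(z))$ with $f_2(z)=\tfrac1{6\sqrt2}(\log B(z))^{3/2}/\sqrt{N-M(z)}$, and $R(z+1)-R(z)$. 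The last two I would dispatch as $o(1)$ by a routine discrete-derivative estimate: along one step $\log B$ changes by $O(\log n)$ (computed exactly below) and $N-M$ by $-z=O(K)$, which combined with $\log B(z)=\Theta(K\log n)$ and $N-M(z)=\Theta(K^2)$ gives $|f_2(z+1)-f_2(z)|=O((\log n)^{3/2}/\sqrt K)=o(1)$, while $|R(z+1)-R(z)|\le|R(z+1)|+|R(z)|=o(1)$. This leaves the main term.

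\textbf{Step 3 (the main term).} Put $g(z)=2(N-M(z))\log B(z)$, so $f(z)=\tfrac12\sqrt{g(z)}$ and $f(z+1)-f(z)=\dfrac{g(z+1)-g(z)}{2(\sqrt{g(z+1)}+\sqrt{g(z)})}$; since $|g(z+1)-g(z)|=O(K^2\log n)=O(g(z)/K)$, the denominator equals $4\sqrt{g(z)}\,(1+o(1))$. For the numerator, $g(z+1)-g(z)=2(N-M(z))\log\tfrac{B(z+1)}{B(z)}-2z\log B(z+1)$, and the explicit ratio $\tfrac{B(z+1)}{B(z)}=\tfrac{(K-z)^2}{(z+1)(n-2K+z+1)}$ gives $\log\tfrac{B(z+1)}{B(z)}=2\log(K-z)-\log(z+1)-\log(n-2K+z+1)=-\log\tfrac{(z+1)n}{(K-z)^2}+O(K/n)$. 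Combining the three resulting pieces: (i) the part $-2(N-M(z))\log\tfrac{(z+1)n}{(K-z)^2}$ contributes $-\tfrac{\sqrt{N-M(z)}}{2\sqrt2\,\sqrt{\log B(z)}}\log\tfrac{(z+1)n}{(K-z)^2}(1+o(1))=-\Theta(\sqrt{K/\log n})\cdot\Theta(\log\tfrac{(z+1)n}{K^2})$, using the Step~1 orders and (crucially) $\log\tfrac{(z+1)n}{(K-z)^2}\asymp\log\tfrac{(z+1)n}{K^2}$; (ii) the $O(K/n)$ remainder contributes $O(\sqrt{K/\log n}\cdot K/n)=O(K^{3/2}/(n\sqrt{\log n}))=o(1)$, which is where $\alpha<2/3$ enters; (iii) the part $-2z\log B(z+1)$ contributes $-\tfrac{z\sqrt{\log B(z)}}{2\sqrt2\,\sqrt{N-M(z)}}(1+o(1))=-z\cdot\Theta(\sqrt{\log n/K})=z\cdot o(1)$. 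Adding everything, $\Gamma_K(z+1)-\Gamma_K(z)=\tfrac z2+z\cdot o(1)-\Theta(\sqrt{K/\log n}\,\log\tfrac{(z+1)n}{K^2})+o(1)$, which is the claimed identity (we in fact obtain $o(1)$ in place of $O(1)$ in this regime).

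\textbf{Main obstacle.} The real content is uniformity over the whole window, and concretely the comparison $\log\tfrac{(z+1)n}{(K-z)^2}\asymp\log\tfrac{(z+1)n}{K^2}$ needed in (i): since $(K-z)^2\le K^2$ one has $\log\tfrac{(z+1)n}{(K-z)^2}\ge\log\tfrac{(z+1)n}{K^2}\ge\log\tfrac{zn}{K^2}\ge\log C_0$ (using $z\ge C_0K^2/n$), and $\log\tfrac{(z+1)n}{(K-z)^2}\le 2\log\tfrac1\epsilon+\log\tfrac{(z+1)n}{K^2}$, so taking $C_0=C_0(\epsilon)$ large forces both logarithms to be bounded below by $\log C_0$ and to differ by only a bounded multiplicative factor, which is exactly what legitimizes the $\Theta$. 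A secondary nuisance is making the discrete-derivative bounds of Step~2 uniform, but that needs only the Step~1 order estimates uniformly, which they are.
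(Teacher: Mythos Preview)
The paper does not prove this lemma at all: it merely recalls it as a specialization of Lemma~7 of \cite{GamarnikZadik} to the non-overparametrized setting $k=\bar k=K$. Your argument is a correct, self-contained derivation of that cited result: you expand $\Gamma_K$ via Lemma~\ref{Lemma 4.9}, take a discrete difference, and isolate the three pieces (i)--(iii), with the uniformity over $C_0K^2/n\le z\le(1-\epsilon)K$ handled carefully (your ``main obstacle'' paragraph is exactly the point where $C_0=C_0(\epsilon)$ is needed). The orders in Steps~1--3 check out, including the observation that the $O(K/n)$ remainder in (ii) is $o(1)$ precisely under the standing assumption $\alpha<2/3$ of that section, so you in fact sharpen the stated $O(1)$ to $o(1)$. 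In short: your approach is the natural one and matches what the cited reference does; there is simply nothing in the present paper to compare against.
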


Note that we can assume that $C_0 \geq 1$ which implies that $\frac{(z+1)n}{K^2} \geq C_0 \frac{K^2}{n} \frac{n}{K^2} = C_0 \geq 1$ and therefore $ \sqrt{\frac{K}{\log(n)}} \log\left(\frac{(z+1)n}{K^2}\right) = \omega(1)$.

\begin{lemma}\label{Lemma 15.4}
Let $\epsilon >0$, we can find $D_1 < D_2$ positive reals (depending on $\epsilon, \alpha$) such that if we let $I \triangleq  \mathbb{Z} \cap [D_1 \sqrt{K \log(K)}, D_2 \sqrt{K \log(K)}]$, then there exists a constant $C_1 = C_1(\epsilon)$ s.t for sufficiently large $n,K$ it holds
$$ \max_{z \in I} \Gamma_K(z) + C_1 K\log(K) \leq \Gamma_K\left( \left \lfloor C_0 \frac{K^2}{n} \right\rfloor \right) \leq \Gamma_K((1-\epsilon)K) .$$
\end{lemma}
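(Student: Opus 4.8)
The plan is to extract the two claimed inequalities from the discrete-derivative estimate of Lemma \ref{Lemma 15.3} by a telescoping/summation argument. For the right-hand inequality $\Gamma_K(\lfloor C_0 K^2/n\rfloor)\le \Gamma_K((1-\epsilon)K)$, I would sum the increments $\Gamma_K(z+1)-\Gamma_K(z)$ over $z$ from $\lfloor C_0 K^2/n\rfloor$ up to $(1-\epsilon)K$. By Lemma \ref{Lemma 15.3} each increment equals $z(\tfrac12+o(1)) - \Theta\!\bigl(\sqrt{K/\log n}\,\log((z+1)n/K^2)\bigr)+O(1)$; since $z\le K$ the negative term is at most $O\!\bigl(\sqrt{K/\log n}\,\log n\bigr)=O(\sqrt{K\log n})$ in absolute value, which is dominated by the positive term $z/2$ once $z=\omega(\sqrt{K\log n})$, i.e. for all but a negligible initial window of $z$-values. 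Hence the total sum is strictly positive for large $n,K$, giving the right inequality (in fact with a $\Theta(K^2)$ gap). One has to be mildly careful on the initial window where $z$ is small and the negative term could dominate a single increment; but there the number of such $z$ is at most $O(\sqrt{K\log n})$ and each increment is $O(\sqrt{K\log n})$ in magnitude, so that window contributes only $O(K\log n)=o(K^2)$, which does not spoil positivity.

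For the left-hand inequality, the first step is to choose $D_1<D_2$ so that on $I=\mathbb Z\cap[D_1\sqrt{K\log K},D_2\sqrt{K\log K}]$ the function $\Gamma_K$ is \emph{decreasing} (or at least its maximum over $I$ is controlled). Taking $z=D\sqrt{K\log K}$ in Lemma \ref{Lemma 15.3}, the increment is $z(\tfrac12+o(1)) - \Theta\!\bigl(\sqrt{K/\log n}\,\log((z+1)n/K^2)\bigr)+O(1)$. Here $\log((z+1)n/K^2)=\Theta(\log n)$ (since $z\le K$ and $n/K^2$ is polynomial in $n$), so the negative term is $\Theta(\sqrt{K\log n})$, while the positive term is $z/2=\Theta(\sqrt{K\log K})$. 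Because $\alpha<2/3$ we have $\log K=\alpha\log n$, so $\sqrt{K\log K}$ and $\sqrt{K\log n}$ are the same order; thus the \emph{sign} of the increment on $I$ is governed by the implied constants, and by choosing $D_1,D_2$ (hence the range of $z/z$-ratios) appropriately one forces the increment to be negative, bounded above by $-c\sqrt{K\log K}$ for a constant $c>0$ throughout $I$. Consequently $\max_{z\in I}\Gamma_K(z)=\Gamma_K(\lceil D_1\sqrt{K\log K}\rceil)$, and moreover summing the negative increments from $\lfloor C_0K^2/n\rfloor$ up to $D_1\sqrt{K\log K}$ — a range of length $\Theta(\sqrt{K\log K})$ with each increment $\le -c\sqrt{K\log K}+O(\sqrt{K\log n})$, still negative — shows $\Gamma_K(\lceil D_1\sqrt{K\log K}\rceil)\le \Gamma_K(\lfloor C_0K^2/n\rfloor) - C_1 K\log K$ for a suitable $C_1=C_1(\epsilon)>0$. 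This yields $\max_{z\in I}\Gamma_K(z)+C_1K\log K\le \Gamma_K(\lfloor C_0K^2/n\rfloor)$.

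The main obstacle I anticipate is the bookkeeping of constants: Lemma \ref{Lemma 15.3} gives the increment only up to a $\Theta(\cdot)$ and an additive $O(1)$, so to guarantee that the increment is genuinely negative of order $-\sqrt{K\log K}$ \emph{uniformly} over $z\in I$ (and over the longer range down to $C_0K^2/n$) I must track how the implied constant in $\Theta\!\bigl(\sqrt{K/\log n}\log(\cdot)\bigr)$ compares to the constant $1/2$ multiplying $z$, and then pick $D_1$ small enough and $D_2$ (and $C_0$) accordingly. Making this quantitative — i.e. verifying there is a genuine window $[D_1\sqrt{K\log K},D_2\sqrt{K\log K}]$ in which the positive drift $z/2$ has not yet overtaken the logarithmic barrier term — is precisely where the hypothesis $\alpha<2/3$ (equivalently $K^2/n=o(\sqrt{K\log K})$, so that the barrier window sits strictly above $C_0K^2/n$) is used, and is the delicate point of the argument. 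Everything else is routine telescoping and the elementary fact that $\sqrt{K/\log n}\,\log(n/K)\asymp\sqrt{K\log n}$.
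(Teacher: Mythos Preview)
Your overall strategy---telescoping the increments supplied by Lemma~\ref{Lemma 15.3}---is exactly what the paper does. But your execution of the left inequality contains a concrete error.

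You assert that on the range $[\lfloor C_0K^2/n\rfloor,\,D_1\sqrt{K\log K}]$, ``each increment $\le -c\sqrt{K\log K}+O(\sqrt{K\log n})$, still negative''. This is false near the left endpoint $z_0=\lfloor C_0K^2/n\rfloor$: there $\log\bigl((z+1)n/K^2\bigr)\approx\log C_0=O(1)$, so the barrier term $\Theta\bigl(\sqrt{K/\log n}\,\log((z+1)n/K^2)\bigr)$ is only of order $\sqrt{K/\log n}$, not $\sqrt{K\log K}$. The individual increments are \emph{not} uniformly of size $-c\sqrt{K\log K}$ over this range; their magnitude grows as $z$ increases from $z_0$. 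A pointwise bound that does not actually hold cannot be summed to yield the claimed $-C_1K\log K$.

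The paper handles this by computing the \emph{sum} of the increments directly rather than bounding each term. Over $z\in[z_0,\,D\sqrt{K\log K}]$ one obtains
\[
\sum_z z\Bigl(\tfrac12+o(1)\Bigr)\;\sim\;\tfrac{D^2}{4}\,K\log K,
\qquad
\sqrt{\tfrac{K}{\log n}}\sum_z \log\!\Bigl(\tfrac{(z+1)n}{K^2}\Bigr)\;\sim\;C'D\sqrt{\alpha}\Bigl(\tfrac{1}{\alpha}-\tfrac{3}{2}\Bigr)K\log K,
\]
the second via $\int_{z_0}^{z_1}\log\bigl((z+1)n/K^2\bigr)\,dz\sim z_1\log\bigl(z_1 n/K^2\bigr)$. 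The essential observation is that the positive drift is \emph{quadratic} in $D$ while the barrier contribution is \emph{linear} in $D$; hence for all $D$ in a sufficiently small interval $[D_1,D_2]$ the net is at most $-2D_0\,K\log K$. This uniformity in $D$ directly gives $\Gamma_K(z)-\Gamma_K(z_0)\le -D_0K\log K$ for every $z\in I$, so your intermediate step of showing $\Gamma_K$ is monotone on $I$ is also unnecessary.
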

\begin{proof}
Let $z_1 = D \sqrt{K \log(K)} -1$ for some positive real $D$ and let $z_0 = \left \lfloor C_0 \frac{K^2}{n} \right\rfloor$. We then have by telescoping the result of Lemma \ref{Lemma 15.3}
\begin{align*}
    \Gamma_{K}(z_1+1) - \Gamma_K(z_0) &= \sum_{z= z_0}^{z_1}  z \left(\frac{1}{2} + o(1)\right) - \Theta \left[  \sqrt{\frac{K}{\log(n)}} \log\left(\frac{(z+1)n}{K^2}\right)  \right]+O(1)\\
    &\leq  \sum_{z= z_0}^{z_1}  z \left(\frac{1}{2} + o(1)\right) - C' \left[  \sqrt{\frac{K}{\log(n)}} \log\left(\frac{(z+1)n}{K^2}\right)  \right]+O(1),
\end{align*}
where $C'$ is a positive constant depending on $n,K$. We have:
\begin{align*}
    \sum_{z= z_0}^{z_1} z \left(\frac{1}{2} + o(1)\right) &\sim \int_{z_0}^{z_1} z \left(\frac{1}{2} + o(1)\right)dz \\
    &\sim \frac{1}{4} \left[z_1  ^2 - z_0^2 \right] \\
    &\sim \frac{z_1^2}{4} \numberthis \label{40} \\
    &= \frac{D^2}{4} K \log(K) \numberthis \label{62},
\end{align*}
where we used $\alpha < \frac{3}{2} \implies\frac{K^2}{n} = o\left(\sqrt{K \log(K)}\right) $ in (\ref{40}).
On the other hand, we have
\begin{align*}
    \sum_{z=z_0}^{z_1}  \log\left(\frac{(z+1)n}{K^2}\right) &\sim \int_{z_0}^{z_1} \log\left(\frac{(z+1)n}{K^2}\right)dz\\ 
    &= \int_{z_0+1}^{z_1+1} \log\left(\frac{nz}{K^2}\right)dz\\
    &= \left[  z\log\left(\frac{nz}{K^2}\right) - z\right]_{z_0+1}^{z_1+1}\\
    &\sim (z_1+1)\log\left(\frac{n(z_1+1)}{K^2}\right)\\
    &= D \sqrt{K \log(K)} \log\left( \frac{nD \sqrt{K\log(K)}}{K^2}\right)\\
    &\sim D \sqrt{K \log(K)} \log\left( \frac{n}{K^{\frac{3}{2}}}\right)\\
    &\sim D\left(\frac{1}{\alpha}  - \frac{3}{2}\right) \sqrt{K} \log(K)^{\frac{3}{2}}  \numberthis \label{63},            
\end{align*}
where the last line follows from $\log\left(\frac{n}{K^{\frac{3}{2}}}\right) = \log(n) -\frac{3}{2}\log(K) = (\frac{1}{\alpha} - \frac{3}{2}) \log(K)$. Note in particular that since we are interested in the regime $\alpha \in ( 0 , \frac{2}{3})$ we have $D(\frac{1}{\alpha}  - \frac{3}{2}) >0$. Moreover:
\begin{align*}
\sum_{z= z_0}^{z_1} O(1) &\sim O(z_1 - z_0)\\
&= O(\sqrt{K \log(K)}) \numberthis \label{64}.
\end{align*}
Combining (\ref{62}), (\ref{63}), and (\ref{64}) we have
\begin{align*}
\Gamma_{K}(z_1+1) - \Gamma_K(z_0)& \lesssim \frac{D^2}{4} K \log(K)  - C' D\left(\frac{1}{\alpha}  - \frac{3}{2}\right) \sqrt{\frac{K}{\log(n)}}  \sqrt{K} \log(K)^{\frac{3}{2}}  + O(\sqrt{K \log(K)}),
\end{align*}
equivalently
\begin{align*}
 \Gamma_K(D\sqrt{K\log(K)}) - \Gamma_K\left( \left \lfloor C_0\frac{K^2}{n} \right \rfloor\right)&\lesssim\left[ \frac{D^2}{4} - C' D \sqrt{\alpha}\left(\frac{1}{\alpha}  - \frac{3}{2}\right) \right] K \log(K)+ O(\sqrt{K \log(K)}).
\end{align*}
 Since $\sqrt{\alpha}\left(\frac{1}{\alpha}  - \frac{3}{2}\right) >0$, we can find some (small enough) $0< D_1 < D_2$ and $ D_0 >0$ such that $ D_0, D_1,D_2$ depend on $\alpha, \epsilon$ and satisfy 
 $$\forall D \in [D_1,D_2] , \left[ \frac{D^2}{4} - C' D \sqrt{\alpha}\left(\frac{1}{\alpha}  - \frac{3}{2}\right) \right] \leq -2D_0,$$
and 
 $$ \forall D \in [D_1,D_2] , \Gamma_K(D\sqrt{K\log(K)}) - \Gamma_K\left( \left\lfloor C_0\frac{K^2}{n} \right\rfloor\right) \leq -D_0 K\log(K),$$
 which implies
 $$ \max_{z \in\mathbb{Z} \cap [D_1 \sqrt{K\log(K)}, D_2 \sqrt{K\log(K)}]} \Gamma_K(z) + D_0 K\log(K) \leq \Gamma_K\left( \left\lfloor C_0 \frac{K^2}{n}\right\rfloor\right) \leq \Gamma_K((1-\epsilon)K).$$
 This concludes the proof of Lemma \ref{Lemma 15.4}.
\end{proof}
\subsection{Proof of Theorems \ref{Theorem 3.13}, \ref{Theorem 3.14}} \label{Subsection 15.2}
In order to establish the claim of Theorems \ref{Theorem 3.13}, \ref{Theorem 3.14} we first need the following  Lemma.
\begin{lemma}\label{Lemma 15.5}
Let $0 < m\leq K=n^{\alpha}\leq n$ be positive integers  with $\alpha\in \left(0, \frac{2}{3} \right)$ and $G \in \mathbb{G}(n,K,\Bern(1/2))$ be a random graph with a Planted Clique $\mathcal{PC}$ of size $K$. Define  $G_0 \triangleq G\setminus \mathcal{PC}$ to be the subgraph of $G$ obtained by removing $\mathcal{PC}$. Then we have the following with high probability as $n \to \infty$
$$ \Psi^{Bern(\frac{1}{2})}_K(G)(m) \geq \binom{m}{2} + \Psi^{Bern(\frac{1}{2})}_{K-m}(G_0) + \frac{(K-m)m}{2} - a(n) \sqrt{\frac{(K-m)m}{4}} ,$$
where $a(n)$ is any positive sequence s.t $a(n)=\omega(1)$.
\end{lemma}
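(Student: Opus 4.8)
The plan is to build, for the given $m$, an explicit $K$-node set $S$ with $|S\cap\mathcal{PC}|=m$ whose edge count realizes the right-hand side up to the fluctuation term, and then to control that fluctuation by a one-sided concentration inequality. Write $\mathcal{PC}=\{1,\dots,K\}$ and $V_0\triangleq\{K+1,\dots,n\}$, so that $G_0$ is the subgraph of $G$ induced on $V_0$. Since the planting alters only edges inside $\mathcal{PC}$, the edge indicators among $V_0$ are i.i.d.\ $\Bern(1/2)$ (hence $G_0\stackrel{d}{=}\G(n-K,1/2)$), and they are independent of the indicators of the bipartite cut between $\mathcal{PC}$ and $V_0$. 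Fix (by any measurable tie-breaking rule) a set $T\subset V_0$ with $|T|=K-m$ and $Z_T=\Psi^{\Bern(1/2)}_{K-m}(G_0)$, and put $S\triangleq\{1,\dots,m\}\cup T$.

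Then $|S|=K$ and $|S\cap\mathcal{PC}|=m$, and splitting the pairs inside $S$ into those with both endpoints in $\{1,\dots,m\}$, both endpoints in $T$, and one endpoint in each yields
\begin{align*}
Z_S=\binom{m}{2}+\Psi^{\Bern(1/2)}_{K-m}(G_0)+B,
\end{align*}
where the first term uses that $\{1,\dots,m\}$ spans a clique, the second is the defining property of $T$, and $B$ is the number of edges of $G$ between $\{1,\dots,m\}$ and $T$. The crucial point is that $T$ is a function of $G_0$ alone, which is independent of the cut, so conditionally on $G_0$ one has $B\stackrel{d}{=}\mathrm{Bin}\!\left(m(K-m),\tfrac12\right)$, with the same parameters for every realization of $G_0$.

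It then remains to bound $B$ from below. The case $m=K$ is trivial, and for $1\le m\le K-1$ one has $m(K-m)\ge K-1\to\infty$, so Hoeffding's inequality (applied conditionally on $G_0$, with a bound uniform in $G_0$, hence also unconditionally) gives
\begin{align*}
\p\!\left(B<\frac{m(K-m)}{2}-a(n)\sqrt{\frac{m(K-m)}{4}}\right)\le\exp\!\left(-\frac{a(n)^2}{2}\right)=o(1)
\end{align*}
for any $a(n)=\omega(1)$; Chebyshev's inequality would also suffice here, with the weaker failure probability $O(a(n)^{-2})$. Since $\Psi^{\Bern(1/2)}_K(G)(m)\ge Z_S$, combining the last two displays proves the lemma w.h.p.

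No step here presents a genuine difficulty; the only points to handle with care are making the independence of $T$ and the cut fully explicit (so that $B$ is really a fresh binomial after conditioning on $G_0$, rather than something correlated with the choice of $T$), and noting that $m(K-m)\to\infty$ on the nontrivial range of $m$ — which is exactly what makes the concentration step effective and explains why the hypothesis $a(n)=\omega(1)$ is the natural one.
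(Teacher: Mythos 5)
Your proposal is correct and follows essentially the same route as the paper: the paper also fixes an $m$-subset of $\mathcal{PC}$, takes the maximizing $(K-m)$-subset of $G_0$, observes that this choice is independent of the bipartite cut so the cut count is a fresh $\mathrm{Bin}(m(K-m),\tfrac12)$, and concludes by concentration. Your explicit handling of the independence of $T$ from the cut and of the degenerate case $m=K$ matches the paper's argument in substance.
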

\begin{proof}
 Fix an arbitrary $m$-vertices subgraph $S_1$ of $\mathcal{PC}$. Then $N\triangleq \binom{n-K}{K-m}$ is the number of different $K-m$ vertices subgraphs $S_2$ of $G_0$. Optimizing over the choice of $S_2$ yields
\begin{align*}
\Psi^{Bern(\frac{1}{2})}_K(G)(m) &\geq \max_{S_2} |E(S_1 \cup S_2)|\\
&= \binom{m}{2} + \max_{S_2} \{ |E(S_1,S_2)| + |E(S_2)| \}. \numberthis \label{41}
 \end{align*} 
 Where here $E(H)$ is the sum of all edge random variables over the subgraph $H$. In particular, note that $E(S_1)= \binom{m}{2}$, $E(S_2) \stackrel{d}{=} Bin\left(\binom{K-m}{2},\frac{1}{2} \right)$ and $E(H,L)$ is the sum of all edge random variables between $H,L$ for $H\cap L = \emptyset$. Next, let's index the subsets $S_2$ by $S^i, i=1,...,N$ and set
$$ Y_i \triangleq |E(S_1, S^i)|, X_i \triangleq |E(S^i)| .$$
Note that
\begin{enumerate}
    \item $\forall i\in [N], X_i \sim Bin\left(\binom{K-m}{2},\frac{1}{2} \right) .$
    \item $\forall i\in [N], Y_i \sim Bin\left((K-m)m,\frac{1}{2} \right) .$
    \item The sequence $(X_i)_i$ is independent from the sequence $(Y_j)_j$.
    \item $\max_{i \in [N]} X_i = \Psi^{Bern(\frac{1}{2})}_{K-m}(G_0).$
\end{enumerate}
We claim that for any sequence $a(n)=\omega(1)$ the following holds w.h.p as $n\to \infty$.
\begin{align*}
    \max_{i \in [N]} \{X_i + Y_i\} &\geq \max_{i\in [N]} X_i + \frac{(K-m)m}{2} - a(n)\sqrt{\frac{(K-m)m}{4}}. \numberthis \label{42}  
 \end{align*}
Indeed, let $i^* \in \arg \max_{i \in [N]} X_i $. It suffices to prove that w.h.p
$$ Y_{i^*} \geq  \frac{(K-m)m}{2} - a(n)\sqrt{\frac{(K-m)m}{4}}.$$
Since the sequences $X,Y$ are mutually independent, we see that $i^*$ is independent of the sequence $Y$, thus $Y_{i^*} \stackrel{d}{=} Bin((K-m)m, \frac{1}{2})$. The claim then follows since $a(n) =\omega(1)$. Combining (\ref{41}) with (\ref{42}) yields for any $a(n) =\omega(1)$
\begin{align*}
\Psi^{\Bern(\frac{1}{2})}_K(G)(m) &\geq \binom{m}{2} + \Psi^{\Bern(\frac{1}{2})}_{K-m}(G_0) + \frac{(K-m)m}{2} - a(n)\sqrt{\frac{(K-m)m}{4}},
\end{align*}
which concludes the proof.
\end{proof}
The proof of Theorems \ref{Theorem 3.13}, \ref{Theorem 3.14} is mainly based on the following assumption.
$$  \Psi^{\Bern(\frac{1}{2})}_K(G) \geq V(n,K) - o(K\log(n)) , \text{ w.h.p as } n \to \infty ,$$
This assumption holds true in the case $K = n^{\alpha}, \alpha \in (0, \frac{1}{2})$ by Theorem \ref{Theorem 3.1}. In the regime $\alpha \in [\frac{1}{2}, \frac{2}{3})$, Theorem \ref{Theorem 3.2} proves a slightly weaker bound, which is why Theorem \ref{Theorem 3.13} is based on the underlying assumption that Conjecture \ref{conjecture 3.3} holds.

\begin{proof}[proof of Theorem \ref{Theorem 3.13}]
We will assume in the remaining of this section that $K\in 2\mathbb{Z}_{\geq 1}$. The result of \ref{Theorem 3.13} when $K\in 2\mathbb{Z}_{\geq 0} +1$ can be readily deduced from the former case by noting  $|\Psi^{\Bern(\frac{1}{2})}_b(G) - \Psi^{\Bern(\frac{1}{2})}_{b+1}(G)|\leq b$.

We first establish the presence of OGP for the Bernoulli Planted Clique Model in the regime $\alpha \in (0, \frac{1}{2})$. This proof is identical in spirit to the proof of Theorem 2 in \cite{GamarnikZadik}. Let $\epsilon = \frac{1}{2}$ in Lemma \ref{Lemma 15.4}, there exists $C_0(\epsilon); C_1>0$ such that
$$ \max_{z \in I} \Gamma_K(z) + C_1 K\log(K) \leq \Gamma_K\left( \left \lfloor C_0 \frac{K^2}{n} \right \rfloor\right)  \leq \Gamma_K\left(\frac{K}{2}\right), $$
where $ I\triangleq \mathbb{Z} \cap \left[D_1 \sqrt{K\log(K)}, D_2 \sqrt{K\log(K)}\right]$. Note in particular that since $K = o(\sqrt{n})$, the previous inequality is equivalent to (for large enough $n$)
\begin{align*}
     \max_{z \in I} \Gamma_K(z) + C_1 K\log(K) \leq \Gamma_K(0) \leq \Gamma_K\left(\frac{K}{2}\right). \numberthis \label{67}
\end{align*}

Using Proposition \ref{Proposition 15.2}, we see that in order to establish the claim of the Theorem, it suffices to prove that w.h.p as $n \to \infty$
$$  \min \left\{ \Psi^{\Bern(\frac{1}{2})}_K(G)(0), \Psi^{\Bern(\frac{1}{2})}_{K}(G)\left(\frac{K}{2}\right) \right\} \geq \Gamma_K\left(0\right) - o(K\log(K)).$$
Indeed, the latter combined with (\ref{67}) and Proposition \ref{Proposition 15.2} would yield
$$\min\left\{ \Psi^{\Bern(\frac{1}{2})}_K(G)(0), \Psi^{\Bern(\frac{1}{2})}_{K}(G)\left(\frac{K}{2}\right) \right\}  \geq \max_{z \in I} \Psi^{\Bern(\frac{1}{2})}_K(G)(z) + \frac{C_1}{2} K\log(K),  $$
which readily complete the proof of Theorem \ref{Theorem 3.13}. We first establish
\begin{align*}
\Psi^{\Bern(\frac{1}{2})}_K(G)(0) &\geq \Gamma_K\left(0\right) - o (K \log(K)). \numberthis\label{68}
\end{align*}
We have by Theorem \ref{Theorem 3.1}
\begin{align*}
    \Psi^{\Bern(\frac{1}{2})}_{K}(G)(0) &= \frac{1}{2} \binom{K}{2}  + \frac{1}{2} V(n-K,K) + O\left(\sqrt{K\log(n)^3}\right)\\
    &\geq \frac{1}{2} \binom{K}{2} + \sqrt{\frac{1}{2} \binom{K}{2} \log \left(\binom{n-K}{K}\right)} - o(K\log(K)) \numberthis \label{69}.
\end{align*}
Using Lemma  \ref{Lemma 4.9}, we have
\begin{align*}
    \Gamma_{K}(0)  &= h^{-1} \left( \log(2) -   \frac{ \log(   \binom{n-K}{K}  ) }{\binom{K}{2}}  \right)  \binom{K}{2}\\
    &= \binom{K}{2} \left[       \frac{1}{2} + \frac{1}{\sqrt{2}} \sqrt{   \frac{ \log(   \binom{n-K}{K}  ) }{\binom{K}{2}}} + O \left(\frac{ \log(   \binom{n-K}{K}  ) }{\binom{K}{2}}  \right)^{\frac{3}{2}}    \right]. \numberthis \label{70}
\end{align*}
By part 3 of Lemma \ref{Lemma 4.5}  we have $ \frac{ \log\left(   \binom{n-K}{K}  \right) }{\binom{K}{2}}  \sim \frac{K \log\left(\frac{n-K}{K}\right)}{\frac{K^2}{2}}= O\left(\frac{\log(n)}{K}\right)$. Henceforth $$\binom{K}{2} O  \left(\frac{ \log(   \binom{n-K}{K}  ) }{\binom{K}{2}}  \right)^{\frac{3}{2}}  = O\left(K^2 \frac{\log(n)^{\frac{3}{2}}}{K^{\frac{3}{2}}}\right) = O\left(\frac{\log(n)^{\frac{3}{2}}}{\sqrt{K}}\right) = o(K\log(K)).$$
From (\ref{69}) and (\ref{70}), we see that in order to establish (\ref{68}), it suffices to prove
\begin{align*}
    & \frac{1}{2} \binom{K}{2} + \sqrt{\frac{1}{2} \binom{K}{2} \log\left(\binom{n-K}{K}\right)} \geq \binom{K}{2} \left[       \frac{1}{2} + \frac{1}{\sqrt{2}} \sqrt{   \frac{ \log(   \binom{n-K}{K}  ) }{\binom{K}{2}}}\right] - o(K\log(K)),
\end{align*}
which trivially holds. Next we establish
$$\Psi^{Bern(\frac{1}{2})}_K(G)\left(\frac{K}{2}\right) \geq \Gamma_K(0) - o (K \log(K)).$$
We have by Lemma \ref{Lemma 15.5} (picking $a(n)= 4\sqrt{\log(n)}$):
\begin{align*}
    \Psi^{Bern(\frac{1}{2})}_K(G)\left(\frac{K}{2}\right) &\geq \binom{\frac{K}{2}}{2} + \Psi^{Bern(\frac{1}{2})}_{\frac{K}{2}}(G_0)(0) + \frac{K^2}{8} -K\sqrt{\log(n)} \text{ w.h.p as }n\to \infty.
\end{align*}
Where $G_0 \triangleq G \setminus \mathcal{PC}$.
We then have using Theorem \ref{Theorem 3.1}  w.h.p as $n\to \infty$
\begin{align*}
    \Psi^{Bern(\frac{1}{2})}_K(G)\left(\frac{K}{2}\right) &\geq \binom{\frac{K}{2}}{2}  + \frac{1}{2}\binom{\frac{K}{2}}{2}  + \frac{1}{2} V\left(n-K,\frac{K}{2}\right) -o(K\log(K)) + \frac{K^2}{8} - K \sqrt{\log(n)}\\
    &\sim \left[ \frac{1}{8} + \frac{1}{16} +\frac{1}{8}\right]K^2\\
    &= \frac{5}{16}K^2.
\end{align*}
Note that by (\ref{70}), it holds
$$ \Gamma_K(0) \sim \frac{1}{2} \binom{K}{2} \sim \frac{K^2}{4} . $$
Therefore
$$ \Psi^{Bern(\frac{1}{2})}_K(G)\left(\frac{K}{2}\right) - \Gamma_K(0) \gtrsim \frac{K^2}{4},$$
which yields the desired inequality $\Psi^{Bern(\frac{1}{2})}_K(G)\left(\frac{K}{2}\right) \geq \Gamma_K(0) - o (K \log(K))$. This completes the proof of  Theorem \ref{Theorem 3.13}. 
\end{proof}

\begin{proof}[Proof of Theorem \ref{Theorem 3.14}]
We now deal with the case $\alpha \in \left[ \frac{1}{2}, \frac{2}{3}\right)$ as stated in Theorem \ref{Theorem 3.14}. Let $ m\triangleq \lfloor C_0 \frac{K^2}{n} \rfloor$. Similarly to the case $\alpha \in \left(0,\frac{1}{2}\right)$, it suffices to prove that w.h.p as $n\to \infty$
$$  \min \left\{ \Psi^{Bern(\frac{1}{2})}_K(G)(m), \Psi^{Bern(\frac{1}{2})}_{K}(G)\left(\frac{K}{2}\right) \right\} \geq \Gamma_K(m) - o(K\log(K)).$$
We first prove that:
\begin{align*}
    \Psi^{Bern(\frac{1}{2})}_K(G)(m) &\geq \Gamma_K(m) - o(K\log(K)).  \numberthis \label{71}
\end{align*}
We have by Lemma \ref{Lemma 15.5} (picking $a(n) = 2\sqrt{\log(n)}$):
\begin{align*}
    \Psi^{Bern(\frac{1}{2})}_K(G)(m) &\geq \binom{m}{2} + \Psi^{Bern(\frac{1}{2})}_{K-m}(G_0) +   \frac{(K-m)m}{2} - \sqrt{ (K-m)m \log(n)}.
\end{align*}
Note then that $\sqrt{ (K-m)m \log(n)} \sim \sqrt{K C_0\frac{K^2}{n} \log(n)} = o(K\log(K))$. Combining the latter with Conjecture \ref{conjecture 3.3}, it follows
\begin{align*}
        \Psi^{Bern(\frac{1}{2})}_K(G)(m)&\geq  \binom{m}{2} + \frac{1}{2} \binom{K-m}{2} + \frac{1}{2} \sqrt{2 \binom{K-m}{2} \log\left(\binom{n-K}{K-m}\right)} + o(K\log(K)) +\frac{(K-m)m}{2}. 
\end{align*}
Using Lemma \ref{Lemma 4.9}
\begin{align*}
    \Gamma_K(m) &= \binom{m}{2} + h^{-1} \left( \log(2) - \frac{\log(\binom{K}{m}\binom{n-K}{K-m})}{\binom{K}{2}-\binom{m}{2}} \right) \left(\binom{K}{2} - \binom{m}{2} \right)\\
    &= \binom{m}{2} +\left(\binom{K}{2} - \binom{m}{2}\right) \left[\frac{1}{2} + \frac{1}{\sqrt{2}}  \sqrt{  \frac{\log(\binom{K}{m} \binom{n-K}{K-m})}{\binom{K}{2} - \binom{m}{2}} }  + O\left( \left[\frac{\log(\binom{K}{m} \binom{n-K}{K-m})}{\binom{K}{2} - \binom{m}{2}}  \right]^{3/2}\right)\right]\\
&= \frac{1}{2} \binom{m}{2} + \frac{1}{2} \binom{K}{2} + \frac{1}{\sqrt{2}} \sqrt{\left(\binom{K}{2} - \binom{m}{2}\right) \log\left(\binom{K}{m} \binom{n-K}{K-m}\right)  }  + O\left(  \sqrt{K}\log(n)^{3/2}\right) \numberthis \label{72}\\
&= \frac{1}{2} \binom{m}{2} + \frac{1}{2} \binom{K}{2} + \frac{1}{\sqrt{2}} \sqrt{\left (\binom{K}{2} - \binom{m}{2}\right) \log\left(\binom{K}{m} \binom{n-K}{K-m}\right)  }   + o( K \log(K) ) \numberthis \label{73}.
\end{align*}
Where we used part 3 of Lemma \ref{Lemma 4.5} to simplify the error term in (\ref{72}) since $\log\left( \binom{K}{m} \binom{n-K}{K-m} \right)  \sim m \log\left( \frac{K}{m}\right)+ K \log\left( \frac{n-K}{K-m} \right) \sim K \log\left( \frac{n}{K} \right)$. Note that
$$ \binom{m}{2} + \frac{1}{2} \binom{K-m}{2}  +\frac{(K-m)m}{2}  - \left(\frac{1}{2} \binom{m}{2} + \frac{1}{2} \binom{K}{2}\right) =0.$$
Therefore, in order to prove (\ref{71}), it suffices to prove
$$\Delta \triangleq \frac{1}{2} \sqrt{2 \binom{K-m}{2} \log\left(\binom{n-K}{K-m}\right)}  -\frac{1}{\sqrt{2}} \sqrt{\left (\binom{K}{2} - \binom{m}{2}\right) \log\left(\binom{K}{m} \binom{n-K}{K-m}\right)  }   \geq -o(K\log(K)) ,$$
note  that $\Delta = \frac{A}{B}$ where
\begin{alignat*}{2}
&A &&\triangleq  \frac{1}{2}  \binom{K-m}{2} \log\left(\binom{n-K}{K-m}\right)  -\frac{1}{2} \left (\binom{K}{2} - \binom{m}{2}\right) \log\left(\binom{K}{m} \binom{n-K}{K-m}\right)\\
 &B &&\triangleq \frac{1}{2} \sqrt{2 \binom{K-m}{2} \log\left(\binom{n-K}{K-m}\right)}  +\frac{1}{\sqrt{2}} \sqrt{\left (\binom{K}{2} - \binom{m}{2}\right) \log\left(\binom{K}{m} \binom{n-K}{K-m}\right)  }.
 \end{alignat*}
Using Lemma \ref{Lemma 4.5} and $m = o(K)$ we have
$$  B = \Theta(\sqrt{K^2 K \log(n)}) = \Theta(K^{\frac{3}{2}} \sqrt{\log(K)}),$$
and
\begin{align*}
    A  &= \frac{1}{2}\left[ \binom{K-m}{2} -\binom{K}{2} + \binom{m}{2} \right]\log\left(\binom{n-K}{K-m}\right) - \frac{1}{2} \left( \binom{K}{2} - \binom{m}{2}\right) \log\left(\binom{K}{m}\right)\\
    &= \frac{1}{2}\left[ m(m-K)\right]\log\left(\binom{n-K}{K-m}\right) - \frac{1}{2} \left( \binom{K}{2} - \binom{m}{2}\right) \log\left(\binom{K}{m}\right).
\end{align*}
Using part 3 of  Lemma \ref{Lemma 4.5} we have $m(m-K)\log\left(\binom{n-K}{K-m}\right) \sim  m K (K-m) \log \left( \frac{n-K}{K-m} \right) = \Theta \left( m K^2 \log(K) \right)$. Similarly we have $\left( \binom{K}{2} - \binom{m}{2} \right) \log \left( \binom{K}{m} \right) = \Theta \left( m K^2 \log(K) \right)$. Therefore
\begin{align*}
    A = O\left( m K^2 \log(K)  \right) = O\left( \frac{K^4}{n} \log(K) \right).
\end{align*}
We thus have:
\begin{align*}
\Delta &= \frac{A}{B}\\
&=   \frac{ O \left( K^4 \log(K) \right) }{ \Theta\left( nK^{\frac{3}{2}} \sqrt{\log(K)} \right)}\\
&= O\left( \frac{K^{\frac{5}{2}}}{n} \sqrt{\log(n)}\right)\\
&= o(K \log(K)),
\end{align*}
where the last line follows from the fact that $ K^{\frac{3}{2}} \leq n$. We have thus proven that $\Delta \geq -o(K\log(K))$ which ends the proof of (\ref{71}).
It remains to prove
$$\Psi^{\Bern(\frac{1}{2})}_K(G)\left(\frac{K}{2}\right) \geq \Gamma_K\left(m \right) - o(K\log(K)) . $$
We have by Lemma \ref{Lemma 15.5} (picking $a(n)= 4\sqrt{\log(n)}$)
\begin{align*}
    \Psi^{\Bern(\frac{1}{2})}_K(G)\left(\frac{K}{2}\right) &\geq \binom{\frac{K}{2}}{2} + \Psi^{\Bern(\frac{1}{2})}_{\frac{K}{2}}(G_0)(0) + \frac{K^2}{8} -K\sqrt{\log(n)} \text{ w.h.p as }n\to \infty.
\end{align*}
Using the assumption in Conjecture \ref{conjecture 3.3} we have w.h.p as $n\to \infty$:
$$ \Psi^{\Bern(\frac{1}{2})}_{\frac{K}{2}}(G_0)(0) \geq \frac{1}{2} \binom{\frac{K}{2}}{2}  V\left(n-K, \frac{K}{2}\right) - o(K \log(K)).$$
Henceforth
\begin{align*}
    \Psi^{\Bern(\frac{1}{2})}_K(G)\left(\frac{K}{2}\right) &\geq \binom{\frac{K}{2}}{2} +\frac{1}{2}\binom{\frac{K}{2}}{2}+ V\left(n-K, \frac{K}{2}\right) - o(K \log(K)) + \frac{K^2}{8} -K\sqrt{\log(n)}\\
    &= \binom{\frac{K}{2}}{2} +\frac{1}{2}\binom{\frac{K}{2}}{2}+ \sqrt{2 \binom{K}{2} \log\left(\binom{n-K}{\frac{K}{2}} \right)} + \frac{K^2}{8} - o(K \log(K))\\
    &\sim \frac{5}{16}K^2.
\end{align*}
On the other hand, we have from (\ref{73})
\begin{align*}
    \Gamma_K(m) &= \frac{1}{2} \binom{m}{2} + \frac{1}{2} \binom{K}{2} + \frac{1}{\sqrt{2}} \sqrt{\left (\binom{K}{2} - \binom{m}{2}\right) \log\left(\binom{K}{m} \binom{n-K}{K-m}\right)  }   + o( K \log(K) )\\
    &\sim \frac{K^2}{4}.
\end{align*}
Therefore
$$ \Psi^{\Bern(\frac{1}{2})}_K(G)\left(\frac{K}{2}\right)  -  \Gamma_K(m) \gtrsim \frac{K^2}{4},$$
which implies
$$\Psi^{\Bern(\frac{1}{2})}_K(G)\left(\frac{K}{2}\right) \geq \Gamma_K\left(m \right) - o(K\log(K)) , $$
and concludes the proof of Theorem \ref{Theorem 3.14}.
\end{proof}

\section{General Disorder.} \label{Proof of Theorem 3.11}

In this section we will denote by  $X_{ij}$  the edge weights sampled from the Gaussian model $\mathcal{N} \left( \frac{1}{2}, \frac{1}{4} \right)$ and $Y_{ij}$ the edge weights sampled from some distribution $\mathcal{A}$  satisfying the assumptions of  Theorem \ref{Theorem 3.11}. In order to obtain bounds on the general disorder $\Psi^{\mathcal{A}}_K(G)$, we use an interpolation scheme between the distribution $\mathcal{A}$ and the Gaussian disorder case. The idea of interpolation between distributions to establish universal results isn't new and has been used in  prior works. However, as we show further below, the application of such method in our specific setting is  problematic. 
In particular we will show that a second vanilla application of the Lindeberg interpolation method provides a nontrivial asymptotics only in the regime $\alpha \in (4/5,1)$.\par

 We will denote vectors by $\mathbf{Z}$, and their entries by $Z_{k}\triangleq \mathbf{Z}_k$. Consider the following smooth max function for some inverse temperature parameter $\beta > 0$

$$f_{\beta} : \mathbb{R}^{\binom{n}{2}} \to \mathbb{R} : \mathbf{Z} \longmapsto  f(\mathbf{Z}) = \frac{1}{\beta} \log \left( \sum_{S \subset V(G) , |S|=K} \exp(\beta Z_S) \right),$$
where $Z_S$ is defined as in (\ref{eq:Z_S})
\begin{align}\label{eq:bold_Z_S}
Z_S\triangleq\sum_{\substack{1\le i<j\le n\\ i,j \in S}} Z_{ij}.
\end{align} 
The function $f_{\beta}$  is infinitely differentiable and its partial derivatives with respect to the variable $Z_{ij}$ (for fixed  $i,j$) are given by
\begin{align*}
  \partial_{ij} f_{\beta} (\mathbf{Z})  \triangleq  \frac{\partial f_{\beta}}{\partial Z_{ij}} &=\frac{U^{\mathbf{Z}}(i,j)}{U^{\mathbf{Z}}(i,j)+V^{\mathbf{Z}}(i,j)},\\
  \partial_{ij}^2 f_{\beta}(\mathbf{Z})  \triangleq\frac{\partial^2 f_{\beta}}{\partial Z_{ij}^2} &=\beta\frac{U^{\mathbf{Z}}(i,j)V^{\mathbf{Z}}(i,j)}{(U^{\mathbf{Z}}(i,j)+V^{\mathbf{Z}}(i,j))^2},\\
  \partial_{ij}^3 f_{\beta}(\mathbf{Z})  \triangleq\frac{\partial^3 f_{\beta}}{\partial Z_{ij}^3} &= \beta^2 \frac{U^{\mathbf{Z}}(i,j)V^{\mathbf{Z}}(i,j)(V^{\mathbf{Z}}(i,j)-U^{\mathbf{Z}}(i,j))}{(U^{\mathbf{Z}}(i,j)+V^{\mathbf{Z}}(i,j))^3},  
\end{align*}
where 
\begin{align*}
    U^{\mathbf{Z}}(i,j) &\triangleq \sum_{S \subset V(G) , |S|=K, (i,j) \in S} e^{\beta Z_S} ,\numberthis \label{U}\\
    V^{\mathbf{Z}}(i,j) &\triangleq \sum_{S \subset V(G) , |S|=K, (i,j) \not \in S } e^{\beta Z_S}, \numberthis \label{V}
\end{align*}
and  the notation $(i,j) \in S$, ( $(i,j) \not \in S$ resp.) is used to indicate that the set of vertices $S$ contains both vertices $i,j$ (doesn't contain at least one of the vertices $i,j$, resp.). We will also use the notation $U^{\mathbf{Z}}(e)$ where $e$ denotes an edge $(i,j)$. Note, in particular, that since for all $i,j$ the sum $U^{\mathbf{Z}}(i,j) + V^{\mathbf{Z}}(i,j)$ is independent of the the choice of $i,j$. We can  define
$$ P^{\mathbf{Z}} \triangleq U^{\mathbf{Z}}(i,j) + V^{\mathbf{Z}}(i,j) , \forall (i,j).$$
Since $U^{\mathbf{Z}}(i,j), V^{\mathbf{Z}}(i,j)\geq 0$ we  obtain the following upper bound on the third partial derivatives of $f_{\beta}$
\begin{align*} \left|\partial_{ij}^3 f_{\beta} (\mathbf{Z})\right| = \beta^2 \frac{U^{\mathbf{Z}}(i,j)V^{\mathbf{Z}}(i,j) }{(U^{\mathbf{Z}}(i,j)+V^{\mathbf{Z}}(i,j))^2} \frac{|V^{\mathbf{Z}}(i,j)-U^{\mathbf{Z}}(i,j)|}{U^{\mathbf{Z}}(i,j)+V^{\mathbf{Z}}(i,j)} \leq \beta^2 \frac{U^{\mathbf{Z}}(i,j)V^{\mathbf{Z}}(i,j)}{(U^{\mathbf{Z}}(i,j)+V^{\mathbf{Z}}(i,j))^2} \leq \frac{\beta^2}{4}. \numberthis \label{derivativebound}
\end{align*}
Using the above bound, we derive in the next section the vanilla application of the Lindeberg method and show its limitation.
\subsection{Direct Application of Lindeberg's Method.}
The Lindeberg method is an interpolation scheme that yields bounds on the difference between the expected values of two probability distributions evaluated at some sufficiently smooth function. We recall here a generalized statement of the Lindeberg method from \cite{Lindeberg}. 
\begin{theorem}\label{Theorem 16.1}
Suppose $\mathbf{X}$ and $\mathbf{Y}$ are random vectors in $\mathbb{R}^n$ with $\mathbf{Y}$ having independent components. For $1 \leq i \leq n$, let
\begin{align*}
    A_i  &\triangleq \E| \E [ X_i | X_1, ..., X_{i-1} ]  - \E[Y_i]|.\\
    B_i  &\triangleq \E| \E [ X_i^2 | X_1, ..., X_{i-1} ]  - \E[Y_i^2]|.
\end{align*}
Let $M_3$ be an upper bound on $\max_i ( \E|X_i|^3 + \E|Y_i|^3) $. Suppose $f: \mathbb{R}^n \to \mathbb{R}$ is a thrice continuously differentiable function, and for $r=1,2,3$ let $L_r(f)$ be a finite constant such that $|\partial_i^r f(x)|\leq L_r(f)$ for each $i$ and $x$, where $\partial_i^r$ denotes the $r$-fold derivative in the ith coordinate. Then
$$  |\E f(\mathbf{X}) - \E f(\mathbf{Y})| \leq \sum_{i=1}^{n} ( A_i L_1 (f) + \frac{1}{2} B_i L_2(f)) + \frac{1}{6} n L_3(f) M_3.$$
\end{theorem}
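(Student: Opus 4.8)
The plan is to establish Theorem~\ref{Theorem 16.1} by the classical Lindeberg replacement argument, swapping the coordinates of $\mathbf{X}$ for those of $\mathbf{Y}$ one at a time \emph{in increasing index order}. The choice of order is the whole point: it is what makes the error committed at step $i$ involve only $A_i$ and $B_i$, which condition on $X_1,\dots,X_{i-1}$. First I would reduce to the case in which $\mathbf{X}$ and $\mathbf{Y}$ are carried by a common probability space and are mutually independent — only the marginal laws enter $\E f(\mathbf{X})$ and $\E f(\mathbf{Y})$, so this costs nothing — and I may as well assume $\E|f(\mathbf{X})|,\E|f(\mathbf{Y})|<\infty$, since otherwise the bound is vacuous. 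Then, for $0\le i\le n$, I set
\[
W^{(i)} \triangleq (X_1,\dots,X_i,\,Y_{i+1},\dots,Y_n), \qquad V^{(i)} \triangleq (X_1,\dots,X_{i-1},\,0,\,Y_{i+1},\dots,Y_n),
\]
so that $W^{(0)}=\mathbf{Y}$, $W^{(n)}=\mathbf{X}$, the vectors $W^{(i)}$ and $W^{(i-1)}$ differ only in coordinate $i$, and $\E f(\mathbf{X})-\E f(\mathbf{Y})=\sum_{i=1}^n\bigl(\E f(W^{(i)})-\E f(W^{(i-1)})\bigr)$.

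For each $i$ I would Taylor-expand $t\mapsto f(V^{(i)}+t\mathbf{e}_i)$ (with $\mathbf{e}_i$ the $i$-th coordinate vector) to second order with Lagrange remainder, using $|\partial_i^3 f|\le L_3(f)$ to get a remainder $\rho_i(t)$ with $|\rho_i(t)|\le\tfrac{|t|^3}{6}L_3(f)$, and evaluate at $t=X_i$ and $t=Y_i$. Subtracting,
\[
f(W^{(i)})-f(W^{(i-1)}) = (X_i-Y_i)\,\partial_i f(V^{(i)}) + \tfrac12(X_i^2-Y_i^2)\,\partial_i^2 f(V^{(i)}) + \rho_i(X_i)-\rho_i(Y_i).
\]
The remainder term is controlled crudely: $\bigl|\E[\rho_i(X_i)-\rho_i(Y_i)]\bigr|\le\tfrac{L_3(f)}{6}\bigl(\E|X_i|^3+\E|Y_i|^3\bigr)\le\tfrac{L_3(f)M_3}{6}$.

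The main obstacle — and the only place where the hypotheses are genuinely used — is taking the expectation of the first two terms when $\mathbf{X}$ has dependent coordinates, so that $\partial_i f(V^{(i)})$ cannot simply be decoupled from $X_i$. Here I would condition on $\mathcal{G}_i\triangleq\sigma(X_1,\dots,X_{i-1},Y_{i+1},\dots,Y_n)$, with respect to which $V^{(i)}$, hence $\partial_i f(V^{(i)})$ and $\partial_i^2 f(V^{(i)})$, is measurable. Since $\mathbf{Y}\perp\mathbf{X}$ and $\mathbf{Y}$ has independent coordinates, the blocks $\mathbf{X},Y_1,\dots,Y_n$ are mutually independent; consequently $Y_i$ is independent of $\mathcal{G}_i$, so $\E[Y_i\mid\mathcal{G}_i]=\E Y_i$ and $\E[Y_i^2\mid\mathcal{G}_i]=\E Y_i^2$, while $(Y_{i+1},\dots,Y_n)$ being independent of $(X_1,\dots,X_i)$ forces $\E[X_i\mid\mathcal{G}_i]=\E[X_i\mid X_1,\dots,X_{i-1}]$ and $\E[X_i^2\mid\mathcal{G}_i]=\E[X_i^2\mid X_1,\dots,X_{i-1}]$. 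Pulling the bounded $\mathcal{G}_i$-measurable factors out of the conditional expectation and using $|\partial_i f|\le L_1(f)$, $|\partial_i^2 f|\le L_2(f)$ then gives
\[
\bigl|\E[(X_i-Y_i)\,\partial_i f(V^{(i)})]\bigr| = \bigl|\E\bigl[\partial_i f(V^{(i)})\bigl(\E[X_i\mid X_1,\dots,X_{i-1}]-\E Y_i\bigr)\bigr]\bigr| \le L_1(f)\,A_i,
\]
and likewise $\bigl|\E[(X_i^2-Y_i^2)\,\partial_i^2 f(V^{(i)})]\bigr|\le L_2(f)\,B_i$.

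To finish, I would sum the three estimates over $i=1,\dots,n$ and apply the triangle inequality to the telescoped difference, obtaining exactly
\[
|\E f(\mathbf{X})-\E f(\mathbf{Y})| \le \sum_{i=1}^n\Bigl(A_i L_1(f)+\tfrac12 B_i L_2(f)\Bigr)+\tfrac16\, n\, L_3(f)\, M_3.
\]
Apart from the conditioning bookkeeping in the previous paragraph, every remaining step is routine — the Lagrange form of the Taylor remainder, pulling bounded measurable factors through conditional expectations, and elementary moment inequalities — so I do not anticipate further difficulties.
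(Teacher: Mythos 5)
Your proof is correct: the reduction to independent copies of $\mathbf{X}$ and $\mathbf{Y}$, the telescoping replacement in increasing index order, the second-order Taylor expansion about $V^{(i)}$, and in particular the conditioning on $\mathcal{G}_i=\sigma(X_1,\dots,X_{i-1},Y_{i+1},\dots,Y_n)$ — which is exactly what turns the first- and second-order terms into $A_i$ and $B_i$ — is the standard argument for this generalized Lindeberg bound. The paper itself does not prove Theorem~\ref{Theorem 16.1} but quotes it from the cited reference, whose proof proceeds along the same lines as yours (and the paper's own later ``rederivation'' of the Lindeberg principle uses the same telescoping-plus-Taylor scheme).
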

In our setting, $X_i, Y_i$ will be the  random variables associated with edge weights. In particular, $(\mathbf{X})_i$ (resp. $(\mathbf{Y})_i$) are i.i.d. Note that $\mathbf{X},\mathbf{Y}$  have equal first and second moments in our setting, therefore $\forall i, A_i = B_i = 0$. The inequality in Theorem \ref{Theorem 16.1} simplifies to
$$  |\E f(\mathbf{X}) - \E f(\mathbf{Y})| \leq   \frac{1}{6} \binom{n}{2} L_3(f_{\beta}) M_3 \leq \frac{n^2\beta^2}{48} M_3,$$
where we used (\ref{derivativebound}) in the last inequality and  $M_3 \triangleq \E_{Z\sim \Nc\left(\frac{1}{2}, \frac{1}{4}\right)}[|Z|^3] + \E_{Z\sim \A}[|Z|^3]$. Note that we have the following elementary inequality for any $\mathbf{Z} \in  \mathbb{R}^{\binom{n}{2}}$
$$  \max_{S \subset V(G) , |S|=K} Z_S \leq  f_{\beta}(\mathbf{Z}) \leq \max_{S \subset V(G) , |S|=K} Z_S+ \frac{ \log \left( \binom{n}{K}\right)}{\beta}  .  $$
Hence
$$ \E[ \max_{S \subset V(G) , |S|=K} X_S ]\leq    \E[ f_{\beta}(\mathbf{X})] \leq  \E[ \max_{S \subset V(G) , |S|=K} X_S ]+ \frac{ \log \left( \binom{n}{K}\right)}{\beta}   . $$
$$ \E[ \max_{S \subset V(G) , |S|=K} Y_S] \leq   \E[ f_{\beta}(\mathbf{Y})] \leq  \E[\max_{S \subset V(G) , |S|=K} Y_S ]+ \frac{ \log \left( \binom{n}{K}\right)}{\beta}  .  $$
Combining
\begin{align*}
    |\E[ \Psi^{\Nc\left(\frac{1}{2}, \frac{1}{4}\right)}_K ]-\E[ \Psi^{\mathcal{A}}_K ]| &\leq |  E[ f_{\beta}(\mathbf{X})]  -  E[ f_{\beta}(\mathbf{Y})| + \frac{ \log \left( \binom{n}{K}\right)}{\beta} \numberthis \label{maxineq}\\
    &\leq \frac{n^2\beta^2}{48} M_3 +\frac{ \log \left( \binom{n}{K}\right)}{\beta}  \\
    &= \Theta\left(  \beta^2 n^2 + \frac{K\log(n)}{\beta} \right) .
\end{align*}
Since only the right side  depends on $\beta$, it can minimized  with respect to $\beta >0$. This minimum is reached for $\beta^* = \left(\frac{K \log(n)}{2 n^2} \right)^{\frac{1}{3}}$ and yields
\begin{align*}
    |\E[ \Psi^{\Nc\left(\frac{1}{2}, \frac{1}{4}\right)}_K ]-\E[ \Psi^{\mathcal{A}} _K ]|  = O(n^{ \frac{2}{3}} K^{\frac{2}{3}} \log(n)^{\frac{2}{3}}) . \numberthis \label{weaklindeberg}
\end{align*}

Since we are able to obtain asymptotics up to order  $o \left(  K^{\frac{3}{2}} \sqrt{\log(n/K)}\right)$  in Corollaries \ref{Corollary 3.5} and \ref{Corollary 3.9}, the bound in (\ref{weaklindeberg}) is interesting only if it is smaller than $K^{\frac{3}{2}} \sqrt{\log(n/K)}$. However, this is not the case unless $K^{\frac{3}{2}} \ge K^{\frac{2}{3}}n^{\frac{2}{3}}$, or equivalently if $\alpha \ge \frac{4}{5}$.

Our next goal is deriving tighter bounds from the Lindeberg's principle.


\subsection{Rederiving Lindeberg's Principle .}
In the remaining of this section we fix a distribution $\mathcal{A}$ satisfying the assumptions in Theorem \ref{Theorem 3.11}. Let $e_1,...,e_N, N\triangleq \binom{n}{2}$ be an enumeration of all pairs $(i,j), i\neq j$.
For $1\leq l \leq N$ let 
$$\mathbf{W}^{l} \triangleq (X_{e_1},...,X_{e_l},Y_{e_{l+1}},...,X_{e_N})= (w^{l}_s)_{1\leq s \leq N} \in \mathbb{R}^{N}$$
be the vector obtained by using the Gaussian random variables $X_{e_i}$ up to and including the edge $e_l$, then the $\A$ random variables $Y_{e_{i}}, i\geq l+1$. Similarly, let
\begin{alignat*}{2}
    &\tilde{\mathbf{W}}^{l} &&\triangleq (X_{e_1}...X_{e_{l-1}},0,Y_{e_{l+1}},...,Y_{e_N}) \in \mathbb{R}^{N},\\
    &\tilde{\mathbf{W}}^{l}(w) &&\triangleq (X_{e_1}...X_{e_{l-1}},w,Y_{e_{l+1}},...,Y_{e_N}) \in \mathbb{R}^{N}.
\end{alignat*}
Note that
$$ \E[f_{\beta}(\mathbf{X})] - \E[f_{\beta}(\mathbf{Y})] = \sum_{1 \leq l \leq N} (\E[f_{\beta}(\mathbf{W}^{l})] - \E[f_{\beta}(\mathbf{W}^{l-1})]),$$
where we use the convention $\mathbf{W}^{0} \triangleq \mathbf{Y}$. Using third-order Taylor expansion, we have
\begin{align*}
f_{\beta}(\mathbf{W}^{l}) - f_{\beta}(\tilde{\mathbf{W}}^{l}) - X_{e_l} \partial_{e_l} f_{\beta}(\tilde{\mathbf{W}}^{l}) - \frac{(X_{e_l})^2}{2} \partial_{e_l}^2 f_{\beta}(\tilde{\mathbf{W}}^l)     & = \frac{1}{6} \int_{0}^{X_{e_l}} \partial_{e_l}^3 f_{\beta}(...,X_{e_{l-1}},u,Y_{e_{l+1}},...) (X_{e_l} -u)^2 du \\
&= \frac{1}{6} \int_{0}^{X_{e_l}} \partial_{e_l}^3 f_{\beta}(\tilde{\mathbf{W}}^{l}(u)) (X_{e_l} -u)^2 du.
\end{align*}
Similarly
\begin{align*}
f_{\beta}(\mathbf{W}^{l-1}) - f_{\beta}(\tilde{\mathbf{W}}^{l}) - Y_{e_{l}} \partial_{e_l} f_{\beta}(\tilde{\mathbf{W}}^{l}) - \frac{(Y_{e_{l}})^2}{2} \partial_{e_l}^2 f_{\beta}(\tilde{\mathbf{W}}^{l})      &= \frac{1}{6} \int_{0}^{Y_{e_{l}}} \partial_{e_l}^3 f_{\beta}(...,X_{e_{l-1}},u,Y_{e_{l+1}},...) (Y_{e_{l}}-u)^2 du\\
&= \frac{1}{6} \int_{0}^{Y_{e_{l}}} \partial_{e_l}^3 f_{\beta}(\tilde{\mathbf{W}}^{l}(u)) (Y_{e_{l}}-u)^2 du.
\end{align*}
Subtracting and taking the expected value of the above two equations yields
\begin{align*}
 |\E[f_{\beta}(\mathbf{W}^l)]- \E[f_{\beta}(\mathbf{W}^{l-1})]| &= \frac{1}{6} \left|  \E\left[ \int_{Y_{e_l}}^{X_{e_l}} \partial^3_{e_l} f_{\beta}(\tilde{\mathbf{W}}^{l}(u)) \left[ (X_{e_l}-u)^2 \mathbf{1}_{u\in (0,X_{e_l})} - (Y_{e_l}-u)^2\mathbf{1}_{u\in (Y_{e_l},0)} \right]du\right] \right| ,
 \end{align*}
where the interval notation $(0,X_{e_l})$ ($(Y_{e_l} ,0)$ resp.) should be understood as $(X_{e_l},0)$ ($(0,Y_{e_l} )$ resp.) if $X_{e_l}<0$ ($Y_{e_l}>0$ resp.). Note that $\left[ (X_{e_l}-u)^2 \mathbf{1}_{u\in (0,X_{e_l})} - (Y_{e_l}-u)^2\mathbf{1}_{u\in (Y_{e_l},0)} \right] \leq |X_{e_l}|^2 + |Y_{e_l}|^2$, therefore
\begin{align*}
 |\E[f_{\beta}(\mathbf{W}^l)]- \E[f_{\beta}(\mathbf{W}^{l-1})]|&\leq \frac{1}{6} \left|  \E\left[ \int_{Y_{e_l}}^{X_{e_l}} \partial^3_{e_l} f_{\beta}(\tilde{\mathbf{W}}^{l}(u)) \left[ |X_{e_l}|^2 + |Y_{e_l}|^2 \right]du\right] \right|\\
 &= \frac{1}{6} \E\left[|X_{e_l} - Y_{e_l}|\left( |X_{e_l}|^2 + |Y_{e_l}|^2\right)   |\partial^{3}_{e_l}f_{\beta} (\tilde{\mathbf{W}}^{l}(w))|\right],
\end{align*}
where $w$ is an unknown random variable in the interval $\left( \min \left( X_{e_l}, Y_{e_l}) \right) , \max\left((X_{e_l},Y_{e_l}\right ) \right)$. Since both $\Nc\left(\frac{1}{2},\frac{1}{4}\right)$ and $\A$ are sub-Gaussian distributions, we can find  $A>0$ such that $ \frac{1}{6}|X_{e_l} - Y_{e_l}|\left( |X_{e_l}|^2 + |Y_{e_l}|^2\right)  > A$ with probability at most  $e^{-\Theta(A^2)}$. Combining this  with (\ref{derivativebound}) yields
\begin{align*}
 \frac{1}{6} \E\left[|X_{e_l} - Y_{e_l}|\left( |X_{e_l}|^2 + |Y_{e_l}|^2\right)   |\partial^{3}_{e_l}f_{\beta} (\tilde{\mathbf{W}}^{l}(w))|\right]&\leq A \E[|\partial^{3}_{e_l}f_{\beta} (\tilde{\mathbf{W}}^{l}(w))|] + \frac{\beta^2}{4} \E\left[|X_{e_l} - Y_{e_l}|^2\left( |X_{e_l}|^2 + |Y_{e_l}|^4\right) \right]^{\frac{1}{2}} e^{-\Theta(A^2)} \\
 &= A \beta^2 \E\left[\frac{U^{\tilde{\mathbf{W}}^{l}(w)}(e_l)V^{\tilde{\mathbf{W}}^{l}(w)}(e_l) }{(U^{\tilde{\mathbf{W}}^{l}(w)}(e_l)+V^{\tilde{\mathbf{W}}^{l}(w)}(e_l))^2}  \right]+ O\left( \beta^2 e^{-\Theta(A^2)} \right)\\
 &\leq A \beta^2 \E\left[\frac{U^{\tilde{\mathbf{W}}^{l}(w)}(e_l)}{U^{\tilde{\mathbf{W}}^{l}(w)}(e_l)+V^{\tilde{\mathbf{W}}^{l}(w)}(e_l)} \right] +O\left(  \beta^2 e^{-\Theta(A^2)} \right) \\
 &= A \beta^2 \E\left[\frac{U^{\tilde{\mathbf{W}}^{l}(w)}(e_l)}{P^{\tilde{\mathbf{W}}^{l}}(e_l)} \right] + O\left( \beta^2 e^{-\Theta(A^2)} \right), 
\end{align*}
where we used Cauchy Schwarz inequality in the first line and the trivial inequality $V^{\tilde{\mathbf{W}}^{l}(w)}(e_l)\leq U^{\tilde{\mathbf{W}}^{l}(w)}(e_l)+V^{\tilde{\mathbf{W}}^{l}(w)}(e_l) $ in the third line. Note that  $w$ has unknown distribution within the interval $(\min (X_{e_l}, Y_{e_l}),\max (X_{e_l}, Y_{e_l}))$. We can nonetheless upper bound the term inside the expectation above by

\begin{align*}
  \frac{U^{\tilde{\mathbf{W}}^{l}(w)}(e_l)}{U^{\tilde{\mathbf{W}}^{l}(w)}(e_l)+V^{\tilde{\mathbf{W}}^{l}(w)}(e_l)} &\leq \frac{U^{\tilde{\mathbf{W}}^{l}(X_{e_l})}(e_l)}{U^{\tilde{\mathbf{W}}^{l}(X_{e_l})}(e_l)+V^{\tilde{\mathbf{W}}^{l}(X_{e_l})}(e_l)}  + \frac{U^{\tilde{\mathbf{W}}^{l}(Y_{e_l})}(e_l)}{U^{\tilde{\mathbf{W}}^{l}(Y_{e_l})}(e_l)+V^{\tilde{\mathbf{W}}^{l}(Y_{e_l})}(e_l)} \\
  &= \frac{U^{{\mathbf{W}}^{l}}(e_l)}{U^{{\mathbf{W}}^{l}}(e_l)+V^{{\mathbf{W}}^{l}}(e_l)}  + \frac{U^{{\mathbf{W}}^{l-1}}(e_l)}{U^{{\mathbf{W}}^{l-1}}(e_l)+V^{{\mathbf{W}}^{l-1}}(e_l)} . 
\end{align*}
Indeed, notice that the left hand side can be written in the form $ \frac{e^{\beta w}A}{e^{\beta w}A + B}$ where $A,B$ don't depend on $w$. Since the function $\mathbb{R} \to \mathbb{R}: x \to \frac{e^{\beta x} A}{e^{\beta x}A+B}$ is monotonic when $A,B\geq0$, then 
$$ \frac{e^{\beta w} A}{e^{\beta w}A+B} \leq \max \left\{ \frac{e^{\beta X_{e_l}} A}{e^{\beta X_{e_l}}A+B} , \frac{e^{\beta Y_{e_l}} A}{e^{\beta Y_{e_l}}A+B} \right\} \leq  \frac{e^{\beta X_{e_l}} A}{e^{\beta X_{e_l}}A+B} + \frac{e^{\beta Y_{e_l}} A}{e^{\beta Y_{e_l}}A+B} . $$
Therefore
\begin{align*}
    |\E[f_{\beta}(\mathbf{W}^{l})]- \E[f_{\beta}(\mathbf{W}^{l-1})]|  \leq  A \beta^2 \E\left[\frac{U^{{\mathbf{W}}^{l}}(e_l)}{P^{{\mathbf{W}}^{l}}}  + \frac{U^{{\mathbf{W}}^{l-1}}(e_l)}{P^{{\mathbf{W}}^{l-1}}}  \right] + O\left( \beta^2 e^{-\Theta(A^2)}  \right) , \numberthis \label{eq1} 
\end{align*}
summing the above yields the following main upper bound
\begin{align*}
|\E[f_{\beta}(\mathbf{X})] - \E[f_{\beta}(\mathbf{Y})]| \leq  A\beta^2 \sum_{l=1}^{N} \E\left[\frac{U^{{\mathbf{W}}^{l}}(e_l)}{P^{{\mathbf{W}}^{l}}}  + \frac{U^{{\mathbf{W}}^{l-1}}(e_l)}{P^{{\mathbf{W}}^{l-1}}}  \right] + O\left( \beta^2e^{-\Theta(A^2)}\binom{n}{2} \right). \numberthis \label{lindebergsum}    
\end{align*}

\subsection{Lindeberg's Method Agregated Out. Proof of Theorem \ref{Theorem 3.11}.}
The inequality (\ref{lindebergsum}) was obtained using the interpolation \textit{order} induced by the enumeration $e_1,...,e_N$. We can view this as starting from the \textit{State} $\mathbf{Y}$ then at step $1 \leq l \leq N$ switching the edge weight $Y_{e_l}$ with $X_{e_l}$. It is unclear how to directly derive upper bounds on (\ref{lindebergsum}) that are tighter than (\ref{weaklindeberg}), as the behavior of the terms $U^{{\mathbf{W}}^{l}}(e_l)/P^{{\mathbf{W}}^{l}}$ depends on the ordering of edges used and induces a lack of symmetry in the summation. To restore the symmetry  we choose the enumeration of edges to be uniformly random. Specifically, let $\sigma \in \mathfrak{S}_N$ be chosen uniformly at random, where we recall that $\mathfrak{S}_N$ is the set of all permutations of $N$ elements. Consider the interpolation induced by the enumeration  $e_{\sigma(1)},...,e_{\sigma(N)}$. We define  the \textit{State} vectors $\mathbf{W}^{l,\sigma}, 1\leq l \leq N$ as
$$\mathbf{W}^{l,\sigma} \triangleq (X_{e_{\sigma(1)}},...,X_{e_{\sigma(l)}},Y_{e_{\sigma(l+1)}},...,X_{e_{\sigma(N)}})= (w^{l,\sigma}_s)_{1\leq s \leq N} \in \mathbb{R}^{N}.$$
Note that using our previous notation, we have $\mathbf{W}^{l}=\mathbf{W}^{l,\text{id}_{\mathfrak{S}_N} }$. Therefore summing over all possible permutations the bound (\ref{lindebergsum}),  yields
\begin{align*}
\hspace{-.25cm}
N! |\E[f_{\beta}(\mathbf{X})] - \E[f_{\beta}(\mathbf{Y})]| &\leq  A\beta^2 \sum_{\sigma \in \mathfrak{S}_N}\sum_{i=1}^{N} \E\left[\frac{U^{{\mathbf{W}}^{l,\sigma}}(e_{\sigma(l)})}{P^{{\mathbf{W}}^{l,\sigma}}}  + \frac{U^{{\mathbf{W}}^{l-1,\sigma}}(e_{\sigma(l)})}{P^{{\mathbf{W}}^{l-1,\sigma}}}  \right] + O\left( N!\beta^2 n^2e^{-\Theta(A^2)} \right). \numberthis \label{81} 
\end{align*}
Surprisingly, the right hand side above can be computed explicitly using a double counting argument that we detail next. Given a binary function $g :\{ e_1,...,e_N\} \mapsto \{1, 0 \}$, we consider the \textit{State} $\textbf{S}_g$ induced by $g$ to be the set of edge weights formed by $\{X_{e_l} |  g(e_l)=1, l\in [N]\} \cup \{Y_{e_l} | g(e_l)=0, l\in[N]\}$. Let $\mathcal{E}$ be the set of all states, and $\mathcal{E}_p , 0\leq p \leq N$ be the set of states induced by functions $g$ that achieve value $1$ exactly $p$ times (and achieve value $0$ exactly $q=N-p$ times). Note that every $\mathbf{W}^{l,\sigma}$ is associated with a state in $\mathcal{E}$.
Given a state $\mathbf{S}_g\in \mathcal{E}$ and edge $e \in \{e_1,...,e_N\}$ we can define  the quantities $U^{\textbf{S}_g}(e), V^{\textbf{S}_g}(e)$ similarly to (\ref{U}), (\ref{V}). Therefore, we can view the upper bound in (\ref{81}) as a sum over pairs of states and edges. Let $0 \leq p \leq N$ and consider now a fixed state $\textbf{S}_g\in \mathcal{E}_p$ and a fixed edge $e \in \{e_1,...,e_N\}$. We will count the number of times the term $U^{\mathbf{S}_g}(e)/P^{\mathbf{S}_g}$ appears in the summation in the right-hand side of (\ref{81}). We consider two cases.

\textbf{Case 1 : $g(e)=1$ }:
Consider first the terms of the form $\frac{U^{{\mathbf{W}}^{l,\sigma}}(e_{\sigma(l)})}{P^{{\mathbf{W}}^{l,\sigma}}} $ in (\ref{81}). The latter equals $\frac{U^{\mathbf{S}_g}(e)}{P^{\mathbf{S}_g}}$ if  the interpolation induced by the enumeration $e_{\sigma(1)},...,e_{\sigma(N)}$ switches the edge weight of $e$ to reach the state $\mathbf{S}_g$.

Notice  that we transit through $\mathbf{S}_g$ exactly $p!q!$ times over all possible interpolation orders induced by permutations $\sigma$, where $q=m-p$. Once we are at state $\mathbf{S}_g$, there are $p$ edges achieving value $1$ at $g$, and each of them is equally likely to have been the switch used to get to $\mathbf{S}_g$. Hence we transit through $\mathbf{S}_g$ using the edge switch $e$ exactly $\frac{p!q!}{p}$ times.
Note that the terms $\frac{U^{{\mathbf{W}}^{l-1,\sigma}}(e_{\sigma(l)})}{P^{{\mathbf{W}}^{l-1,\sigma}}}$ are never equal to $\frac{U^{\mathbf{S}_g}(e)}{P^{\mathbf{S}_g}}$ since we assumed $g(e)=1$, which does not hold on states induced by $\mathbf{W}^{l-1,\sigma}$.

\textbf{Case 2 : $g(e)=0$} :
 Consider  the terms $\frac{U^{{\mathbf{W}}^{l-1,\sigma}}(e_{\sigma(l)})}{P^{{\mathbf{W}}^{l-1,\sigma}}} $ in (\ref{81}). The latter equals $\frac{U^{\mathbf{S}_g}(e)}{P^{\mathbf{S}_g}}$ if  the interpolation induced by the enumerations $e_{\sigma(1)},...,e_{\sigma(N)}$  switches the edge weight of $e$ to obtain the next state after $\mathbf{S}_g$. As before, we transit through $\mathbf{S}_g$ exactly $p!q!$ times over all possible interpolation orders induced by permutations $\sigma$, where $q=N-p$. There are $q=N-p$ edges achieving value $0$ at $g$, and each of them is "equally likely" to be the edge  switched  to obtain the next state. Hence, $e$ was the switch edge used when transitioning through $\mathbf{S}_g$ to the next state exactly $\frac{p!q!}{q}$ times. Similarly to the previous case, the terms $\frac{U^{{\mathbf{W}}^{l,\sigma}}(e_{\sigma(l)})}{P^{{\mathbf{W}}^{l,\sigma}}}$ are never equal to $\frac{U^{\mathbf{S}_g}(e)}{P^{\mathbf{S}_g}}$ since we assumed $g(e)=0$, which does not hold on states induced by $\mathbf{W}^{l,\sigma}$.\\

Combining the above observations, we have
\begin{align*}
\sum_{\sigma \in \mathfrak{S}_N}\sum_{i=1}^{N} \left[\frac{U^{{\mathbf{W}}^{l}}(e_l)}{P^{{\mathbf{W}}^{l}}}  + \frac{U^{{\mathbf{W}}^{l-1}}(e_l)}{P^{{\mathbf{W}}^{l-1}}}  \right]  
&= \sum_{p=1}^{N} \sum_{\substack{ \mathbf{S_g} \in \mathcal{E}_p\\  e\in\{e_1,...,e_N\}\\  g(e)=1}} \frac{p! q!}{p} \frac{U^{\mathbf{S}_g}(e)}{P^{\mathbf{S}_g}} + \sum_{p=0}^{N-1} \sum_{\substack{ \mathbf{S_g} \in \mathcal{E}_p\\ e\in\{e_1,...,e_N\}\\  g(e)=0}} 
 \frac{p!q!}{q}\frac{U^{\mathbf{S}_g}(e)}{P^{\mathbf{S}_g}}\\
 &= B_0 + B_N + B_{1}^{N-1},
\end{align*}
where
\begin{alignat*}{1}
B_0 &\triangleq  \sum_{\substack{ \mathbf{S_g} \in \mathcal{E}_0\\ e\in\{e_1,...,e_N\}\\  g(e)=0}} 
 (N-1)! \frac{U^{\mathbf{S}_g}(e)}{P^{\mathbf{S}_g}},\\
B_N & \triangleq    \sum_{\substack{ \mathbf{S_g} \in \mathcal{E}_N\\  e\in\{e_1,...,e_N\}\\  g(e)=1}} (N-1)! \frac{U^{\mathbf{S}_g}(e)}{P^{\mathbf{S}_g}},\\
B_{1}^{N-1} &\triangleq \sum_{p=1}^{N-1} \left[ \sum_{\substack{ \mathbf{S_g} \in \mathcal{E}_p\\  e\in\{e_1,...,e_N\}\\  g(e)=1}} \frac{p! q!}{p} \frac{U^{\mathbf{S}_g}(e)}{P^{\mathbf{S}_g}} +  \sum_{\substack{ \mathbf{S_g} \in \mathcal{E}_p\\ e\in\{e_1,...,e_N\}\\  g(e)=0}} 
 \frac{p!q!}{q}\frac{U^{\mathbf{S}_g}(e)}{P^{\mathbf{S}_g}}\right] .
\end{alignat*}
In the remaining computations we will upper bound $B_0,B_N, B_{1}^{N-1}$. We first show the following result.
\begin{lemma}\label{Lemma 15.2}
Suppose $\mathbf{S}_g \in \mathcal{E}$, then
$$ \sum_{e \in \{e_1,...,e_N\}} U^{\mathbf{S}_g} (e) = \binom{K}{2} P^{\mathbf{S}_g}$$
\end{lemma}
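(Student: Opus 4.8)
\textbf{Proof plan for Lemma \ref{Lemma 15.2}.} The plan is a one-line double-counting (Fubini) argument on the defining sums. Recall that for any edge $e=(i,j)$ and any state $\mathbf{S}_g\in\mathcal{E}$ we have, by the analogues of (\ref{U}) and (\ref{V}),
$$ U^{\mathbf{S}_g}(e)=\sum_{\substack{S\subset V(G),\,|S|=K\\ i,j\in S}} e^{\beta (\mathbf{S}_g)_S},\qquad P^{\mathbf{S}_g}=\sum_{\substack{S\subset V(G)\\|S|=K}} e^{\beta (\mathbf{S}_g)_S},$$
where $(\mathbf{S}_g)_S$ denotes the edge-sum over $S$ of the weights recorded in the state $\mathbf{S}_g$ (i.e.\ $X_{e}$ or $Y_{e}$ according to $g$), consistently with (\ref{eq:bold_Z_S}).

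First I would substitute this expression for $U^{\mathbf{S}_g}(e)$ into the left-hand side and interchange the two (finite) sums:
$$ \sum_{e\in\{e_1,\dots,e_N\}} U^{\mathbf{S}_g}(e) \;=\; \sum_{e\in\{e_1,\dots,e_N\}}\ \sum_{\substack{S,\,|S|=K\\ e\subset S}} e^{\beta (\mathbf{S}_g)_S} \;=\; \sum_{\substack{S,\,|S|=K}} e^{\beta (\mathbf{S}_g)_S}\,\bigl|\{\,e\in\{e_1,\dots,e_N\}:\ e\subset S\,\}\bigr|,$$
where $e\subset S$ means both endpoints of $e$ lie in $S$. Then I would observe that every $K$-vertex set $S$ spans exactly $\binom{K}{2}$ edges, so the inner cardinality equals $\binom{K}{2}$ independently of $S$, giving
$$ \sum_{e\in\{e_1,\dots,e_N\}} U^{\mathbf{S}_g}(e) \;=\; \binom{K}{2}\sum_{\substack{S,\,|S|=K}} e^{\beta (\mathbf{S}_g)_S} \;=\; \binom{K}{2}\,P^{\mathbf{S}_g},$$
which is the claim.

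There is essentially no analytic obstacle here; the only point requiring a moment of care is bookkeeping, namely that the index set $\{e_1,\dots,e_N\}$ of edges is in bijection with the unordered pairs $(i,j)$, $i\neq j$, so that "summing over $e$ with $e\subset S$" really does count each $2$-subset of $S$ exactly once, and that $P^{\mathbf{S}_g}$ as defined just before the lemma is independent of the chosen edge (which is exactly why $P^{\mathbf{S}_g}$ is well-defined). With that identification fixed, the interchange of summation and the count $\binom{K}{2}$ are immediate.
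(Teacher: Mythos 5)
Your proof is correct and is essentially identical to the paper's: both interchange the order of summation over edges and $K$-subsets, note that each $K$-subset spans exactly $\binom{K}{2}$ edges, and factor this count out to obtain $\binom{K}{2}P^{\mathbf{S}_g}$. No gaps.
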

\begin{proof}
Let $\mathbf{Z} \in \mathbb{R}^N$ be such that $\mathbf{Z}_{e_l} = Z_{e_l}= X_{e_l} \mathbf{1}_{g(e_l) = 1} +  Y_{e_l} \mathbf{1}_{g(e_l) =0}$ and let $G^{\mathbf{S}_g}$ be the graph induced by the edge weights $Z_{e_l}$. We have
\begin{align*}
    \sum_{e \in \{e_1,...,e_N\}} U^{\mathbf{S}_g}(e)&= \sum_{e \in \{e_1,...,e_N\}} \left [ \sum_{S\subset V(G^{\mathbf{S}_g}), |S|=K , e \in S } e^{\beta Z_{S}} \right] \\
    &=  \sum_{S\subset V(G^{\mathbf{S}_g}) , |S|=K  } \left[ \sum_{e \in S, e \in \{e_1,...,e_N\} } e^{\beta Z_{S}} \right].
\end{align*}
Note that for each fixed subset $S\subset V(G^{\mathbf{S}_g})$, there are exactly $\binom{K}{2}$ edges $e$ such that $e \in \{e_1,...,e_N\}, e\in S$, therefore
\begin{align*}
    \sum_{e \in \{e_1,...,e_N\}} U^{\mathbf{S}_g}(e)&=  \sum_{ S \subset V(G^{\mathbf{S}_g}) , |S|=K  } \left[\binom{K}{2} e^{\beta Z_{S}} \right] \\
    &= \binom{K}{2} \sum_{ S \subset V(G^{\mathbf{S}_g}) , |S|=K   } e^{\beta Z_{S}} \\
    &= \binom{K}{2} P^{\mathbf{S}_g}.
\end{align*}
\end{proof}
Note that $\mathcal{E}_0$ consists of the unique state $\mathbf{S}_{\mathbf{0}} \triangleq \mathbf{S}_{\{e_1,...,e_N\}\mapsto \{0\}^{N}} $. Similarly  $\mathcal{E}_N$ consists of the unique state $\mathbf{S}_{\mathbf{1}} \triangleq \mathbf{S}_{\{e_1,...,e_N\}\mapsto \{1\}^{N}}$. We thus have using Lemma \ref{Lemma 15.2}
\begin{align*}
    B_0 &=  \sum_{\substack{  e\in\{e_1,...,e_N\}\\  g(e)=0}} 
 (N-1)! \frac{U^{\mathbf{S}_{\mathbf{0}}}(e)}{P^{\mathbf{S}_{\mathbf{0}}}}\\
 &= \frac{(N-1)!}{{P^{\mathbf{S}_{\mathbf{0}}}}} \sum_{e \in \{e_1, ...,e_N\}} 
  U^{\mathbf{S}_{\mathbf{0}}}(e)\\
  &= (N-1)! \binom{K}{2}.
\end{align*}
Similarly we obtain $B_N = (N-1)! \binom{K}{2}$. We next upper bound $B_{1}^{N-1}$. We have for $1 \leq p \leq N-1$ using Cauchy Schwartz inequality
\begin{align*}
     \sum_{\substack{ \mathbf{S_g} \in \mathcal{E}_p\\  e\in\{e_1,...,e_N\}\\  g(e)=1}} \frac{p! q!}{p} \frac{U^{\mathbf{S}_g}(e)}{P^{\mathbf{S}_g}} +  \sum_{\substack{ \mathbf{S_g} \in \mathcal{E}_p\\ e\in\{e_1,...,e_N\}\\  g(e)=0}} 
 \frac{p!q!}{q}\frac{U^{\mathbf{S}_g}(e)}{P^{\mathbf{S}_g}} &= \sum_{ \mathbf{S}_g \in \mathcal{E}_p} \frac{p!q!}{P^{\mathbf{S}_g}} \left[   \frac{1}{p} \left(  \sum_{\substack{ \  e\in\{e_1,...,e_N\}\\  g(e)=1}} U^{\mathbf{S}_g}(e) \right) + 
 \frac{1}{q} \left( \sum_{\substack{ e\in\{e_1,...,e_N\}\\  g(e)=0}} U^{\mathbf{S}_g}(e) \right) \right] \\
 &\leq \sum_{ \mathbf{S}_g \in \mathcal{E}_p} \frac{p!q!}{P^{\mathbf{S}_g}} \left(\frac{1}{p^2}+\frac{1}{q^2} \right)^{1\over 2}\\& \left[ \left(  \sum_{\substack{ \  e\in\{e_1,...,e_N\}\\  g(e)=1}} U^{\mathbf{S}_g}(e) \right)^2 + 
 \left( \sum_{\substack{ e\in\{e_1,...,e_N\}\\  g(e)=0}} U^{\mathbf{S}_g}(e) \right)^2 \right] ^{1\over 2}\\
 &\leq \sum_{ \mathbf{S}_g \in \mathcal{E}_p} \frac{p!q!}{P^{\mathbf{S}_g}} \left(\frac{1}{p^2}+\frac{1}{q^2} \right)^{1\over 2} \left[ \sum_{\substack{ \  e\in\{e_1,...,e_N\}\\  g(e)=1}} U^{\mathbf{S}_g}(e)  + 
  \sum_{\substack{ e\in\{e_1,...,e_N\}\\  g(e)=0}} U^{\mathbf{S}_g}(e)  \right] \\
  &=  \sum_{ \mathbf{S}_g \in \mathcal{E}_p} \frac{p!q!}{P^{\mathbf{S}_g}} \left(\frac{1}{p^2}+\frac{1}{q^2} \right)^{1\over 2} \binom{K}{2} P^{\mathbf{S}_g}\\
  &=  \binom{K}{2}\sum_{ 1\leq p \leq N-1} p!q! \frac{(p^2+q^2)^{1\over 2}}{pq} |\mathcal{E}_p|   \\
  &=  \binom{K}{2}\sum_{ 1\leq p \leq N-1} p!q! \frac{(p^2+q^2)^{1\over 2}}{pq} \binom{N}{p}\\
  &\leq \binom{K}{2} N!\sum_{ 1\leq p \leq N-1}  \left(\frac{1}{p}+ \frac{1}{q}\right) \\
  &\lesssim N! \frac{K^2}{2} 2\log(N)\\
  &\sim 2N! K^2 \log(n) .
\end{align*}
Combining the bounds of $B_, B_N, B_{1}^{N-1}$ we obtain
\begin{align*}
    \sum_{\sigma \in \mathfrak{S}_N}\sum_{i=1}^{N} \left[\frac{U^{{\mathbf{W}}^{l}}(e_l)}{P^{{\mathbf{W}}^{l}}}  + \frac{U^{{\mathbf{W}}^{l-1}}(e_l)}{P^{{\mathbf{W}}^{l-1}}}  \right]  &\lesssim 2N! K^2 \log(n) + 2 (N-1)! \binom{K}{2}\\
    &\sim 2N! K^2 \log(n).
\end{align*}
Using (\ref{81}) we have
$$ |\E[f_{\beta}(\mathbf{X})] - \E[f_{\beta}(\mathbf{Y})]| \lesssim  A\beta^2 2 K^2 \log(n) + O\left(\beta^2 n^2 e^{-\Theta(A^2)}  \right).$$
From (\ref{maxineq})
\begin{align*}
 \E[ \Psi^{\Nc\left(\frac{1}{2}, \frac{1}{4}\right)}_K ]-\E[ \Psi^{\mathcal{A}}_K ]| &\leq |\E[f_{\beta}(\mathbf{X})] - \E[f_{\beta}(\mathbf{Y})]| + \frac{ \log \left( \binom{n}{K}\right)}{\beta}\\
 &\lesssim  A\beta^2 2 K^2 \log(n) +O\left(\beta^2 n^2 e^{-\Theta(A^2)}  \right) + \frac{\log\left( \binom{n}{K}\right)}{\beta} \\
 &= \Theta\left( A\beta^2  K^2 \log(n) +   \beta^2n^2 e^{-\Theta(A^2)}  + \frac{K\log\left( n\right)}{\beta} \right).
\end{align*}
This holds for any large enough $A$ (the magnitude of $A$ is independent of $K,n$ by construction). Let $M>0$ be the constant that appears in $\beta^2 n^2 e^{\Theta(A^2)}$ in the sense that this expression is at most $\beta^2 n^2 e^{-MA^2} $ for all $A>0$. Pick $A = \frac{\sqrt{2}}{\sqrt{M}}\sqrt{\log(n)}$ so that
$ n^2 e^{-\Theta(A^2)} \leq n^2 e^{- 2\log(n)}=1$.
We then have
$$   \E[ \Psi^{\Nc\left(\frac{1}{2}, \frac{1}{4}\right)}_K ]-\E[ \Psi^{\mathcal{A}}_K ]| \leq  \Theta\left( \beta^2  K^2 \log(n)^{\frac{3}{2}}  +  \frac{K\log\left( n\right)}{\beta} \right).$$
The right hand side can be minimized with respect to $\beta >0$. The minimizer is  given by $\beta^* \sim  \left( \frac{1}{2 K\sqrt{\log(n)}}  \right)^{\frac{1}{3}} $ and yields
$$  \E[ \Psi^{\Nc\left(\frac{1}{2}, \frac{1}{4}\right)}_K ]-\E[ \Psi^{\mathcal{A}}_K ]| = O(K^{\frac{4}{3}} \log(n)^{\frac{7}{6}}).$$
This concludes the proof of the first part of Theorem \ref{Theorem 3.11}.

\subsection{Proof of Second Part of Theorem \ref{Theorem 3.11}.}
Let $\A$ be a bounded distribution  satisfying the assumptions of Theorem \ref{Theorem 3.11}. We claim the following result.
\begin{lemma}\label{Lemma 16.2}
    There exists universal constants $\beta, \theta >0$ s.t for $t\geq \beta K$ it holds
    $$ \p\left(| \Psi^{\A}_K(G) - \E[      \Psi^{\A}_K(G)   ]| \geq t \right) \leq \exp\left(-\frac{t^2}{\theta K^2}\right) .$$
\end{lemma}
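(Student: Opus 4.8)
\textbf{Proof proposal for Lemma \ref{Lemma 16.2}.}

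The plan is to mimic the proof of Theorem \ref{Theorem 4.8}, replacing the use of the Talagrand certification inequality (Theorem 7.7.1 in \cite{AlonProbabilistic}) by Talagrand's convex-distance concentration inequality, which applies to bounded --- not merely Bernoulli --- product spaces. Since $\A$ is a bounded distribution, there is a constant $R>0$ with $|Z_{ij}|\le R$ a.s. Viewing $\Psi^{\A}_K(G)$ as a function $h$ of the vector of $N=\binom{n}{2}$ independent weights $(Z_{ij})$, the first observation is a Lipschitz property in each coordinate: changing a single edge weight $Z_{ij}$ changes $\Psi^{\A}_K(G)$ by at most $2R$, because any $K$-subset $S$ either contains the edge $(i,j)$ (then $Z_S$ changes by at most $2R$) or not (then $Z_S$ is unchanged), and the maximum of functions that each move by at most $2R$ moves by at most $2R$.

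Next I would establish a certifiability/self-bounding type property. If $h(z)\ge s$, then there is a $K$-subset $S$ with $\sum_{i<j,\,i,j\in S}z_{ij}\ge s$, and this inequality is certified by the $\binom{K}{2}\le K^2/2$ coordinates indexed by the edges of $S$: any $z'$ agreeing with $z$ on those coordinates has $h(z')\ge s - (\text{nothing})$, since those coordinates alone already force a subgraph of value $\ge s$ (the remaining edges only help, and in the worst case the value drops by $0$ on that particular $S$). More carefully, to get a clean bound one takes $b=\mathrm{Median}(\Psi^{\A}_K)$, applies Talagrand's inequality in the form: for an event $B$ and the convex distance $d_T$, $\p(B)\,\p(d_T(\cdot,B)\ge t)\le e^{-t^2/4}$, together with the Lipschitz bound to convert convex distance into a bound on $|h(z)-h(z')|$. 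The quantitative outcome, exactly as in the proof of Theorem \ref{Theorem 4.8} but with an extra factor $R$, is
$$ \p\big(\,|\Psi^{\A}_K(G) - \mathrm{Median}(\Psi^{\A}_K(G))| \ge t\,\big) \le 4\exp\!\left(-\frac{t^2}{c\,R^2 K^2}\right) $$
for a universal constant $c$, valid for all $t$ (the $f$-certifiability uses $f(s)=s/(2R)^2$ or similar, and the boundedness $0\le \Psi^{\A}_K \le R\binom{K}{2}\le RK^2/2$ bounds both medians as in the symmetric argument with $h$ and $RK^2/2 - h$).

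Finally I would pass from the median to the mean exactly as in Theorem \ref{Theorem 4.8}: integrating the tail bound gives $|\E[\Psi^{\A}_K] - \mathrm{Median}(\Psi^{\A}_K)| \le \int_0^\infty 4e^{-t^2/(cR^2K^2)}dt = O(RK)$, and then for $t = O(RK) + u$ with $u$ large enough, $\p(|\Psi^{\A}_K - \E\Psi^{\A}_K|\ge t)\le 4\exp(-(t-O(RK))^2/(cR^2K^2)) \le \exp(-t^2/(\theta K^2))$ for a suitable universal $\theta$ (absorbing $R$ into $\theta$) once $t\ge \beta K$ for a universal $\beta$. The main obstacle is purely bookkeeping: making the certifiability argument for a \emph{real-valued} bounded disorder fully rigorous, since in Theorem \ref{Theorem 4.8} the certificates were literally "these edges are present", whereas here one must phrase the certificate as "these $\binom K2$ coordinates are frozen to their current real values", and verify that Talagrand's inequality in the convex-distance form still delivers the stated bound with the Lipschitz constant $2R$ entering quadratically. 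No new ideas beyond Theorem \ref{Theorem 4.8} are needed; the constant $\theta$ will depend on the support bound $R$ of $\A$, which is legitimate since $\A$ is fixed.
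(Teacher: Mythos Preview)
Your proposal is correct and follows essentially the same route as the paper: apply Talagrand's certification inequality to $\Psi^{\A}_K$ viewed as a function on a bounded product space, then pass from the median to the mean exactly as in Theorem~\ref{Theorem 4.8}. The paper's only twist is to first affinely rescale $\A$ so that its support lies in $[1,2]$, which makes $h$ $1$-Lipschitz and lets the $f(s)=s$ certifiability argument of Theorem~\ref{Theorem 4.8} carry over verbatim; your variant---keeping the Lipschitz constant $2R$ and using the constant certificate $f\equiv\binom K2$ consisting of all edges of a maximizing $K$-set---is equally valid and arguably more direct, and your concern about ``real-valued'' certificates is unfounded since Talagrand's inequality in Alon--Spencer is already stated for arbitrary product spaces.
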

We skip the proof of Lemma \ref{Lemma 16.2}  for now and show how it leads to asymptotics for $\Psi^{\A}_K(G)$. We have using Borell-Tis inequality (Theorem \ref{Theorem 4.16}) and the definition of $\Psi$ for $t>0$
\begin{align}
    \p\left(| \Psi^{\Nc\left(\frac{1}{2}, \frac{1}{4}\right)}_K(G) - \E[      \Psi^{\Nc\left(\frac{1}{2}, \frac{1}{4}\right)}_K(G)   ]| \geq t \right) &= \p\left(| \Psi^{\Nc}_K(G) - \E[      \Psi^{\Nc}_K(G)   ]| \geq 2t \right)\\
    &\leq 4\exp\left(- \frac{4t^2}{2\binom{K}{2}} \right)\\
    &= 4\exp\left( -\frac{4t^2}{K(K-1)} \right).
\end{align}
For $t = K\sqrt{\log(n)}$ we have
$$\p\left(| \Psi^{\Nc\left(\frac{1}{2}, \frac{1}{4}\right)}_K(G) - \E[     \Psi^{\Nc\left(\frac{1}{2}, \frac{1}{4}\right)}_K(G)  ]| \geq t \right) \leq 2 \exp\left(-\frac{4K^2}{K(K-1)} \log(n)\right) = o(1).$$
Therefore, the following holds w.h.p as $n\to +\infty$
$$| \Psi^{\Nc\left(\frac{1}{2}, \frac{1}{4}\right)}_K(G) - \E[     \Psi^{\Nc\left(\frac{1}{2}, \frac{1}{4}\right)}_K(G) ]| < K\sqrt{\log(n)}.$$
Using Lemma \ref{Lemma 16.2} and similar computations, we can establish the above inequality for the distribution $\A$ as well, i.e, it holds w.h.p as $n \to +\infty$
$$| \Psi^{\A}_K(G) - \E[      \Psi^{\A}_K(G) ]| < K\sqrt{\log(n)}.$$
Combining the above with the result of Theorem \ref{Theorem 3.11} we have w.h.p as $n \to +\infty$
\begin{align}
    | \Psi^{\Nc\left(\frac{1}{2}, \frac{1}{4}\right)}_K(G) - \Psi^{\A}_K(G)| &\leq | \Psi^{\Nc\left(\frac{1}{2}, \frac{1}{4}\right)}_K(G) - \E[ \Psi^{\Nc\left( \frac{1}{2}, \frac{1}{4}\right)}_K(G)]| + | \E[ \Psi^{\Nc\left(\frac{1}{2}, \frac{1}{4}\right)}_K(G)] - \E[\Psi^{\A}_K(G)]| \\
    &+| \E[\Psi^{\A}_K(G)] - \Psi^{\A}_K(G)|\\
    &\leq K \sqrt{\log(n)} + O(K^{\frac{4}{3}} \log(n)^{\frac{7}{6}}) + K \sqrt{\log(n)}\\
    &= O(K^{\frac{4}{3}} \log(n)^{\frac{7}{6}}).
\end{align}
Therefore, it holds w.h.p as $n \to +\infty$
$$\Psi^{\A}_K(G)  =  \Psi^{\Nc\left(\frac{1}{2}, \frac{1}{4}\right)}_K(G)  +  O(K^{\frac{4}{3}} \log(n)^{\frac{7}{6}}).$$
Which concludes the proof of the second part of Theorem \ref{Theorem 3.11}.
It remains then to prove Lemma \ref{Lemma 16.2}.
\begin{proof}[Proof of Lemma \ref{Lemma 16.2}]
Since concentration inequalities around expected values are (asymptotically) invariant to shifting and/or multiplying the distribution $\A$ by constants (up to changes on the constants inside the exponential tail decay $e^{-\Theta(t^2)}$), we may then assume that the bounded distribution $\A$ is restricted to taking values in $[1, 2]$. In light of the proof of Theorem \ref{Theorem 4.8}, we first  establish that $h(G)\triangleq \Psi^{\A}_K(G)$ is $f$-certifiable with $f(s)=s$. If $h(G)\geq s$, then if $s > \binom{K}{2}$ let $I$ be the set of all edge weights in (one of) the densest subgraphs in $G$, and if $s\leq \binom{K}{2}$ let   $I$ be any $s$-subset of the edge weights in (one of) the densest subgraphs in $G$. Note that fixing the values of edges in $I$ and changing the rest of edges would still yield a a graph $G'$ s.t $h(G')\geq s$, therefore $h$ is $f$-certifiable and similarly, $3 \binom{K}{2}-h$ is $f$-certifiable by symmetry. We can then follow the roadmap of Theorem \ref{Theorem 4.8} to establish the desired concentration inequality.
\end{proof}

\section*{Acknowledgement.}
The authors would like to thank Noga Alon for several useful suggestions regarding Talagrand's concentration inequality.


\printbibliography
\end{document}